\documentclass[11pt,a4paper]{article}
\usepackage{graphics}
\usepackage{fancyhdr}
\usepackage{color}
\usepackage{mathrsfs}
\usepackage{amsfonts,amssymb,amsmath,amsthm,bm}
\usepackage{authblk}
\usepackage{enumitem}

\usepackage[unicode,pdftex]{hyperref}
\hypersetup{
	colorlinks = true,
	citecolor = green,
	anchorcolor = blue,
	linkcolor = blue}

\newtheorem{thm}{Theorem}[section]
\newtheorem{lemma}[thm]{Lemma}

\newtheorem{prop}[thm]{Proposition}
\newtheorem{conj}[thm]{Conjecture}
\newtheorem{defi}[thm]{Definition}
\newtheoremstyle{rem}{10pt}{10pt}{\rmfamily}{}{\bfseries}{.}{.5em}{} 
\theoremstyle{rem}
\newtheorem{rem}[thm]{Remark}

\numberwithin{equation}{section} 

\title{Fractional and anisotropic Gagliardo--Nirenberg inequalities with applications}
\author{Jie Chen\footnote{E-mail address: jiechern@jmu.edu.cn}}
\affil[]{\scriptsize \textit{School of Science, Jimei University, Xiamen 361021, P.R. China}}
\date{}

\begin{document}
	\maketitle
	\begin{abstract}
		In this article, we first investigate the necessary and sufficient conditions on the ranges of $s,s_1,s_2\in \mathbb{R}$, $1\leq p_1,p_2,q\leq \infty$, $\theta_1,\theta_2\geq 0$ for the validity of the fractional Gagliardo--Nirenberg inequalities 
		$$\|D^su\|_{L^q(\mathbb{R}^d)}\lesssim \|D^{s_1}u\|_{L^{p_1}(\mathbb{R}^d)}^{\theta_1}\|D^{s_2}u\|_{L^{p_2}(\mathbb{R}^d)}^{\theta_2}.$$
		Secondly, we consider the anisotropic Gagliardo--Nirenberg inequalities
		$$\|u\|_{L^q(\mathbb{R}^d)}\lesssim \|u\|_{L^{p_0}(\mathbb{R}^d)}^{\theta_0}\prod_{j=1}^n\|D_{x_j}^{s_j}u\|_{L^{p_j}(\mathbb{R}^d)}^{\theta_j}.$$
		We show the sharp conditions of these inequalities except ``one" case. Finally, we establish the profile decomposition associated with the above inequalities when $p_j = 2$, $0\leq j\leq n$. Using these results, we establish existence of extremizers and construct soliton solutions of relevant dispersive equations.
		\vspace{5pt}
		
		\noindent{\bf{Key words.}} Fractional Gagliardo--Nirenberg inequality,  anisotropic Gagliardo--Nirenberg inequality, soliton solutions
		\vspace{5pt}
		
		\noindent{\bf{MSC codes.}} 26D10, 26A33, 35A15
	\end{abstract}

	\section{Introduction}
	The classical Gagliardo--Nirenberg inequalities read: Let $1\leq p_0,p_1,q\leq \infty$, $l,m\in \mathbb{N}_0:=\{0,1,2,\cdots\}$, $l<m$, $l/m\leq \theta\leq 1$, $d((1-\theta)/p_0+\theta/p_1-1/q) = m\theta-l$, $(l,q)\neq (0,p_0)$. If $(\theta,q) = (1,\infty)$, we further assume that $p_1 = 1$. Then for any $u$ in the Schwartz space $\mathcal{S}(\mathbb{R}^d)$, one has
	$$\sum_{|\alpha| = l}\|\partial_x^\alpha u\|_{L^q(\mathbb{R}^d)}\lesssim \|u\|_{L^{p_0}(\mathbb{R}^d)}^{1-\theta}\sum_{|\alpha| = m}\|\partial_x^\alpha u\|_{L^{p_1}(\mathbb{R}^d)}^{\theta},$$
	where $\partial_x^\alpha = \partial_{x_1}^{\alpha_1}\cdots \partial_{x_d}^{\alpha_d}$ for $\alpha = (\alpha_1,\cdots,\alpha_d)\in \mathbb{N}_0^d$, $x = (x_1,\cdots,x_d)\in \mathbb{R}^d$. These inequalities, discovered by Gagliardo \cite{gagliardo1959ulteriori} and Nirenberg \cite{nirenberg1959elliptic}, are very useful in the study of PDEs. There is a vast literature on inequalities of this type since then. For these inequalities in Besov or Triebel--Lizorkin spaces, see Hajaiej--Molinet--Ozawa--Wang \cite{hajaiej2010sufficient}, Triebel \cite{triebel2014gagliardo}, and reference therein. For the complete and ultimate story of these inequalities in Sobolev spaces, we refer to the works of Brezis--Mironescu \cite{brezis2018gagliardo,brezis2019sobolev}.
		
	Naturally, we may also consider inequalities of this type in Bessel potential spaces:
	\begin{equation}\label{gninbessel}
		\|D^su\|_{L^q(\mathbb{R}^d)}\lesssim \|D^{s_1}u\|_{L^{p_1}(\mathbb{R}^d)}^{\theta_1}\|D^{s_2}u\|_{L^{p_2}(\mathbb{R}^d)}^{\theta_2}
	\end{equation}
	where $D^s:=\mathscr{F}^{-1}|\xi|^s\mathscr{F}$ and
	\begin{align*}
		\mathscr{F}f(\xi) = \frac{1}{(2\pi)^{d/2}}\int_{\mathbb{R}^d}f(x)e^{-ix\cdot \xi}~dx.
	\end{align*}
	Without loss of generality, we would assume:
	\begin{equation}\label{basicred}
		\begin{aligned}
			&(s,q),(s_1,p_1),(s_2,p_2) \mbox{ are distinct from each other and}\\
			& 1\leq p_1,p_2,q\leq \infty,~p_1\leq p_2,~ s_1,s_2,s\in \mathbb{R},~ 0<\theta_1 = 1-\theta_2\leq 1.
		\end{aligned}
	\end{equation}
	By scaling arguments and by testing the inequality with functions whose Fourier support at high frequencies, we obtain necessary conditions for the inequality \eqref{gninbessel} to hold:
	\begin{equation}\label{scalingandhighfre}
		d\left(\frac{\theta_1}{p_1}+\frac{\theta_2}{p_2}-\frac{1}{q}\right) = \theta_1s_1+\theta_2s_2-s\geq 0.
	\end{equation}
	In fact, The above conditions are almost sufficient, except for some limiting cases. The first goal of this paper is to obtain necessary and sufficient conditions on the indices for \eqref{gninbessel} to hold. Let
	\begin{equation}\label{formalin}
		C(d,s_1,p_1,s_2,p_2,\theta_1,q):=\sup\frac{\|D^su\|_{L^q(\mathbb{R}^d)}}{\|D^{s_1}u\|_{L^{p_1}(\mathbb{R}^d)}^{\theta_1}\|D^{s_2}u\|_{L^{p_2}(\mathbb{R}^d)}^{\theta_2}}.
	\end{equation}
	where the sup is taking for all nonzero $\mathscr{F}u\in C_0^\infty(\mathbb{R}^d\setminus \{0\})$.
	\begin{thm}\label{firstmain}
		Assume \eqref{basicred}--\eqref{scalingandhighfre}. $C(d,s_1,p_1,s_2,p_2,\theta_1,q)=\infty$ if and only if $(\theta_1,p_1) = (1,1)$, or $(\theta_1,q) = (1,\infty)$, or
		\begin{itemize}
			\item $q = \infty$, $s_1-s_2 = d(1/p_1-1/p_2)$, or
			\item $1 = p_1<p_2<\infty$, $s_1-s_2 = d(1-1/p_2)$, $\theta_1>1-p_2/q$, or
			\item $p_1 = 1$, $p_2 = \infty$, $s_1-s_2 = d$,  $\theta_1>1-1/q$, or
			\item $1 = p_1<q<p_2<\infty$, $0<s_1-s_2< d(1-1/p_2)$, $\theta_1 = \frac{1/q-1/p_2}{1-1/p_2}$, or
			\item $p_1 = 1$, $p_2 = \infty$, $1<q<2$, $0<s_1-s_2<d$, $\theta_1 = 1/q$.
		\end{itemize}
	\end{thm}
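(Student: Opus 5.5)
The plan has two halves: counterexamples for each listed case, and a proof of finiteness of $C(d,s_1,p_1,s_2,p_2,\theta_1,q)$ in every other case allowed by \eqref{basicred}--\eqref{scalingandhighfre}. Throughout I write $\sigma:=s_1-s_2$. By \eqref{scalingandhighfre} and scaling we may normalize the Fourier support.

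\textbf{Sufficiency.} I first decompose $u=\sum_j \Delta_j u$ with Littlewood--Paley projections. Bernstein's inequality gives, for each $j$,
$$\|D^s\Delta_j u\|_{L^q}\lesssim 2^{j(s-s_1+d/p_1-d/q)}\|D^{s_1}u\|_{L^{p_1}}$$
and the analogous bound with $(s_2,p_2)$. These hold because $\|\Delta_j f\|_{L^{p}}\lesssim\|f\|_{L^p}$ for all $1\le p\le\infty$, so only $q\ge p_i$ matters; when $q<p_i$ I instead interpolate through a mixed norm. Summing the first bound over $2^j\le N$ and the second over $2^j>N$ and optimizing in $N$ gives the inequality whenever the two exponents $s-s_1+d/p_1-d/q$ and $s-s_2+d/p_2-d/q$ have strictly opposite signs. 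By \eqref{scalingandhighfre} this is equivalent to $0<\theta_1<1$ and $\sigma\neq d(1/p_1-1/p_2)$, modulo endpoint issues. The remaining regimes are handled as follows.
\begin{itemize}
\item When $1<p_1,p_2,q<\infty$, Mikhlin multipliers and complex interpolation (the Stein--Weiss or Bessel-potential interpolation $[\dot H^{s_1,p_1},\dot H^{s_2,p_2}]_{\theta}$), combined with Sobolev embedding, give the result directly, including the degenerate case $\sigma=d(1/p_1-1/p_2)$.
\item When $\sigma$ equals the Sobolev gap with $q<\infty$, I use Hardy--Littlewood--Sobolev, either $\dot H^{s_1,p_1}\hookrightarrow \dot H^{s_2,p_2}$ or the reverse, followed by Hölder-type interpolation.
\item When $\theta_1=1$, the inequality is a pure Sobolev embedding, valid unless $p_1=1$ or $q=\infty$, which explains the first two failing cases.
\item The cases with $p_1=1$ or $p_2=\infty$ need Hardy-space or BMO substitutes. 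I would replace $L^1$ by the endpoint bound $\|D^{-\alpha}f\|_{L^{d/(d-\alpha),\infty}}\lesssim\|f\|_{L^1}$ and use real interpolation with Lorentz spaces. Real interpolation of weak-type estimates then upgrades to a strong $L^q$ bound exactly when the interpolated Lorentz second index is at most $q$. This produces the thresholds $\theta_1\le 1-p_2/q$ and $\theta_1\le 1-1/q$, and explains why the line $\theta_1=\frac{1/q-1/p_2}{1-1/p_2}$, where the interpolation parameter hits the $L^1$ endpoint exactly, is critical.
\end{itemize}

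\textbf{Necessity.} For each listed case I would construct explicit counterexamples.
\begin{itemize}
\item For $(\theta_1,p_1)=(1,1)$, take $u$ with $D^{s_1}u$ approximating a Riesz-kernel dipole, so that $D^{s_1}u\in L^1$ but $D^su\notin L^q$ (the $L^1$ Riesz-transform failure). For $(1,\infty)$, use $\log$-type functions to show the failure of $\dot H^{d/p,p}\hookrightarrow L^\infty$.
\item For $q=\infty$ with the Sobolev gap, use $\widehat u=\sum_{j\le N}2^{-j(s+d)}\phi(2^{-j}\xi)$-type lacunary sums. The right side then grows like $N^{\theta_1/p_1+\theta_2/p_2}$ and the left like $N$.
\item For the $p_1=1$ cases, use sums of $N$ widely separated bumps at frequency scale $1$ with random signs, or a single Riesz-kernel profile truncated at scales $1$ to $2^N$, and compare powers of $N$. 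The condition $\theta_1>1-p_2/q$ emerges because the $L^1$ norm of $D^{s_1}u$ is $\sim N$ while the $L^{p_2}$ and $L^q$ norms grow like $N^{1/p_2}$ and $N^{1/q}$.
\end{itemize}

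The main obstacle I expect is the critical line $\theta_1=\frac{1/q-1/p_2}{1-1/p_2}$ with $0<\sigma<d(1-1/p_2)$, and its $p_2=\infty$ analogue with $1<q<2$. Here neither side dominates, and the failure is only logarithmic. I would try a Knapp/lacunary construction with $u=\sum_{k=1}^N 2^{k\alpha}\psi(2^k x)$, where $\alpha$ is chosen so that all dyadic pieces have comparable $D^{s_1}$-$L^1$ norm. The hope is that the $L^q$ norm sums as $N^{1/q}$ by disjointness in scale while the right side grows like $N^{\theta_1}\cdot N^{\theta_2/p_2}$, which is $N^{1/q}$ times a factor that is strictly smaller exactly on this line. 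I am uncertain that this comparison closes, and the $p_2=\infty$, $q<2$ restriction suggests square-function (Khintchine) effects that would have to be exploited there with random signs.
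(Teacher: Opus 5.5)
Your scaling-critical counterexamples and your non-endpoint positive cases are essentially the paper's. The proposal has genuine gaps exactly where the theorem is delicate. Both gaps sit on the H\"{o}lder line $\theta_1=\frac{1/q-1/p_2}{1-1/p_2}$ with $p_1=1<q<p_2$, where the hypotheses force $s=\theta_1s_1+\theta_2s_2$ and $1/q'=\theta_2/p_2'$.

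\textbf{Counterexample.} Your point-concentrated lacunary sum cannot close. Normalize $s_1=0$, let $\sigma=s_1-s_2\in(0,d/p_2')$, and take $u=\sum_{k=1}^N\Delta_k\delta_d$, which has equal $L^1$ weight per scale. Then $\|u\|_{L^1}\sim1$, $\|D^{-\sigma}u\|_{L^{p_2}}\sim2^{N(d/p_2'-\sigma)}$ and $\|D^{-\sigma\theta_2}u\|_{L^q}\sim2^{N\theta_2(d/p_2'-\sigma)}$, so the quotient is $O(1)$. Other geometric weights keep it bounded or make it decay, and separated copies or random signs gain nothing on this line. Such a profile lives on a $0$-dimensional set and is dominated by a single scale.

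The missing idea is a set of intermediate dimension $d-\delta$, $\delta=\sigma p_2'$, on which every generation contributes equally to the Riesz potential. The paper extends the one-dimensional Brezis--Mironescu example to a self-similar Cantor-type set $E_k^j$: $k$ cubes of side $\epsilon_k=k^{-1/(d-\delta)}$ spaced along the diagonal, iterated $j$ times. Lemma \ref{thekeylemma} proves, as $k\to\infty$ and for all $1<p<\infty$,
$\|\chi_{E_k^j}\|_{L^1}=\epsilon_k^{j\delta}$, $\|D^{-\delta}\chi_{E_k^j}\|_{L^\infty}\lesssim j\epsilon_k^{j\delta}$, and $\|D^{-\delta/p'}\chi_{E_k^j}\|_{L^p}\sim j^{1/p}\epsilon_k^{j\delta}$.
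The quotient is then $j^{p_2'(1/q-1/p_2)}\to\infty$. For $p_2=\infty$ it is $j^{1/q}/j^{1/q'}$, which is exactly where $q<2$ comes from; no square-function or Khintchine effect is involved. The construction requires $d-1<\delta<d$. The remaining $\delta$ are reached through the dimension-lifting Lemma \ref{dimensionreduction}, and a frequency truncation puts the functions in the admissible class.

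Separately, the rate you quote for the scaling-critical $p_1=1$ cases is inconsistent. With $\|D^{s_1}u\|_{L^1}\sim N$ the exponent would be $1/q-\theta_1-\theta_2/p_2\le 0$. For the truncated Dirac sum the $L^1$ norm stays $\sim1$, giving exponent $1/q-\theta_2/p_2$, which is positive iff $\theta_1>1-p_2/q$.

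\textbf{Positive results.} Your weak-type/Lorentz scheme treats this line as uniformly critical. Yet the theorem says the inequality \emph{holds} there for $\sigma\le0$ or $\sigma>d/p_2'$, and, when $p_2=\infty$, for every $\sigma$ once $q\ge2$. Nothing in your argument detects the dependence on $\sigma$; for $\sigma<0$ there is not even a weak-type HLS bound to start from. Dyadic Bernstein summation also gives nothing here. With $q<p_2$ each block must go through H\"{o}lder between $L^1$ and $L^{p_2}$, which yields decay only when $\theta_1s_1+\theta_2s_2>s$.

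The paper's key input is Cohen's theorem $(L^1,B^{t}_{p,p})_{\theta,q}=B^{t\theta}_{q,q}$ with $\theta=p'/q'$. It is valid exactly for $t>0$ or $t<-d/p'$, and this is the source of the ranges $\sigma<0$ and $\sigma>d/p_2'$ ($\sigma=0$ is H\"{o}lder). The paper combines it with a Triebel--Lizorkin Gagliardo--Nirenberg inequality and an absorption step (Proposition \ref{coheninter}).

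For $p_1=1$, $p_2=\infty$, $q\ge2$, the paper uses the identity $\|u\|_{L^2}^2=\int D^{s_1}u\,\overline{D^{-s_1}u}\,dx\le\|D^{s_1}u\|_{L^1}\|D^{-s_1}u\|_{L^\infty}$, valid for every $s_1$. Propositions \ref{littlewood} and \ref{endsobo} then reach $q>2$ and the threshold $\theta_1\le 1/q'$ (Proposition \ref{doubleend}). Lorentz interpolation cannot produce this, since $D^{s_2}u\in L^\infty$ carries no Lorentz information.

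Your Lorentz idea does give a clean alternative proof of Proposition \ref{endsobo}: combine $I_{d/q'}:L^1\to L^{q,\infty}$, $\dot H^{s_2,p_2}\hookrightarrow L^{q,p_2}$, and $\|f\|_{L^q}\lesssim\|f\|_{L^{q,\infty}}^{1-p_2/q}\|f\|_{L^{q,p_2}}^{p_2/q}$. Finally, complex interpolation does not cover $p_1>1$, $p_2=\infty$; the paper treats that case by a separate distribution-function argument.
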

	\begin{rem}
		To establish inequality \eqref{gninbessel} for the case $p_1 = 1$, we rely on the remarkable interpolation result of Cohen \cite{cohen1999ondelettes}, which asserts that for $1<q<p$, $\theta = p'/q'$, $t = s\theta$, and either $s>0$ or $s<-d/p'$, we have
		\begin{equation}\label{interesult}
			(L^1,B^s_{p,p})_{\theta,q} = B^t_{q,q},
		\end{equation}
		where $B^{s}_{p,p}$ denotes the Besov space (see, e.g., Triebel \cite{triebel1983theory}).	We adopt the definition of real interpolation spaces from Bergh--L\"{o}fstr\"{o}m \cite{bergh1976interpolation}.
	\end{rem}
	\begin{rem}
		Although our result here seems very different from Theorem 1 in Brezis--Mironescu \cite{brezis2018gagliardo} (where the index conditions are independent of the spatial dimension $d$), the proof of Theorem \ref{firstmain} is highly related to the proof in \cite{brezis2018gagliardo}. In fact, we shall appropriately generalize the important one‑dimensional counterexample constructed in \cite{brezis2018gagliardo} to higher dimensions.
	\end{rem}
	
	The second part of this paper is to consider the following anisotropic Gagliardo--Nirenberg inequalities
	\begin{equation}\label{aniso}
		\|u\|_{L^q(\mathbb{R}^d)}\lesssim \|u\|_{L^{p_0}(\mathbb{R}^d)}^{\theta_0}\prod_{j=1}^n\|D_{x_j}^{s_j}u\|_{L^{p_j}(\mathbb{R}^d)}^{\theta_j},\quad \forall~u\in \mathcal{S}(\mathbb{R}^d)
	\end{equation}
	where $x = (x_1,\cdots,x_n)\in \mathbb{R}^{d_1}\oplus\cdots\oplus \mathbb{R}^{d_n}=\mathbb{R}^d$, $D_{x_j}^{s_j}=\mathscr{F}^{-1}|\xi_j|^{s_j}\mathscr{F}$. Without loss of generality, we assume:
	\begin{equation}\label{anibasicred}
		\begin{aligned}
			&n\in \mathbb{N},~n\geq 2,~ d_j\in \mathbb{N},~s_j\geq 0,~\mbox{for any}~1\leq j\leq n,\\
			&1\leq p_j,q\leq \infty,~\theta_j\geq 0,~\mbox{for any}~0\leq j\leq n,~p_0\neq q,~\mbox{and}~\sum_{j=0}^n\theta_j = 1.
		\end{aligned}
	\end{equation}
	
	Similar to considering \eqref{gninbessel}, we likewise have the following necessary conditions:
	\begin{equation}\label{scalingar}
		d_j\left(\sum_{j=0}^n\frac{\theta_j}{p_j}-\frac{1}{q}\right) = \theta_js_j,\quad \forall~1\leq j\leq n.
	\end{equation}
	\begin{thm}\label{mainani}
		Assume \eqref{anibasicred}--\eqref{scalingar} and $(\theta_0,q)\neq (0,\infty)$. Then \eqref{aniso} holds if and only if it holds for any $u(x) =\prod_{j=1}^nu_j(x_j)$, $u_j\in \mathcal{S}(\mathbb{R}^{d_j})$. Equivalently, \eqref{aniso} holds except $1<q<\infty$, $\theta_0 = 0$, $p_{j_0} = 1$, $s_{j_0} = d_{j_0}/q'$, $\theta_{j_0}>0$ for some $1\leq j_0\leq n$, $p_j = q$ for any $1\leq j\leq n$ with $j\neq j_0$.
	\end{thm}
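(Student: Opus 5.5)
The proof splits into two directions. We first show that \eqref{aniso} fails in the exceptional case by testing on tensor products $u=\prod_j u_j(x_j)$. After that we prove \eqref{aniso} in every other case. This automatically gives the stated equivalence, because the counterexamples are themselves of product form.

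\emph{Necessity.} Assume $1<q<\infty$, $\theta_0=0$, $p_{j_0}=1$, $s_{j_0}=d_{j_0}/q'$, $\theta_{j_0}>0$, and $p_j=q$ for $j\ne j_0$. Take $u=u_{j_0}(x_{j_0})\prod_{j\ne j_0}u_j(x_j)$. Since $p_j=q$ for $j\neq j_0$, all factors in the $x_j$ variables, $j\ne j_0$, can be arranged (by choosing $u_j$ and exploiting the scaling \eqref{scalingar}) so that the inequality reduces to a one-block inequality in $\mathbb{R}^{d_{j_0}}$:
\[
\|v\|_{L^q}\lesssim \|v\|_{L^q}^{1-\theta_{j_0}}\|D^{d_{j_0}/q'}v\|_{L^1}^{\theta_{j_0}},
\]
that is, $\|v\|_{L^q}\lesssim\|D^{d/q'}v\|_{L^1}$ with $d=d_{j_0}$. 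This is the endpoint Sobolev embedding $\dot W^{d/q',1}\hookrightarrow L^q$. For non-integer order it fails, and more generally it fails via a Brezis--Mironescu type counterexample, which is precisely the limiting case $(\theta_1,p_1)=(1,1)$ in Theorem \ref{firstmain}. We can therefore quote Theorem \ref{firstmain} for the one-block failure and tensor it with fixed $u_j$. The main care needed here is that the factors in the other variables genuinely decouple. Because all the norms factor over products when $p_j=q$, the cross factors cancel exactly, up to powers that scaling in each block removes.

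\emph{Sufficiency.} The strategy is to reduce to mixed-norm one-dimensional (block) inequalities and interpolate. By \eqref{scalingar} the quantity $\sigma:=\sum\theta_j/p_j-1/q$ satisfies $\theta_js_j=d_j\sigma$. Use a Littlewood--Paley decomposition in each block $\xi_j$ separately (a product decomposition with dyadic pieces $\Delta^{(j)}_{k_j}$). For a fixed multi-index $k=(k_1,\dots,k_n)$, Bernstein's inequality in each block gives
\[
\|\Delta_k u\|_{L^q}\lesssim \prod_j 2^{k_jd_j(1/r-1/q)}\|\Delta_k u\|_{L^r}
\quad\text{and}\quad
\|D_{x_j}^{s_j}\Delta_k u\|_{L^{p}}\sim 2^{k_js_j}\|\Delta_ku\|_{L^p}.
\]
We then bound $\|\Delta_ku\|_{L^q}$ by $\min_j$ of the estimates coming from each factor, $A_0\prod_i2^{k_id_i(1/p_0-1/q)}$ and $A_j2^{-k_js_j}\prod_i2^{k_id_i(1/p_j-1/q)}$, and sum over $k\in\mathbb{Z}^n$. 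The minimum of exponentially weighted terms sums to a geometric mean with exponents $\theta_j$ exactly when the exponent vectors satisfy the balance \eqref{scalingar} and lie in general position. This is the standard multiparameter version of the argument in Theorem \ref{firstmain}. For $q<\infty$ the summation cannot use the triangle inequality naively; we first use square-function estimates (product Littlewood--Paley for $1<q<\infty$). When some $p_j=1$ or $q\in\{1,\infty\}$, which is where $L^p$ Littlewood--Paley theory fails, we instead use real interpolation, including Cohen's result \eqref{interesult} blockwise, or a direct argument to handle the $L^1$ endpoint. The nonexceptional hypotheses ensure that the geometric sum is either non-degenerate or, if degenerate, the endpoint embedding involved is true: $p_{j_0}=1$ with $q=1$ or $\infty$, or another $p_j\neq q$ providing the off-diagonal gain needed.

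The main obstacle is this last sufficiency bookkeeping. The difficulty is the degenerate configurations where the exponent vectors are not in general position, so that the geometric series is summable only at the level of a Lorentz or Besov norm. We must show that in all such cases except the stated exception the resulting $B^0_{q,\infty}$ or $L^{q,\infty}$ bound can be upgraded to $L^q$. The plan for the upgrade is an extra interpolation: use the strict inequality between some $p_j$ and $q$ to interpolate two weak-type estimates with different $\theta$'s, in the manner of Marcinkiewicz.
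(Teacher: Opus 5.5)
Your necessity half is correct, and it fills in what the paper leaves implicit. Fixing $u_j$ for $j\neq j_0$, the product inequality becomes $\|u_{j_0}\|_{L^q}\lesssim C(u_j)\,\|D^{d_{j_0}/q'}u_{j_0}\|_{L^1}$; the cross factors do not cancel, they simply become constants. This fails for every $1<q<\infty$, integer order or not, by the case $(\theta_1,p_1)=(1,1)$ of Theorem~\ref{firstmain}, i.e.\ by testing $D^{-d_{j_0}/q'}u_{d_{j_0},K}$ with Lemma~\ref{cutdirac}.

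The genuine gap is in sufficiency, in the critical case $\theta_0=0$, $1<q<\infty$, which is where the theorem has its content. Put $L_j(k)=-s_jk_j+(1/p_j-1/q)\sum_l d_lk_l$. By \eqref{scalingar}, $\sum_{j=1}^n\theta_jL_j(k)=0$ for every $k$. So the forms $L_j$ have a common kernel, and $\min_jA_j2^{L_j(k)}\gtrsim\min_jA_j$ for all lattice points within bounded distance of a line in that kernel. Your dyadic sum therefore diverges in \emph{every} critical configuration. Your dichotomy ``non-degenerate, or degenerate with a true endpoint'' is empty when $\theta_0=0$, and everything rests on the final Marcinkiewicz upgrade. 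That upgrade cannot work as described: for fixed $(s_j,p_j)$ with $\theta_0=0$, \eqref{scalingar} forces $\theta_j=sd_j/(ds_j)$ and fixes $q$. So there are no two critical weak-type estimates with different $\theta$'s and the same right-hand side to interpolate between. Note also that the obstruction is $\theta_0=0$, not $q<\infty$: for $\theta_0>0$ the triangle-inequality summation works for every $q$, as in Proposition~\ref{subbern}. What your sketch does miss in that case is that Bernstein cannot bound $\|\Delta_ku\|_{L^q}$ by $\|\Delta_ku\|_{L^{p_j}}$ when $p_j>q$. One must first use H\"older to pass to $L^{\tilde p}$ with $1/\tilde p=\sum_j\theta_j/p_j>1/q$, perturbed by $\epsilon$ toward each $p_l$.

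What is missing are the two concrete mechanisms of Propositions~\ref{crilit} and~\ref{controlinfi}.

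\emph{All $p_j>1$.} If $\max_jp_j<q$, use the product square function and the pointwise H\"older inequality $|P_ku|_{l^2_k}\le\prod_j|2^{-L_j(k)}P_ku|_{l^2_k}^{\theta_j}$, which is exact because the weights balance. Identify $\||2^{-L_j(k)}P_ku|_{l^2_k}\|_{L^q}$ with $\|D^{s_j}_{x_j}\prod_lD_{x_l}^{-d_l(1/p_j-1/q)}u\|_{L^q}$, and remove the negative orders by blockwise Sobolev embeddings and Minkowski. If some $p_n\ge q$, apply the one-block inequality of Proposition~\ref{littlewood} in $x_n$, then H\"older, Minkowski and induction on $n$.

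\emph{$p_{j_0}=1$.} Here one needs an absorption argument. If the other $p_j$ are $\le q$ with at least one $<q$, then \eqref{scalingar} gives $s_{j_0}>d_{j_0}/q'$. Cohen's interpolation (Proposition~\ref{coheninter}) then yields $\|D^{s_{j_0}/2}_{x_{j_0}}u\|_{L^{\tilde p}}\lesssim\|D^{s_{j_0}}_{x_{j_0}}u\|_{L^1}^{1/2}\|u\|_{L^q}^{1/2}$ with $\tilde p>1$. Do the same in the other blocks and apply the $p_j>1$ result to this half-order problem, which again satisfies \eqref{scalingar} with the same $\theta_j$ and $q$. This gives $\|u\|_{L^q}\lesssim\|u\|_{L^q}^{1/2}\prod_j\|D^{s_j}_{x_j}u\|_{L^{p_j}}^{\theta_j/2}$, and one divides. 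If some other $p_j>q$ and $n=2$, then $s_{j_0}<d_{j_0}/q'$ and a further variant is needed: a small-order interpolation and Sobolev embedding in the other block, feeding Proposition~\ref{subcritical} in $x_{j_0}$. For $n\ge3$ one slices in that variable and inducts.

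The exceptional case is exactly where all of this breaks: $s_{j_0}=d_{j_0}/q'$ is the endpoint at which the one-block interpolation from $L^1$ fails, and no other block has $p_j\neq q$. None of these steps is in your proposal, so its sufficiency half remains unproved.
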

	\begin{rem}
		For related partial results, see, e.g., Adams \cite{adams1988anisotropic}, Algervik \cite{algervik2010}, and Esfahani \cite{esfahani2015anisotropic}; for the particular case $q = \infty$ and $s_j = 1$, we refer to Porretta \cite{porretta2020note}.
	\end{rem}
	\begin{rem}
		We refer to inequality \eqref{aniso} as \textit{critical} when $\theta_0 = 0$, and as subcritical when \textit{subcritical}.
	\end{rem}
	
	The case $(\theta_0,q) = (0,\infty)$ is different. In fact we have
	\begin{thm}\label{couninfty}
		Assume  \eqref{anibasicred}--\eqref{scalingar} and $(\theta_0,q)= (0,\infty)$. If $1<p_{j_0}<\infty$, $\theta_{j_0}>0$ for some $1\leq j\leq n$, then \eqref{aniso} does not hold.
	\end{thm}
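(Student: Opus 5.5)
The plan is to build a counterexample from a lacunary sum of anisotropically rescaled bumps, the analogue of the classical counterexample to $W^{d/p,p}\hookrightarrow L^\infty$ for $1<p<\infty$. First I record what the hypotheses give. Since $\theta_0=0$ and $q=\infty$, the scaling condition \eqref{scalingar} reads $d_j\sigma=\theta_js_j$ for $1\le j\le n$, where $\sigma:=\sum_{k=1}^n\theta_k/p_k$. From $1<p_{j_0}<\infty$ and $\theta_{j_0}>0$ we get $\sigma>0$. Since every $d_j\geq 1$, this forces $\theta_j>0$ and $s_j=d_j\sigma/\theta_j>0$ for every $j$.

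\emph{Step 1: a scaling preserving each factor separately.} For $t>0$ let $u_t(x)=\phi(t^{a_1}x_1,\dots,t^{a_n}x_n)$ with $a_j:=1/(p_js_j)$, so that $a_j=0$ when $p_j=\infty$. Then
$$\|D_{x_j}^{s_j}u_t\|_{L^{p_j}}=t^{\,a_js_j-|a|/p_j}\|D_{x_j}^{s_j}\phi\|_{L^{p_j}},\qquad |a|:=\sum_{i=1}^n a_id_i .$$
Using $s_i=d_i\sigma/\theta_i$ we get $|a|=\sum_i \theta_i/(p_i\sigma)=1$. Hence $a_js_j=1/p_j=|a|/p_j$, so every factor on the right of \eqref{aniso} is invariant under this scaling. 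Meanwhile $\|u_t\|_{L^\infty}=\|\phi\|_{L^\infty}$. The scaling is nontrivial because $a_{j_0}>0$.

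\emph{Step 2: the test function.} Choose $\phi\in\mathcal S(\mathbb R^d)$ with $\mathscr F\phi\geq0$, $\mathscr F\phi\not\equiv 0$, and $\mathscr F\phi$ supported in $\{1/2<|\xi_j|<2\ \text{for all } j\}$; in particular $\phi(0)>0$. Take $t=2^{Mk}$ with $M$ large and set
$$u_N=\sum_{k=1}^N\frac1k\,\phi_k,\qquad \phi_k:=u_{2^{Mk}} .$$
Then $\|u_N\|_{L^\infty}\geq u_N(0)=\phi(0)\sum_{k\le N}1/k\gtrsim\log N$.

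\emph{Step 3: bounding the right-hand side.} For each $j$ with $p_j\in\{1,\infty\}$, I use the triangle inequality and Step 1. This gives $\|D_{x_j}^{s_j}u_N\|_{L^{p_j}}\lesssim\sum_k 1/k\lesssim\log N$.

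For $j$ with $1<p_j<\infty$ we have $a_j>0$. The $x_j$-frequencies of $\phi_k$ lie in $|\xi_j|\sim 2^{Mka_j}$, which is a lacunary family once $M$ is large. Littlewood--Paley theory in the $x_j$ variable (vector-valued in the remaining variables, or via product Littlewood--Paley) then gives
$$\|D_{x_j}^{s_j}u_N\|_{L^{p_j}}\lesssim\Big\|\Big(\sum_k k^{-2}|D_{x_j}^{s_j}\phi_k|^2\Big)^{1/2}\Big\|_{L^{p_j}} .$$
If $p_j\geq2$, Minkowski's inequality in $L^{p_j/2}$ bounds this by $\big(\sum_k k^{-2}\|D^{s_j}_{x_j}\phi_k\|_{p_j}^2\big)^{1/2}$. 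If $p_j<2$, the embedding $\ell^{p_j}\subset\ell^2$ bounds it by $\big(\sum_k k^{-p_j}\|D^{s_j}_{x_j}\phi_k\|_{p_j}^{p_j}\big)^{1/p_j}$. By Step 1 both are $O(1)$ uniformly in $N$, since $p_j>1$.

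Combining, the right-hand side of \eqref{aniso} is $\lesssim(\log N)^{1-\theta_{j_0}}$, while the left-hand side is $\gtrsim\log N$. Since $\theta_{j_0}>0$, letting $N\to\infty$ shows that \eqref{aniso} fails.

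The main obstacle is Step 1, i.e.\ noticing that the scaling relations \eqref{scalingar} with $\theta_0=0$ admit a dilation preserving each factor individually, rather than only their product. Once that is in place, the remaining point needing care is the Littlewood--Paley step in Step 3. This requires lacunarity in the single block variable $x_j$; I would handle it with the one-parameter Littlewood--Paley square function in $\xi_j$, which is bounded on $L^{p_j}(\mathbb R^d)$ for $1<p_j<\infty$.
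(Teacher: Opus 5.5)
Your proof is correct and is essentially the paper's argument: the paper uses the same dilations (frequencies $|\xi_j|\sim 2^{l/(s_jp_j)}$, i.e.\ your exponents $a_j$), sums $L$ equally weighted $L^\infty$-normalized bumps with nonnegative Fourier transform so that $\|u_L\|_{L^\infty}\sim L$, and uses Littlewood--Paley to get $\|D^{s_j}_{x_j}u_L\|_{L^{p_j}}\lesssim L^{1/\min\{p_j,2\}}$. Your $1/k$ weights (giving $\log N$ against $O(1)$) and your direct triangle-inequality treatment of $p_j=\infty$, where the paper first splits off those variables with a tensor product $f(x_1,\dots,x_{n-1})g(x_n)$, are only cosmetic variations.
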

	Essentially, we are missing the case where $p_j$ equals $1$ for any $1\leq j\leq n$. More precisely, we have the following conjecture.
	\begin{conj}\label{conjp1}
		Let $d_j\in \mathbb{N},$ $\theta_j>0$ for any $1\leq j\leq n$ and $\sum_{j=1}^n\theta_j = 1$ and let $d = \sum_{j=1}^n d_j$. The inequality
		\begin{equation}\label{end1}
			\|u\|_{L^\infty(\mathbb{R}^d)}\lesssim \prod_{j=1}^n \|D_{x_j}^{\frac{d_j}{\theta_j}}u\|_{L^1(\mathbb{R}^d)}^{\theta_j}
		\end{equation}
		does not hold.
	\end{conj}
	\begin{rem}
		For $d_1 = d_2 = 1$, $\theta_1 = \theta_2 = 1/2$, Conjecture \ref{conjp1} claims
		$$\|u(x,y)\|^2_{L^\infty(\mathbb{R}^2)}\lesssim \|\partial_{xx}u(x,y)\|_{L^1(\mathbb{R}^2)}\|\partial_{yy}u(x,y)\|_{L^1(\mathbb{R}^2)}$$
		does not hold. Since
		$$|u(x,y)| = \left|\int_{-\infty}^x\int_{-\infty}^y\partial_{xy}u(x',y')~dx'dy'\right|\leq \|\partial_{xy}u\|_{L^1(\mathbb{R}^2)},$$
		a scaling argument then disproves the inequality
		$$\|\partial_{xy}u\|_{L^1(\mathbb{R}^2)}\lesssim \|\partial_{xx}u(x,y)\|_{L^1(\mathbb{R}^2)}+\|\partial_{yy}u(x,y)\|_{L^1(\mathbb{R}^2)}.$$
		This is the well-known Ornstein non-inequality \cite{ornstein1962non}. For further results in this direction, see Kazaniecki–Stolyarov–Wojciechowski \cite{kazaniecki2017anisotropic}. However, to the best of our knowledge, estimate \eqref{end1} remains completely open.
	\end{rem}
	
	Next, we prove the existence of extremizers for \eqref{aniso} when $p_j = 2$ for each $j = 0,\cdots, n$. Let $\bm{d} = (d_1,\cdots,d_n)$, $\bm{s} = (s_1,\cdots,s_n)$. We define
	\begin{align*}
		H_{\bm{d}}^{\bm{s}} = \{u\in L^2(\mathbb{R}^d):D^{s_j}_{x_j}u\in L^2(\mathbb{R}^d), j = 1,\cdots,n\},
	\end{align*}
	with norm
	\begin{equation}\label{defiHds}
		\|u\|_{H_{\bm{d}}^{\bm{s}}}^2:= \|u\|_{L^2}^2+\sum_{j=1}^n\|D_{x_j}^{s_j}u\|_{L^2}^2.
	\end{equation}
	The homogeneous space $\dot{H}_{\bm{d}}^{\bm{s}}$ is then defined as the completion of $H_{\bm{d}}^{\bm{s}}$ under the norm
	\begin{equation}\label{defidotHds}
		\|u\|_{\dot{H}_{\bm{d}}^{\bm{s}}}^2 = \sum_{j=1}^n \|D_{x_j}^{s_j}u\|_{L^2(\mathbb{R}^d)}^2.
	\end{equation}
	\begin{thm}\label{existextri}
		Assume   \eqref{anibasicred}--\eqref{scalingar} and $p_j = 2$, $s_j>0$ for each $j = 0,\cdots, n$. If $\theta_0\neq 0$, there exists a extremizer $Q^{\bm{s},q}_{\bm{d}}\in H^{\bm{s}}_{\bm{d}}$ of the inequality
		\begin{equation}\label{sharp}
			\|u\|_{L^q(\mathbb{R}^d)}\leq C_{op} \|u\|_{L^{2}(\mathbb{R}^d)}^{\theta_0}\prod_{j=1}^n\|D_{x_j}^{s_j}u\|_{L^{2}(\mathbb{R}^d)}^{\theta_j},\quad \forall~u\in H^{\bm{s}}_{\bm{d}}.
		\end{equation}
		If $\theta_0 = 0$, there exists a extremizer $Q^{\bm{s}}_{\bm{d}}\in \dot{H}^{\bm{s}}_{\bm{d}}$ of the inequality
		\begin{equation}\label{sharpc}
			\|u\|_{L^q(\mathbb{R}^d)}\leq C_{op}\prod_{j=1}^n\|D_{x_j}^{s_j}u\|_{L^{2}(\mathbb{R}^d)}^{\theta_j},\quad \forall~u\in \dot{H}^{\bm{s}}_{\bm{d}}.
		\end{equation}
	\end{thm}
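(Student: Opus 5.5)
The plan is to prove existence of extremizers by the concentration-compactness / profile decomposition method, adapted to the anisotropic scaling group. The problem is invariant under three operations: translations $u\mapsto u(\cdot-y)$, the anisotropic dilations $u\mapsto \lambda u(\mu_1 x_1,\dots,\mu_n x_n)$ with independent $\mu_j>0$, and multiplication by constants. Indeed, by \eqref{scalingar} the functional
$$J(u)=\frac{\|u\|_{L^q}}{\|u\|_{L^2}^{\theta_0}\prod_j\|D^{s_j}_{x_j}u\|_{L^2}^{\theta_j}}$$
is invariant under each independent dilation $x_j\mapsto\mu_j x_j$. Theorem \ref{mainani} guarantees $C_{op}<\infty$; in the excluded case $p_{j_0}=1$ is impossible since all $p_j=2$.

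\textbf{Step 1: normalization.} Take a maximizing sequence $u_k$ with $J(u_k)\to C_{op}$. Using the $n$ independent dilations and the multiplicative constant, normalize it:
\begin{itemize}
\item in the subcritical case $\theta_0\ne0$: $\|u_k\|_{L^2}=1$ and $\|D^{s_j}_{x_j}u_k\|_{L^2}=1$ for every $j$ with $\theta_j>0$;
\item in the critical case $\theta_0=0$: $\|D^{s_j}_{x_j}u_k\|_{L^2}=1$ for all $j$.
\end{itemize}
Indices with $\theta_j=0$ force $s_j=0$ or $d_j=0$ via \eqref{scalingar}, unless $\sum\theta_j/2=1/q$, which contradicts $p_0\ne q$. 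So every $j$ can be normalized and the sequence is bounded in $H^{\bm s}_{\bm d}$ (resp.\ $\dot H^{\bm s}_{\bm d}$). Note that $q>2$ is forced, and $q<\infty$ follows from the standing assumption unless $\theta_0\neq0$; there the $L^\infty$ case needs $\sum_j d_j/(2s_j)<1$ and is handled similarly.

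\textbf{Step 2: profile decomposition.} Establish the decomposition
$$u_k=\sum_{l=1}^L g_k^l\phi^l+r_k^L,$$
where the $g_k^l$ are translations, composed in the critical case with anisotropic dilations. The profiles satisfy asymptotic Pythagorean expansions in each $\|D^{s_j}_{x_j}\cdot\|_{L^2}^2$ (and in $L^2$), together with $\limsup_k\|r_k^L\|_{L^q}\to0$ as $L\to\infty$.

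The key tool is a refined inequality
$$\|u\|_{L^q}\lesssim \|u\|_{H}^{1-\alpha}\,\sup_{N}\|P_N u\|_{L^\infty}^{\alpha},$$
where $P_N$ ranges over anisotropic Littlewood--Paley pieces adapted to rectangles (dyadic in each $|\xi_j|$ separately). Proving this refined estimate is the main obstacle, because the dyadic decomposition is multiparameter. To get it, I would first bound $\|u\|_{L^q}$ via the Sobolev-type embedding of the anisotropic Besov space, using that $q$ strictly exceeds $2$ and the scaling exponent condition. I would then interpolate $\ell^2$ against $\ell^\infty$ summation over the multi-index $N\in(2^{\mathbb Z})^n$. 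This requires checking that the geometric sums converge in every direction, and it uses $s_j>0$.

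The decomposition itself is then extracted in the standard way. Whenever $\|r\|_{L^q}$ is not small, some $P_Nr$ has a large value at a point $y$. Rescaling $N\to1$ and translating $y\to0$ yields a nonzero weak limit profile. The orthogonality of the parameters gives the Pythagorean expansions.

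\textbf{Step 3: conclusion.} Plug the decomposition into $J$. Decoupling of the $L^q$ norm,
$$\|\textstyle\sum g_k^l\phi^l\|_{L^q}^q\to\sum\|\phi^l\|_{L^q}^q,$$
holds since $q<\infty$ and the parameters are orthogonal. With $a_l=\|\phi^l\|^2_{L^2}$ and $b_{l,j}=\|D^{s_j}_{x_j}\phi^l\|^2_{L^2}$, the bound $\|\phi^l\|_{L^q}^q\le C_{op}^q\big(a_l^{\theta_0}\prod_j b_{l,j}^{\theta_j}\big)^{q/2}$ applies.

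The Hölder/AM–GM inequality together with $\sum\theta_j=1$ gives $\sum_l\prod_j x_{l,j}^{\theta_j}\le\prod_j(\sum_l x_{l,j})^{\theta_j}\le1$. Combined with $q/2>1$, this yields
$$C_{op}^q\le C_{op}^q\sum_l\Big(a_l^{\theta_0}\prod_j b_{l,j}^{\theta_j}\Big)^{q/2}\le C_{op}^q\max_l\Big(\cdots\Big)^{q/2-1}.$$
Hence exactly one profile $\phi^1$ carries all the mass, with $a_1=b_{1,j}=1$, and the remainders vanish. Therefore $\phi^1$ attains $C_{op}$, and this gives $Q^{\bm s,q}_{\bm d}$, resp.\ $Q^{\bm s}_{\bm d}$.

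In the subcritical case dilations are fixed by the normalization, so the profiles are obtained modulo translations only. Their existence relies on a Lions-type lemma: bounded sequences in $H^{\bm s}_{\bm d}$ with $\sup_y\|u\|_{L^2(B(y,1))}\to0$ tend to $0$ in $L^q$. I would prove this lemma via the same refined estimate restricted to frequencies $\gtrsim1$ and a local compact embedding.
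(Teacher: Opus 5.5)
Your overall architecture matches the paper's, and two parts of it are fine. The normalization with the $n$ independent dilations and the constant is legitimate: with all $p_j=2$, \eqref{scalingar} gives $\theta_js_j=d_j(\frac12-\frac1q)$, so the quotient is invariant under each $x_j\mapsto\mu_jx_j$. Your Step~3 is the H\"older plus $\ell^2\subset\ell^q$ argument of Theorem~\ref{crigeneralso}. In the subcritical case your blockwise interpolation also works, because away from $\frac1q=\frac12-\frac sd$ it leaves a geometric gain in $k$, as in Proposition~\ref{subbern}.

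The gap is the step you yourself flag as the main obstacle, in the critical case $\theta_0=0$: the proposed method cannot prove the refined estimate there. At $\frac1q=\frac12-\frac sd$, H\"older and Bernstein on one block give $\|P_ku\|_{L^q}\lesssim a_k^{1-\alpha}\eta^{\alpha}$ with no leftover power of $2^k$. Here $a_k=2^{\frac sd k\cdot\bm d}\|P_ku\|_{L^2}$ and $\eta=\sup_k2^{-k\cdot\bm d/q}\|P_ku\|_{L^\infty}$. Since $\frac sd k\cdot\bm d=\sum_j\frac{sd_j}{ds_j}k_js_j$ is a convex combination of the $k_js_j$, you get $a_k\le\max_j2^{k_js_j}\|P_ku\|_{L^2}$. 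So $(a_k)\in\ell^2$, with decay transversal to the diagonal $k_1s_1=\cdots=k_ns_n$ but none along it. The Besov bound $\|u\|_{L^q}\lesssim\|(\|P_ku\|_{L^q})_k\|_{\ell^2}$ cannot be improved to $\ell^r$ with $r>2$. It therefore requires $(a_k)\in\ell^{2(1-\alpha)}$, which no $\ell^2$ bound provides, for any $\alpha\in(0,1)$. This is the same endpoint obstruction as in the isotropic refined Sobolev inequality. Note also that the supremum must carry the weight $2^{-k\cdot\bm d/q}$; the unweighted version is not $S_\mu$-invariant and is false.

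What is needed is an argument that uses $q>2$ at the level of distribution functions. The paper uses the Lorentz improvement $\|u\|_{L^{q,2}}\lesssim\|u\|_{\dot H^{\bm s}_{\bm d}}$ (Proposition~\ref{lorentzrefined}). It then applies Lemma~\ref{localconcentration}, where $\ell^2\subset\ell^q$ on the Lorentz sequence isolates one set $E_{l_0}$ and then one block.

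Alternatively, your inequality does hold with $\alpha=1-\frac2q$, via a level-set argument whose low-pass region is the anisotropic box $\{k:k_js_j\le K\ \forall j\}$. The sum of $2^{k\cdot\bm d/q}$ over this box is $\lesssim2^{Kd/(sq)}$, so choose $K_\lambda$ with $C\eta2^{K_\lambda d/(sq)}=\lambda/2$. Chebyshev on the complementary frequencies and Fubini then give $\|u\|_{L^q}^q\lesssim\eta^{q-2}\int|\hat u|^2\max_j|\xi_j|^{2s_j}\,d\xi$, because $\frac{d(q-2)}{sq}=2$.

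Finally, ``rescaling $N\to1$'' must use only the one-parameter group $S_\mu$. Independent dilations break the normalization $\|D^{s_j}_{x_j}u\|_{L^2}=1$ and the separate Pythagorean expansions your Step~3 relies on. Using $S_\mu$ is consistent because a concentrating block is automatically balanced. Bernstein gives $2^{k_js_j}\lesssim2^{\frac sd k\cdot\bm d}$ for every $j$, and the right-hand exponent is the convex combination above. Hence all $2^{k_js_j}\sim\mu$, which is \eqref{scalingrela}.
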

	\begin{rem}
		In Section \ref{profiledeco}, we establish the profile decomposition associated with the embeddings $H^{\bm{s}}_{\bm{d}} \hookrightarrow L^q$ and $\dot{H}^{\bm{s}}_{\bm{d}} \hookrightarrow L^q$. This result will be used to prove the existence of extremizers for various Gagliardo–Nirenberg and Sobolev-type inequalities. 
	\end{rem}
	
	This paper is organized as follows. Section \ref{fractionalgn} is devoted to the fractional Gagliardo–Nirenberg inequality and the proof of Theorem \ref{firstmain}; Theorem \ref{mainani} is proved in Subsection \ref{proofofagn}, while the counterexample for the case $(\theta_0,q) = (0,\infty)$ is presented in Subsection \ref{counter}. In Subsection \ref{animixed}, we extend the anisotropic Gagliardo–Nirenberg inequality to mixed derivatives in mixed Lebesgue spaces. In Section \ref{profiledeco}, we study the profile decomposition and apply it in Subsection \ref{solitonsolu} to problems such as the construction of soliton solutions for a class of dispersive equations. Finally, in Subsection \ref{solitonestimate}, we provide estimates for the energy of these solitons and establish nonscattering results in the energy space for the corresponding dispersive equations.
	
	\section{Fractional Gagliardo--Nirenberg inequalities}\label{fractionalgn}
	In this section, we study inequality \eqref{gninbessel} and prove Theorem \ref{firstmain}. Let $\varphi\in C_0^\infty(\mathbb{R})$ be a non-negative radial function satisfying $0\leq \varphi\leq 1$, $\varphi = 1$ on $[-1,1]$, and $\mathrm{supp}~\varphi\subset (-3/2,3/2)$. Let $\psi(\cdot) = \varphi(\cdot)-\varphi(2\cdot)$. For $k\in \mathbb{Z}$, define the Littlewood--Paley projection operators $\Delta_k$ by
	$$\Delta_k = \mathscr{F}^{-1} \psi(|\xi|/2^k)\mathscr{F}.$$
	
	\subsection{Sufficient conditions for \texorpdfstring{\eqref{gninbessel}}{(1.1)}}\label{truepart}
	We begin by examining the sufficient conditions for the validity of \eqref{gninbessel}.
	\begin{prop}\label{subcritical}
		Assume \eqref{basicred}--\eqref{scalingandhighfre}, $\theta_1<1$, and $s_1-d/p_1\neq s_2-d/p_2$. If $p_2\leq q$ or $s_1\theta_1+s_2\theta_2>s$, then \eqref{gninbessel} holds.
	\end{prop}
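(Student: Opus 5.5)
Write $\alpha_i = s_i - d/p_i$ and $\alpha = s - d/q$. Condition \eqref{scalingandhighfre} gives $\alpha=\theta_1\alpha_1+\theta_2\alpha_2$, and by hypothesis $\alpha_1\neq\alpha_2$. The approach is a Littlewood--Paley splitting with frequency-dependent interpolation, and it has three steps.

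\textbf{Step 1 (block estimate).} First I bound each block. By Bernstein's inequality, for $q\geq p_i$,
\[
\|D^s\Delta_k u\|_{L^q}\lesssim 2^{k(s-s_i)+kd(1/p_i-1/q)}\|D^{s_i}\Delta_k u\|_{L^{p_i}} .
\]
I also use that $\|D^{s_i}\Delta_k u\|_{L^{p_i}}\lesssim \|D^{s_i}u\|_{L^{p_i}}$, since $\Delta_k$ is given by an $L^1$-bounded kernel; this holds for every $1\leq p_i\leq\infty$. The exponents are
\[
s-s_i+d/p_i-d/q=\alpha-\alpha_i,
\]
and $\alpha-\alpha_1$, $\alpha-\alpha_2$ have opposite signs: their ratio is $-\theta_2/\theta_1$.

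\textbf{Step 2, case $p_2\leq q$.} Then also $p_1\leq q$. I split $u=\sum_{k\leq N}\Delta_k u+\sum_{k>N}\Delta_k u$ and estimate $\|D^su\|_{L^q}\leq\sum_k\|D^s\Delta_k u\|_{L^q}$. On the side where $\alpha-\alpha_1<0$ I use the $p_1$-bound, on the other side the $p_2$-bound. Both geometric series converge and give
\[
\|D^s u\|_{L^q}\lesssim 2^{N(\alpha-\alpha_1)}A_1+2^{N(\alpha-\alpha_2)}A_2,\qquad A_i:=\|D^{s_i}u\|_{L^{p_i}} .
\]
Choosing $2^{N(\alpha_2-\alpha_1)}=A_2/A_1$ (up to rounding) balances the two terms and yields $A_1^{\theta_1}A_2^{\theta_2}$. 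This needs no strict inequality in \eqref{scalingandhighfre}, and it covers $q=\infty$ as well.

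\textbf{Step 3, case $q<p_2$ with $\theta_1s_1+\theta_2s_2>s$.} Here Bernstein does not pass from $L^{p_2}$ down to $L^q$, so I would interpolate instead.
\begin{itemize}
\item Set $\beta=d(\theta_1/p_1+\theta_2/p_2-1/q)>0$ and $1/r=\theta_1/p_1+\theta_2/p_2$, so that $r\leq q$, since $1/r-1/q=\beta/d>0$.
\item Apply the Step 2 splitting in Besov form, without Bernstein, at the level of $L^r$, with H\"older $\|fg\|_{L^r}\le\|f\|_{L^{p_1}}\|g\|_{L^{p_2}}$ used only through the scalar balancing. Concretely, pass through the Besov space $\dot B^{s+\beta}_{r,\infty}$: from $\|D^{s+\beta}\Delta_k u\|_{L^r}$ to $\|D^s u\|_{L^q}$ via the Sobolev embedding $\dot B^{s+\beta}_{r,q}\hookrightarrow \dot F^{s}_{q,2}=\dot H^{s,q}$ (Jawerth--Franke type), which is valid for $r<q$.
\item For the Besov norm with third index $q$, interpolate $\dot B^{s_1}_{p_1,\infty}$ and $\dot B^{s_2}_{p_2,\infty}$ by the real method: $(\dot B^{s_1}_{p_1,\infty},\dot B^{s_2}_{p_2,\infty})_{\theta_2,q}$. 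This equals $\dot B^{s+\beta}_{r,q}$ when $p_1,p_2<\infty$ and $q\ge r$; with $s_1+d/\ldots$ adjusted, I would rather use the diagonal form $(\dot B^{s_1}_{p_1,p_1},\dot B^{s_2}_{p_2,p_2})_{\theta_2,r}=\dot B^{s+\beta}_{r,r}$ whenever the exponents match.
\item Finally use $\dot H^{s_i,p_i}\hookrightarrow\dot B^{s_i}_{p_i,\infty}$, which holds for all $p_i$ including $1$ and $\infty$.
\end{itemize}

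\textbf{Main obstacle.} The hard step is this interpolation identity at endpoint exponents ($p_1=1$ or $p_2=\infty$), where Lebesgue and Besov interpolation is delicate. If it fails there, I would fall back on the one step that needs no interpolation space identity: the Gagliardo--Nirenberg inequality in Besov form,
\[
\|u\|_{\dot B^{s+\beta}_{r,1}}\lesssim\|u\|_{\dot B^{s_1}_{p_1,\infty}}^{\theta_1}\|u\|_{\dot B^{s_2}_{p_2,\infty}}^{\theta_2}.
\]
I would prove it blockwise using $\|\Delta_k u\|_{L^r}\le\|\Delta_ku\|_{L^{p_1}}^{\theta_1}\|\Delta_ku\|_{L^{p_2}}^{\theta_2}$ (H\"older on $|\Delta_k u|$) and the same geometric balancing in $k$, which is legitimate because $s_1\neq s_2$ follows from $\alpha_1\ne\alpha_2$ adjusted by $r$. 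I would then conclude with $\dot B^{s+\beta}_{r,1}\hookrightarrow \dot B^{s}_{q,1}\hookrightarrow\dot H^{s,q}$.
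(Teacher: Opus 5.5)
Your Steps 1--2 are correct and match the paper's argument for $p_2\le q$. The paper normalizes $\|D^{s_j}u\|_{L^{p_j}}=1$ by scaling instead of optimizing $N$, which is equivalent.

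The gap is Step 3, and it is structural rather than an endpoint issue. Every route you offer passes through $\dot B^{s_i}_{p_i,\infty}$ and then tries to land in a Besov space at the exact H\"older point $(1/r,\,s+\beta)$ with a finite third index. That step is false when $p_1\neq p_2$. To see this, take $N$ integers $k$ with $k(\alpha_2-\alpha_1)\ge 0$. Let $\Delta_k u$ be $c_k$ times a sum of $M_k$ widely separated bumps of frequency $2^k$ and width $2^{-k}$, so that $\|\Delta_k u\|_{L^p}\sim c_k(M_k2^{-kd})^{1/p}$ for every $p$. Because $p_1\neq p_2$, you can choose $c_k,M_k$ with $2^{ks_i}\|\Delta_k u\|_{L^{p_i}}\sim 1$ for $i=1,2$; this forces $M_k^{1/p_1-1/p_2}=2^{k(\alpha_2-\alpha_1)}\ge 1$. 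H\"older is then sharp on every block, so $2^{k(s+\beta)}\|\Delta_k u\|_{L^r}\sim 1$. Hence $\|u\|_{\dot B^{s+\beta}_{r,q}}\sim N^{1/q}$, whereas $\|u\|_{\dot B^{s_1}_{p_1,\infty}}^{\theta_1}\|u\|_{\dot B^{s_2}_{p_2,\infty}}^{\theta_2}\sim 1$.

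This example defeats each of your three routes:
\begin{itemize}
\item Every real interpolation space satisfies $\|u\|_{(X_0,X_1)_{\theta,q}}\lesssim \|u\|_{X_0}^{1-\theta}\|u\|_{X_1}^{\theta}$. So the identity $(\dot B^{s_1}_{p_1,\infty},\dot B^{s_2}_{p_2,\infty})_{\theta_2,q}=\dot B^{s+\beta}_{r,q}$ cannot hold.
\item The diagonal version needs $\dot B^{s_i}_{p_i,p_i}$ inputs. $\|D^{s_i}u\|_{L^{p_i}}$ does not control these for $p_i<2$, in particular for $p_1=1$.
\item Your fallback $\dot B^{s+\beta}_{r,1}$ estimate fails on the same example, where its left side is $\sim N$. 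Concretely, blockwise H\"older produces the factor $2^{k(s+\beta-\theta_1 s_1-\theta_2 s_2)}=1$, so there is no geometric decay to balance.
\end{itemize}
Separately, $\alpha_1\neq\alpha_2$ does not imply $s_1\neq s_2$, and that is not what would be needed anyway.

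The missing idea is to skip $L^r$ and interpolate each block directly into $L^q$. This is exactly where $\theta_1 s_1+\theta_2 s_2>s$ enters. The example already shows the room: on those blocks $2^{ks}\|\Delta_k u\|_{L^q}\sim M_k^{-\beta/d}$, which decays geometrically.

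Since $p_1<q<p_2$, H\"older gives $\|\Delta_k u\|_{L^q}\le \|\Delta_k u\|_{L^{p_1}}^{a}\|\Delta_k u\|_{L^{p_2}}^{1-a}$ with $a=\frac{1/q-1/p_2}{1/p_1-1/p_2}$. The condition $\beta>0$ is equivalent to $a<\theta_1$. Combined with $\|\Delta_k u\|_{L^{p_i}}\lesssim 2^{-ks_i}A_i$, this yields $\|D^s\Delta_k u\|_{L^q}\lesssim 2^{k(\theta_1-a)(\alpha_1-\alpha_2)}A_1^{a}A_2^{1-a}$. Bernstein from $L^{p_1}$ to $L^q$ gives instead $2^{k\theta_2(\alpha_2-\alpha_1)}A_1$. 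The two exponents have opposite signs, so your Step 2 splitting with $2^{N(\alpha_2-\alpha_1)}\sim A_2/A_1$ again produces $A_1^{\theta_1}A_2^{\theta_2}$.

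This is the paper's proof of the case $p_2>q$. It uses only H\"older and Bernstein blockwise, so $p_1=1$ or $p_2=\infty$ cause no trouble, and no interpolation identity or Jawerth embedding is required.
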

	\begin{proof}[\textbf{Proof}]
		Since $s_1-d/p_1\neq s_2-d/p_2$, by a scaling argument we may assume that $\|D^{s_j}u\|_{L^{p_j}} = 1$. First, if $p_1\leq p_2\leq q$, then by the Bernstein inequality one has
		$$\|\Delta_ku\|_{L^q}\lesssim 2^{-s_j k+ k(\frac{d}{p_j}-\frac{d}{q})}\|D^{s_j}u\|_{L^{p_j}}\lesssim 2^{k(\frac{d}{p_j}-\frac{d}{q}-s_j)},\quad j = 1,2.$$
		Note that $(d/p_1-d/q-s_1)(d/p_2-d/q-s_2) <0$ due to $s_1-d/p_1\neq s_2-d/p_2$ and \eqref{scalingandhighfre}. Thus, we have
		$$\|u\|_{L^q}\leq \sum_{k\in \mathbb{Z}}\|\Delta_k u\|_{L^q}\lesssim \sum_{k\in \mathbb{Z}}\min_{j=1,2}\{2^{k(\frac{d}{p_j}-\frac{d}{q}-s_j)}\}\lesssim 1.$$
		
		If $p_2>q$, then we have $p_1<q$ by \eqref{scalingandhighfre}. By the H\"{o}lder inequality and the Bernstein inequality, we have
		\begin{align*}
			\|\Delta_k u\|_{L^q}\leq\|\Delta_k u\|_{L^{p_1}}^{\frac{1/q-1/p_2}{1/p_1-1/p_2}} \|\Delta_k u\|_{L^{p_2}}^{\frac{1/p_1-1/q}{1/p_1-1/p_2}}\lesssim 2^{-k\frac{s_2(1/p_1-1/q)+s_1(1/q-1/p_2)}{1/p_1-1/p_2}}.
		\end{align*}
		By \eqref{scalingandhighfre}, we have
		\begin{align*}
			\alpha:=\frac{s_2(1/p_1-1/q)+s_1(1/q-1/p_2)}{1/p_1-1/p_2} = \frac{s_1\theta_1+s_2\theta_2}{d\theta_2(1/p_1-1/p_2)}\left(\frac{d}{p_1}-\frac{d}{q}-s_1\right).
		\end{align*}
		Note that $\alpha (d/p_1-d/q-s_1)>0$. We thus have
		$$\|u\|_{L^q}\leq \sum_{k\in \mathbb{Z}}\|\Delta_ku\|_{L^q}\lesssim \min\{2^{-k\alpha},2^{k(d/p_1-d/q-s_2)}\}\lesssim 1,$$
		and the proof is complete.
	\end{proof}
	
	\begin{prop}[\cite{hajaiej2010sufficient}]\label{littlewood}
		Assume \eqref{basicred}--\eqref{scalingandhighfre}, $p_1>1$, and $1<q<\infty$. Then \eqref{gninbessel} holds.
	\end{prop}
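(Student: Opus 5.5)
Since $1<p_1\le p_2\le\infty$ and $1<q<\infty$, I would work with Triebel--Lizorkin spaces rather than directly with the Bessel-potential quantities. Because $1<p_1<\infty$, the Littlewood--Paley square-function theorem gives $\|D^{s_1}u\|_{L^{p_1}}\sim\|u\|_{\dot F^{s_1}_{p_1,2}}$. The same equivalence holds for $\|D^su\|_{L^q}$, since $1<q<\infty$. The only delicate endpoint is $p_2=\infty$. There I use only the elementary bound $\sup_k 2^{ks_2}\|\Delta_k u\|_{L^\infty}\lesssim\|D^{s_2}u\|_{L^\infty}$, i.e.\ $\|u\|_{\dot B^{s_2}_{\infty,\infty}}\lesssim\|D^{s_2}u\|_{L^\infty}$. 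For $p_2<\infty$ I use the analogous embedding $\|u\|_{\dot B^{s_2}_{p_2,\infty}}\lesssim\|D^{s_2}u\|_{L^{p_2}}$. It therefore suffices to prove
$$\|u\|_{\dot F^{s}_{q,2}}\lesssim\|u\|_{\dot F^{s_1}_{p_1,2}}^{\theta_1}\|u\|_{\dot B^{s_2}_{p_2,\infty}}^{\theta_2}.$$
When $\theta_1=1$, \eqref{scalingandhighfre} reduces to the Sobolev relation $s_1-s=d(1/p_1-1/q)\ge 0$. This is the standard embedding $\dot F^{s_1}_{p_1,2}\hookrightarrow\dot F^{s}_{q,2}$, which holds for $p_1>1$ (Hardy--Littlewood--Sobolev combined with the square function). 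So I assume $\theta_1<1$.

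For $\theta_1<1$ I would prove a pointwise Gagliardo--Nirenberg estimate, in the style of Oru or Brezis--Mironescu, with maximal functions. Set $\sigma=s_1\theta_1+s_2\theta_2-s\ge 0$ and $\frac1{r}=\frac{\theta_1}{p_1}+\frac{\theta_2}{p_2}$, so that the scaling condition reads $d/r-d/q=\sigma$. Splitting $u$ into dyadic blocks, I estimate each block $\Delta_k u(x)$ pointwise in two ways:
\[
|\Delta_ku(x)|\lesssim 2^{-ks_1}M\big(2^{ks_1}\Delta_ku\big)(x)\le 2^{-ks_1}A(x),\qquad
|\Delta_ku(x)|\lesssim 2^{-ks_2}\,\|u\|_{\dot B^{s_2}_{p_2,\infty}}\,2^{kd/p_2}.
\]
Here $A(x)$ is a vector-valued maximal quantity bounded in $L^{p_1}$ by Fefferman--Stein; this step is where $p_1>1$ enters. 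The second bound is Bernstein's inequality $L^{p_2}\to L^\infty$.

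The argument then splits according to $\sigma$. If $\sigma>0$, I sum $2^{ks}|\Delta_ku(x)|$ over $k$, using the first bound for high $k$ and the second for low $k$, with the crossover point $N(x)$ chosen pointwise. This yields
$$\Big(\sum_k|2^{ks}\Delta_ku(x)|^2\Big)^{1/2}\lesssim A(x)^{\theta'}\|u\|^{1-\theta'}_{\dot B^{s_2}_{p_2,\infty}},$$
with an exponent $\theta'$ for which $L^{q}$ integration gives exactly $\|A\|_{L^{p_1}}^{\theta_1}$. That is, I need $q\theta'=p_1$; checking this reduces to the scaling identity. If instead $s_1-d/p_1$ and $s_2-d/p_2$ have the "wrong" order, I reverse the roles of high and low frequencies. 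If $\sigma=0$, then $r=q$ and the exponents are tight, so the geometric-series trick fails. In that case I would use the complex interpolation identity $[\dot F^{s_1}_{p_1,2},\dot F^{s_2}_{p_2,2}]_{\theta_2}=\dot F^{s}_{q,2}$ together with the abstract inequality $\|u\|_{[X,Y]_\theta}\le\|u\|_X^{1-\theta}\|u\|_Y^\theta$. When $p_2=\infty$ I replace $\dot F^{s_2}_{\infty,2}$ by the BMO-type space $\dot F^{s_2}_{\infty,2}$ and bound it by $\|D^{s_2}u\|_{L^\infty}$, using that $L^\infty\subset$ BMO $=\dot F^0_{\infty,2}$.

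The main obstacle is the case $\sigma=0$ with $p_2=\infty$ or $p_2>q$, where the loss from $\ell^2$ versus $\ell^\infty$ has to be avoided. For $\sigma>0$, the pointwise summation already absorbs $\ell^\infty$. For $\sigma=0$, I would instead first try Chen--Zhu / Brezis--Mironescu-style interpolation through $\dot F^{s_2}_{p_2,\infty}$, namely $[\dot F^{s_1}_{p_1,2},\dot F^{s_2}_{p_2,\infty}]_{\theta_2}=\dot F^{s}_{q,2}$ when $p_1<\infty$. This is exactly the Hajaiej--Molinet--Ozawa--Wang result, and I would verify it by the Frazier--Jawerth/Calderón product $(\dot F^{s_1}_{p_1,2})^{1-\theta}(\dot F^{s_2}_{p_2,\infty})^{\theta}$, using $p_1>1$ for the Fefferman--Stein bound.
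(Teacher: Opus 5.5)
Your $\sigma>0$ branch has a genuine gap. Your two bounds are $|\Delta_ku(x)|\lesssim 2^{-ks_1}A(x)$ and $|\Delta_ku(x)|\lesssim 2^{-k(s_2-d/p_2)}B$, with $B=\|u\|_{\dot B^{s_2}_{p_2,\infty}}$. Optimizing the crossover gives $\sum_k 2^{ks}|\Delta_ku(x)|\lesssim A(x)^{\theta'}B^{1-\theta'}$ with $\theta'=\frac{s-s_2+d/p_2}{s_1-s_2+d/p_2}$. The condition $q\theta'=p_1$ does \emph{not} follow from \eqref{scalingandhighfre}. Comparing dilation exponents shows that, when $s_1-d/p_1\neq s_2-d/p_2$, it holds only if $1/q=\theta_1/p_1$, i.e.\ $\sigma=d\theta_2/p_2$. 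Take $d=1$ and $\|u\|_{L^4}\lesssim\|u'\|_{L^2}^{1/4}\|u\|_{L^2}^{3/4}$, where $\sigma=1/4$. Your scheme gives $\theta'=1/3$ and needs $\|A\|_{L^{4/3}}$, which $\|u'\|_{L^2}$ does not control. Once you use the uniform Bernstein bound $L^{p_2}\to L^\infty$, the integrability of the second factor is lost, and no choice of $N(x)$ recovers it. Relatedly, your reduction to $\|u\|_{\dot B^{s_2}_{p_2,\infty}}$ on the right is too strong for $p_2<\infty$. At $\sigma=0$ with $1<p_1<p_2<\infty$, superpose $N$ well-separated pieces at distinct dyadic scales, each made of suitably many bumps and normalized to unit $\dot F^{s_1}_{p_1,2}$ and $\dot B^{s_2}_{p_2,\infty}$ contributions. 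The left side is then $\sim N^{1/q}$ and the right side $\sim N^{\theta_1/p_1}$.

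The missing idea is to first move to the line $\sigma=0$ by Sobolev. With $1/r=\theta_1/p_1+\theta_2/p_2$ you have $1<p_1\le r\le q<\infty$ and $\|D^su\|_{L^q}\lesssim\|D^{s+\sigma}u\|_{L^r}$, where now $s+\sigma=\theta_1s_1+\theta_2s_2$. The paper makes the equivalent move for $p_2=\infty$, replacing $(s_1,p_1)$ by $(\tilde s,p)$ with $\theta_1/p=1/q$. On $\sigma=0$ your diagnosis is inverted. There $s$ lies strictly between $s_1$ and $s_2$, so the geometric series converge. For $p_2=\infty$ one has exactly $q\theta_1=p_1$, so your own pointwise estimate closes without any interpolation of BMO-type spaces. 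That is the paper's argument: a level-set estimate with cutoff $K_0(\lambda)$ and the uniform bound $2^{-s_2k}\|D^{s_2}u\|_{L^\infty}$ on one side of it. For $p_2<\infty$ the paper simply cites Corollary 1.5 of \cite{hajaiej2010sufficient}. Your complex interpolation of $\dot F^{s_j}_{p_j,2}$ is a valid substitute there. Simpler still: bound both pieces by $2^{-ks_j}M(D^{s_j}u)$, sum to get $M(D^{s_1}u)^{\theta_1}M(D^{s_2}u)^{\theta_2}$ (if $s_1=s_2$, use H\"older directly), then apply H\"older; this also covers $p_2=\infty$. Finally, for $p_2<\infty$, complex interpolation of $\dot F^{s_1}_{p_1,2}$ and $\dot F^{s_2}_{p_2,\infty}$ yields fine index $2/\theta_1$, not $2$. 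The result of \cite{hajaiej2010sufficient} is an inequality, not that identity.
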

	\begin{proof}[\textbf{Proof}]
		If $p_2\neq \infty$ or $\theta_1 = 1$, we can use Corollary 1.5 in \cite{hajaiej2010sufficient} directly. If $p_2 = \infty$ and $0<\theta_1<1$, we define $p$ by $\theta_1/p = 1/q$, $\tilde{s} = -s_2\theta_2/\theta_1$. Note that $1<p_1\leq  p<\infty$. By the Sobolev inequality, we have $ \|D^{\tilde{s}}u\|_{L^{p}}\lesssim\|D^{s_1}u\|_{L^{p_1}}$. Thus we only need to consider the case $p_1 = q\theta_1$, $s_1 = -s_2\theta_2/\theta_1$. If $s_2 = 0$, we conclude the proof by the H\"{o}lder inequality. Below we apply a similar argument as in the proof of Theorem C.2 in \cite{WangHHG}. By the Littlewood--Paley theory, we have
		\begin{align*}
			\|u\|^q_{L^q}\sim \||\Delta_ku(x)|_{l^2_k}\|^q_{L^q_x} \sim \int_0^\infty \lambda^q |\{x:|\Delta_k u(x)|_{l^2_k}>\lambda\}|\frac{d\lambda}{\lambda}.
		\end{align*}
		Let $A:=\|D^{s_1}u\|_{L^{p_1}}$, $B:=\|D^{s_2}u\|_{L^\infty}$. By the Bernstein inequality we have
		\begin{align*}
			|\Delta_ku(x)|\lesssim \min\{2^{(-s_1+d/p_1)k}A,2^{-s_2k}B\}.
		\end{align*}
		If $s_2<0$, choose $K_0(\lambda)$ such that $2^{-s_2K_0(\lambda)}B\sim c\lambda$ for some fixed constant $0<c\ll 1$. Then
		\begin{align*}
			&\quad\int_0^\infty \lambda^q |\{x:|\Delta_k u(x)|_{l^2_k}>\lambda\}|\frac{d\lambda}{\lambda}\\
			&\leq \int_0^\infty \lambda^q |\{x:|\Delta_k u(x)|_{l^2_{k\geq K_0(\lambda)}}>\lambda/2\}|\frac{d\lambda}{\lambda}\\
			&\leq \int_0^\infty \lambda^q |\{x:|2^{ks_1}\Delta_k u(x)|_{l^2_{k\geq K_0(\lambda)}}>2^{K_0(\lambda)s_1}\lambda/2\}|\frac{d\lambda}{\lambda}\\
			&\lesssim \int_0^\infty \lambda^q |\{x:|2^{ks_1}\Delta_k u(x)|_{l^2_{k}}>(B/\lambda)^{s_1/s_2}\lambda\}|\frac{d\lambda}{\lambda}
		\end{align*}
		Let $\mu = (B/\lambda)^{s_1/s_2}\lambda$. We obtain
		\begin{align*}
			\|u\|^q_{L^q}&\lesssim \int_0^\infty (\mu B^{-s_1/s_2})^{q/(1-s_1/s_2)} |\{x:|2^{ks_1}\Delta_k u(x)|_{l^2_{k}}>\mu\}|\frac{d\mu}{\mu}\\
			&\lesssim \|D^{s_1}u\|_{L^{q/(1-s_1/s_2)}}^{q/(1-s_1/s_2)}B^{qs_1/(s_1-s_2)}\sim A^{p_1}B^{q-p_1}.
		\end{align*}
		Thus $\|u\|_{L^q}\lesssim \|D^{s_1}u\|_{L^{p_1}}^{\theta_1}\|D^{s_2}u\|_{L^{p_2}}^{\theta_2}$.
		
		If $s_2>0$, choose $K_0(\lambda)$ such that $2^{-s_2K_0(\lambda)}B\sim c\lambda$ for some fixed constant $0<c\ll 1$. Then
		\begin{align*}
			&\quad\int_0^\infty \lambda^q |\{x:|\Delta_k u(x)|_{l^2_k}>\lambda\}|\frac{d\lambda}{\lambda}\\
			&\leq \int_0^\infty \lambda^q |\{x:|\Delta_k u(x)|_{l^2_{k\leq K_0(\lambda)}}>\lambda/2\}|\frac{d\lambda}{\lambda}\\
			&\leq \int_0^\infty \lambda^q |\{x:|2^{ks_1}\Delta_k u(x)|_{l^2_{k\leq K_0(\lambda)}}>2^{K_0(\lambda)s_1}\lambda/2\}|\frac{d\lambda}{\lambda}\\
			&\lesssim \int_0^\infty \lambda^q |\{x:|2^{ks_1}\Delta_k u(x)|_{l^2_{k}}>(B/\lambda)^{s_1/s_2}\lambda\}|\frac{d\lambda}{\lambda}
		\end{align*}
		By the same argument as for the case $s_2<0$, we finish the proof.
	\end{proof}
	
	\begin{prop}\label{endsobo}
		Assume \eqref{basicred}--\eqref{scalingandhighfre}, $1 = p_1<p_2<q<\infty$, $s_1-s_2 = d/p'_2$, and $\theta_1 \leq 1- p_2/q$. Then \eqref{gninbessel} holds.
	\end{prop}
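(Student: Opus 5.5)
The plan is to reduce to a single endpoint inequality and prove that endpoint by real interpolation, using Cohen's result \eqref{interesult}. First we record what the hypotheses say. Since $p_1=1$ and $s_1-s_2=d/p_2'$, we have $s_1-d/p_1=s_2-d/p_2$. This is exactly the degenerate case excluded from Proposition \ref{subcritical}: the two Bernstein bounds $2^{k(d/p_j-d/q-s_j)}$ have the same exponent, so summing over dyadic blocks $\Delta_k$ cannot work. Because $q<\infty$, \eqref{scalingandhighfre} fixes $s$ once $\theta_1$ is chosen. The substitute for the Littlewood--Paley summation will be a genuine interpolation statement between $L^1$ and a Besov space. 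Cohen's result is the only earlier tool that handles $p_1=1$.

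\textbf{Step 1 (the endpoint $\theta_1^*=1-p_2/q$).} Set $f=D^{s_1}u\in L^1$. The second hypothesis says $D^{s_2}u=D^{s_2-s_1}f\in L^{p_2}$, and $L^{p_2}\subset\dot B^{0}_{p_2,\max(p_2,2)}$. Hence $f$ lies in a homogeneous Besov space of smoothness $-(s_1-s_2)=-d/p_2'$ with integrability $p_2$. Cohen's range in \eqref{interesult} requires $\sigma<-d/p_2'$ strictly, so we sit exactly on its boundary. I would first try to remedy this by using the slack in $q$. Fix $r$ with $p_2<r<q$ chosen so that, after the Sobolev embedding $L^{p_2}$-smoothness $\to$ $L^{r}$-smoothness, the effective pair becomes $(L^1,\dot B^{\sigma}_{r,r})$ with $\sigma<-d/r'$. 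Passing from the third index $\max(p_2,2)$ to $r$ is harmless because $r>p_2$, and for $p_2<2$ one uses the Jawerth/Franke embedding $\dot F^0_{p_2,2}\subset\dot B^{-d(1/p_2-1/r)}_{r,p_2}$. Then the homogeneous version of \eqref{interesult}, with parameter $\vartheta=r'/q'$ and output $\dot B^{\sigma\vartheta}_{q,q}$, together with the Lions--Peetre inequality $\|f\|_{(X_0,X_1)_{\vartheta,q}}\lesssim\|f\|_{X_0}^{1-\vartheta}\|f\|_{X_1}^{\vartheta}$, gives
\[
\|D^{s_1}u\|_{\dot B^{\sigma\vartheta}_{q,q}}\lesssim\|D^{s_1}u\|_{L^1}^{1-\vartheta}\|D^{s_2}u\|_{L^{p_2}}^{\vartheta}.
\]
The homogeneous version is obtained from the inhomogeneous one by scaling, since both sides have matching homogeneity.

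\textbf{Step 2 (closing the endpoint).} When $q\ge2$ we convert the Besov bound into the $L^q$ norm via $\dot B^{t}_{q,q}\subset\dot B^t_{q,\min(q,2)}$ after trading a little integrability, i.e.\ $\dot B^{t}_{\tilde q,\tilde q}\subset\dot F^{t-d(1/\tilde q-1/q)}_{q,2}$ for $\tilde q<q$. The free parameter $r$ has to be tuned so that the final exponents land on $D^s$ in $L^q$ with weights $\theta_1^*,1-\theta_1^*$. By scaling, \eqref{scalingandhighfre} forces the exponents to match once they are homogeneous, so the bookkeeping only needs to confirm that the weight $\vartheta$ equals $p_2/q$. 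I expect the main obstacle to be this step. One must hit the borderline parameter exactly while keeping Cohen's strict condition $\sigma<-d/r'$ and the Besov$\to L^q$ passage (third index $\le2$ when $q<2$) simultaneously valid. If the direct choice fails, I would instead run a Marcinkiewicz-type argument: prove weak-type versions at two nearby $q$'s and reinterpolate, exploiting that $q$ is strictly between $p_2$ and $\infty$.

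\textbf{Step 3 ($\theta_1<\theta_1^*$).} At $\theta_1=0$ the inequality is the Sobolev embedding $\|D^{s}u\|_{L^q}\lesssim\|D^{s_2}u\|_{L^{p_2}}$, valid since $1<p_2<q<\infty$. For intermediate $\theta_1$, write $\theta_1=\lambda\theta_1^*$ and let $s_\lambda$ interpolate linearly between the $s$-values of the two inequalities. Then combine the endpoint with the pure Sobolev case through a Hölder-type interpolation $\|D^{s_\lambda}u\|_{L^q}\lesssim\|D^{s^*}u\|_{L^q}^{\lambda}\|D^{s^0}u\|_{L^q}^{1-\lambda}$, where $s^*,s^0$ are the $s$-values forced by \eqref{scalingandhighfre} at $\theta_1^*$ and $0$. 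This holds by complex interpolation of homogeneous Triebel spaces at fixed $q\in(1,\infty)$, with both factors bounded by Steps 1--2 and by Sobolev, respectively. Multiplying out the exponents gives exactly $\theta_1$ and $1-\theta_1$, which completes the range $\theta_1\le1-p_2/q$.
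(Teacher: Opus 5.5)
Your Step 1 cannot work, and the obstruction is structural rather than a matter of tuning $r$. You correctly notice that you sit on the boundary of Cohen's range, but the proposed remedy does not move you off it.

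First, real interpolation between $L^1$ and a space of integrability $r$ only reaches integrabilities in $(1,r)$. That is why \eqref{interesult} requires $q<p$; with your choice $p_2<r<q$ the parameter $\vartheta=r'/q'$ exceeds $1$. Second, if you take $r>q$ instead, Sobolev embedding preserves $\sigma-d/r$. Starting from $D^{s_1}u\in\dot B^{-d/p_2'}_{p_2,\cdot}$ you therefore always land at exactly $\sigma=-d/r'$, never $\sigma<-d/r'$. The hypothesis $s_1-d/p_1=s_2-d/p_2$ says precisely that both data scale like $L^1$, and no embedding changes that.

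The boundary case is in fact false. Take $g=u_{d,K}$ from Lemma \ref{cutdirac}. Then $\|g\|_{L^1}\sim 1$, and since $\Delta_j g=\Delta_j\delta_d$ for $2\le j\le K-1$, one has $\|g\|_{\dot B^{-d/r'}_{r,r}}\sim K^{1/r}$ and $\|g\|_{\dot B^{-d/q'}_{q,q}}\sim K^{1/q}$. For $r>q$ the inequality
\[
\|g\|_{\dot B^{-d/q'}_{q,q}}\lesssim\|g\|_{L^1}^{1-\vartheta}\|g\|_{\dot B^{-d/r'}_{r,r}}^{\vartheta},\qquad \vartheta=\frac{r'}{q'},
\]
would force $\frac1q\le\frac{\vartheta}{r}=\frac{q-1}{q(r-1)}$, which is false. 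So the step where you replace the third index by $r$ and call it ``harmless'' throws away exactly the gain in $K$ that makes the proposition true. Separately, $\dot B^t_{q,q}\subset\dot B^t_{q,\min(q,2)}$ in Step 2 is false for $q>2$. Your Marcinkiewicz-type fallback is not specified, so no argument for the endpoint $\theta_1=1-p_2/q$ remains.

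The missing idea is to move the $L^1$ information \emph{up} to $L^\infty$, rather than moving the $L^{p_2}$ information. After reducing to $s=0$ (so $s_1=d/q'$ and $s_2=d(1/p_2-1/q)$), Bernstein gives $\|\Delta_k u\|_{L^\infty}\lesssim 2^{kd/q}\|D^{d/q'}u\|_{L^1}$. The claim then becomes a refined Sobolev inequality in the spirit of G\'erard--Meyer--Oru:
\[
\|u\|_{L^q}\lesssim\|u\|_{\dot B^{-d/q}_{\infty,\infty}}^{1-p_2/q}\,\|D^{s_2}u\|_{L^{p_2}}^{p_2/q}.
\]
The paper proves this by a level-set truncation:
\begin{itemize}
\item Normalize $\|D^{d/q'}u\|_{L^1}=1$.
\item For each height $t$, pick $K$ with $2^{Kd/q}\sim ct$. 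Then $\sum_{k\le K}|\Delta_ku|\le ct$ pointwise, so the low frequencies cannot reach height $t$.
\item For $k\ge K$ one has $2^{ks_2}\ge 2^{Ks_2}\sim t^{q/p_2-1}$. Hence $|\{|\Delta_ku|_{\ell^2_k}>t\}|\le|\{|2^{ks_2}\Delta_ku|_{\ell^2_k}\gtrsim t^{q/p_2}\}|$.
\item Integrating against $t^{q-1}\,dt$ and substituting $\tau=t^{q/p_2}$ gives $\|u\|_{L^q}^q\lesssim\|D^{s_2}u\|_{L^{p_2}}^{p_2}$.
\end{itemize}

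Your Step 3 is correct but heavier than needed. Since $s_1-d=s_2-d/p_2$, \eqref{scalingandhighfre} gives $s=s_2-d/p_2+d/q$ for every $\theta_1$, so $s^*=s^0$. The range $\theta_1<1-p_2/q$ is then just the product of the endpoint inequality and the Sobolev inequality raised to complementary powers, with no complex interpolation required.
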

	\begin{proof}[\textbf{Proof}]
		Since $\|u\|_{L^q}\lesssim \|D^{s_2}u\|_{L^{p_2}}$, we only need to show the inequality for $\theta_1 = 1- p_2/q$. By scaling, we may assume that $\|D^{d/q'}u\|_{L^1} = 1$. As in the proof of Proposition \ref{littlewood}, for any $t>0$, we choose an integer $K$ such that $2^{K d/q}\sim ct$ for some fixed constant $0<c\ll 1$. It follows from Bernstein’s inequality that
		\begin{align*}
			|\Delta_k u(x)|_{l^1_{k\leq K}}&\lesssim 2^{Kd/q}\|2^{-kd/q}\Delta_ku(x)\|_{l_k^\infty L_x^\infty}\\
			&\lesssim 2^{Kd/q}\|2^{kd/q'}\Delta_ku(x)\|_{l_k^\infty L^1}\\
			&\lesssim ct.
		\end{align*}
		Thus,
		\begin{align*}
			|\{x:|\Delta_k u(x)|_{l^2_k}>t\}|&\leq |\{x:|\Delta_k u(x)|_{l^2_{k\geq K}}>t/2\}|\\
			&\leq |\{x:|2^{kd(1/p_2-1/q)}\Delta_ku(x)|_{l^2_k}>2^{Kd(1/p_2-1/q)}t/2\}|\\
			&\leq  |\{x:|2^{ks_2}\Delta_ku(x)|_{l^2_k}>t^{q/p_2}c^{q/p_2-1}/2\}|.
		\end{align*}
		By the Littlewood--Paley theory, we obtain
		\begin{align*}
			\|u\|_{L^q}^q\sim \||\Delta_k u|_{l^2_k}\|_{L^q}^q&\sim \int_0^\infty t^{q-1}|\{x:|\Delta_k u(x)|_{l^2_k}>t\}|~dt\\
			&\lesssim \int_0^\infty t^{q-1}|\{x:|2^{ks_2}\Delta_ku(x)|_{l^2_k}>t^{q/p_2}c^{q/p_2-1}/2\}|~dt\\
			&\lesssim \int_0^\infty t^{p_2-1}|\{x:|2^{ks_2}\Delta_ku(x)|_{l^2_k}>t\}|~dt\\
			&\sim \||2^{ks_2}\Delta_k u|_{l^2_k}\|_{L^{p_2}}^{p_2}\sim \|D^{s_2}u\|_{L^{p_2}}^{p_2}.
		\end{align*}
		Therefore, $\|u\|_{L^q}\lesssim \|D^{s_2}u\|_{L^{p_2}}^{p_2/q}\sim \|D^{d/q'}u\|_{L^1}^{1-p_2/q}\|D^{s_2}u\|_{L^{p_2}}^{p_2/q}$.
	\end{proof}
	
	\begin{prop}\label{coheninter}
		Assume \eqref{basicred}--\eqref{scalingandhighfre}, $\theta_1<1$ and $1 = p_1<q<p_2\leq\infty$. If $s_1-s_2 > d/p'_2$ or $s_1-s_2\leq 0$, then \eqref{gninbessel} holds.
	\end{prop}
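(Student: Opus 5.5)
We prove the statement through Cohen's interpolation identity \eqref{interesult}. The idea is to view $D^s u$ as lying in a real interpolation space between $L^1$ (controlled by $D^{s_1}u$) and a Besov space (controlled by $D^{s_2}u$).

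\textbf{Step 1: normalization.} Put $v = D^{s_1}u$, so $v\in L^1$ and $\|D^{s_1}u\|_{L^1}=\|v\|_{L^1}$. Then
$$D^{s_2}u = D^{-(s_1-s_2)}v.$$
Write $\sigma := s_2-s_1$, so $D^{s_2}u = D^{\sigma}v$. Then $\|D^{s_2}u\|_{L^{p_2}}$ controls $v$ in $\dot B^{\sigma}_{p_2,\infty}$, and in the homogeneous analogue of $B^{\sigma}_{p_2,p_2}$ up to the usual embedding losses.

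Set $\theta = p_2'/q'$. From \eqref{scalingandhighfre} with $p_1=1$,
$$d\Big(\theta_1+\frac{\theta_2}{p_2}-\frac1q\Big) = \theta_1 s_1+\theta_2 s_2 - s \ge 0 .$$
When equality holds, i.e. $s-s_1 = \theta_2\sigma$, the natural candidate is the interpolation exponent $\theta=\theta_2$. Indeed $1/q = (1-\theta_2)\cdot 1 + \theta_2/p_2$ is exactly the Cohen relation $\theta = p'/q'$.

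\textbf{Step 2: reduction to the critical exponent.} The case $\theta_1<1-p_2'/q'$, or more generally $\theta_1 s_1+\theta_2 s_2>s$, should reduce to the critical one. Either apply the subcritical Proposition \ref{subcritical} where $s_1-d\neq s_2-d/p_2$, or use a Sobolev embedding in the $L^q$ target to move to the line $\theta_2=p_2'/q'$. Concretely, if $\theta_2$ differs from $p_2'/q'$, the scaling relation forces a gap $\theta_1s_1+\theta_2s_2-s>0$, and then Proposition \ref{subcritical} applies. The hypothesis there, $s_1-d\ne s_2-d/p_2$, i.e. $s_1-s_2\neq d/p_2'$, is guaranteed by our assumption. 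So we are left with $\theta_2 = p_2'/q'$ and $s-s_1=\theta_2\sigma$.

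\textbf{Step 3: the interpolation estimate.} By \eqref{interesult} with $p=p_2$, smoothness $\sigma$, and $t=\sigma\theta$, we get
$$(L^1,B^{\sigma}_{p_2,p_2})_{\theta,q}=B^{\sigma\theta}_{q,q},$$
valid exactly when $\sigma>0$ or $\sigma<-d/p_2'$. These are our hypotheses: $s_1-s_2\le 0$ gives $\sigma \ge 0$, and $s_1-s_2>d/p_2'$ gives $\sigma<-d/p_2'$. The case $\sigma=0$ is plain Hölder, $\|v\|_{L^q}\le\|v\|_{L^1}^{\theta_1}\|v\|_{L^{p_2}}^{\theta_2}$.

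The interpolation inequality $\|v\|_{(A_0,A_1)_{\theta,q}}\lesssim\|v\|_{A_0}^{1-\theta}\|v\|_{A_1}^{\theta}$ then gives
$$\|v\|_{B^{\sigma\theta}_{q,q}}\lesssim \|v\|_{L^1}^{\theta_1}\|v\|_{B^{\sigma}_{p_2,p_2}}^{\theta_2}.$$
Since $q\ge 2$ fails in general, we need $B^{0}_{q,q}$-type control of $D^{s-s_1}v = D^{\theta_2\sigma}v$ in $L^q$. For $1<q\le 2$ use $B^{0}_{q,q}\hookrightarrow L^q$. For $q>2$ use the room gained on the $p_2$ side: $\|D^{\sigma}v\|_{L^{p_2}}$ controls $\dot B^{\sigma}_{p_2,\max(p_2,2)}$, and the interpolation is redone with the $l^2$ sum via the reiteration theorem.

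\textbf{Step 4: homogeneity.} Cohen's result is stated for inhomogeneous spaces, so we must pass to homogeneous ones. Since $\mathscr F u\in C_0^\infty(\mathbb R^d\setminus\{0\})$, we can rescale $u$ so that its frequency support lies in an annulus of fixed size. Then the inhomogeneous and homogeneous norms are comparable, up to constants independent of the scaling, by the scaling relation \eqref{scalingandhighfre}. More robustly, we would apply the estimate to $u(\lambda\cdot)$ and note that both sides scale identically, so low-frequency contributions of order $\lambda^{\epsilon}$ vanish as $\lambda\to\infty$ or $\lambda\to 0$, depending on the sign of $\sigma$.

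\textbf{Main obstacle.} The main obstacle is precisely this homogeneous/inhomogeneous transfer together with the Besov-to-Lebesgue mismatch ($B^{\cdot}_{q,q}$ versus $L^q$, and $L^{p_2}$ versus $B_{p_2,p_2}$ when $p_2>2$ or $p_2=\infty$). The plan is to handle both by the scaling trick, and by exploiting the strict inequalities $\sigma>0$ or $\sigma<-d/p_2'$ to absorb $\epsilon$-losses via reiteration.
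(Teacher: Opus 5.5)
Your Steps 1--2 match the paper: Proposition \ref{subcritical} applies because $s_1-s_2\neq d/p_2'$, which leaves only $\theta_2=p_2'/q'$ and $s-s_1=\theta_2\sigma$. The gap is in Step 3.

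\textbf{The Besov/Triebel--Lizorkin mismatch.} Cohen's identity with $p=p_2$ takes $\dot B^{\sigma}_{p_2,p_2}$ as input and produces $\dot B^{\theta_2\sigma}_{q,q}$. Your data, however, is $\|D^{\sigma}v\|_{L^{p_2}}\sim\|v\|_{\dot F^{\sigma}_{p_2,2}}$, and your target is $\|D^{\theta_2\sigma}v\|_{L^{q}}\sim\|v\|_{\dot F^{\theta_2\sigma}_{q,2}}$. Two embeddings fail:
\begin{itemize}
\item $\dot B_{q,q}=\dot F_{q,q}\not\hookrightarrow\dot F_{q,2}$ for $q>2$, so the output is too weak;
\item $\dot F_{p_2,2}\not\hookrightarrow\dot F_{p_2,p_2}$ for $p_2<2$, so the input is unavailable.
\end{itemize}
Hence your argument is complete only when $q\le 2\le p_2$, and the case $q<p_2<2$ is not addressed at all.

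\textbf{Why your proposed remedy for $q>2$ does not work.} There is no room on the $p_2$ side. The embedding $\dot F^\sigma_{p_2,2}\hookrightarrow\dot B^\sigma_{p_2,\max(p_2,2)}$ is sharp, so for $p_2>2$ you get exactly the $\dot B^\sigma_{p_2,p_2}$ that Cohen already uses. Reiteration by itself only produces further $K$-spaces of the couple $(L^1,B^\sigma_{p_2,p_2})$, and Cohen identifies only the diagonal ones, $B_{q,q}$. Upgrading Besov control to Triebel--Lizorkin control at the target requires an additional inequality: a Gagliardo--Nirenberg inequality in Triebel--Lizorkin spaces that gains the fine index when the smoothness indices differ. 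Your proposal never supplies it. The strictness of $\sigma>0$ or $\sigma<-d/p_2'$ is just Cohen's hypothesis and does not help here.

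A minor point on Step 4: you cannot rescale into an annulus of fixed size. What you actually need, and what is true, is that Fourier support in $\{|\xi|\ge 2\}$ makes homogeneous and inhomogeneous norms coincide.

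\textbf{How the paper closes the gap.} Set $p=p_2$ and $\alpha=(s_2-s_1)p'$. The paper proves the stronger estimate $\|u\|_{F^{\alpha/q'}_{q,1}}\lesssim\|u\|_{L^1}^{1-p'/q'}\|u\|_{F^{\alpha/p'}_{p,\infty}}^{p'/q'}$ in three steps.
\begin{enumerate}
\item Fix $1<\tilde q<q$. Theorem 1.4 of \cite{hajaiej2010sufficient} gives $\|u\|_{F^{\alpha/q'}_{q,1}}\lesssim\|u\|^{1-\theta}_{F^{\alpha/\tilde q'}_{\tilde q,\infty}}\|u\|^{\theta}_{F^{\alpha/p'}_{p,\infty}}$ with $(1-\theta)/\tilde q+\theta/p=1/q$. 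Only the fine index $\infty$ is needed on the $p_2$ side, so $p_2<2$ and $p_2=\infty$ cause no trouble.
\item Cohen's identity is applied not at $p_2$ but between $L^1$ and $B^{\alpha/q'}_{q,q}$. This gives $\|u\|_{F^{\alpha/\tilde q'}_{\tilde q,\tilde q}}\lesssim\|u\|_{L^1}^{1-\mu}\|u\|^{\mu}_{F^{\alpha/q'}_{q,q}}$ with $\mu=q'/\tilde q'<1$. The parametrization by $\alpha$ makes Cohen's condition read $\alpha>0$ or $\alpha<-d$ at every exponent, which is exactly your hypothesis.
\item Since $F_{q,1}\hookrightarrow F_{q,q}$ and $F_{\tilde q,\tilde q}\hookrightarrow F_{\tilde q,\infty}$, the target norm reappears on the right with power $\mu(1-\theta)<1$ and is absorbed. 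Finally $F_{q,1}\hookrightarrow F_{q,2}$ yields the $L^q$ statement.
\end{enumerate}
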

	\begin{proof}[\textbf{Proof}]
		Since $s_1-d/p_1\neq s_2-d/p_2$, by Proposition \ref{subcritical} we only need to consider the case $s_1\theta_1+s_2\theta_2 = 0$. It is equivalent to
		$$\|D^{-s_1}u\|_{L^q}\lesssim \|u\|_{L^1}^{1-p_2'/q'}\|D^{s_2-s_1}u\|_{L^{p_2}}^{p_2'/q'}.$$
		Let $p_2 = p$, $\alpha = (s_2-s_1)p'$. We would show a slightly stronger result. A scaling argument shows that it is enough to prove, for $\alpha>0$ or $\alpha<-d$ and $1<q<p \leq \infty$, that
		\begin{equation}\label{aim}
			\|u\|_{F^{\alpha/q'}_{q,1}}\lesssim \|u\|_{L^1}^{1-p'/q'}\|u\|_{F^{\alpha/p'}_{p,\infty}}^{p'/q'},
		\end{equation}
		with $F^s_{p,q}$ being the Triebel–Lizorkin space (we refer to \cite{triebel1983theory} for the definition). Choose  $1<\tilde{q}<q$. By Theorem 1.4 in \cite{hajaiej2010sufficient} we have
		\begin{align*}
			\|u\|_{F^{\alpha/q'}_{q,1}}\lesssim \|u\|^{1-\theta}_{F^{{\alpha}/{\tilde{q}'}}_{\tilde{q},\infty}}\|u\|_{F^{{\alpha}/{p'}}_{p,\infty}}^\theta,\quad \frac{1-\theta}{\tilde{q}}+\frac{\theta}{p} = \frac{1}{q}.
		\end{align*}
		By the interpolation result \eqref{interesult} of \cite{cohen1999ondelettes}, for $1/\tilde{q}' = \mu/q'$ and either $\alpha>0$ or $\alpha<-d$, we have
		$$\|u\|_{F^{\alpha/\tilde{q}'}_{\tilde{q},\tilde{q}}}\lesssim \|u\|_{L^1}^{1-\mu}\|u\|_{F^{\alpha/q'}_{q,q}}^\mu.$$
		Thus
		\begin{align*}
			\|u\|_{F^{\alpha/q'}_{q,1}}\lesssim\|u\|^{1-\theta}_{F^{{\alpha}/{\tilde{q}'}}_{\tilde{q},\infty}}\|u\|_{F^{{\alpha}/{p'}}_{p,\infty}}^\theta
			&\lesssim \|u\|^{1-\theta}_{F^{{\alpha}/{\tilde{q}'}}_{\tilde{q},\tilde{q}}}\|u\|_{F^{{\alpha}/{p'}}_{p,\infty}}^\theta\\
			&\lesssim(\|u\|_{L^1}^{1-\mu}\|u\|_{F^{\alpha/q'}_{q,q}}^\mu)^{1-\theta}\|u\|_{F^{{\alpha}/{p'}}_{p,\infty}}^\theta\\
			&\lesssim(\|u\|_{L^1}^{1-\mu}\|u\|_{F^{\alpha/q'}_{q,1}}^\mu)^{1-\theta}\|u\|_{F^{{\alpha}/{p'}}_{p,\infty}}^\theta.
		\end{align*}
		Noting that $p'/q' = \theta/(1-\mu(1-\theta))$, we deduce \eqref{aim}. The embeddings $F^{s}_{p,1}\hookrightarrow F^s_{p,2}\hookrightarrow F^s_{p,\infty}$, together with the Littlewood–Paley theory, complete the proof.
	\end{proof}
	
	\begin{prop}\label{doubleend}
		Assume \eqref{basicred}--\eqref{scalingandhighfre}, $\theta_1<1$, $p_1 = 1$, $p_2 = \infty$, and $q\geq 2$. If $s_1-s_2\neq d$ or $\theta_1\leq 1/q'$, then \eqref{gninbessel} holds.
	\end{prop}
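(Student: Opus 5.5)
The plan is to reduce to a single critical configuration using the earlier propositions, and to handle that configuration by real interpolation, with $q\ge 2$ entering through the embedding $B^t_{q,2}\hookrightarrow F^t_{q,2}$. I take $q<\infty$ throughout, since the case $q=\infty$ with $s_1-s_2=d$ is the first excluded case of Theorem \ref{firstmain} and is presumably not meant here.

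\emph{Reductions.} If $s_1-s_2\neq d$, then $s_1-d/p_1\neq s_2-d/p_2$. Proposition \ref{subcritical} then gives \eqref{gninbessel} whenever $s_1\theta_1+s_2\theta_2>s$; this covers in particular $q=\infty$, where \eqref{scalingandhighfre} forces $s_1\theta_1+s_2\theta_2-s=d\theta_1>0$. So we may assume $q<\infty$ and $s=s_1\theta_1+s_2\theta_2$. Then \eqref{scalingandhighfre} gives $d(\theta_1-1/q)=0$, that is, $\theta_1=1/q$.

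If $s_1-s_2>d$ or $s_1-s_2\le 0$, Proposition \ref{coheninter} with $p_2=\infty$ (so $d/p_2'=d$) applies, because $1=p_1<q<p_2$. The genuinely new cases are therefore:
\begin{itemize}
\item (a) $0<s_1-s_2<d$ with $\theta_1=1/q$;
\item (b) $s_1-s_2=d$ with $\theta_1\le 1/q'$. This reduces to the endpoint $\theta_1=1/q'$, or is again subcritical and handled by a Bernstein/dyadic-sum argument as in Proposition \ref{subcritical}, after checking exponents.
\end{itemize}

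\emph{Critical case (a).} Write $\beta=s_1-s_2\in(0,d)$ and replace $u$ by $D^{-s_1}u$. The target becomes
$$\|D^{-s_1}u\|_{L^q}\lesssim \|u\|_{L^1}^{1/q}\|D^{-\beta}u\|_{L^\infty}^{1/q'} .$$
Since $D^{-\beta}u\in L^\infty$ controls $u$ in $B^{-\beta}_{\infty,\infty}$, after a scaling it suffices to show
$$(L^1,B^{-\beta}_{\infty,\infty})_{1/q',2}\hookrightarrow B^{-\beta/q'}_{q,2}\hookrightarrow F^{-\beta/q'}_{q,2},$$
where the last embedding is exactly where $q\ge 2$ is used; moreover $F^{-\beta/q'}_{q,2}$ is the Bessel potential space, and $-\beta/q'=-s_1\theta_1-s_2\theta_2+\dots$ matches the required exponent by the scaling identity. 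To get the first embedding, pick $1<q_0<q<q_1<\infty$. I expect Cohen's result \eqref{interesult}, after passing from $B^{\sigma}_{p,p}$ to $B^{\sigma}_{\infty,\infty}$ by Besov embeddings with $p$ large (legitimate since $-\beta>-d$ lies in the allowed range $\sigma<-d/p'$ only for suitable signs, so I may have to approximate with $p<\infty$ and use $B^{\sigma}_{\infty,\infty}\hookrightarrow B^{\sigma-\epsilon}_{p,p}$ locally, which is the delicate point), to give $(L^1,B^{-\beta}_{\infty,\infty})_{1/q_i',q_i}\hookrightarrow B^{-\beta/q_i'}_{q_i,q_i}$ for $i=0,1$. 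The reiteration theorem then yields
$$(L^1,X)_{1/q',2}=\big((L^1,X)_{1/q_0',q_0},(L^1,X)_{1/q_1',q_1}\big)_{\eta,2}\hookrightarrow (B^{t_0}_{q_0,q_0},B^{t_1}_{q_1,q_1})_{\eta,2}=B^{-\beta/q'}_{q,2},$$
and the standard estimate $\|u\|_{(A_0,A_1)_{\theta,r}}\lesssim\|u\|_{A_0}^{1-\theta}\|u\|_{A_1}^{\theta}$ gives the multiplicative form.

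\emph{Endpoint (b) and the main obstacle.} For $s_1-s_2=d$ with $\theta_1=1/q'$, the plan is to mimic Proposition \ref{endsobo}. Bernstein gives the pointwise bound $|\Delta_k u|\lesssim 2^{k(d-s_1)}\|D^{s_1}u\|_{L^1}$ and the bound $2^{-ks_2}\|D^{s_2}u\|_{L^\infty}$, and the two agree up to the exponent shift $d=s_1-s_2$. One then controls the level sets of the square function by splitting frequencies at a $\lambda$-dependent threshold $K(\lambda)$, summing the geometric tail in $l^1$ and using the weak-type estimate of the remaining piece; the $L^\infty$ norm is used as the "large" exponent.

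The step I expect to be hardest is the interpolation in (a). Cohen's theorem is stated for $B^s_{p,p}$ with $p<\infty$ and a sign restriction on $s$, so extending it to the $L^\infty$/$B_{\infty,\infty}$ endpoint with $-d<-\beta<0$ (a range apparently excluded) needs care. I would first try to run the argument with $p$ large but finite, where $\sigma=-\beta$ may satisfy $\sigma<-d/p'$ once $p'$ is close enough to $1$, that is, once $p$ is large enough that $d/p'<\beta$ fails. If that fails, I would instead prove the embedding directly by a wavelet or atomic argument in the spirit of Cohen.
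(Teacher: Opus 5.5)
Your reductions agree with the paper's: Proposition \ref{subcritical} when $s_1-s_2\neq d$ and $\theta_1>1/q$, and Proposition \ref{coheninter} when $s_1-s_2\le 0$ or $s_1-s_2>d$. Both remaining cases have genuine gaps.

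\textbf{Case (a).} The step you flagged is not merely delicate: the embedding $(L^1,\dot{B}^{-\beta}_{\infty,\infty})_{1/q',2}\hookrightarrow \dot{B}^{-\beta/q'}_{q,2}$ is false for $q\ge 2$. So neither Cohen's theorem nor a wavelet argument can supply it. Combined with $\dot{B}^{t}_{q,2}\hookrightarrow \dot{F}^{t}_{q,2}$, it would give $\|D^{-\beta/q'}u\|_{L^q}\lesssim \|u\|_{L^1}^{1/q}\|u\|_{\dot{B}^{-\beta}_{\infty,\infty}}^{1/q'}$. The sets of Lemma \ref{thekeylemma} violate this when $\delta=\beta\in(d-1,d)$:
\begin{itemize}
\item Put $\alpha=d-\delta$ and $C_j=\sup_{x,r}r^{-\alpha}|E_k^j\cap B(x,r)|$.
\item For $k$ large and $r<\epsilon_k^{\alpha}/2$, the ball meets at most one copy $l\epsilon_k^\alpha\mathfrak{v}+\epsilon_kE_k^{j-1}$, so $|E_k^j\cap B(x,r)|\le\epsilon_k^{\delta}C_{j-1}r^{\alpha}$.
\item For larger $r$, counting copies gives $|E_k^j\cap B(x,r)|\lesssim\min\{r,1\}\epsilon_k^{j\delta}$.
\item Hence $C_j\le\max\{\epsilon_k^{\delta}C_{j-1},A\epsilon_k^{j\delta}\}$ with $A$ independent of $j$, so $C_j\lesssim\epsilon_k^{j\delta}$ and $\|u_k^j\|_{\dot{B}^{-\delta}_{\infty,\infty}}\lesssim\epsilon_k^{j\delta}$ uniformly in $j$.
\item Yet $\|u_k^j\|_{L^1}=\epsilon_k^{j\delta}$, while $\|D^{-\delta/q'}u_k^j\|_{L^q}\gtrsim j^{1/q}\epsilon_k^{j\delta}$ for $k$ large.
\end{itemize}
The reason: for $u\ge 0$, $D^{-\beta}u$ is a Riesz potential, essentially a \emph{sum} over dyadic scales of $2^{m(d-\beta)}\int_{B(x,2^{-m})}u$. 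This is the source of the factor $j$ in $\|D^{-\delta}u_k^j\|_{L^\infty}\lesssim j\epsilon_k^{j\delta}$. The $\dot{B}^{-\beta}_{\infty,\infty}$ norm is only a \emph{supremum} over scales, so passing to it discards exactly what makes the inequality true. Your finite-$p$ fallback also goes the wrong way. The condition $-\beta<-d/p'$ means $\beta>d/p'$, which holds only for $p$ near $1$; as $p\to\infty$ one has $d/p'\to d>\beta$, which is the excluded range.

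\textbf{Case (b).} When $s_1-s_2=d$, one has $s_1-d/p_1=s_2-d/p_2$, so Proposition \ref{subcritical} is unavailable. Normalizing $s=0$ forces $s_1=d/q'$ and $s_2=-d/q$ for \emph{every} admissible $\theta_1$. The Bernstein/H\"older bounds for $\|\Delta_k u\|_{L^q}$ then have no decay in $k$ at either end, so the dyadic sum diverges and no value of $\theta_1$ is subcritical. Since $\|D^{d/q'}u\|_{L^1}$ and $\|D^{-d/q}u\|_{L^\infty}$ scale identically, the endpoint $\theta_1=1/q'$ does not imply smaller $\theta_1$. You also need $\theta_1=1/q$ at $s_1-s_2=d$, which your case (a) excludes. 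Imitating Proposition \ref{endsobo} fails at $p_2=\infty$: that proof needs the square-function characterization of $L^{p_2}$, and a threshold where the two bounds behave differently in frequency.

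\textbf{The missing idea.} It is the elementary identity, for $2\sigma=s_1+s_2$,
\[
\|D^{\sigma}u\|_{L^2}^2=\int_{\mathbb{R}^d}D^{s_1}u\,\overline{D^{s_2}u}\,dx\le \|D^{s_1}u\|_{L^1}\|D^{s_2}u\|_{L^\infty},
\]
which settles $q=2$ and is exactly where $q\ge 2$ enters. For $q>2$, the paper routes through $L^2$:
\begin{itemize}
\item If $\theta_1=1/q$ (so $s=0$, $s_1=(1-q)s_2$, any $0<s_1-s_2\le d$), Proposition \ref{littlewood} gives $\|u\|_{L^q}\lesssim\|D^{-s_2(q/2-1)}u\|_{L^2}^{2/q}\|D^{s_2}u\|_{L^\infty}^{1-2/q}$.
\item If $s_1-s_2=d$ and $\theta_1=1/q'$, Proposition \ref{endsobo} with $p_2=2$ gives $\|u\|_{L^q}\lesssim\|D^{d/q'}u\|_{L^1}^{1-2/q}\|D^{d/2-d/q}u\|_{L^2}^{2/q}$.
\end{itemize}
In both cases the identity bounds the $L^2$ factor. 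The intermediate range $1/q<\theta_1<1/q'$ then follows by taking a geometric mean of the two endpoint estimates, which share the same $s,s_1,s_2$.
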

	\begin{proof}[\textbf{Proof}]
		By \eqref{scalingandhighfre}, we have $\theta_1\geq 
		1/q$. If $s_1-s_2\leq 0$ or $s_1-s_2>d$, the proof follows from Proposition \ref{coheninter}. If $0<s_1-s_2<d$, $\theta_1>1/q$, then Proposition \ref{subcritical} applies. Hence it remains to consider either $0<s_1-s_2<d$ with $\theta_1 = 1/q$ or $s_1-s_2 = d$ with $1/q\leq \theta_1\leq 1/q'$. For $q = 2$, we have
		\begin{align*}
			\|u\|_{L^2}^2 &= \int_{\mathbb{R}^d}u\bar{u}~dx = \int_{\mathbb{R}^d}D^{s}uD^{-s}\bar{u}~dx\\
			&\leq \|D^{s_1}u\|_{L^1}^{1/2}\|D^{-s_1}u\|_{L^\infty}^{1/2}.
		\end{align*}
		For $q>2$, $\theta_1 = 1/q$, we have $s_2-qs_2 = s_1$. Then
		\begin{align*}
			\|u\|_{L^q}&\lesssim \|D^{-s_2(q/2-1)}u\|_{L^2}^{2/q}\|D^{s_2}u\|_{L^\infty}^{1-2/q}\\
			&\lesssim (\|D^{s_2-qs_2}u\|_{L^1}^{1/2}\|D^{s_2}u\|_{L^\infty}^{1/2})^{2/q}\|D^{s_2}u\|_{L^\infty}^{1-2/q}\\
			&\sim \|D^{s_1}u\|_{L^1}^{\theta_1}\|D^{s_2}u\|_{L^\infty}^{\theta_2}.
		\end{align*}
		For $q>2$, $s_1-s_2 = d$, we have $s_1 = d/q'$, $s_2 = -d/q$. Then
		\begin{align*}
			\|u\|_{L^q}&\lesssim \|D^{d/q'}u\|_{L^1}^{1-2/q}\|D^{d/2-d/q}u\|_{L^2}^{2/q}\\
			&\lesssim \|D^{d/q'}u\|_{L^1}^{1-2/q}(\|D^{d/q'}u\|_{L^1}^{1/2}\|D^{-d/q}u\|_{L^\infty}^{1/2})^{2/q}\\
			&\sim \|D^{s_1}u\|_{L^1}^{1/q'}\|D^{s_2}u\|_{L^\infty}^{1/q}.
		\end{align*}
		For $1/q<\theta_1\leq 1/q'$, we can choose $\theta\in (0,1]$ such that $(1-\theta)/q+\theta/q' = \theta_1$. Then
		\begin{align*}
			\|u\|_{L^q}&\lesssim (\|D^{s_1}u\|_{L^1}^{1/q}\|D^{s_2}u\|_{L^\infty}^{1/q'})^{1-\theta}(\|D^{s_1}u\|_{L^1}^{1/q'}\|D^{s_2}u\|_{L^\infty}^{1/q})^\theta\\
			&\lesssim \|D^{s_1}u\|_{L^1}^{\theta_1}\|D^{s_2}u\|_{L^\infty}^{\theta_2},
		\end{align*}
		which completes the proof.
	\end{proof}
	Combining Propositions \ref{subcritical}–\ref{doubleend}, we finish the proof of the “only if” part of Theorem \ref{firstmain}.
	
	\subsection{Necessary conditions for \texorpdfstring{\eqref{gninbessel}}{(1.1)}}
	Let $\delta_d$ be the Dirac measure on $\mathbb{R}^d$. For any $K\in \mathbb{N}$, define
	\begin{equation}\label{scalingcounter}
		u_{d,K}:=\sum_{k=1}^K \Delta_k \delta_d.
	\end{equation}
	\begin{lemma}\label{cutdirac}
		$\|u_{d,K}\|_{L^1(\mathbb{R}^d)}\sim_d 1$, $\|D^{-d}u_{d,K}\|_{L^\infty(\mathbb{R}^d)}\sim_d K$, and
		$$\|D^{-\frac{d}{p'}}u\|_{L^p(\mathbb{R}^d)}\sim_{d,p} K^{\frac{1}{p}},\quad 1<p<\infty.$$
	\end{lemma}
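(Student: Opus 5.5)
Write $\Phi:=\sum_{k=1}^K\psi(|\xi|/2^k)=\varphi(|\xi|/2^{K})-\varphi(|\xi|)$, so that $u_{d,K}=(2\pi)^{-d/2}\mathscr{F}^{-1}\Phi$, up to normalization $u_{d,K}=2^{Kd}\check\varphi(2^K\cdot)-\check\varphi$. The plan is to read off all three estimates from this telescoped form and from pointwise bounds on dyadic pieces.

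\textbf{The $L^1$ bound.} The upper bound follows from $\|2^{Kd}\check\varphi(2^K\cdot)\|_{L^1}=\|\check\varphi\|_{L^1}$ and the triangle inequality. For the lower bound I use $\|u_{d,K}\|_{L^1}\ge\|\widehat{u_{d,K}}\|_{L^\infty}\gtrsim 1$, since $\Phi=1$ on $3/2\le|\xi|\le 2^K$ once $K\ge1$.

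\textbf{The $L^\infty$ bound for $D^{-d}u_{d,K}$.} Evaluating at $x=0$ gives
$$D^{-d}u_{d,K}(0)=c_d\int|\xi|^{-d}\Phi(\xi)\,d\xi=c_d\sum_{k=1}^K\int|\xi|^{-d}\psi(|\xi|/2^k)\,d\xi=c_dK\int|\eta|^{-d}\psi(|\eta|)\,d\eta\sim K,$$
using the scale invariance of $|\xi|^{-d}d\xi$. Since $\Phi\ge0$, this is also the supremum: $|D^{-d}u_{d,K}(x)|\le D^{-d}u_{d,K}(0)$. Hence $\|D^{-d}u_{d,K}\|_{L^\infty}\sim K$.

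\textbf{The $L^p$ bound, $1<p<\infty$.} Set $g_k:=D^{-d/p'}\Delta_k\delta_d=2^{kd/p}\,G(2^k x)$, where $G=\mathscr{F}^{-1}(|\xi|^{-d/p'}\psi)$ is Schwartz. Each piece satisfies $\|g_k\|_{L^p}=\|G\|_{L^p}$.

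For the upper bound I use the Littlewood--Paley square function, $\|\sum g_k\|_{L^p}\sim\|(\sum_k|g_k|^2)^{1/2}\|_{L^p}$. Pointwise, $|g_k(x)|\lesssim 2^{kd/p}(1+2^k|x|)^{-N}$. On the annulus $|x|\sim 2^{-j}$ with $1\le j\le K$, the terms with $k\le j$ contribute $\sum_{k\le j}2^{kd/p}\sim 2^{jd/p}$, and the terms with $k>j$ decay geometrically. So the square function is $\lesssim 2^{jd/p}$ there, and its $p$-th power integrated over the annulus is $\lesssim 2^{jd}2^{-jd}=1$. Summing over $j$ gives $\lesssim K$.

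The two remaining regions are bounded as follows:
\begin{itemize}
\item for $|x|\lesssim 2^{-K}$, the square function is $\lesssim 2^{Kd/p}$, giving $O(1)$;
\item for $|x|\gtrsim1$, it is $\lesssim|x|^{-N}$, giving $O(1)$.
\end{itemize}
Thus $\|D^{-d/p'}u_{d,K}\|_{L^p}^p\lesssim K$.

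\textbf{Lower bound: the main obstacle.} The lower bound is the delicate step, because the pieces $g_k$ oscillate and could cancel. I would again use the square function, which needs a lower bound on $\sum_k|g_k|^2$ on each annulus; oscillation makes this awkward.

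The route I would try first avoids this by working with the radial kernel directly. For $|x|\sim 2^{-j}$ with $1\ll j\ll K$, write
$$D^{-d/p'}u_{d,K}(x)=c\int|\xi|^{-d/p'}\Phi(\xi)e^{ix\xi}\,d\xi .$$
This equals the Riesz potential kernel $c'|x|^{-d/p}$, plus errors from the low cutoff, of size $O(1)$, and from the high cutoff, of size $O(2^{Kd/p}(2^K|x|)^{-N})$. The error terms come from integrating against the smooth kernels $\check\varphi$, namely $|x|^{-d/p}*\check\varphi$ and its rescaled version.

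So on $2^{-K+C}\le|x|\le 2^{-C}$ one has $|D^{-d/p'}u_{d,K}(x)|\gtrsim|x|^{-d/p}$. Then
$$\int_{2^{-K+C}\le|x|\le2^{-C}}|x|^{-d}\,dx\sim K,$$
which gives $\|D^{-d/p'}u_{d,K}\|_{L^p}\gtrsim K^{1/p}$. This same pointwise description also reproves the upper bound. Making the error estimates for the two cutoff terms precise is the step I expect to need the most care.
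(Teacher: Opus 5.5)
Your proof is correct, but for the $L^p$ estimates it takes a genuinely different route from the paper. The $L^1$ and $L^\infty$ parts match the paper: both telescope to $\varphi(|\xi|/2^K)-\varphi(|\xi|)$ and use positivity of $\mathscr{F}u_{d,K}$. You additionally supply the lower bound $\|u_{d,K}\|_{L^1}\gtrsim 1$, which the paper leaves implicit.

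For $1<p<\infty$, the paper never analyzes the kernel. It computes $\|D^{-d/2}u_{d,K}\|_{L^2}\sim K^{1/2}$ by Plancherel. The square function then gives one inequality on each side of $p=2$: with $\ell^p\hookrightarrow\ell^2$ the upper bound for $p<2$, and with $\ell^2\hookrightarrow\ell^p$ the lower bound for $p>2$. The missing direction comes from its own Gagliardo--Nirenberg estimates. Proposition~\ref{endsobo} between $L^1$ and $L^2$ gives the upper bound for $p>2$. Proposition~\ref{littlewood}, with $L^2$ between $L^p$ and $L^{p'}$ and the already known bound $K^{1/p'}$ for $\|D^{-d/p}u_{d,K}\|_{L^{p'}}$, gives the lower bound for $p<2$.

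You work in physical space instead. Dyadic pointwise bounds give the upper bound for every $p$ at once; the pointwise triangle inequality already suffices there, so the square function is not needed. For the lower bound you write, up to normalizing constants,
\[
D^{-d/p'}u_{d,K}=c\,|x|^{-d/p}-2^{Kd/p}H(2^Kx)-\mathscr{F}^{-1}\bigl(|\xi|^{-d/p'}\varphi(|\xi|)\bigr),\qquad H=\mathscr{F}^{-1}\bigl(|\xi|^{-d/p'}(1-\varphi(|\xi|))\bigr).
\]
Here $H$ decays faster than any power on $|y|\ge 1$, since its symbol is smooth with $\alpha$-th derivative $O(|\xi|^{-d/p'-|\alpha|})$. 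The last term is bounded by $\||\xi|^{-d/p'}\varphi\|_{L^1}$. Hence $|D^{-d/p'}u_{d,K}(x)|\sim|x|^{-d/p}$ on $2^{-K+C}\le|x|\le 2^{-C}$, which gives the lower bound; the step you flagged is therefore routine.

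Your route is self-contained: it uses neither Proposition~\ref{littlewood} nor Proposition~\ref{endsobo}, and needs no case split at $p=2$. It is also more informative, showing $D^{-d/p'}u_{d,K}$ to be a truncated Riesz kernel and $K^{1/p}$ to be the logarithmic divergence of $\int|x|^{-d}\,dx$. The paper's route is shorter given the inequalities it has already proved, and it needs no kernel asymptotics.
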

	\begin{proof}[\textbf{Proof}]
		By the definition of $u_{d,K}$, we have
		\begin{align*}
			D^{-\frac{d}{p'}}u_{d,K} &= \mathscr{F}^{-1}\left(|\xi|^{-\frac{d}{p'}}\sum_{k=1}^K \psi(|\xi|/2^k)\right)\\
			& = \mathscr{F}^{-1}(|\xi|^{-\frac{d}{p'}}(\varphi(|\xi|/2^K)-\varphi(|\xi|))).
		\end{align*}
		Thus $\|u_{d,K}\|_{L^1(\mathbb{R}^d)}\leq 2\|\mathscr{F}^{-1}(\varphi(|\xi|))\|_{L^1(\mathbb{R}^d)}\lesssim 1$. By the Plancherel identity, one has
		\begin{equation*}
			\|D^{-\frac{d}{2}}u_{d,K}\|_{L^2(\mathbb{R}^d)}\sim \||\xi|^{-\frac{d}{2}}\psi(|\xi|/2^k)\|_{l^2_{1\leq k\leq K}L^2(\mathbb{R}^d)} \sim K^{\frac{1}{2}}.
		\end{equation*}
		Note that $\mathscr{F}(u_{d,K})\geq 0$. Thus
		$$\|D^{-d}u_{d,K}\|_{L^\infty}\sim  \|(|\xi|^{-d}(\varphi(|\xi|/2^K)-\varphi(|\xi|))\|_{L^1(\mathbb{R}^d)}\sim K.$$
		By the Littlewood--Paley theory, for $1<p<2$ we have
		\begin{align*}
			\|D^{-\frac{d}{p'}}u_{d,K}\|_{L^{p}(\mathbb{R}^d)}\sim \| 2^{-k\frac{d}{p'}}\Delta_k\delta_d\|_{L^pl^2_{1\leq k\leq K}}\lesssim \|2^{-k\frac{d}{p'}}\Delta_k\delta_d\|_{l^p_{1\leq k\leq K}L^p}\sim K^{\frac{1}{p}},
		\end{align*}
		while for $2<p<\infty$,
		\begin{align*}
			\|D^{-\frac{d}{p'}}u_{d,K}\|_{L^{p}(\mathbb{R}^d)}&\sim \| 2^{-k\frac{d}{p'}}\Delta_k\delta_d\|_{L^pl^2_{1\leq k\leq K}} \gtrsim \|2^{-k\frac{d}{p'}}\Delta_k\delta_d\|_{l^p_{1\leq k\leq K}L^p}\sim K^{\frac{1}{p}}.
		\end{align*}
		Applying Proposition \ref{endsobo} for $2<p<\infty$, we obtain
		$$\|D^{-\frac{d}{p'}}u_{d,K}\|_{L^p(\mathbb{R}^d)}\lesssim \|u_{d,K}\|_{L^1(\mathbb{R}^d)}^{1-\frac{2}{p}}\|D^{-\frac{d}{2}}u_{d,K}\|_{L^2(\mathbb{R}^d)}^{\frac{2}{p}}\lesssim K^{\frac{1}{p}}.$$
		Applying Proposition \ref{littlewood} for $1<p<2$, we obtain
		\begin{align*}
			K^{\frac{1}{2}}\sim \|D^{-\frac{d}{2}}u_{d,K}\|_{L^2(\mathbb{R}^d)}&\lesssim \|D^{-\frac{d}{p'}}u_{d,K}\|_{L^p(\mathbb{R}^d)}^{\frac{1}{2}}\|D^{-\frac{d}{p}}u_{d,K}\|_{L^{p'}(\mathbb{R}^d)}^{\frac{1}{2}}\\
			&\lesssim \|D^{-\frac{d}{p'}}u_{d,K}\|_{L^p(\mathbb{R}^d)}^{\frac{1}{2}} K^{\frac{1}{2p'}}.
		\end{align*}
		Thus $\|D^{-d/p'}u_{d,K}\|_{L^p(\mathbb{R}^d)}\gtrsim K^{1/p}$ for $1<p<2$. Combining the above inequalities yields the conclusion.
	\end{proof}
		
	\begin{prop}\label{scalingcritical}
		Assume \eqref{basicred}--\eqref{scalingandhighfre}. If $(\theta_1,p_1) = (1,1)$, or $(\theta_1,q) = (1,\infty)$, or $q =\infty$ and $s_1-s_2 = d(1/p_1-1/p_2)$, or
		\begin{itemize}
			\item $1 = p_1<p_2<\infty$, $s_1-s_2 = d(1-1/p_2)$, $\theta_1>1-p_2/q$, or
			\item $p_1 = 1$, $p_2 = \infty$, $s_1-s_2 = d$, $\theta_1>1-1/q$,
		\end{itemize}
		then $C(d,s_1,p_1,s_2,p_2,\theta_1,q)=\infty$.
	\end{prop}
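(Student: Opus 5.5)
The plan is to test \eqref{gninbessel} on the truncated Dirac family $u_{d,K}$ of \eqref{scalingcounter}, suitably modified, and read off the growth in $K$ from Lemma \ref{cutdirac}. By \eqref{scalingandhighfre} the inequality is scaling invariant, so dilations are not a source of blow‑up. Instead we use the logarithmic divergence at endpoint Sobolev exponents.

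First, the case $(\theta_1,p_1)=(1,1)$. Here \eqref{scalingandhighfre} gives $s=s_1-d/q'$ with $q\ne 1$, so the inequality reads $\|D^{s_1-d/q'}u\|_{L^q}\lesssim\|D^{s_1}u\|_{L^1}$. Setting $v=D^{s_1}u$, this becomes $\|D^{-d/q'}v\|_{L^q}\lesssim\|v\|_{L^1}$. Take $v=u_{d,K}$, which has Fourier support in an annulus, so this is admissible after a harmless approximation. By Lemma \ref{cutdirac} the left side is $\sim K^{1/q}$ for $1<q<\infty$ and $\sim K$ for $q=\infty$, while the right side is $\sim1$. Letting $K\to\infty$ gives $C=\infty$.

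Second, the case $(\theta_1,q)=(1,\infty)$. This reduces to $\|D^{-d/p_1}v\|_{L^\infty}\lesssim\|v\|_{L^{p_1}}$. For $p_1=1$ we use $u_{d,K}$ as above. For $p_1>1$ we dualize: take $v=D^{-d/p_1'}u_{d,K}$, so that $D^{-d/p_1}v=D^{-d}u_{d,K}$ has $L^\infty$ norm $\sim K$, while $\|v\|_{L^{p_1}}\sim K^{1/p_1}$ (or is bounded by $K^{1/p_1}$ when $p_1=\infty$, up to checking $\|D^{0}u_{d,K}\|$, in which case we use the $p=\infty$ analogue). The ratio is $K^{1/p_1'}\to\infty$.

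Third, the case $q=\infty$ with $s_1-s_2=d(1/p_1-1/p_2)$. Combined with \eqref{scalingandhighfre} this forces $s=s_j-d/p_j$ for both $j$. After replacing $u$ by $D^{s_1-d/p_1}u$, both factors become the endpoint norms $\|D^{d/p_j}w\|_{L^{p_j}}$ and the left side is $\|w\|_{L^\infty}$. Take $w=D^{-d}u_{d,K}$. Then $\|w\|_\infty\sim K$ and $\|D^{d/p_j}w\|_{L^{p_j}}=\|D^{-d/p_j'}u_{d,K}\|_{L^{p_j}}\sim K^{1/p_j}$, reading this as $\sim1$ when $p_j=1$. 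The ratio is $K^{1-\theta_1/p_1-\theta_2/p_2}$. This diverges unless $p_1=p_2=1$, which is the case $(\theta_1,p_1)=(1,1)$-type degenerate, or $\theta_1=1$ already covered, since distinctness forces $s_1\ne s_2$ when $p_1=p_2$. One caveat remains: when $p_2=\infty$ the quantity $\|D^0 u_{d,K}\|_\infty$ is not bounded. There I would instead use $D^{-d}$ of a lacunary-type modification, or note that $\|u_{d,K}\|_\infty\sim 2^{Kd}$ is incompatible with the fact that $p_1\le p_2$ gives $p_2=\infty$ only with $p_1<\infty$. This subcase needs care.

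Fourth, the last two bullets. Here $p_1=1$ and $s_1-s_2=d/p_2'$, and by scaling $s=s_1-d/q'$ up to normalisation. Shift so that $s_1=0$, which gives $s=-d/q'$ and $s_2=-d/p_2'$, and test with $u=u_{d,K}$. Lemma \ref{cutdirac} gives a left side $\sim K^{1/q}$ and a right side $\sim 1^{\theta_1}(K^{1/p_2})^{\theta_2}$, with $K$ in place of $K^{1/p_2}$ when $p_2=\infty$. The ratio $K^{1/q-\theta_2/p_2}$ blows up iff $\theta_2<p_2/q$, i.e.\ $\theta_1>1-p_2/q$; the case $p_2=\infty$ gives $\theta_1>1-1/q$. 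For $q=\infty$ use $\|D^{-d}u_{d,K}\|_\infty\sim K$.

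The main obstacle is the admissibility and the endpoint $p=\infty$ cases, where Lemma \ref{cutdirac} gives no $L^\infty$ bound for $D^0$-type quantities. I expect to handle these by using the $L^\infty$ statement $\|D^{-d}u_{d,K}\|_\infty\sim K$ together with the $L^1$ bound, choosing the shift so that every norm that appears is one of those computed in the lemma.
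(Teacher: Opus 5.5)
Your argument is the same as the paper's. The paper normalizes to $s_1-d/p_1=s_2-d/p_2=s-d/q$; when $\theta_1=1$ it simply picks an auxiliary pair $(s_2,p_2)$ with this property. It then tests with the single function $u=D^{-s-d/q'}u_{d,K}$, which turns the quotient into $\|D^{-d/q'}u_{d,K}\|_{L^q}\big/\prod_j\|D^{-d/p_j'}u_{d,K}\|_{L^{p_j}}^{\theta_j}$, and reads off the growth in $K$ from Lemma \ref{cutdirac}. Your four cases are exactly this computation unpacked. No approximation is needed for admissibility: $\mathscr{F}u_{d,K}=(2\pi)^{-d/2}(\varphi(|\xi|/2^K)-\varphi(|\xi|))$ is smooth and supported in $\{1\leq|\xi|\leq 3\cdot 2^{K-1}\}$.

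The subcase you leave open (``$p_2=\infty$ \dots\ needs care''), and the $p_1=\infty$ parenthetical in your second case, are not real obstacles. You should close them rather than defer them.

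\textbf{They cannot occur.} Under the normalization, $p_j=q$ would force $s_j=s$, contradicting the distinctness in \eqref{basicred}. So $p_1,p_2,q$ are pairwise distinct, and $q=\infty$ forces $p_1,p_2<\infty$. The same observation gives $q\neq 1$ in the two bullets. Your formula $K^{1/q-\theta_2/p_2}$ tacitly needs this, since at $q=1$ the numerator would be $\|u_{d,K}\|_{L^1}\sim 1$, not $K$.

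\textbf{Your diagnosis is a miscalculation.} For $p=\infty$ one has $p'=1$, so the relevant norm is $\|D^{-d}u_{d,K}\|_{L^\infty}\sim K$, not $\|D^0u_{d,K}\|_{L^\infty}$. You use this correctly in your fourth case, and no lacunary modification is needed.

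\textbf{The third-case exponent is always positive.} Once you use $\|u_{d,K}\|_{L^1}\sim 1$, the exponent is $1-\sum_{j:\,p_j>1}\theta_j/p_j>0$. So the remark ``diverges unless $p_1=p_2=1$'' is unnecessary, and that configuration is excluded anyway because it forces $s_1=s_2$.
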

	\begin{proof}[\textbf{Proof}]
		If $\theta_1 = 1$, we can choose $s_2,p_2$ arbitrary. Thus, without loss of generality, by \eqref{scalingandhighfre} we assume that $s_1-d/p_1 = s_1-d/p_2 = s-d/q$. It follows from \eqref{basicred} that $p_1,p_2,q$ are pairwise distinct. Taking $u = D^{-s-d/q'}u_{d,K}$, where $u_{d,K}$ is given by \eqref{scalingandhighfre}, yields
		\begin{align*}
			C(d,s_1,p_1,s_2,p_2,\theta_1,q)\geq \frac{\|D^{-d/q'}u_{d,K}\|_{L^q}}{\|D^{-d/p_1'}u_{d,K}\|_{L^{p_1}}^{\theta_1}\|D^{-d/p_2'}u_{d,K}\|_{L^{p_2}}^{\theta_2}}.
		\end{align*}
		If $q = \infty$, by Lemma \ref{cutdirac} we have
		\begin{align*}
			C(d,s_1,p_1,s_2,p_2,\theta_1,q)\gtrsim K^{1-\frac{\theta_1}{p_1}-\frac{\theta_2}{p_2}}\rightarrow \infty,~ \mbox{ as }K\rightarrow\infty.
		\end{align*}
		If $1 = p_1<q<\infty$, $p_2<\infty$, and $\theta_1>1-p_2/q$, by Lemma \ref{cutdirac} we have
		\begin{align*}
			C(d,s_1,p_1,s_2,p_2,\theta_1,q)\gtrsim K^{\frac{1}{q}-\frac{1-\theta_1}{p_2}}\rightarrow \infty,~ \mbox{ as }K\rightarrow\infty.
		\end{align*}
		If $1 = p_1<q<p_2 = \infty$ and $\theta_1>1-1/q$, by Lemma \ref{cutdirac} we have
		\begin{align*}
			C(d,s_1,p_1,s_2,p_2,\theta_1,q)\gtrsim K^{\frac{1}{q}-(1-\theta_1)}\rightarrow \infty,~ \mbox{ as }K\rightarrow\infty.
		\end{align*}
		We finish the proof.
	\end{proof}
	
	Across different dimensions, the following simple relation holds.
	\begin{lemma}\label{dimensionreduction}
		Assume \eqref{basicred}--\eqref{scalingandhighfre}. If $C(d_0,s_1,p_1,s_2,p_2,\theta_1,q)=\infty$, then for any $d>d_0$ we have $C(d,s_1,p_1,s_2,p_2,\theta_1,q)=\infty$.
	\end{lemma}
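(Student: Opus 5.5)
The plan is to lift a bad sequence from $\mathbb{R}^{d_0}$ to $\mathbb{R}^d$ by tensoring with a function of the remaining $d-d_0$ variables that is spread out at a large scale. Then $|\xi|^s$ on $\mathbb{R}^d$ is essentially $|\xi'|^s$ on the first $d_0$ coordinates, and every norm in \eqref{formalin} factorizes. Write $x=(x',x'')\in\mathbb{R}^{d_0}\times\mathbb{R}^{d-d_0}$ and $\xi=(\xi',\xi'')$.

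Since $C(d_0,\dots)=\infty$, for each $M$ there is $v$ with $\mathscr{F}v\in C_0^\infty(\mathbb{R}^{d_0}\setminus\{0\})$ and ratio at least $M$. Fix $\chi$ on $\mathbb{R}^{d-d_0}$ with $\mathscr{F}\chi\in C_0^\infty$ supported in the unit ball, and set
\[
u_\lambda(x)=v(x')\,\chi(\lambda x''),
\]
so that $\mathscr{F}u_\lambda(\xi)=\mathscr{F}v(\xi')\,\lambda^{-(d-d_0)}\mathscr{F}\chi(\xi''/\lambda)$.

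\textbf{Step 1 (admissibility).} Because $\mathscr{F}v$ is supported in $\{a\le|\xi'|\le b\}$ with $a>0$, the support of $\mathscr{F}u_\lambda$ avoids the origin. So $u_\lambda$ is admissible in \eqref{formalin} on $\mathbb{R}^d$.

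\textbf{Step 2 (factorization).} On the support we have $|\xi|^{\sigma}=|\xi'|^{\sigma}m_\lambda(\xi)$ with $m_\lambda(\xi)=(1+|\xi''|^2/|\xi'|^2)^{\sigma/2}$ and $|\xi''|\le\lambda$. Thus $D^\sigma u_\lambda$ equals $(D^\sigma v)\otimes\chi(\lambda\cdot)$ up to an error. To bound the error, write $m_\lambda-1$ on the support as a smooth bounded multiplier times $|\xi''|^2/|\xi'|^2$. After localizing with a bump adapted to the support and rescaling $\xi''=\lambda\eta$, it becomes a multiplier in $(\xi',\eta)$ whose $C^N$ norms are $O(\lambda^2)$ on a fixed compact set away from $\xi'=0$. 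Its kernel therefore has $L^1$ norm $O(\lambda^2)$, uniformly for $\lambda\le1$. Applying this for each of the three pairs $(\sigma,r)\in\{(s,q),(s_1,p_1),(s_2,p_2)\}$, including $r=1,\infty$ (Young's inequality needs only an $L^1$ kernel), gives
\[
\|D^\sigma u_\lambda\|_{L^r(\mathbb{R}^d)}=\|D^\sigma v\|_{L^r(\mathbb{R}^{d_0})}\,\lambda^{-(d-d_0)/r}\,\|\chi\|_{L^r}\,\bigl(1+O(\lambda^2)\bigr).
\]

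\textbf{Step 3 (ratio).} The factors $\|\chi\|_{L^r}$ are fixed constants. The powers of $\lambda$ cancel exactly because of the scaling relation in \eqref{scalingandhighfre}, $1/q=\theta_1/p_1+\theta_2/p_2$ once the derivative terms balance. Indeed, the $d$-dimensional and $d_0$-dimensional scaling identities differ only by $(d-d_0)(\theta_1/p_1+\theta_2/p_2-1/q)$. If that quantity were nonzero, the $\lambda$ power would be positive or negative and could be sent to infinity by letting $\lambda\to0$ or $\lambda\to\infty$, which is even better. Since \eqref{scalingandhighfre} is assumed in both dimensions with the same indices, the quantity is zero. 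Hence the $d$-dimensional ratio is at least $cM(1+O(\lambda^2))$, and letting $M\to\infty$ gives $C(d,\dots)=\infty$.

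\textbf{Main obstacle.} The hardest step is Step 2, the uniform $L^r\to L^r$ control of $m_\lambda$ for $r\in\{1,\infty\}$. I would handle it through the anisotropic rescaling $\xi''=\lambda\eta$, which turns $m_\lambda$ into a smooth perturbation of $1$ on a fixed compact set. This makes its kernel uniformly $L^1$-close to a fixed approximate identity, and does not require Mikhlin theory.
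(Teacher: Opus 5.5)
Your argument is correct and is essentially the paper's. Both tensor the $x'$-factor with a bump in $x''$ whose frequency scale is negligible next to that of the $x'$-factor, so that $|\xi|^\sigma\approx|\xi'|^\sigma$ on the Fourier support and each $\|D^\sigma\cdot\|_{L^r}$ factorizes up to an $L^1$-kernel error; the only difference is that you fix $v$ and let $\lambda\to0$, whereas the paper fixes the bump, pushes $f$ to $|\xi'|\ge2$, and then rescales in $\mathbb{R}^{d_0}$. Both versions need $\theta_1/p_1+\theta_2/p_2=1/q$ (so that $s$ is the same in dimensions $d_0$ and $d$), which you rightly make explicit; your aside about sending $\lambda\to\infty$ when this fails is not covered by your Step 2 estimate (valid only for $\lambda\le1$), but it is not needed.
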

	\begin{proof}[\textbf{Proof}]
		Let $x=(x',x''), \xi=(\xi',\xi'')\in \mathbb{R}^d$, with $x',\xi'\in \mathbb{R}^{d_0}$ and $x'',\xi''\in \mathbb{R}^{d-d_0}$. Denote by $\mathscr{F}_n$ the Fourier transform on $\mathbb{R}^n$. For any $f(x')\in \mathcal{S}(\mathbb{R}^{d_0})$ with $\mathrm{supp}(\mathscr{F}_{d_0}f)\subset \{\xi':|\xi'|\geq 2\}$, define $u(x) = f(x') \mathscr{F}_{d-d_0}^{-1}(\varphi(|\xi''|))(x'')$. Then for any $\alpha\in \mathbb{R}$ and $1\leq p\leq \infty$, we have
		$$\|D_x^\alpha u(x)\|_{L^p(\mathbb{R}^d)}\sim \|D^\alpha_{x'}f(x')\|_{L^p(\mathbb{R}^{d_0})}.$$
		
		If $C(d,s_1,p_1,s_2,p_2,\theta_1,q)<\infty$, we have
		\begin{align*}
			\|D^s_{x'}f(x')\|_{L^q(\mathbb{R}^{d_0})}&\sim \|D_x^s u(x)\|_{L^q(\mathbb{R}^d)}\\
			&\lesssim \|D_x^{s_1} u(x)\|_{L^{p_1}(\mathbb{R}^d)}^{\theta_1}\|D_x^{s_2} u(x)\|_{L^{p_2}(\mathbb{R}^d)}^{\theta_2}\\
			&\sim \|D^{s_1}_{x'}f(x')\|_{L^{p_1}(\mathbb{R}^{d_0})}^{\theta_1}\|D^{s_2}_{x'}f(x')\|_{L^{p_2}(\mathbb{R}^{d_0})}^{\theta_2}
		\end{align*}
		By scaling, $\|D^s_{x'}f(x')\|_{L^q(\mathbb{R}^{d_0})}\lesssim \|D^{s_1}_{x'}f(x')\|_{L^{p_1}(\mathbb{R}^{d_0})}^{\theta_1}\|D^{s_2}_{x'}f(x')\|_{L^{p_2}(\mathbb{R}^{d_0})}^{\theta_2}$ holds for any $\mathscr{F}_{d_0}f\in C_0^\infty (\mathbb{R}^{d_0}\setminus \{0\})$. 
%
		We finish the proof by contradiction.
	\end{proof}
	
	By Proposition \ref{scalingcritical} and Lemma \ref{dimensionreduction}, to show the counterexample for $0<s_1-s_2<d/p_2'$ we only need to consider the case $(d-1)/p_2'<s_1-s_2<d/p_2'$. Let $\delta = (s_1-s_2)p_2'$. By analogy, we may expect to construct a function whose support is approximately 
	$(d-\delta)$-dimensional. In fact, Brezis–Mironescu \cite{brezis2018gagliardo} gave a construction for the one-dimensional case. We extend this construction to arbitrary dimensions.
	
	Let $\alpha = d-\delta$ and $\mathfrak{v} = (1,1,\cdots,1)\in \mathbb{R}^d$. For each $k\in \mathbb{N}$, set $\epsilon_k= k^{-1/\alpha}$. $Q_k = [0,\epsilon_k]^d$, and define $E^1_k = \cup_{l=0}^{k-1} (l\epsilon_k^\alpha\mathfrak{v}+Q_k)$. For $j\geq 2$, recursively define
	$$E_k^j = \bigcup_{l=0}^{k-1} (l\epsilon_k^\alpha\mathfrak{v}+\epsilon_kE_k^{j-1}).$$
	Finally, let $u_k^j = \chi_{E_k^j}$.
	
	\begin{lemma}\label{thekeylemma}
		Let $d\in \mathbb{N}$, $d-1<\delta<d$, and $1<p<\infty$. For $k,j\in \mathbb{N}$, $\|u_k^j\|_{L^1(\mathbb{R}^d)} = \epsilon_k^{j\delta}$, $\|D^{-\delta}u_k^j\|_{L^\infty(\mathbb{R}^d)}\lesssim_{d,\delta}j\epsilon_k^{j\delta}$, and
		\begin{equation*}
			\limsup_{k\rightarrow \infty} \frac{\|D^{-\delta/p'}u_k^j\|_{L^p(\mathbb{R}^d)}}{\epsilon_k^{j\delta}}\sim_{d,\delta,p}\liminf_{k\rightarrow \infty} \frac{\|D^{-\delta/p'}u_k^j\|_{L^p(\mathbb{R}^d)}}{\epsilon_k^{j\delta}}\sim_{d,\delta,p} j^{\frac{1}{p}}.
		\end{equation*}
	\end{lemma}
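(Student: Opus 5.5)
The first identity is pure counting. I would first check that the building blocks do not overlap. Since $\alpha=d-\delta\in(0,1)$, we have $\epsilon_k^\alpha>\epsilon_k$, so the $k$ cubes in $E_k^1$ are pairwise disjoint. By induction, $E_k^{j-1}$ lies in the cube $[0,k\epsilon_k^\alpha+\epsilon_k]^d=[0,1+\epsilon_k]^d$. Hence $\epsilon_kE_k^{j-1}$ has side about $\epsilon_k$, which is smaller than the spacing $\epsilon_k^\alpha$ for $k$ large, so the $k$ translates in $E_k^j$ are also disjoint. Thus $E_k^j$ is a disjoint union of $k^j$ cubes of side $\epsilon_k^j$. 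Using $k=\epsilon_k^{-\alpha}$, this gives $|E_k^j|=k^j\epsilon_k^{jd}=\epsilon_k^{j(d-\alpha)}=\epsilon_k^{j\delta}$. (For the finitely many small $k$ I would only claim the bounds up to constants, which is all the $\limsup$/$\liminf$ statement needs.)

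For the $L^\infty$ bound I would write $D^{-\delta}$ as the Riesz potential $c_{d,\delta}\,|x|^{-\alpha}*$, with a positive kernel since $0<\delta<d$. I would then estimate $\sup_x\int_{E_k^j}|x-y|^{-\alpha}dy$ scale by scale. At level $m\in\{1,\dots,j\}$, the point $x$ sees the $k$ level-$m$ pieces of the level-$(m-1)$ piece containing it (or nearest to it). These pieces each have mass $\epsilon_k^{j\delta}k^{-m}$ and sit along a diagonal at distances roughly $l\epsilon_k^{m-1+\alpha}$, $l=0,\dots,k-1$.

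The pieces with $l\geq1$ contribute
$$\epsilon_k^{j\delta}k^{-m}\epsilon_k^{-\alpha(m-1+\alpha)}\sum_{l\le k}l^{-\alpha}\lesssim \frac{\epsilon_k^{j\delta}k^{1-\alpha-m}\epsilon_k^{-\alpha(m-1+\alpha)}}{1-\alpha}=\frac{\epsilon_k^{j\delta}}{1-\alpha}.$$
Here I used $k^{1-\alpha-m}=\epsilon_k^{\alpha(m-1+\alpha)}$. The innermost cube of side $\epsilon_k^j$ contributes at most $\epsilon_k^{j(d-\alpha)}=\epsilon_k^{j\delta}$. Farther pieces at a coarser level were already counted at that level. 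Summing over $m$ gives $\|D^{-\delta}u_k^j\|_{L^\infty}\lesssim j\epsilon_k^{j\delta}$. Running the same computation from below for $x\in E_k^j$ shows that each level contributes $\gtrsim\epsilon_k^{j\delta}$, so $D^{-\delta}u_k^j\gtrsim j\epsilon_k^{j\delta}$ on $E_k^j$. Consequently
$$\|D^{-\delta/2}u_k^j\|_{L^2}^2=\langle D^{-\delta}u_k^j,u_k^j\rangle\sim j\epsilon_k^{2j\delta},$$
which is the case $p=2$.

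For $2\leq p<\infty$ I would get the upper bound from Proposition \ref{doubleend} with $p_1=1$, $p_2=\infty$, $q=p$, $s_1=0$, $s_2=-\delta$, $\theta_1=1/p$. This case is admissible because $s_1-s_2=\delta\neq d$ and $q\ge2$. It yields
$$\|D^{-\delta/p'}u_k^j\|_{L^p}\lesssim\|u_k^j\|_{L^1}^{1/p}\|D^{-\delta}u_k^j\|_{L^\infty}^{1/p'}\lesssim j^{1/p'}\epsilon_k^{j\delta}.$$
This exponent is wrong: it gives $j^{1/p'}$ rather than $j^{1/p}$. So the endpoint interpolation alone is too lossy, and I would instead estimate $D^{-\delta/p'}u_k^j$ directly. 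This is a Riesz potential of order $\delta/p'$, with kernel $|x|^{-(d-\delta/p')}$. The same multiscale decomposition applies. On the $\epsilon_k^{j}$-neighbourhood of $E_k^j$, of measure about $\epsilon_k^{j\delta}$, I expect the level-$m$ contribution to be of order $\epsilon_k^{j\delta/p'}\cdot(\text{size at which }x\text{ resolves level }m)$. I would organise the $L^p$ norm by a layer-cake/dyadic decomposition in the distance from $x$ to $E_k^j$, so that the $j$ levels contribute essentially orthogonally in the $L^p$ sum. The aim is $\|D^{-\delta/p'}u_k^j\|_{L^p}^p\sim j\,\epsilon_k^{jp\delta}$ for every $1<p<\infty$ (upper bound), together with a matching lower bound at least for $p\ge2$.

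With those in hand, the range $1<p<2$ follows by duality exactly as in Lemma \ref{cutdirac}:
$$j\epsilon_k^{2j\delta}\sim\|D^{-\delta/2}u_k^j\|_{L^2}^2\le\|D^{-\delta/p'}u_k^j\|_{L^p}\|D^{-\delta/p}u_k^j\|_{L^{p'}}\lesssim\|D^{-\delta/p'}u_k^j\|_{L^p}\,j^{1/p'}\epsilon_k^{j\delta}.$$
This gives the lower bound $j^{1/p}\epsilon_k^{j\delta}$, and symmetrically the upper bound for $p\ge2$ yields the lower bound for $p'$. All constants are uniform in $k$ large, which gives the $\limsup/\liminf$ form of the statement.

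The main obstacle is the direct $L^p$ estimate for $p\neq2$ with the sharp power $j^{1/p}$. One has to show that the $j$ scales do not pile up pointwise in $L^p$; that is, the contribution from level $m$ is concentrated on a set where the other levels are comparable but not all simultaneously large. I would first try an $\ell^p$-over-levels decomposition mimicking the Littlewood--Paley argument of Lemma \ref{cutdirac}, with level $m$ replacing the frequency $2^k$. If that proves too delicate, I would instead use interpolation between the exact $p=2$ identity and a careful weak-type endpoint.
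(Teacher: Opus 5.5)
There is a genuine gap. Your $L^1$ count, your $L^\infty$ bound and your case $p=2$ are correct. The $L^\infty$ bound is the paper's induction $\|f^j_{k,\alpha}\|_{L^\infty}\le\epsilon_k^{\delta}\|f^{j-1}_{k,\alpha}\|_{L^\infty}+c_\alpha\epsilon_k^{j\delta}$ unrolled level by level. For $p=2$ you use $\langle D^{-\delta}u_k^j,u_k^j\rangle$ together with the pointwise bound $D^{-\delta}u_k^j\gtrsim j\epsilon_k^{j\delta}$ on $E_k^j$. Your duality step, turning an upper bound at $p'$ into a lower bound at $p$, is also valid. As an aside, $E_k^{j-1}\subset[0,1]^d$ since $(k-1)\epsilon_k^\alpha+\epsilon_k\le 1$, so the $L^1$ identity is exact for every $k$.

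The gap is the heart of the lemma: the two-sided asymptotics $\|D^{-\delta/p'}u_k^j\|_{L^p}\approx j^{1/p}\epsilon_k^{j\delta}$ for $p\neq2$. This is exactly what Proposition \ref{counterp1} uses with $p=q$ and $p=p_2$, and your proposal only states it as an aim. Your fallback cannot close it, because interpolating between the quantities you control never gives the sharp power:
\begin{itemize}
\item For $p>2$ it yields $j^{1/p'}$, as you saw with Proposition \ref{doubleend}. Proposition \ref{littlewood} between $\|D^{-\delta/2}u\|_{L^2}$ and $\|D^{-\delta}u\|_{L^\infty}$ gives the same.
\item For $1<p<2$ the natural candidate $\|D^{-\delta/p'}u\|_{L^p}\lesssim\|u\|_{L^1}^{2/p-1}\|D^{-\delta/2}u\|_{L^2}^{2/p'}$ is false. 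It is the case $1=p_1<q<p_2=2$, $0<s_1-s_2=\delta/2<d/p_2'$, $\theta_1=\frac{1/q-1/p_2}{1-1/p_2}$ of Theorem \ref{firstmain}. The failure of that case is precisely what this lemma is built to prove.
\end{itemize}
So the sharp exponent has to come from the geometry of $E_k^j$.

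The missing idea is the exact self-similarity of $f^j_{k,\mu}(x)=\int_{E_k^j}|x-y|^{-\mu}\,dy$ with $\mu=d-\delta/p'$:
\[
f^j_{k,\mu}(x)=\epsilon_k^{d-\mu}\sum_{l=0}^{k-1}f^{j-1}_{k,\mu}\Big(\frac{x-l\epsilon_k^{\alpha}\mathfrak{v}}{\epsilon_k}\Big).
\]
After rescaling, the $k$ translates of $f^{j-1}_{k,\mu}$ are spaced $\sim\epsilon_k^{\alpha-1}\to\infty$ apart. Split $\mathbb{R}^d$ into balls $B_m$ of radius $\epsilon_k^{\alpha-1}/2$ around them and the complement $\Omega$.
\begin{itemize}
\item On each $B_m$ the other translates add at most $\epsilon_k^{(j-1)\delta}A$, with $A\to0$ as $k\to\infty$.
\item With $|a+b|^p\le|a|^p+C|a|^{p-\varepsilon}|b|^{\varepsilon}+C|b|^p$, the balls contribute $\epsilon_k^{\delta p}\|f^{j-1}_{k,\mu}\|_{L^p}^p$. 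The error is $O(\epsilon_k^{j\delta p})$ from above and $o(\epsilon_k^{j\delta p})$ from below; the cross term is $o(\epsilon_k^{j\delta p})$ given inductive boundedness.
\item On $\Omega$ the function is comparable to $|E_k^{j-1}|\sum_l|x-l\epsilon_k^{\alpha-1}\mathfrak{v}|^{-\mu}$, which contributes $\sim\epsilon_k^{j\delta p}$ from both sides.
\end{itemize}
Hence $a_{k,j}:=\|f^j_{k,\mu}\|_{L^p}^p/\epsilon_k^{j\delta p}$ satisfies $a_{k,j-1}+\tilde c-o(1)\le a_{k,j}\le a_{k,j-1}+C+o(1)$ as $k\to\infty$. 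This gives $\limsup_k a_{k,j}\lesssim j$ and $\liminf_k a_{k,j}\gtrsim j$ for every $1<p<\infty$ directly, with no duality needed.

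This is the precise form of your ``orthogonality of levels'' heuristic. The $p$-th powers of the norms add one level at a time, but only asymptotically in $k$, which is why the statement is phrased with $\limsup$ and $\liminf$.
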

	\begin{proof}[\textbf{Proof}]
		It follows directly from the definition that $|E_k^1| = k\epsilon_k^d$, $|E_k^j| = k\epsilon_k^d|E_k^{j-1}| = (k\epsilon_k^d)^j = \epsilon_k^{\delta j}$. Thus 
		$$\|u_k^j\|_{L^1(\mathbb{R}^d)} = |E_k^j| = \epsilon_k^{\delta j}.$$
		Let
		$$f_{k,\mu}^j(x) = \int_{E_k^j}\frac{dy}{|x-y|^\mu}.$$
		For $0<\beta<d$, one has $D^{-\beta}u_k^j(x)\sim f_{k,d-\beta}^j(x)$. For each $j\geq 2$, we have
		\begin{align*}
			f^j_{k,\mu}(x) =  \int_{E_k^j}\frac{dy}{|x-y|^\mu}	
			& = \sum_{l=0}^{k-1}\int_{l\epsilon_k^\alpha\mathfrak{v}+\epsilon_kE_k^{j-1}}\frac{dy}{|x-y|^\mu}
			\\
			& = \epsilon_k^{d-\mu} \sum_{l=0}^{k-1}f_{k,\mu}^{j-1}\left(\frac{x-l\epsilon_k^\alpha \mathfrak{v}}{\epsilon_k}\right).
		\end{align*}
		Thus
		\begin{align*}
			\|f_{k,\mu}^j\|_{L^r(\mathbb{R}^d)} = \epsilon_k^{d-\mu+\frac{d}{r}}\left\|\sum_{l=0}^{k-1}f_{k,\mu}^{j-1}(x-l\epsilon_k^{\alpha-1}\mathfrak{v})\right\|_{L^r(\mathbb{R}^d)}.
		\end{align*}
		For $r = \infty$, $\mu = \alpha$, we have
		\begin{equation}\label{induction}
			\|f_{k,\alpha}^j\|_{L^\infty(\mathbb{R}^d)} = \epsilon_k^{\delta}\left\|\sum_{l=0}^{k-1}f_{k,\alpha}^{j-1}(x-l\epsilon_k^{\alpha-1}\mathfrak{v})\right\|_{L^\infty(\mathbb{R}^d)}.
		\end{equation}
		If $|x|\geq 2$, we have
		\begin{equation}\label{largex}
			2^{-\mu}|x|^{-\mu}|E_k^j|\leq f_{k,\mu}^j(x)\leq 2^\mu |x|^{-\mu}|E_k^j|.
		\end{equation}
		Note that for $k>k_\alpha\gg 1$ one has $|l\epsilon_k^{\alpha-1}\mathfrak{v}-j\epsilon_k^{\alpha-1}\mathfrak{v}|>4$, $\forall~l\neq j$. Define $l_x\in \{0,1,\cdots,k-1\}$ such that $|x-l_x\epsilon_k^{\alpha-1}\mathfrak{v} |= \min_{0\leq l\leq k-1}|x-l\epsilon_k^{\alpha-1}\mathfrak{v}| $, then $|x-l\epsilon_k^{\alpha-1}\mathfrak{v}|\geq |l\epsilon_k^{\alpha-1}\mathfrak{v}-l_x\epsilon_k^{\alpha-1}\mathfrak{v}|/2$. Thus
		\begin{align*}
			\sum_{l=0}^{k-1}f_{k,\alpha}^{j-1}(x-l\epsilon_k^{\alpha-1}\mathfrak{v})&\leq f_{k,\alpha}^{j-1}(x-l_x\epsilon_k^{\alpha-1}\mathfrak{v})+\sum_{l\neq l_x}f_{k,\alpha}^{j-1}(x-l\epsilon_k^{\alpha-1}\mathfrak{v})\\
			&\leq \|f_{k,\alpha}^{j-1}\|_{L^\infty(\mathbb{R}^d)}+\sum_{l\neq l_x} 2^{\alpha}|x-l\epsilon_k^{\alpha-1}\mathfrak{v}|^{-\alpha}|E_k^{j-1}|\\
			&\leq \|f_{k,\alpha}^{j-1}\|_{L^\infty(\mathbb{R}^d)}+\sum_{l\neq l_x} 2^{2\alpha}|l-l_x|^{-\alpha}\epsilon_k^{-\alpha(\alpha-1)}|E_k^{j-1}|\\
			&\leq \|f_{k,\alpha}^{j-1}\|_{L^\infty(\mathbb{R}^d)}+2^{2\alpha-1}\epsilon_k^{\delta(j-1)}\sum_{l=1}^k l^{-\alpha}\epsilon_k^{-\alpha(\alpha-1)}\\
			&\leq \|f_{k,\alpha}^{j-1}\|_{L^\infty(\mathbb{R}^d)}+c_\alpha\epsilon_k^{\delta(j-1)}.
		\end{align*}
		By \eqref{induction}, we obtain
		$$\|f_{k,\alpha}^j\|_{L^\infty(\mathbb{R}^d)}\leq \epsilon_k^\delta\|f_{k,\alpha}^{j-1}\|_{L^\infty(\mathbb{R}^d)}+c_\alpha\epsilon_k^{j\delta}.$$
		Then $\|f_{k,\alpha}^j\|_{L^\infty(\mathbb{R}^d)}\leq \epsilon_k^{j\delta}((j-1)c_\alpha+\|f_{k,\alpha}^1\|_{L^\infty(\mathbb{R}^d)})$, $j\geq 2$.
		We also have
		\begin{equation}\label{inftystart}
			\|f_{k,\alpha}^1\|_{L^\infty(\mathbb{R}^d)}\leq c_\alpha+\sup_x\int_0^1 \frac{dy}{|x-y|^\alpha}\leq C_\alpha.
		\end{equation}
		Thus $\|f_{k,\alpha}^j\|_{L^\infty(\mathbb{R}^d)}\leq C_\alpha j \epsilon_k^{js}$ for any $j\geq 1$.
		
		Let
		\begin{equation*}
			r = p,\quad \mu = d-\frac{\delta}{p'} = \frac{d}{p}+\frac{\alpha}{p'} = \frac{\delta}{p}+\alpha
		\end{equation*}
		By \eqref{induction}, one has
		\begin{equation}\label{decom}
			\begin{aligned}
				\|f_{k,\mu}^j\|_{L^p(\mathbb{R}^d)}^p& = \epsilon_k^{(d-\alpha/p')p}\left\|\sum_{l=0}^{k-1}f_{k,\mu}^{j-1}(x-l\epsilon_k^{\alpha-1}\mathfrak{v})\right\|_{L^p(\mathbb{R}^d)}^p\\
				& = \epsilon_k^{\delta p+\alpha}\sum_{m=0}^{k-1}\left\|\sum_{l=0}^{k-1}f_{k,\mu}^{j-1}(x-l\epsilon_k^{\alpha-1}\mathfrak{v})\right\|_{L^p(B_m)}^p\\
				&\quad+ \epsilon_k^{\delta p+\alpha}\left\|\sum_{l=0}^{k-1}f_{k,\mu}^{j-1}(x-l\epsilon_k^{\alpha-1}\mathfrak{v})\right\|_{L^p(\Omega)}^p\\
				&:=\epsilon_k^{\delta p+\alpha} \sum_{l=0}^{k-1}I_m+J,
			\end{aligned}
		\end{equation}
		where
		$$B_l = \{x: |x-l\epsilon_k^{\alpha-1}\mathfrak{v}|\leq \epsilon_k^{\alpha-1}/2\},\quad \Omega = \mathbb{R}^d\setminus(\cup_{l=0}^{k-1}B_l).$$
		For any $x\in \Omega$, we have
		\begin{align}\label{thetailterm}
			\sum_{l=0}^{k-1}f_{k,\mu}^{j-1}(x-l\epsilon_k^{\alpha-1}\mathfrak{v}) \sim |E_k^{j-1}|\sum_{l=0}^{k-1}|x-l\epsilon_k^{\alpha-1}\mathfrak{v}|^{-\mu}.
		\end{align}
		By scaling, we obtain
		\begin{equation}\label{tailterm}
			\begin{aligned}
				&\quad\left\|\sum_{l=0}^{k-1}|x-l\epsilon_k^{\alpha-1}\mathfrak{v}|^{-\mu}\right\|_{L^p(\Omega)}^p \\
				&= \epsilon_k^{(d-\mu p)(\alpha-1)}\int_{(\cup_l B(l\mathfrak{v},1/2))^c}\left(\sum_{l=0}^{k-1}\frac{1}{|y-l\mathfrak{v}|^\mu}\right)^p~dy,
			\end{aligned}
		\end{equation}
		where $B(z,r)$ denotes the closed ball in $\mathbb{R}^d$ centered at $z$ with radius $r$. If $0<\mu<1$, then for any $y\in (\cup_l B(l\mathfrak{v},1/2))^c$, 
		$$\sum_{l=0}^{k-1}\frac{1}{|y-l\mathfrak{v}|^\mu}\lesssim k^{1-\mu},$$
		and hence
		\begin{align*}
			\int_{(\cup_l B(l\mathfrak{v},1/2))^c}\left(\sum_{l=0}^{k-1}\frac{1}{|y-l\mathfrak{v}|^\mu}\right)^p~dy&\lesssim \int_{|y|\geq 2k}\frac{k^p}{|y|^{\mu p}}~dy+ k^{d+(1-\mu)p}\\
			&\lesssim k^{d+(1-\mu)p}.
		\end{align*}
		If $\mu\geq 1$, we have $d\geq 2$. Let $C=\{y:|y|\geq 1/2,y = \lambda \mathfrak{v}+z, (z,\mathfrak{v}) = 0, |z|\leq 2k, |\lambda|\leq 2/3\}$. Then
		\begin{align*}
			&\quad\int_{(\cup_l B(l\mathfrak{v},1/2))^c}\left(\sum_{l=0}^{k-1}\frac{1}{|y-l\mathfrak{v}|^\mu}\right)^p~dy\\
			&\lesssim \int_{|y|\gtrsim k}\frac{k^p}{|y|^{\mu p}}~dy+k \int_{C}\left(\sum_{l=-k}^{k-1}\frac{1}{|y-l\mathfrak{v}|^\mu}\right)^p~dy\\
			&\lesssim k^{d+(1-\mu)p}+k \int_{C}\left(\sum_{l=-k}^{k-1}\frac{1}{|y-l\mathfrak{v}|^\mu}\right)^p~dy.
		\end{align*}
		Thus
		\begin{align*}
			&\quad \int_{C}\left(\sum_{l=-k}^{k-1}\frac{1}{|y-l\mathfrak{v}|^\mu}\right)^p~dy\\
			&\lesssim \int_0^{2k}\rho^{d-2}d\rho\int_{-2/3}^{2/3} \left(\sum_{l=-k}^{k-1}\frac{\chi_{\rho+|\lambda|\gtrsim 1}}{|\lambda-l|^\mu+\rho^\mu}\right)^p~d\lambda\\
			&\lesssim \sum_{n=0}^{2k}\int_{n}^{n+1}(n+1)^{d-2}~d\rho\int_{-2/3}^{2/3} \left(\sum_{l=-k}^{k-1}\frac{\chi_{\rho+|\lambda|\gtrsim 1}}{|\lambda-l|^\mu+\rho^\mu}\right)^p~d\lambda\\
			&\lesssim  \sum_{n=1}^{2k}n^{d-2}\left(\sum_{l=1}^{k}\frac{1}{l^\mu+n^\mu}\right)^p.
		\end{align*}
		If $\mu>1$, we obtain
		\begin{align*}
			\sum_{n=1}^{2k}n^{d-2}\left(\sum_{l=1}^{k}\frac{1}{l^\mu+n^\mu}\right)^p&\lesssim \sum_{n=1}^{2k}n^{d-2}n^{(1-\mu)p}\sim k^{d+(1-\mu)p-1}.
		\end{align*}
		If $\mu = 1$, we obtain
		\begin{align*}
			\sum_{n=1}^{2k}n^{d-2}\left(\sum_{l=1}^{k}\frac{1}{l+n}\right)^p&\lesssim \sum_{n=1}^{2k}n^{d-2}(1+\log (2k/n))\\
			&\sim k^{d-1}.
		\end{align*}		
		Thus
		\begin{equation}\label{tailmainesti}
			\left\|\sum_{l=0}^{k-1}|x-l\epsilon_k^{\alpha-1}\mathfrak{v}|^{-\mu}\right\|_{L^p(\Omega)}^p\lesssim \epsilon_k^{(d-\mu p)(\alpha-1)-\alpha(d+(1-\mu)p)}\sim \epsilon_k^{-\alpha}.
		\end{equation}
		By \eqref{thetailterm} and \eqref{tailmainesti}, one has
		$$J\lesssim \epsilon_k^{\delta p+\alpha} |E_k^{j-1}|^p\epsilon_k^{-\alpha} \sim \epsilon_k^{\delta jp}.$$
		We now estimate $I_m$. For $x\in B_m$, we have
		\begin{align*}
			\sum_{l=0}^{k-1}f_{k,\mu}^{j-1}(x-l\epsilon_k^{\alpha-1}\mathfrak{v})&\leq f_{k,\mu}^{j-1}(x-m\epsilon_k^{\alpha-1}\mathfrak{v})+\sum_{l\neq m}2^\mu |E_k^{j-1}||x-l\epsilon_k^{\alpha-1}\mathfrak{v}|^{-\mu}\\
			&\leq f_{k,\mu}^{j-1}(x-m\epsilon_k^{\alpha-1}\mathfrak{v})+c_\alpha\epsilon_k^{(j-1)\delta }\sum_{l=1}^kl^{-\mu}\epsilon_k^{-\mu(\alpha-1)}\\
			&:= f_{k,\mu}^{j-1}(x-m\epsilon_k^{\alpha-1}\mathfrak{v})+\epsilon_k^{(j-1)\delta }A.
		\end{align*}
		Here $A$ satisfies
		\begin{equation*}
			A\sim \left\{
			\begin{aligned}
				&\epsilon_k^{\delta/p}, &0<\mu<1;\\
				&\epsilon_k^{(1-\alpha)}\log (1+\epsilon_k^{-1}), & \mu = 1;\\
				&\epsilon_k^{\mu(1-\alpha)},& \mu>1.
			\end{aligned}
			\right.
		\end{equation*}
		Then by \eqref{decom}, $|a+b|^p\leq |a|^p+C_p|a|^{p-\varepsilon}|b|^{\varepsilon}+C_p|b|^p$ for any $0\leq \varepsilon\leq 1$, and $\epsilon_k^{\delta p+(j-1)\delta p}A^p |B_0|\lesssim \epsilon_k^{\delta j p}$, we obtain
		\begin{align*}
			&\quad\|f_{k,\mu}^j\|_{L^p(\mathbb{R}^d)}^p\\
			& \leq \epsilon_k^{\delta p+\alpha}k\|f_{k,\mu}^{j-1}(x)+\epsilon_k^{(j-1)\delta }A\|_{L^p(B_0)}^p+C\epsilon_k^{\delta jp}\\
			& \leq\epsilon_k^{\delta p}\|f_{k,\mu}^{j-1}(x)\|_{L^p(B_0)}^p+C_p\epsilon_k^{\delta p}(A\epsilon_k^{ (j-1)\delta})^\varepsilon\int_{B_0}|f_{k,\mu}^{j-1}(x)|^{p-\varepsilon}~dx+C'\epsilon_k^{\delta jp}.
		\end{align*}
		Choose $0<\varepsilon\ll 1$ such that $(p-\varepsilon)\mu>d$. Then by \eqref{largex}, one has
		\begin{align*}
			&\quad \epsilon_k^{\delta p}(A\epsilon_k^{ (j-1)\delta})^\varepsilon\int_{B_0\setminus B(0,2)}|f_{k,\mu}^{j-1}(x)|^{p-\varepsilon}~dx\\
			&\sim \epsilon_k^{\delta p}(A\epsilon_k^{ (j-1)\delta})^\varepsilon\int_2^{\epsilon_k^{\alpha-1}}\rho^{-\mu (p-\varepsilon)}|E_k^{j-1}|^{p-\varepsilon}\rho^{d-1}~d\rho\\
			&\lesssim \epsilon_k^{\delta jp} A^\varepsilon\lesssim \epsilon_k^{\delta jp}.
		\end{align*}
		By the H\"{o}lder inequality, we obtain
		\begin{align*}
			&\quad\|f_{k,\mu}^j\|_{L^p(\mathbb{R}^d)}^p \\
			&\leq\epsilon_k^{\delta p}\|f_{k,\mu}^{j-1}(x)\|_{L^p(B_0)}^p+C\epsilon_k^{\delta p}(A\epsilon_k^{ (j-1)\delta})^\varepsilon\|f_{k,\mu}^{j-1}(x)\|_{L^p(B(0,2))}^{p-\varepsilon}+C''\epsilon_k^{\delta jp}.
		\end{align*}
		Let $a_{k,j} = \|f_{k,\mu}^j\|_{L^p(\mathbb{R}^d)}^p/\epsilon_k^{\delta jp}$, then
		\begin{equation}\label{inductione}
			a_{k,j}\leq a_{k,j-1}+CA^\varepsilon a_{k,j-1}^{(p-\varepsilon)/p}+C''.
		\end{equation}
		By \eqref{largex} and \eqref{inftystart}, we have
		\begin{equation}\label{inductionfirst}
			\begin{aligned}
				a_{k,1} &= \epsilon_k^{-\delta p}\|f_{k,\mu}^1\|_{L^p(\mathbb{R}^d)}^p\\
				&= \epsilon_k^{-\delta p}(\|f_{k,\mu}^1(x)\|_{L^p_{|x|\geq 2}}^p+\|f_{k,\mu}^1(x)\|_{L^p_{|x|\leq 2}}^p)\\
				& \lesssim \epsilon_k^{-\delta p}\left(\int_{|x|\geq 2}|x|^{-\mu p}|E_k^1|^p~dx+\|f_{k,\mu}^1(x)\|_{L^\infty(\mathbb{R}^d)}^p\right)\\
				&\lesssim 1.
			\end{aligned}
		\end{equation}
		Let $b_j = \limsup_{k\rightarrow\infty}a_{k,j}$. By \eqref{inductione} and \eqref{inductionfirst}, we obtain 
		$b_j\leq b_{j-1}+C$, $b_1\lesssim 1$. Thus $b_j\lesssim j$.
		
		By \eqref{thetailterm}--\eqref{tailterm}, we also have (without loss of generality $\tilde{c} = \tilde{c}(d,\delta ,p)\ll 1$)
		\begin{align*}
			J&\geq c\epsilon_k^{\delta p+\alpha} |E_k^{j-1}|\epsilon_k^{(d-\mu p)(\alpha-1)}\int_{\mathbb{R}^d}\frac{k^p}{|y|^{\mu p}}\chi_{|y|\geq 2k}~dy\\
			&\geq \tilde{c} \epsilon_k^{\delta p+\alpha+(j-1)\delta p+(d-\mu p)(\alpha-1)-\alpha(p-\mu p+d)} = \tilde{c} \epsilon_k^{j\delta p}.
		\end{align*}
		At the same time,
		\begin{align*}
			I_m\geq \|f_{k,\mu}^{j-1}(x-m\epsilon_k^{\alpha-1}\mathfrak{v})\|_{L^p(B_m)}^p &= \|f_{k,\mu}^{j-1}(x)\|_{L^p(B_0)}^p\\
			& = \|f_{k,\mu}^{j-1}\|_{L^p(\mathbb{R}^d)}^p-\|f_{k,\mu}^{j-1}\|_{L^p(B_0^c)}^p.
		\end{align*}
		Thus
		\begin{align*}
			\|f_{k,\mu}^{j-1}\|_{L^p(B_0^c)}^p\sim \int_{\epsilon_k^{\alpha-1}}^\infty \left(\frac{|E_k^{j-1}|}{\rho^{\mu}}\right)^{p}\rho^{d-1}~d\rho \sim \epsilon_k^{(j-1)\delta p+(\alpha-1)(d-\mu p)}.
		\end{align*}
		Note that $(\alpha-1)(d-\mu p) = (1-\alpha)\alpha p/p'>0$. Thus
		\begin{align*}
			\|f_{k,\mu}^j\|_{L^p(\mathbb{R}^d)}^p &\geq \epsilon_k^{\delta p+\alpha} k(\|f_{k,\mu}^{j-1}\|_{L^p(\mathbb{R}^d)}^p-\|f_{k,\mu}^{j-1}\|_{L^p(B_0^c)}^p)+\tilde{c}\epsilon_k^{j\delta p}\\
			&\geq \epsilon_k^{\delta p}\|f_{k,\mu}^{j-1}\|_{L^p(\mathbb{R}^d)}^p+\tilde{c}\epsilon_k^{j\delta p}-\tilde{C}\epsilon_k^{j\delta p+(1-\alpha)\alpha p/p'}.
		\end{align*}
		Let $c_j = \liminf_{k\rightarrow\infty}a_{k,j}$. Then $c_j\geq c_{j-1}+\tilde{c}$. By \eqref{largex} and \eqref{inductionfirst}, we also have $a_{k,1}\gtrsim 1$. Thus $c_1\gtrsim 1$, and by induction we obtain $c_j\gtrsim j$, thereby finishing the proof.
	\end{proof}
	
	\begin{prop}\label{counterp1}
		Let $1 = p_1<q<p_2<\infty$, $0<s_1-s_2\leq d/p_2'$, $\theta_1 = 1-p_2'/q'$. Then $C(d,s_1,p_1,s_2,p_2,\theta_1,q)=\infty$.
	\end{prop}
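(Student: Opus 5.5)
We first reduce to the range of $s_1-s_2$ where Lemma \ref{thekeylemma} applies, and then test the ratio in \eqref{formalin} on the fractal sets $E_k^j$. Write $\delta:=(s_1-s_2)p_2'\in(0,d]$.

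If $\delta=d$, i.e.\ $s_1-s_2=d/p_2'$, the case is already covered by Proposition \ref{scalingcritical}. Indeed $p_2'/q'<1<p_2/q$ because $q<p_2$, so $\theta_1=1-p_2'/q'>1-p_2/q$.

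If $0<\delta<d$, pick $d_0\le d$ with $d_0-1<\delta\le d_0$. When $\delta=d_0$, Proposition \ref{scalingcritical} in dimension $d_0$ gives $C(d_0,\dots)=\infty$. Otherwise $d_0-1<\delta<d_0$, and we prove $C(d_0,\dots)=\infty$ directly. In either case Lemma \ref{dimensionreduction} lifts the conclusion to dimension $d$. So from now on assume $d-1<\delta<d$.

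Next we identify the exponents. With $\theta_1=1-p_2'/q'$ one computes $\theta_1+\theta_2/p_2-1/q=1-\theta_2/p_2'-1/q=0$. Hence \eqref{scalingandhighfre} forces $s=\theta_1s_1+\theta_2s_2$, and therefore $s_1-s=\theta_2(s_1-s_2)=\delta/q'$ and $s_2-s=\delta/q'-\delta/p_2'$. We take $u$ with $D^{s}u=D^{-\delta/q'}u_k^j$, that is, $u=D^{-s-\delta/q'}u_k^j$. Then
$$D^{s_1}u=u_k^j,\qquad D^{s_2}u=D^{-\delta/p_2'}u_k^j .$$
Lemma \ref{thekeylemma} (with $p=q$ and with $p=p_2$, both in $(1,\infty)$) gives, for fixed $j$ and $k$ large,
$$\frac{\|D^su\|_{L^q}}{\|D^{s_1}u\|_{L^1}^{\theta_1}\|D^{s_2}u\|_{L^{p_2}}^{\theta_2}}\gtrsim\frac{j^{1/q}\epsilon_k^{j\delta}}{\epsilon_k^{j\delta\theta_1}\,(j^{1/p_2}\epsilon_k^{j\delta})^{\theta_2}}=j^{\frac1q-\frac{\theta_2}{p_2}}.$$
Since $\theta_2/p_2=\frac{1}{q'(p_2-1)}$ and $p_2>q$, we have $q'(p_2-1)>q'(q-1)=q$, so the exponent $1/q-\theta_2/p_2$ is positive. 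Letting $j\to\infty$ makes the ratio unbounded.

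The main obstacle is admissibility: $u_k^j$ is an indicator function, whereas the sup in \eqref{formalin} runs over $u$ with $\mathscr{F}u\in C_0^\infty(\mathbb{R}^d\setminus\{0\})$. To fix this we replace $u_k^j$ by $g_N:=\sum_{|m|\le N}\Delta_m(\rho_\eta*u_k^j)$, with $\rho_\eta$ a mollifier, and set $u_N=D^{-s-\delta/q'}g_N$. Then $\mathscr{F}u_N$ is smooth with compact support away from the origin.

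We need the three quantities for $u_N$ to converge, as $N\to\infty$ and $\eta\to0$, to those for $u$ up to constants.
\begin{itemize}
\item $L^1$ term: $\|\sum_{|m|\le N}\Delta_m f\|_{L^1}\le 2\|\mathscr{F}^{-1}\varphi\|_{L^1}\|f\|_{L^1}$, exactly as in Lemma \ref{cutdirac}, so $\|D^{s_1}u_N\|_{L^1}\lesssim\|u_k^j\|_{L^1}$.
\item $L^q$ and $L^{p_2}$ terms: $D^{-\beta}$ with $0<\beta<d$ is the Riesz potential, which maps $L^1\cap L^\infty$ into $L^r$ for $r>d/(d-\beta)$; this covers $\beta=\delta/q'$ and $\beta=\delta/p_2'$. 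The Littlewood--Paley truncations converge in $L^r$ for $1<r<\infty$. Together these give convergence of $\|D^{s}u_N\|_{L^q}$ and $\|D^{s_2}u_N\|_{L^{p_2}}$ to the corresponding norms of $u$.
\end{itemize}
I expect checking these limits carefully to be the only delicate point. The quantitative core is the bound from Lemma \ref{thekeylemma} above.
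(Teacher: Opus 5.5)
Your proposal is correct and is essentially the paper's proof. It uses the same reduction via Proposition \ref{scalingcritical} and Lemma \ref{dimensionreduction} to the range $d-1<\delta<d$, the same test functions $D^{-s-\delta/q'}u_k^j$ with Lemma \ref{thekeylemma} giving the growth $j^{1/q-\theta_2/p_2}=j^{p_2'(1/q-1/p_2)}$, and a smooth frequency truncation to make the test functions admissible. The paper truncates with $\varphi(\epsilon|\xi|)-\varphi(|\xi|/\epsilon)$; your mollifier is harmless but unnecessary, since $u_k^j$ has compact support and so $\widehat{u_k^j}$ is already smooth.
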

	\begin{proof}[\textbf{Proof}]
		Let $\delta = (s_1-s_2)p_2'$. Then $s_1-s = \theta_2(s_1-s_2) = \delta/q'$. If $d-1<\delta<d$, then for any $0<\epsilon<1$, we define
		\begin{equation*}
			u_{k,\epsilon}^j = \mathscr{F}^{-1}(\varphi(\epsilon|\xi|)-\varphi(|\xi|/\epsilon))\mathscr{F}u_k^j.
		\end{equation*}
		For any $\alpha\in \mathbb{R}$, $1\leq p\leq \infty$, one has $\|D^\alpha u_{k,\epsilon}^j\|_{L^p(\mathbb{R}^d)}\lesssim \|D^\alpha u_k^j\|_{L^p(\mathbb{R}^d)}$ by the Young inequality. Due to $1<q<\infty$, we also have $\lim_{\epsilon\rightarrow 0+}\|D^\alpha u_{k,\epsilon}^j\|_{L^q(\mathbb{R}^d)}\sim \|D^\alpha u_k^j\|_{L^q(\mathbb{R}^d)}$ for any $\alpha\in \mathbb{R}$. Then by Lemma \ref{thekeylemma}, we obtain
		\begin{align*}
			C(d,s_1,p_1,s_2,p_2,\theta_1,q)&\geq\sup_k\sup_{0<\epsilon<1} \frac{\|D^{s-s_1}u_{k,\epsilon}^j\|_{L^q(\mathbb{R}^d)}}{\|u_{k,\epsilon}^j\|_{L^1(\mathbb{R}^d)}^{1-p_2'/q'}\|D^{s_2-s_1}u_{k,\epsilon}^j\|_{L^{p_2}(\mathbb{R}^d)}^{p_2'/q'}}\\
			&\gtrsim\sup_k \frac{\|D^{-\delta/q'}u_{k}^j\|_{L^q(\mathbb{R}^d)}}{\|u_{k}^j\|_{L^1(\mathbb{R}^d)}^{1-p_2'/q'}\|D^{-\delta/p_2'}u_{k}^j\|_{L^{p_2}(\mathbb{R}^d)}^{p_2'/q'}}\\
			&\gtrsim \frac{j^{1/q}}{1\cdot (j^{1/p_2})^{p_2'/q'}}\\
			&\sim j^{p_2'(\frac{1}{q}-\frac{1}{p_2})}\rightarrow \infty,~\mbox{ as }j\rightarrow\infty.
		\end{align*}
		By Proposition \ref{scalingcritical}, the conclusion holds for $s_1-s_2 = d/p_2'$. Combining with Lemma \ref{dimensionreduction}, we finish the proof.
	\end{proof}
	
	The same argument as in Proposition \ref{counterp1} yields the following conclusion.
	\begin{prop}\label{inftysubcounter}
		Let $p_1 = 1$, $p_2 = \infty$, $1<q<2$, $0<s_1-s_2<d$, $\theta_1 = 1/q$. Then $C(d,s_1,p_1,s_2,p_2,\theta_1,q)=\infty$.
	\end{prop}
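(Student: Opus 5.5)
We follow the proof of Proposition \ref{counterp1} and test the inequality on the Brezis--Mironescu type sets $E_k^j$. Put $\delta=(s_1-s_2)$; for $p_2=\infty$ we have $p_2'=1$, so $\delta=(s_1-s_2)p_2'$ is consistent with the earlier notation and $0<\delta<d$. By Lemma \ref{dimensionreduction}, it suffices to treat $d-1<\delta<d$: for a given $\delta\in(0,d)$ take $d_0=\lceil\delta\rceil$, which satisfies $d_0-1<\delta<d_0$ and $d_0\le d$. The case $\delta=d_0$ cannot occur since $\delta<d$ and $\delta$ is non-integer there. If $\delta$ is an integer, say $\delta=m<d$, we instead take $d_0=m$, i.e. $\delta=d_0$; this is the endpoint case $s_1-s_2=d_0$ with $p_2=\infty$. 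It is covered by Proposition \ref{scalingcritical}, since $\theta_1=1/q>1-1/q$ exactly because $q<2$.

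Next we reduce the exponents. Scaling gives $s=\theta_1 s_1+\theta_2 s_2$ together with $d(\theta_1-1/q)=0$, which is consistent with $\theta_1=1/q$. Hence $s_1-s=\theta_2\delta=\delta/q'$ and $s_2-s_1=-\delta$. Applying the inequality to $D^{-s_1}$ of a test function, it becomes
$$\|D^{-\delta/q'}u\|_{L^q}\lesssim \|u\|_{L^1}^{1/q}\|D^{-\delta}u\|_{L^\infty}^{1/q'}.$$
We insert $u=u_{k,\epsilon}^j$, the frequency-truncated version of $u_k^j=\chi_{E_k^j}$ from the proof of Proposition \ref{counterp1}. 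The Young inequality controls the right-hand side by the corresponding quantities for $u_k^j$. Since $1<q<\infty$, letting $\epsilon\to0$ recovers $\|D^{-\delta/q'}u_k^j\|_{L^q}$ up to constants on the left.

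Lemma \ref{thekeylemma}, applied with $p=q$, gives:
\begin{itemize}
\item $\|u_k^j\|_{L^1}=\epsilon_k^{j\delta}$;
\item $\|D^{-\delta}u_k^j\|_{L^\infty}\lesssim j\epsilon_k^{j\delta}$;
\item $\|D^{-\delta/q'}u_k^j\|_{L^q}\gtrsim j^{1/q}\epsilon_k^{j\delta}$ for large $k$.
\end{itemize}
The ratio is therefore at least
$$\frac{j^{1/q}}{j^{1/q'}}=j^{2/q-1}\to\infty\quad (j\to\infty),$$
precisely because $q<2$. This yields $C=\infty$.

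The main delicate point is the reduction to the model inequality: one must check that $D^{-\delta}$ and $D^{-\delta/q'}$ of $u_k^j$ agree, up to constants, with the Riesz potentials $f^j_{k,d-\delta}$ and $f^j_{k,d-\delta/q'}$ used in Lemma \ref{thekeylemma}. This needs $0<\delta<d$, which holds. One must also check that the truncation limit $\epsilon\to0$ is harmless in $L^\infty$ on the right-hand side; this holds because Young only requires an upper bound there. Both steps are handled exactly as in Proposition \ref{counterp1}, so no new obstacle arises.
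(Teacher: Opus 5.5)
Your proposal is correct and follows the paper's approach: the paper proves this proposition by saying it is "the same argument as in Proposition \ref{counterp1}". You carry that argument out. You test the reduced inequality $\|D^{-\delta/q'}u\|_{L^q}\lesssim \|u\|_{L^1}^{1/q}\|D^{-\delta}u\|_{L^\infty}^{1/q'}$ on the frequency-truncated $\chi_{E_k^j}$, using Lemma \ref{thekeylemma} with $p=q$, which gives the ratio $j^{2/q-1}\to\infty$. You reduce to $d_0-1<\delta<d_0$ via Lemma \ref{dimensionreduction} (legitimate here because $\theta_1=1/q$ makes the scaling condition dimension-independent) and treat integer $\delta$ with Proposition \ref{scalingcritical}.
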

	Combining Propositions \ref{scalingcritical}, \ref{counterp1}, and \ref{inftysubcounter}, we finish the proof of the “if” part of Theorem \ref{firstmain}.
	
	\section{Anisotropic Gagliardo--Nirenberg inequalities}
	If $\theta_{j_0} s_{j_0} = 0$ for some $1\leq j_0\leq n$, then by \eqref{scalingar} we have $\theta_j s_j = 0$ for any $1\leq j\leq n$. Now \eqref{aniso} is equivalent to
	\begin{equation*}
		\|u\|_{L^q(\mathbb{R}^d)}\lesssim \|u\|_{L^{p_0}(\mathbb{R}^d)}^{\theta_0}\prod_{j=1}^n\|u\|_{L^{p_j}(\mathbb{R}^d)}^{\theta_j}.
	\end{equation*}
	By the H\"{o}lder inequality, we conclude the proof. Below we would always assume $\theta_js_j>0$ for any $1\leq j\leq n$. 
	
	We define $s>0$, $1\leq p\leq \infty$, and $-\infty<r\leq \infty$ by the relations
	\begin{equation}\label{avreandin}
		\frac{d}{s} = \sum_{j=1}^n \frac{d_j}{s_j},\quad \frac{d}{p} = \sum_{j=1}^d\frac{sd_j}{s_jp_j},\quad \frac{d}{r} = \frac{d}{p}-s.
	\end{equation}
	A similar definition appears in Chapter 10 of Triebel \cite{triebel1983theory}.
	
	By \eqref{scalingar}, one has $p_0\neq r$ and
	\begin{equation}\label{thetarel}
		\theta_0 = \frac{1/q-1/r}{1/p_0-1/r},\quad \theta_j\frac{s_j}{d_j} = (1-\theta_0)\frac{s}{d},~1\leq j\leq n.
	\end{equation}
	For $k\in \mathbb{Z}^n$, define the Littlewood--Paley projection operators $P_k$ by
	$$P_k = \mathscr{F}^{-1}\left(\prod_{j=1}^n \psi(|\xi_j|/2^{k_j})\right)\mathscr{F},\quad k = (k_1,\cdots,k_n).$$
	
	\subsection{Proof of anisotropic Gagliardo--Nirenberg inequalities}\label{proofofagn}
	In this section, we show Theorem \ref{mainani}.
	\begin{prop}\label{subbern}
		Assume  \eqref{anibasicred}--\eqref{scalingar} and $\theta_0>0$. Then \eqref{aniso} holds.
	\end{prop}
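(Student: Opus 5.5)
We follow the scheme of Proposition~\ref{subcritical}: decompose $u=\sum_{k\in\mathbb{Z}^n}P_ku$, bound each piece in $L^q$ in two ways, and use that $\theta_0>0$ leaves room to sum a geometric series. By the scaling symmetries $u\mapsto \lambda u(\mu_1x_1,\dots,\mu_nx_n)$ ($n+1$ free parameters) and \eqref{scalingar}, we may normalize $\|u\|_{L^{p_0}}=1$ and $\|D_{x_j}^{s_j}u\|_{L^{p_j}}=1$ for all $j$. It then suffices to show $\|u\|_{L^q}\lesssim 1$. Under this normalization the right-hand side of \eqref{aniso} is $1$, and the general case follows by rescaling back; \eqref{scalingar} together with $\theta_0>0$ guarantees that the normalizing dilations exist.

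\emph{Piece bounds.} For a fixed block $k=(k_1,\dots,k_n)$, anisotropic Bernstein gives two estimates:
\begin{equation*}
\|P_ku\|_{L^q}\lesssim 2^{\sum_j k_jd_j(1/p_0-1/q)}
\end{equation*}
when $p_0\le q$, and
\begin{equation*}
\|P_ku\|_{L^q}\lesssim 2^{-k_is_i+\sum_j k_jd_j(1/p_i-1/q)}
\end{equation*}
for each $i$ with $p_i\le q$. The cases $p_i>q$ are handled as in Proposition~\ref{subcritical}: Hölder-interpolate $\|P_ku\|_{L^q}$ between $L^{p_0}$ and $L^{p_i}$ (or between two of the $p_i$), which are on opposite sides of $q$. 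Since $P_k$ is a product of $L^p$-bounded multipliers, the problem decouples into a one-dimensional-per-block bookkeeping of exponents. We then take the minimum of these bounds (equivalently, a convex combination of them).

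\emph{Summation.} The key step is to show that
\begin{equation*}
\sum_{k\in\mathbb{Z}^n}\min_{\text{admissible bounds}}2^{\ell(k)}<\infty,
\end{equation*}
where each bound is $2^{\ell(k)}$ with $\ell$ affine-linear on $\mathbb{Z}^n$. A sum of this form converges if and only if the origin lies in the interior of the convex hull of the gradients of the $\ell$'s. The gradient of the $L^{p_0}$ bound has $j$-th component $d_j(1/p_0-1/q)$; the gradient of the $i$-th bound differs from it by $-s_ie_i$ plus lower-order terms. Condition \eqref{scalingar}, written as \eqref{thetarel}, states exactly that the weighted combination $\theta_0\cdot\nabla\ell_0+\sum_i\theta_i\cdot\nabla\ell_i=0$. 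Because $\theta_0>0$ and $\theta_i s_i>0$, all weights are strictly positive, and the $n+1$ gradients are affinely independent: they are $n$ independent shifts in the coordinate directions $e_i$. Hence $0$ is an interior point of their simplex. This gives $\sum_k\|P_ku\|_{L^q}\lesssim1$, and therefore $\|u\|_{L^q}\lesssim 1$, after noting $u=\sum_k P_ku$ in $\mathcal{S}'$ (modulo polynomials, which are excluded since $u\in L^{p_0}$).

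\emph{Main obstacle.} The hard part is the bookkeeping when some $p_i>q$ or $p_0>q$. There one must choose the right interpolated bound so that the gradient vectors still form a nondegenerate simplex containing $0$ in its interior, i.e.\ so that the Hölder-interpolated exponents preserve the positivity of the weights. I would handle this first via the averaged quantities $s,p,r$ of \eqref{avreandin}. The key identity \eqref{thetarel} shows the scaling balance holds with the same $\theta_0$ for every direction, so each direction $j$ can be treated as in the isotropic Proposition~\ref{subcritical} with the pair $(p_0,p_j)$. A possible fallback for the cases $q=\infty$ or $p_0=\infty$ is a direct $L^\infty$ Bernstein bound, which fits the same scheme.
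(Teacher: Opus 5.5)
Your architecture is the same as the paper's: anisotropic blocks $P_k$, Bernstein--H\"older bounds of the form $2^{\ell(k)}$ with $\ell$ linear in $k$, and summability because $0$ is an interior point of the convex hull of the gradients, with \eqref{scalingar} supplying the positive weights $\theta_l$. The gap is in the interiority step.

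You prove it for the pure functionals $L_l(k)=-s_lk_l+(\frac{1}{p_l}-\frac{1}{q})\sum_jk_jd_j$ (with $s_0=0$, $k_0=0$). But $2^{L_l(k)}$ is an admissible bound only when $p_l\le q$, since Bernstein never lowers the Lebesgue exponent. Once some $p_l>q$, the identity $\sum_l\theta_l\nabla L_l=0$ is about vectors you are not allowed to use. This includes $p_0=\infty>q$, where your $L^\infty$ fallback is unavailable. You then leave the replacement argument open.

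The fix you sketch, treating each direction $j$ as in Proposition~\ref{subcritical} with the pair $(p_0,p_j)$, does not work. Every $L_l$ couples all $k_m$ through $\sum_mk_md_m$, and no single pair satisfies a balance relation of its own. The $(p_0,p_j)$-interpolant at $q$ only yields $2^{-ts_jk_j}$, which controls a single coordinate.

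\emph{The missing idea.} This is exactly how the paper proceeds. The balanced exponent $\frac{1}{\tilde p}:=\sum_{l=0}^n\frac{\theta_l}{p_l}$ satisfies $\frac{1}{\tilde p}-\frac{1}{q}=\frac{\theta_js_j}{d_j}>0$ strictly. So for a small fixed $\epsilon>0$ and \emph{every} $l\in\{0,\dots,n\}$, the exponent $\frac{1-\epsilon}{\tilde p}+\frac{\epsilon}{p_l}$ still exceeds $\frac{1}{q}$. Bernstein into $L^q$, H\"older, and $\|P_ku\|_{L^{\tilde p}}\le\prod_j\|P_ku\|_{L^{p_j}}^{\theta_j}$ then give $\|P_ku\|_{L^q}\lesssim 2^{\epsilon L_l(k)}$; the $(1-\epsilon)$-part contributes exponent $0$ by \eqref{scalingar}. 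This restores the whole simplex $\{\epsilon\nabla L_l\}_{l=0}^n$, however $p_l$ compares with $q$.

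Equivalently, your bounds (the pure ones for $p_l\le q$ and the pairwise interpolants at $q$) are exactly the vertices of $\{\lambda\in[0,1]^{n+1}:\sum_l\lambda_l=1,\ \sum_l\lambda_l/p_l\ge 1/q\}$. That polytope contains a neighbourhood of $\theta$ precisely because $1/\tilde p>1/q$, and this is the observation you would need to state and use.

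\emph{A second, smaller gap.} Your affine-independence justification is not correct as written. The difference $\nabla L_i-\nabla L_0=-s_ie_i+(\frac{1}{p_i}-\frac{1}{p_0})\bm{d}$ is a rank-one perturbation of $-\mathrm{diag}(s_i)$, not a coordinate shift plus lower-order terms. Its determinant is $(-1)^n\prod_is_i\,\bigl(1-\frac{d}{s}(\frac{1}{p}-\frac{1}{p_0})\bigr)$, which vanishes exactly when $p_0=r$. You must exclude this via \eqref{thetarel}: since $\frac{1}{q}=\frac{\theta_0}{p_0}+\frac{1-\theta_0}{r}$, $p_0=r$ would force $q=p_0$, which \eqref{anibasicred} forbids. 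The same condition, not $\theta_0>0$, is what makes your normalizing dilations solvable.

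The paper avoids the determinant altogether. It combines the bounds $2^{\epsilon L_l}$ into explicit bounds $2^{\epsilon\alpha\sum_jk_jd_j}$, $2^{-\epsilon\beta\sum_jk_jd_j}$ and $2^{-\epsilon\alpha_lk_l}$. Here $\alpha=\frac{1}{p_0}-\frac{1}{q}$ and $\beta=\frac{1}{q}-\frac{1}{r}$ are nonzero with the same sign, and $\alpha_l>0$.
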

	\begin{proof}[\textbf{Proof}]
		Let $s_0 = 0$, $k = (k_1,\cdots,k_n)$, and $k_0 = 0$ by convention. By scaling, we may assume that
		$$\|u\|_{L^{p_0}} = \|D_{x_1}^{s_1}u\|_{L^{p_1}}=\cdots = \|D^{s_d}_{x_d}u\|_{L^{p_d}} = 1.$$
		Let
		$$\frac{1}{\tilde{p}} = \sum_{j=0}^n\frac{\theta_j}{p_j} = \frac{\theta_0}{p_0}+\frac{1-\theta_0}{p},\quad \frac{1}{\tilde{p}}-\frac{1}{q} = \frac{\theta_j s_j}{d_j} = (1-\theta_0)\frac{s}{d}.$$
		Applying Bernstein's and Hölder's inequalities, for $0\leq \epsilon\ll 1$, we have
		\begin{align*}
			\|P_ku\|_{L^q}&\lesssim 2^{((1-\epsilon)(1/\tilde{p}-1/q)+\epsilon (1/p_l-1/q))\sum_{j=1}^nk_jd_j} \|P_ku\|_{L^{\tilde{p}}}^{1-\epsilon}\|P_ku\|_{L^{p_l}}^{\epsilon}\\
			&\lesssim 2^{-\epsilon k_ls_l+((1-\epsilon)(1/\tilde{p}-1/q)+\epsilon (1/p_l-1/q))\sum_{j=1}^nk_jd_j} \prod_{j=0}^n \|P_ku\|_{L^{p_j}}^{\theta_j(1-\epsilon)}\\
			&\lesssim 2^{-\epsilon k_ls_l-\sum_{j=1}^n(1-\epsilon)\theta_jk_js_j+((1-\epsilon)(1/\tilde{p}-1/q)+\epsilon (1/p_l-1/q))\sum_{j=1}^nk_jd_j}\\
			&\sim 2^{\epsilon(-k_ls_l+ (1/p_l-1/q)\sum_{j=1}^nk_jd_j)}.
		\end{align*}
		Then by the triangle inequality, we obtain
		\begin{align*}
			\|u\|_{L^q}\leq \sum_{k\in \mathbb{Z}^n}\|P_ku\|_{L^q} \lesssim \sum_{k\in \mathbb{Z}^n}\min_{0\leq l\leq n}\{2^{\epsilon(-s_lk_l+(\frac{1}{p_l}-\frac{1}{q})\sum_{j=1}^dk_jd_j)}\}.
		\end{align*}
		Note that $\alpha:=1/p_0-1/q \neq 0$ and
		$$ \sum_{l=1}^d\frac{d_ls}{ds_l}\left(-s_lk_l+(\frac{1}{p_l}-\frac{1}{q})\sum_{j=1}^nk_jd_j\right) = (\frac{1}{r}-\frac{1}{q})\sum_{l=1}^dk_ld_l:= -\beta\sum_{l=1}^dk_ld_l,$$
		where $\beta = 1/q-1/r\neq 0$. Note that $\theta_0 = \beta/\alpha>0$. We can choose $\gamma_{l,1}>0$, $\gamma_{l,2},\gamma_{l,3}\geq 0$ such that $\gamma_{l,1}+\gamma_{l,2}+\gamma_{l,3} = 1$ and $\gamma_{l,1}(1/p_l-1/q)+\gamma_{l,2}\alpha -\gamma_{l,3}\beta = 0$. Then
		\begin{align*}
			&\quad\gamma_{l,1}\left(-s_lk_l+(\frac{1}{p_l}-\frac{1}{q})\sum_{j=1}^nk_jd_j\right)+\gamma_{l,2}\alpha\sum_{j=1}^dk_jd_j-\gamma_{l,3}\beta\sum_{j=1}^nk_jd_j\\
			& = -\gamma_{l,1}s_lk_l:=-\alpha_l k_j,
		\end{align*}
		where $\alpha_l = \gamma_{l,1} s_l > 0$.
		\begin{align*}
			\|u\|_{L^q}&\lesssim \sum_{k\in \mathbb{Z}^n}\min\{2^{\epsilon\alpha\sum_{j=1}^n k_jd_j},2^{-\epsilon\beta\sum_{j=1}^n k_jd_j},2^{-\epsilon\alpha_1k_1},\cdots,2^{-\epsilon\alpha_d k_d}\}\\
			&\lesssim 1.
		\end{align*}
		We finish the proof.
	\end{proof}
	
	For the critical case, we use the Littlewood--Paley theory.
	
	\begin{prop}\label{crilit}
		Assume  \eqref{anibasicred}--\eqref{scalingar}, $\theta_0=0$, $1<q<\infty$, and $p_j>1$ for any $1\leq j\leq n$.
		Then \eqref{aniso} holds.
	\end{prop}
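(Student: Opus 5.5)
We reduce Proposition \ref{crilit} to the one-parameter case treated in Proposition \ref{littlewood}, applied along each direction $x_j$ separately, and then combine the directions through a product Littlewood--Paley square function.

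\textbf{Setting.} With $\theta_0=0$, relation \eqref{scalingar} reads
$$\frac{\theta_js_j}{d_j}=\sum_{i=1}^n\frac{\theta_i}{p_i}-\frac1q=:\sigma>0,\qquad 1\le j\le n.$$

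\textbf{Step 1: scalar reduction.} We first record an elementary fact about a family of numbers $a_k$, $k\in\mathbb{Z}^n$. Suppose that for each $j$ we have $|a_k|\le 2^{-k_js_j}b^{(j)}_k$. Then
$$|a_k|\le \prod_j\big(2^{-k_js_j}b^{(j)}_k\big)^{\theta_j}=2^{-\sigma\sum_j k_jd_j}\prod_j (b^{(j)}_k)^{\theta_j}.$$
This bound alone does not sum in $k$. The actual argument must therefore use the level-set (good-$\lambda$) approach of Proposition \ref{littlewood}, adapted to several parameters.

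\textbf{Step 2: product square function.} By product Littlewood--Paley theory on $\mathbb{R}^{d_1}\oplus\cdots\oplus\mathbb{R}^{d_n}$, valid for $1<q,p_j<\infty$, we have
$$\|u\|_{L^q}\sim\big\||P_ku|_{l^2_k}\big\|_{L^q},\qquad \|D^{s_j}_{x_j}u\|_{L^{p_j}}\sim\big\||2^{k_js_j}P_ku|_{l^2_k}\big\|_{L^{p_j}}.$$
The second equivalence also uses that the multiplier $|\xi_j|^{s_j}2^{-k_js_j}\psi(|\xi_j|/2^{k_j})$ is a Marcinkiewicz-type multiplier.

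\textbf{Step 3: pointwise interpolation.} We aim to prove, pointwise in $x$,
$$|P_ku(x)|_{l^2_k}\lesssim \prod_{j=1}^n\Big(\mathcal M_j\big(|2^{k_js_j}P_ku|_{l^2_k}\big)(x)\Big)^{\theta_j}\cdot(\text{a factor absorbed by Hölder}),$$
where $\mathcal M_j$ is a suitable (strong or fractional) maximal operator. The inequality would then follow from Hölder's inequality with exponents $p_j/\theta_j$, using that $\sum_j\theta_j/p_j=1/q+\sigma$.

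Because of the excess $\sigma$, the naive product is not scale-balanced. The plan is instead to follow the proof of Proposition \ref{littlewood} in each variable. For a fixed direction $j$, apply the one-dimensional-in-$x_j$ Gagliardo--Nirenberg/Sobolev argument with mixed norms, i.e.\ Sobolev embedding in $x_j$ with the other variables frozen.

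\textbf{Step 4: induction on $n$.} Concretely, proceed by induction on $n$. First use the isotropic Sobolev inequality in the block $x_j$ in mixed-norm form,
$$\|D^{t}_{x_j}v\|_{L^{a}_{x_j}}\lesssim\|D^{s_j}_{x_j}v\|_{L^{b}_{x_j}},\qquad t=s_j-d_j\Big(\tfrac1b-\tfrac1a\Big).$$
Combine it with the fractional Gagliardo--Nirenberg inequality of Proposition \ref{littlewood}, applied in $x_j$ with the remaining variables as parameters, and with Minkowski's inequality to interchange the order of the mixed norms.

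A cleaner route is to prove the inequality first for the endpoint configuration $p_j=\tilde p_j$ with $d_j(1/\tilde p_j-1/q)=s_j$. In that case each factor individually controls $\|u\|_{L^q}$ only in its own direction, and the product inequality follows from the Lorentz-space refinement of the Sobolev inequality (through level sets, exactly as in Proposition \ref{endsobo}). The general case then follows by interpolating via Hölder between such configurations.

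\textbf{Main obstacle.} The hardest step is exactly this decoupling of directions. Mixed norms $L^{p_j}_{x_j}L^{p_i}_{x_i}$ do not commute when $p_i\ne p_j$, so the induction requires a careful ordering of the variables, for instance ordering so that $p_j$ is increasing and using Minkowski's inequality in the favorable direction. Failing that, one would need a genuinely multiparameter distribution-function argument in the spirit of Proposition \ref{littlewood}, choosing cutoffs $K_j(\lambda)$ in every direction simultaneously.
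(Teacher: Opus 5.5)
Your proposal never resolves the step you yourself single out as the main obstacle, decoupling the directions, and the two concrete routes you offer do not work as stated.

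The ``cleaner route'' rests on an inequality that violates scaling. With $\theta_0=0$, \eqref{scalingar} gives $\theta_js_j/d_j=\sigma:=\sum_i\theta_i/p_i-1/q$ for every $j$. Imposing $s_j=d_j(1/\tilde p_j-1/q)$ for all $j$ then gives $\theta_j(1/\tilde p_j-1/q)=\sigma$, and summing over $j$ yields $\sigma=n\sigma$, i.e.\ $\sigma=0$. This contradicts $\theta_js_j>0$ when $n\ge 2$. Moreover, ``interpolating via H\"older'' between product inequalities with different exponents is not a valid mechanism.

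The one-block reduction of Step 4 works only in part of the range. After applying Proposition \ref{littlewood} in $x_n$, the factor $\|D^{s_n}_{x_n}u\|_{L^{p_n}_{x_n}}^{\theta_n}$ lies only in $L^{p_n/\theta_n}_{x'}$, so the H\"older step into $L^q_{x'}$ needs $\theta_n/p_n<1/q$. This fails in every direction for, e.g., $n=2$, $d_1=d_2=1$, $\theta_1=\theta_2=\tfrac12$, $p_1=p_2=2$, $q=5$, $s_1=s_2=\tfrac35$, where $\theta_j/p_j=\tfrac14>\tfrac15$. Finally, your Step 2 excludes $p_j=\infty$, which the statement allows.

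The missing idea is the paper's case split: order $p_1\le\cdots\le p_n$ and induct on $n$.

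If $p_n<q$, your Step 3 was the right route, and you abandoned it too early. Distribute the excess as $\sigma\sum_lk_ld_l=\sum_j\theta_j(1/p_j-1/q)\sum_lk_ld_l$, i.e.\ write $|P_ku|=\prod_j\bigl(2^{s_jk_j-(1/p_j-1/q)\sum_lk_ld_l}|P_ku|\bigr)^{\theta_j}$, whose $\theta_j$-weighted exponents sum to zero by \eqref{scalingar}. H\"older in $l^2_k$ and in $L^q$, with product Littlewood--Paley theory at both ends, then bounds $\|u\|_{L^q}$ by $\prod_j\|D^{s_j}_{x_j}D_{x_1}^{-d_1(1/p_j-1/q)}\cdots D_{x_n}^{-d_n(1/p_j-1/q)}u\|_{L^q}^{\theta_j}$. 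Each factor is $\lesssim\|D^{s_j}_{x_j}u\|_{L^{p_j}}^{\theta_j}$ by the Sobolev inequality in each block plus Minkowski, which is legitimate precisely because $p_j<q$. No maximal functions or level sets are needed.

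If $p_n\ge q$, peel off $x_n$:
\begin{enumerate}
\item Apply Proposition \ref{littlewood} in $x_n$ between $L^\gamma_{x_n}$, with $1/\gamma=\sum_{l<n}\theta_l/((1-\theta_n)p_l)$, and $D^{s_n}_{x_n}u\in L^{p_n}_{x_n}$.
\item Apply H\"older in $x'$ with $1/q=(1-\theta_n)/\tilde q+\theta_n/p_n$. Here $1<\gamma<\tilde q\le q$, because $p_n\ge q$ and $(1-\theta_n)(1/\gamma-1/\tilde q)=\sigma>0$.
\item Use Minkowski to move the $L^\gamma_{x_n}$ norm outside the $L^{\tilde q}_{x'}$ norm, which is allowed since $\gamma\le\tilde q$.
\item Apply the induction hypothesis in $x'$ with weights $\theta_l/(1-\theta_n)$ and target $L^{\tilde q}$; its scaling condition checks out.
\item Conclude with H\"older in $x_n$.
\end{enumerate}
This branch also handles $p_j=\infty$.
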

	\begin{proof}[\textbf{Proof}]
		Without loss of generality, we would assume $1<p_1\leq \cdots\leq p_n$. By \eqref{scalingar}, we know that $p_1\leq q<\infty$. The case $n = 1$ follows from Theorem \ref{firstmain}. Assume the result holds for $n-1$. If $p_n\geq q$, write $x = (x',x_n)\in \mathbb{R}^{d-d_n}\times \mathbb{R}^{d_n}$ and set
		$$\frac{1}{\gamma} = \sum_{l=1}^{n-1}\frac{\theta_l}{(1-\theta_n)p_l}\in (0,1).$$
		Then for any $u\in \mathcal{S}(\mathbb{R}^d)$, combining Proposition \ref{littlewood}, Hölder's inequality, Minkowski's inequality, and the induction hypothesis yields
		\begin{align*}
			\|u\|_{L^q}&\lesssim \|\|u\|^{1-\theta_n}_{L^{\gamma}_{x_d}}\|D_{x_n}^{s_n}u\|^{\theta_n}_{L^{p_n}_{x_n}}\|\|_{L^q_{x'}}\\
			&\lesssim \|u\|^{1-\theta_n}_{L^{\tilde{q}}_{x'}L_{x_d}^\gamma}\|D_{x_n}^{s_n}u\|^{\theta_n}_{L^{p_n}}\\
			&\lesssim \|u\|^{1-\theta_n}_{L_{x_n}^\gamma L^{\tilde{q}}_{x'} }\|D_{x_n}^{s_n}u\|^{\theta_n}_{L^{p_n}}\\
			&\lesssim \left\|\prod_{j=1}^{n-1}\|D^{s_j}_{x_j}u\|_{L^{p_j}_{x'}}^{\frac{\theta_j}{1-\theta_n}}\right\|^{1-\theta_n}_{L_{x_d}^\gamma}\|D_{x_n}^{s_n}u\|^{\theta_n}_{L^{p_n}}\\
			&\lesssim\prod_{j=1}^n\|D_{x_j}^{s_j}u\|_{L^{p_j}}^{\theta_j}.
		\end{align*}
		If $p_n<q$, then by H\"{o}lder's inequality, the Littlewood--Paley theory (see \cite{stein1970singular} or Lemma \ref{littlewoodpaley} below), Minkowski's inequality, and Sobolev's inequality, we obtain
		\begin{align*}
			\|u\|_{L^q}\sim \||P_ku|_{l^2_{k}}\|_{L^q}
			&\lesssim \left\|\prod_{j=1}^d2^{k_j\frac{sd_j}{d}-(\frac{\theta_j}{p_j}-\frac{\theta_j}{q})\sum_{l=1}^nk_ld_l}|P_ku|^{\theta_j}_{l^2_{k}}\right\|_{L^q}\\
			&\lesssim \prod_{j=1}^n \||2^{s_j k_j- (\frac{1}{p_j}-\frac{1}{q})\sum_{l=1}^nk_ld_l}P_{k} u|_{l^2_k}\|_{L^q}^{\theta_j}\\
			&\lesssim \prod_{j=1}^{n} \|D_{x_j}^{s_j}D_{x_1}^{\frac{d_1}{q}-\frac{d_1}{p_j}}\cdots D_{x_d}^{\frac{d_n}{q}-\frac{d_n}{p_j}}u\|_{L^{q}}^{\theta_j}\\
			&\lesssim \prod_{j=1}^d \|D_{x_j}^{s_j}u\|_{L^{p_j}}^{\theta_j}.
		\end{align*}
		To obtain the last inequality, we invoke the Sobolev inequality $n$ times and the Minkowski inequality $n-1$ times.
	\end{proof}
	
	\begin{prop}\label{controlinfi}
		Assume \eqref{anibasicred}--\eqref{scalingar}, $\theta_0=0$, $1 = p_1\leq \cdots\leq p_n$, $1<q<\infty$, and $(p_2,p_n)\neq (q,q)$. Then \eqref{aniso} holds.
	\end{prop}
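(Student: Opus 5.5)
The plan is to run the induction on $n$ from Proposition \ref{crilit}, with the base case $n=1$ supplied by Theorem \ref{firstmain}. The factor $\|D^{s_1}_{x_1}u\|_{L^1}$ is the only place where Sobolev or Littlewood--Paley bounds from $L^1$ are unavailable. Everywhere it appears, I would replace the strong-type bound by a distribution-function argument in the spirit of Proposition \ref{endsobo}. Throughout, $\theta_0=0$ and \eqref{scalingar} give $\theta_js_j/d_j=1/\tilde p-1/q$ for all $j$, with $1/\tilde p=\sum_j\theta_j/p_j$. Since $p_1=1<q$ and $\theta_1>0$, this forces $p_2\le q$ unless $\theta_1$ is large, and it is this balance that I will track in each case.

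\emph{Case $p_n>q$.} I peel off the $x_n$ variable exactly as in Proposition \ref{crilit}. For fixed $x'$, I apply the one-block fractional inequality in $x_n$ with exponents $(\gamma,0)$ and $(p_n,s_n)$, where $1/\gamma=\sum_{l<n}\theta_l/((1-\theta_n)p_l)$. This is Proposition \ref{littlewood} when $\gamma>1$. If all $p_l=1$ for $l<n$, then $\gamma=1$, and I use Proposition \ref{coheninter} instead, which applies since $1=\gamma<q<p_n$ and $s_1-s_2=0-s_n\le 0$. After Hölder in $x'$ and Minkowski (which needs $\gamma\le\tilde q$), I am left with an $(n-1)$-block critical inequality in $x'$ with target exponent $\tilde q$. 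To invoke the induction hypothesis I must check that the remaining exponents $p_2,\dots,p_{n-1}$ do not all equal $\tilde q$. For $n-1=1$ I must check that I am not in the exceptional case of Theorem \ref{firstmain}. I expect the scaling relation together with $p_n>q$ to give $\tilde q<q$, and hence to exclude both bad situations; this is the bookkeeping step I would verify first.

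\emph{Case $p_n\le q$.} This is the main case, and by hypothesis some $p_j<q$ with $j\ge2$ (recall $(p_2,p_n)\neq(q,q)$). I would normalize all norms to $1$ by the anisotropic scaling, and write $\|u\|_{L^q}^q\sim\int_0^\infty t^{q-1}|\{|P_ku|_{l^2_k}>t\}|\,dt$. For each $t$, I choose a threshold $K(t)$ in the $x_1$-frequency. For $k_1\le K(t)$, Bernstein in $x_1$ from $L^1$ and the $\sup$-control of the other blocks should give the pointwise bound $|P_ku|_{l^1_{k_1\le K}}\le ct$, as in Proposition \ref{endsobo}. For $k_1>K(t)$, I trade a power of $2^{k_1}$ for a larger threshold $\sim t^{q/\bar p}$, where $\bar p$ is the exponent obtained from the remaining factors via the Hölder/Sobolev chain of Proposition \ref{crilit}. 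All of those factors have $p_j>1$, so the strong-type Littlewood--Paley and Sobolev bounds are legitimate there. Changing variables in $t$ then produces $\|u\|_{L^q}^q\lesssim\prod_{j\ge2}\|D^{s_j}_{x_j}u\|_{L^{p_j}}^{(\cdot)}$. This is raised to the power exactly compensated by the normalization $\|D^{s_1}_{x_1}u\|_{L^1}=1$, and undoing the normalization restores the right product.

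The step I expect to be hardest is making this last chain exponent-consistent. The threshold exponent must be strictly larger than $q$, as in Proposition \ref{endsobo}, where this required $p_2<q$. That is precisely where the hypothesis $(p_2,p_n)\ne(q,q)$ enters. If all $p_j$ with $j\ge2$ equal $q$, the gain disappears, which is consistent with the exceptional case in Theorem \ref{mainani}. If the direct argument is messy when only some $p_j$ are below $q$, my fallback is to first use Hölder (a convexity interpolation in the $\theta$'s, as at the end of Proposition \ref{doubleend}) to reduce to two effective blocks: the $L^1$ block and one combined $L^{\bar p}$ block with $\bar p<q$. I would then apply the one-variable argument above.
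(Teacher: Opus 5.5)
\textbf{The case $p_n>q$ contains a false step.} You rely on the claim that $\tilde q<q$ keeps the reduced inequality out of the exceptional configuration. The inequality $\tilde q<q$ is true, but it does not rule anything out.

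For $n=2$ you get $\gamma=1$. After H\"older and Minkowski you need $\|u\|_{L^{\tilde q}_{x_1}}\lesssim\|D^{s_1}_{x_1}u\|_{L^1_{x_1}}$ with $s_1=d_1/\tilde q'$. This is the case $(\theta_1,p_1)=(1,1)$ of Theorem \ref{firstmain}, so it always fails. Peeling therefore can never handle $n=2$, $p_2>q$.

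For $n\ge 3$ the configuration $p_2=\cdots=p_{n-1}=\tilde q$ really occurs. Take $d_j=1$, $q=2$, $(p_1,p_2,p_3)=(1,4/3,4)$, $(\theta_1,\theta_2,\theta_3)=(1/4,1/4,1/2)$, which forces $(s_1,s_2,s_3)=(1/4,1/4,1/8)$. Then $\tilde q=4/3=p_2$, and the two-block inequality you reduce to is exactly the false exceptional case of Theorem \ref{mainani}.

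The paper avoids both problems:
\begin{itemize}
\item For $n=2$, $p_2>q$ it does not peel. It interpolates $\|D^{s_2\epsilon}_{x_2}u\|_{L^{\tilde p_2}}\lesssim\|D^{s_2}_{x_2}u\|_{L^{p_2}}^{\epsilon}\|u\|_{L^q}^{1-\epsilon}$. It then uses Sobolev in $x_2$ and Proposition \ref{subcritical} in $x_1$ to replace the $L^1$ block by an $x_1$-block with exponent $\tilde p_1>1$. Finally it applies Proposition \ref{crilit} and absorbs the power of $\|u\|_{L^q}$.
\item For $n\ge 3$ with $(p_2,p_{n-1})=(\tilde q,\tilde q)$ it peels $x_{n-1}$ instead of $x_n$.
\end{itemize}
Your remark that $\gamma=1$ (all $p_l=1$ for $l<n$) calls for Proposition \ref{coheninter} rather than Proposition \ref{littlewood} is correct; the paper glosses over this point.

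\textbf{For $p_n\le q$ your sketch is not yet an argument, and neither ingredient works as stated.} The threshold argument of Proposition \ref{endsobo} is one-parameter. Here $P_k$ has $n$ parameters, and cutting at $k_1\le K(t)$ leaves a sum over all $(k_2,\dots,k_n)\in\mathbb{Z}^{n-1}$ on which the $L^1$ Bernstein bound gives no decay. Your fallback also fails: H\"older cannot merge the factors $\|D^{s_j}_{x_j}u\|_{L^{p_j}}$, $j\ge 2$, into one $L^{\bar p}$ block, because they are derivatives in different variables.

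The missing idea is to interpolate each factor with the left-hand side and then absorb:
\begin{enumerate}
\item Since $p_2\le p_n\le q$, the hypothesis $(p_2,p_n)\neq(q,q)$ forces $p_2<q$.
\item Then \eqref{scalingar} gives $s_j>(1/p_j-1/q)d_j$ for every $j$, in particular $s_1>d_1/q'$.
\item Propositions \ref{coheninter} and \ref{littlewood}, with H\"older in the remaining variables, give $\|D^{s_j/2}_{x_j}u\|_{L^{\tilde p_j}}\lesssim\|D^{s_j}_{x_j}u\|_{L^{p_j}}^{1/2}\|u\|_{L^q}^{1/2}$, where $1/\tilde p_j=1/(2p_j)+1/(2q)$.
\item Every $\tilde p_j>1$, and the halved indices still satisfy \eqref{scalingar}. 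So Proposition \ref{crilit} yields $\|u\|_{L^q}\lesssim\prod_j\|D^{s_j}_{x_j}u\|_{L^{p_j}}^{\theta_j/2}\|u\|_{L^q}^{1/2}$.
\item Since $\|u\|_{L^q}<\infty$ for Schwartz $u$, the last factor can be absorbed.
\end{enumerate}
This step is exactly where the hypothesis $(p_2,p_n)\neq(q,q)$ enters.
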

	\begin{proof}[\textbf{Proof}]
		We first treat the case $n = 2$.
		
		If $1\leq p_2<q$, then \eqref{scalingar} implies $s_1>d_1/q'$. By Proposition \ref{coheninter} and the H\"{o}lder inequality, we obtain
		\begin{align*}
			\|D^{s_1/2}_{x_1}u\|_{L^{\tilde{p}_1}}&\lesssim \|\|D^{s_1}_{x_1}u\|_{L_{x_1}^1}^{1/2}\|u\|_{L^{q}_{x_1}}^{1/2}\|_{L^{\tilde{p}}_{x_2}},\quad \frac{1}{\tilde{p}_1} = \frac{1}{2}+\frac{1}{2q}\\
			&\lesssim \|D^{s_1}_{x_1}u\|_{L^1}^{1/2}\|u\|_{L^{q}}^{1/2}.
		\end{align*}
		Similarly we have
		$\|D^{s_2/2}_{x_2}u\|_{L^{\tilde{p}_2}}\lesssim\|D^{s_2}_{x_2}u\|_{L^{p_2}}^{1/2}\|u\|_{L^{q}}^{1/2}$, $1/\tilde{p}_2 = 1/(2p_2)+1/(2q)$. By Proposition \ref{crilit}, we have
		\begin{align*}
			\|u\|_{L^q}\lesssim \|D^{s_1/2}_{x_1}u\|_{L^{\tilde{p}_1}}^{\theta_1}\|D^{s_2/2}_{x_2}u\|_{L^{\tilde{p}_2}}^{\theta_2} \lesssim \|D^{s_1}_{x_1}u\|_{L^1}^{\theta_1/2}\|D^{s_2}_{x_2}u\|_{L^{p_2}}^{\theta_2/2}\|u\|_{L^{q}}^{1/2}.
		\end{align*}
		Thus we obtain \eqref{aniso}.
		
		If $q<p_2\leq \infty$, then for $0<\epsilon\ll 1$ by Proposition \ref{littlewood} and the H\"{o}lder inequality, we have
		\begin{equation}\label{midin}
			\|D^{s_2\epsilon}_{x_2}u\|_{L^{\tilde{p}_2}}\lesssim\|D^{s_2}_{x_2}u\|_{L^{p_2}}^{\epsilon}\|u\|_{L^{q}}^{1-\epsilon},\quad \frac{1}{\tilde{p}_2} = \frac{\epsilon}{p_2}+\frac{1-\epsilon}{q}.
		\end{equation}
		Let $\bar{p}_2 = 1/(1/\tilde{p}_2-s_2\epsilon/d_2)$, $\tilde{p}_1 = 1/(1/2+1/(2\bar{p}_2))$, $\alpha = s_1/2-d_1(1/2+1/(2\tilde{p}_2)-1/\tilde{p}_1)$. By the Sobolev inequality, we have $\|u\|_{L^{\bar{p}_2}_{x_2}}\lesssim \|D^{s_2\epsilon}_{x_2}u\|_{L^{\tilde{p}_2}}$. By Proposition \ref{subcritical}, the H\"{o}lder inequality, and the Minkowski inequality, we obtain
		\begin{equation}\label{x1mid}
			\begin{aligned}
				\|D_{x_1}^{\alpha}u\|_{L^{\tilde{p}_1}}& \lesssim \|\|D_{x_1}^{s_1}u\|_{L^1_{x_1}}^{1/2}\|u\|_{L^{\tilde{p}_2}_{x_1}}^{1/2}\|_{L^{\tilde{p}_1}_{x_2}}\\
				&\lesssim \|D_{x_1}^{s_1}u\|_{L^1}^{1/2}\|u\|_{L^{\bar{p}_2}_{x_2}L^{\tilde{p}_2}_{x_1}}^{1/2}\\
				&\lesssim \|D_{x_1}^{s_1}u\|_{L^1}^{1/2}\|u\|_{L^{\tilde{p}_2}_{x_1}L^{\bar{p}_2}_{x_2}}^{1/2}\\
				&\lesssim \|D_{x_1}^{s_1}u\|_{L^1}^{1/2}\|D_{x_2}^{s_2\epsilon}u\|_{L^{\tilde{p}_2}}^{1/2}.
			\end{aligned}
		\end{equation}
		By Proposition \ref{crilit} and \eqref{midin}--\eqref{x1mid}, for $\theta = 1/(1+d_1s_2/(d_2\alpha))$, we have
		\begin{align*}
			\|u\|_{L^q}&\lesssim \|D_{x_1}^\alpha u\|_{L^{\tilde{p}_1}}^{1-\theta}\|D_{x_2}^{s_2} u\|_{L^{p_2}}^{\theta}\\
			&\lesssim \|D_{x_1}^{s_1}u\|_{L^1}^{(1-\theta)/2}\|D_{x_2}^{s_2\epsilon}u\|_{L^{\tilde{p}_2}}^{(1-\theta)/2} \|D_{x_2}^{s_2} u\|^{\theta}_{L^{p_2}}\\
			&\lesssim \|D_{x_1}^{s_1}u\|_{L^1}^{(1-\theta)/2} \|D^{s_2}_{x_2}u\|_{L^{p_2}}^{(1-\theta)\epsilon/2+\theta}\|u\|_{L^{q}}^{(1-\epsilon)(1-\theta)/2}.
		\end{align*}
		This completes the proof for $n = 2$.
		
		For $n\geq 3$, if $p_n\leq q$, then since $(p_1,p_n)\neq (q,q)$, we have $1 = p_1\leq p_2<q$. Consequently, by \eqref{scalingar}, for any $1\leq j\leq n$, we obtain
		\begin{equation*}
			\frac{s_j}{d_j}\theta_j = \sum_{l=1}^d \frac{\theta_l}{p_l}-\frac{1}{q}>(\frac{1}{p_j}-\frac{1}{q})\theta_j.
		\end{equation*}
		Thus $s_j>(1/p_j-1/q)d_j$. For any $1\leq j\leq n$, Propositions \ref{littlewood} and \ref{coheninter} combined with H\"{o}lder's inequality yield
		\begin{equation}\label{inter}
			\|D^{s_j/2}_{x_j}u\|_{L^{\tilde{p}_j}}\lesssim\|D^{s_j}_{x_j}u\|_{L^{p_2}}^{1/2}\|u\|_{L^{q}}^{1/2},\quad \frac{1}{\tilde{p}_j} = \frac{1}{2p_j}+\frac{1}{2q}.
		\end{equation}
		By Proposition \ref{crilit}, we have $\|u\|_{L^q}\lesssim \prod_{j=1}^n \|D^{s_j/2}_{x_j}u\|_{L^{\tilde{p}_j}}^{\theta_j}$. Combining with \eqref{inter}, we conclude the proof.
		
		If $p_n > q$, let $x = (x_1,\cdots,x_{n-1},x_n):= (x',x_n)\in \mathbb{R}^{d-d_n}\times \mathbb{R}^{d_n}$ and define $\gamma$, $\tilde{q}$ by
		$$\frac{1}{\gamma} = \sum_{l=1}^{n-1}\frac{\theta_l}{(1-\theta_n)p_l}\in (0,1),\quad \frac{1}{q}= \frac{1-\theta_n}{\tilde{q}}+\frac{\theta_n}{p_n}.$$
		In the case $(p_2,p_{n-1})\neq  (\tilde{q},\tilde{q})$, applying Proposition \ref{littlewood}, Hölder's inequality, Minkowski's inequality, and the induction hypothesis yields
		\begin{align*}
			\|u\|_{L^q}\lesssim \|\|u\|^{1-\theta_n}_{L^{\gamma}_{x_n}}\|D_{x_n}^{s_n}u\|^{\theta_n}_{L^{p_n}_{x_n}}\|\|_{L^q_{x'}}
			&\lesssim \|u\|^{1-\theta_n}_{L^{\tilde{q}}_{x'}L_{x_n}^\gamma}\|D_{x_n}^{s_n}u\|^{\theta_n}_{L^{p_n}}\\
			&\lesssim \|u\|^{1-\theta_n}_{L_{x_n}^\gamma L^{\tilde{q}}_{x'} }\|D_{x_n}^{s_n}u\|^{\theta_n}_{L^{p_n}}\\
			&\lesssim \left\|\prod_{j=1}^{n-1}\|D^{s_j}_{x_j}u\|_{L^{p_j}_{x'}}^{\frac{\theta_j}{1-\theta_n}}\right\|^{1-\theta_n}_{L_{x_n}^\gamma}\|D_{x_n}^{s_n}u\|^{\theta_n}_{L^{p_n}}\\
			&\lesssim\prod_{j=1}^n\|D_{x_j}^{s_j}u\|_{L^{p_j}}^{\theta_j}.
		\end{align*}	
		If $(p_2,p_{n-1}) = (\tilde{q},\tilde{q})$, then $1/q>\theta_{n-1}/p_{n-1}$. Set $x'' = (x_1,\cdots,x_{n-2},x_n)$, and define $\tilde{\gamma}$, $r$ by
		$$\frac{1}{\tilde{\gamma}} = \frac{\theta_n}{(1-\theta_{n-1})p_n}+ \sum_{l=1}^{n-2}\frac{\theta_l}{(1-\theta_{n-1})p_l}\in (0,1),\quad \frac{1}{q}= \frac{1-\theta_{n-1}}{r}+\frac{\theta_{n-1}}{p_{n-1}}.$$
		Note that $p_2 = \tilde{q}\neq p_n$. Thus as in the previous argument, we have
		\begin{align*}
			\|u\|_{L^q}&\lesssim \|\|u\|^{1-\theta_{n-1}}_{L^{\tilde{\gamma}}_{x_{n-1}}}\|D_{x_{n-1}}^{s_{n-1}}u\|^{\theta_{n-1}}_{L^{p_{n-1}}_{x_{n-1}}}\|\|_{L^q_{x''}}\\
			&\lesssim \|u\|^{1-\theta_{n-1}}_{L^{r}_{x''}L_{x_{n-1}}^{\tilde{\gamma}}}\|D_{x_{n-1}}^{s_{n-1}}u\|^{\theta_{n-1}}_{L^{p_{n-1}}}\\
			&\lesssim \|u\|^{1-\theta_{n-1}}_{L_{x_{n-1}}^{\tilde{\gamma}} L^{r}_{x''} }\|D_{x_{n-1}}^{s_{n-1}}u\|^{\theta_{n-1}}_{L^{p_{n-1}}}\\
			&\lesssim \left\|\|D^{s_n}_{x_n}u\|_{L_{x''}^{p_n}}^{\frac{\theta_n}{1-\theta_{n-1}}}\prod_{j=1}^{n-2}\|D^{s_j}_{x_j}u\|_{L^{p_j}_{x''}}^{\frac{\theta_j}{1-\theta_{n-1}}}\right\|^{1-\theta_{n-1}}_{L_{x_{n-1}}^{\tilde{\gamma}}}\|D_{x_{n-1}}^{s_{n-1}}u\|^{\theta_{n-1}}_{L^{p_{n-1}}}\\
			&\lesssim\prod_{j=1}^n\|D_{x_j}^{s_j}u\|_{L^{p_j}}^{\theta_j}.
		\end{align*}
		We finish the proof.
	\end{proof}
		
	\subsection{Some counterexamples for \texorpdfstring{$\theta_0 = 0, q = \infty$}{theta0=0,q=infty}}\label{counter}
	In this section, we consider\eqref{aniso} with $\theta_0 = 0$, $q = \infty$. Assume \eqref{anibasicred}--\eqref{scalingar}, $(\theta_0,q) = (0,\infty)$, and $1<p_{j_0}<\infty$, $\theta_{j_0}>0$ for some $1\leq j\leq n$. For any $L\in \mathbb{N}$, we define
	\begin{equation*}
		u_L = \mathscr{F}^{-1}\left(\sum_{l=1}^L2^{-l}\prod_{j=1}^n \psi(|\xi_j|/2^{l/s_jp_j})\right).
	\end{equation*}
	\begin{lemma}\label{lpesti}
		$\|u_L\|_{L^\infty(\mathbb{R}^d)}\sim L$, $\|D^{s_j}_{x_j}u_L\|_{L^{p_j}(\mathbb{R}^d)}\lesssim L^{1/2}$ for $2\leq p_j<\infty$, and $\|D^{s_j}_{x_j}u_L\|_{L^{p_j}(\mathbb{R}^d)}\lesssim L^{1/p_j}$ for $1\leq p_j<2$.
	\end{lemma}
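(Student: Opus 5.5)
The plan is to compute everything by scaling, since each summand of $u_L$ is an $L^1$-normalized-type dilate of a fixed Schwartz function. Set $a_j = 1/(s_jp_j)$ and $\Phi=\mathscr{F}^{-1}\big(\prod_{j=1}^n\psi(|\xi_j|)\big)$. Then the $l$-th summand of $u_L$ equals
$$2^{-l}\,2^{l\sum_j a_jd_j}\,\Phi\big(2^{la_1}x_1,\dots,2^{la_n}x_n\big).$$

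The first observation is that $\sum_{j} a_j d_j = 1$. Indeed, \eqref{scalingar} with $\theta_0=0$ and $q=\infty$ gives $d_j/s_j = \theta_j/\big(\sum_i \theta_i/p_i\big)$. Hence $\sum_j d_j/(s_jp_j) = \sum_j (\theta_j/p_j)/\sum_i(\theta_i/p_i) = 1$. Consequently each summand is $\Phi_l(x):=\Phi(2^{la_1}x_1,\dots)$ with no prefactor. In particular $\|\Phi_l\|_{L^\infty}=\|\Phi\|_{L^\infty}$, and the triangle inequality gives $\|u_L\|_{L^\infty}\lesssim L$. For the lower bound, note that $\mathscr{F}u_L\ge 0$. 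Therefore $u_L(0)=(2\pi)^{-d/2}\|\mathscr{F}u_L\|_{L^1}=L\,(2\pi)^{-d/2}\|\prod_j\psi(|\xi_j|)\|_{L^1}\sim L$, so $\|u_L\|_{L^\infty}\sim L$.

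For the derivative bounds, fix $j$ with $p=p_j<\infty$. Writing $|\xi_j|^{s_j}=2^{l a_j s_j}\,|2^{-la_j}\xi_j|^{s_j}$ and using $a_js_j=1/p$, we get $D^{s_j}_{x_j}u_L=\sum_{l=1}^L 2^{l/p}\,G_l$. Here $G_l(x)=G(2^{la_1}x_1,\dots,2^{la_n}x_n)$, with $G$ the inverse Fourier transform of $|\xi_j|^{s_j}\prod_i\psi(|\xi_i|)$, which is a Schwartz function. Since $\sum_i a_id_i=1$, we have $\|G_l\|_{L^p}=2^{-l/p}\|G\|_{L^p}$. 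So each term $g_l:=2^{l/p}G_l$ has $\|g_l\|_{L^p}\sim 1$. The case $p=1$ then follows at once from the triangle inequality, giving the bound $L=L^{1/p}$.

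For $1<p<\infty$ I would exploit the lacunarity of $\mathrm{supp}\,\mathscr{F}g_l$ in the single variable $\xi_j$. This support lies in $\{2^{la_j-1}\le|\xi_j|\le 3\cdot2^{la_j-1}\}$. First split $\{1,\dots,L\}$ into $M$ residue classes mod $M$, where $M=M(a_j)$ is chosen so that $2^{Ma_j}\ge 8$. Within a class the $\xi_j$-annuli are disjoint and lacunary. Next take a smooth Littlewood--Paley family $\tilde\Delta_l$ acting only in $\xi_j$ with $\tilde\Delta_l g_l=g_l$. The Littlewood--Paley square function estimate in $L^p(\mathbb{R}^d)$, applied in the $x_j$ variable (the multipliers depend only on $\xi_j$, so they are Mikhlin multipliers on $\mathbb{R}^d$), gives for each class
$$\Big\|\sum_{l} g_l\Big\|_{L^p}\lesssim\Big\|\Big(\sum_l|g_l|^2\Big)^{1/2}\Big\|_{L^p}.$$
If $p\ge2$, Minkowski's inequality in $L^{p/2}$ bounds the right side by $\big(\sum_l\|g_l\|_{L^p}^2\big)^{1/2}\lesssim L^{1/2}$. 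If $1<p<2$, the embedding $\ell^p\subset\ell^2$ bounds it by $\big(\sum_l\|g_l\|^p_{L^p}\big)^{1/p}\lesssim L^{1/p}$. Summing over the $M$ classes costs only a constant.

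The main point needing care is the square-function step. The upper bound $\|\sum g_l\|_{L^p}\lesssim\|(\sum|g_l|^2)^{1/2}\|_{L^p}$ for lacunary $\xi_j$-supports is the dual form of the square function inequality, obtained from the Rademacher/Khintchine argument with Mikhlin multipliers in one block of variables. I expect it to hold uniformly in $L$, with constants depending only on $p$, $d$ and $a_j$.
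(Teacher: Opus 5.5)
Your proof is correct. The $L^\infty$ part is the same as the paper's: positivity of $\mathscr{F}u_L$ together with $\sum_j d_j/(s_jp_j)=1$. For the derivative bounds you take a different route.

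The paper uses the full product Littlewood--Paley decomposition $P_k$, $k\in\mathbb{Z}^n$, from Lemma \ref{littlewoodpaley}. Bernstein gives $\|P_kD^{s_j}_{x_j}u_L\|_{L^{p_j}}\lesssim 1$ on each block, and each $P_k$ sees only boundedly many of the $L$ pieces. Minkowski (for $p_j\geq 2$) or $\ell^{p_j}\hookrightarrow\ell^2$ (for $p_j<2$) then yields $L^{1/2}$, resp.\ $L^{1/p_j}$.

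You instead compute $\|g_l\|_{L^p}=\|G\|_{L^p}$ exactly by anisotropic scaling. You then exploit lacunarity only in the block $\xi_j$, after splitting $l$ into residue classes mod $M$ so that the $\xi_j$-annuli separate. This gains you two things:
\begin{itemize}
\item You need only the classical one-parameter square function on $\mathbb{R}^{d_j}$ plus Fubini, not the multi-parameter theory.
\item You settle $p_j=1$ by the triangle inequality. The paper's chain for $1\leq p_j<2$ instead invokes $\|f\|_{L^1}\lesssim\|(\sum_k|P_kf|^2)^{1/2}\|_{L^1}$. That bound lies outside the range $0<1/\bm{p}<1$ of Lemma \ref{littlewoodpaley}; it is true, but only via product Hardy space theory.
\end{itemize}
The paper's route needs no residue-class splitting, since the product decomposition separates the pieces automatically, and it reuses machinery already in place.

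One justification in your write-up should be corrected. A multiplier depending only on $\xi_j$ is not a Mikhlin multiplier on $\mathbb{R}^d$: its $\xi_j$-derivatives decay like $|\xi_j|^{-|\alpha|}$, not $|\xi|^{-|\alpha|}$. What you actually need, and what your last paragraph describes, is this: the square function estimate in $L^{p'}(\mathbb{R}^{d_j})$ for the lacunary family $\tilde\psi(|\xi_j|/2^{la_j})$, $l\equiv r \pmod M$, applied for almost every fixed value of the remaining variables and then integrated. That argument is valid uniformly in $L$, with constants depending only on $p$, $d_j$ and $a_j$.

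Also, dividing by $\sum_i\theta_i/p_i$ uses the standing assumption $\theta_js_j>0$, which guarantees this sum is positive.
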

	\begin{proof}[\textbf{Proof}]
		Note that $\mathscr{F}u_L\geq 0$. Thus
		\begin{align*}
			\|u_L\|_{L^\infty(\mathbb{R}^d)} \sim \|\mathscr{F}u_L\|_{L^1(\mathbb{R}^d)} \sim \sum_{l=0}^L 2^{-l}\prod_{j=1}^n 2^{l\frac{d_j}{s_jp_j}} \sim L.
		\end{align*}
		If $2\leq p_j<\infty$, by the Littlewood--Paley theory, the Minkowski inequality, and the Berstein inequality, we have
		\begin{align*}
			\|D^{s_j}_{x_j}u_L\|_{L^{p_j}(\mathbb{R}^d)}
			&\sim \||P_kD^{s_j}_{x_j}u_L|_{l^2_{k\in \mathbb{Z}^n}}\|_{L^{p_j}(\mathbb{R}^d)}\\
			&\lesssim |\|P_kD^{s_j}_{x_j}u_L\|_{L^{p_j}(\mathbb{R}^d)}|_{l^2_k}\\
			&\lesssim \left|\sum_{l=1}^L2^{-l}2^{l\frac{s_j}{s_jp_j}}\prod_{m=1}^n\chi_{2^{k_m}\sim 2^{l/s_mp_m}}2^{l\frac{d_m}{s_mp_mp_j'}}\right|_{l^2_k}\\
			&\lesssim \left|\sum_{l=1}^L\prod_{m=1}^n\chi_{2^{k_m}\sim 2^{l/s_mp_m}}\right|_{l^2_k}\lesssim L^\frac{1}{2}.
		\end{align*}
		If $1\leq  p_j<2$, by the Littlewood--Paley theory and $l^{p_j}\hookrightarrow l^2$, we have
		\begin{align*}
			\|D^{s_j}_{x_j}u_L\|_{L^{p_j}(\mathbb{R}^d)}\lesssim \||P_kD^{s_j}_{x_j}u_L|_{l^2_{k\in \mathbb{Z}^n}}\|_{L^{p_j}(\mathbb{R}^d)} \lesssim |\|P_kD^{s_j}_{x_j}u_L\|_{L^{p_j}(\mathbb{R}^d)}|_{l^{p_j}_k}.
		\end{align*}
		By the same argument as for the case $2\leq p_j<\infty$, we obtain 
		$$\|D_{x_j}^{s_j}u_L\|_{L^{p_j}(\mathbb{R}^d)}\lesssim L^\frac{1}{p_j},\quad 1\leq p_j<2$$
		and finish the proof.
	\end{proof}
	We proceed to prove Theorem \ref{couninfty}. Without loss of generality, suppose $p_1\leq p_2\leq \cdots \leq p_n$. In the case $p_n = \infty$, choosing $u = f(x_1,\cdots,x_{n-1})g(x_n)$ yields
	$$\|f\|_{L^\infty(\mathbb{R}^{d-d_n})}\lesssim \prod_{j = 1}^{n-1}\|D_{x_j}^{s_j}f\|_{L^{p_j}(\mathbb{R}^{d-d_n})}^{\frac{\theta_j}{1-\theta_n}}.$$
	Repeating this procedure, we can assume $1\leq p_j<\infty$ for every $1\leq j\leq n$. Under assumption \eqref{aniso}, taking $u = u_L$ and using Lemma \ref{lpesti}, we obtain
	\begin{align*}
		L\lesssim \prod_{j=1}^nL^{\frac{\theta_j}{\min\{p_j,2\}}}\lesssim L^{\frac{\theta_{j_0}}{\min\{p_{j_0},2\}}+1-\theta_{j_0}}.
	\end{align*}
	By taking $L\rightarrow\infty$, we conclude the proof of Theorem \ref{couninfty} by contradiction.
	
	\subsection{Anisotropic Gagliardo--Nirenberg inequalities in mixed Lebesgue spaces}\label{animixed}
	Let $n\in \mathbb{N}$ and $d_j\in \mathbb{N}$ for $1\leq j\leq n$. For a vector $\bm{s} = (\bm{s}(1),\cdots,\bm{s}(n))\in \mathbb{R}^n$, we define the Fourier multiplier operator
	$$D^{\bm{s}} = \mathscr{F}^{-1}(|\xi_1|^{\bm{s}(1)}\cdots |\xi_n|^{\bm{s}(n)})\mathscr{F},\quad\xi = (\xi_1,\cdots,\xi_n)\in \mathbb{R}^{d_1}\oplus\cdots\oplus \mathbb{R}^{d_n}.$$
	
	We use the Hadamard (componentwise) product of vectors: for 
	$\bm{x}_1,\bm{x}_2\in \mathbb{R}^n$, $$\bm{x}_1\bm{x}_2 = (\bm{x}_1(1)\bm{x}_2(1),\cdots,\bm{x}_1(n)\bm{x}_2(n)).$$
	For $\bm{p} = (\bm{p}(1),\cdots,\bm{p}(n))\in [1,\infty]^n$, we write $1/\bm{p} = (1/\bm{p}(1),\cdots,1/\bm{p}(n))\in [0,1]^n$. Let $\bm{0} = (0,\cdots,0)$ and $\bm{1} = (1,\cdots,1)$ in  $\mathbb{R}^n$, and define $1/\bm{p}' = \bm{1}-1/\bm{p}$. Vector inequalities are understood componentwise: $\bm{s}_1\leq \bm{s}_2$ if and only if $ \bm{s}_1(j)\leq \bm{s}_2(j)$ for all $j$; the relations $<$, $\geq$, and $>$ are defined analogously.
	
	Let $\bm{d} = (d_1,\cdots,d_n)$ and $d = d_1+\cdots+d_n$. For a measurable function $u$ on $\mathbb{R}^d = \mathbb{R}^{d_1}\oplus\cdots\oplus \mathbb{R}^{d_n}$, the mixed Lebesgue norm is defined by
	$$\|u\|_{L^{\bm{q}}(\mathbb{R}^{\bm{d}})}:=\|\cdots\|\|u(x_1,\cdots,x_n)\|_{ L^{\bm{q}(1)}_{x_1}(\mathbb{R}^{d_1})}\|_{L^{\bm{q}(2)}_{x_2}(\mathbb{R}^{d_2})}\cdots\|_{L^{\bm{q}(n)}_{x_n}(\mathbb{R}^{d_n})}.$$
	Let $M\in \mathbb{N}$ and let $\bm{s}_m\in \mathbb{R}^n$ for $m = 1,\cdots, M$. We consider the anisotropic Gagliardo–Nirenberg inequality with mixed derivatives,
	\begin{equation}\label{anignmix}
		\|D^{\bm{s}}u\|_{L^{\bm{q}}(\mathbb{R}^{\bm{d}})}\lesssim \prod_{m=1}^M \|D^{\bm{s}_m}u\|_{L^{\bm{p}_m}(\mathbb{R}^{\bm{d}})}^{\theta_m}.
	\end{equation}
	Without loss of generality, we may assume the following standing hypotheses:
	\begin{equation}\label{basicassmix}
		\bm{s}_m,\bm{s}\geq \bm{0},~~ \bm{0}\leq \frac{1}{\bm{p}_m},\frac{1}{\bm{q}}\leq \bm{1},~~ \theta_m>0 \mbox{ for any }1\leq m\leq M.
	\end{equation}
	A scaling argument yields the necessary conditions
	\begin{equation}\label{basicconmix}
		\sum_{m=1}^M\theta_m = 1,\quad \bm{d}\left(\sum_{m=1}^M\frac{\theta_m}{\bm{p}_m}-\frac{1}{\bm{q}}\right) = \sum_{m=1}^M\theta_m\bm{s}_m-\bm{s}\geq \bm{0}.
	\end{equation}

	We shall use the following Littlewood–Paley characterization for mixed Lebesgue spaces.	
	\begin{lemma}[\cite{lizorkin1970multipliers}]\label{littlewoodpaley}
		Let $\bm{s}\geq \bm{0}$, $\bm{0}<1/\bm{p}<\bm{1}$. There exists $C_{\bm{d},\bm{q},\bm{s}}<\infty$ such that for any $u\in \mathcal{S}(\mathbb{R}^d)$, we have
		$$C_{\bm{d},\bm{q},\bm{s}}^{-1}\|D^{\bm{s}}u\|_{L^{\bm{q}}(\mathbb{R}^{\bm{d}})}\leq \||2^{k\cdot \bm{s}}P_ku|_{l^2_{k\in \mathbb{Z}^n}}\|_{L^{\bm{q}}(\mathbb{R}^{\bm{d}})} \leq C_{\bm{d},\bm{q},\bm{s}}\|D^{\bm{s}}u\|_{L^{\bm{q}}(\mathbb{R}^{\bm{d}})}$$
	\end{lemma}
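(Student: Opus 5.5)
The plan is to prove the two-sided estimate by iterating the one-parameter Littlewood--Paley theorem one variable block at a time. This works because the mixed norm $L^{\bm{q}}(\mathbb{R}^{\bm{d}})$ is an iterated Lebesgue norm, and in every block we have $1<\bm{q}(j)<\infty$.

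First we reduce to the case $\bm{s}=\bm{0}$. We write $P_k=\prod_j \Delta^{(j)}_{k_j}$, where $\Delta^{(j)}_{k_j}$ is the Littlewood--Paley projection acting in the variable $x_j$. Each $|\xi_j|^{\bm{s}(j)}$ localized at frequency $2^{k_j}$ equals $2^{k_j\bm{s}(j)}$ times a fixed smooth bump multiplier. Consequently $2^{k\cdot\bm{s}}P_ku=\tilde P_k D^{\bm{s}}u$, where $\tilde P_k$ has symbol $\prod_j \tilde\psi(|\xi_j|/2^{k_j})$ with $\tilde\psi(t)=t^{-\bm{s}(j)}\psi(t)$, and $\tilde\psi$ is again supported in an annulus. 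Setting $v=D^{\bm{s}}u$, it therefore suffices to show
$$\|v\|_{L^{\bm{q}}}\lesssim \||\tilde P_k v|_{l^2_k}\|_{L^{\bm{q}}}\lesssim \|v\|_{L^{\bm{q}}}$$
for product-type annular families $\tilde P_k$ (for the lower bound one uses the original $\psi$ family together with a reproducing pair).

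For the upper bound we use vector-valued Calder\'on--Zygmund theory. The operator $v\mapsto (\tilde P_k v)_{k\in\mathbb{Z}^n}$ is a tensor product of the one-parameter operators $T_j: f\mapsto(\tilde\Delta^{(j)}_{k_j}f)_{k_j}$. Each $T_j$ is a Calder\'on--Zygmund operator on $\mathbb{R}^{d_j}$ with $l^2$-valued kernel. By the Benedek--Calder\'on--Panzone theorem, $T_j$ extends boundedly from $L^{p}(\mathbb{R}^{d_j};H)$ to $L^{p}(\mathbb{R}^{d_j};l^2(H))$ for any Hilbert space $H$ and any $1<p<\infty$.

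We apply these operators from the innermost variable outward. For the variables $x_1,\dots,x_{j-1}$ already treated, we use the lattice-valued extension, namely Fubini and Minkowski in the $L^{\bm{q}(i)}$ norms for $i<j$. More precisely, we view $L^{\bm{q}(1)}_{x_1}\cdots L^{\bm{q}(j-1)}_{x_{j-1}}(l^2)$ as a Banach lattice $X_{j-1}$ and need $T_j$ to be bounded on $L^{\bm{q}(j)}(\mathbb{R}^{d_j};X_{j-1})$. This is the main obstacle, since $X_{j-1}$ is not Hilbert and the order of the mixed norm (inner variable $x_1$) does not match the variable on which $T_j$ acts.

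The first attempt to overcome it is Rubio de Francia's extrapolation in mixed norms. The one-parameter operators satisfy weighted $A_p$ estimates in each variable separately, uniformly in the other variables, so they are bounded on $L^{\bm{q}(1)}\cdots L^{\bm{q}(n)}$. Their $l^2$-valued versions are bounded as well, because weighted square-function bounds are vector-valued by extrapolation, i.e.\ bounded on $L^{p}(w;l^2)$. Alternatively, one can note that $X_{j-1}$ is a UMD lattice, since $L^{r}$ with $1<r<\infty$ and $l^2$ are UMD and UMD is preserved under iterated Bochner spaces. Bourgain's theorem, or the Rubio de Francia--Ruiz--Torrea results for lattice-valued CZ operators, then gives boundedness of $T_j$ on $L^{\bm{q}(j)}(\mathbb{R}^{d_j};X_{j-1}(l^2))$. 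Composing over $j=1,\dots,n$ yields the upper bound.

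The lower bound follows by duality. Choose $\check\psi$ with $\sum_k\check\psi\psi=1$ on annuli, and write $\langle v,g\rangle=\sum_k\langle \tilde P_kv,\check P_kg\rangle$ for $g$ in the dual space $L^{\bm{q}'}$, whose exponents also lie in $(1,\infty)$. Applying H\"older in $l^2$ and in mixed norms gives $|\langle v,g\rangle|\le\||\tilde P_kv|_{l^2}\|_{L^{\bm{q}}}\||\check P_kg|_{l^2}\|_{L^{\bm{q}'}}$. The second factor is controlled by the upper bound already proved, now in $L^{\bm{q}'}$. Taking the supremum over $g$ finishes the argument. Schwartz functions with zero-frequency issues are handled by noting that $u\in\mathcal{S}$ has $P_ku$ summable, so that $\sum_kP_ku=u$ in $\mathcal{S}'$ modulo polynomials, which vanish in $L^{\bm{q}}$.
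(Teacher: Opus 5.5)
Your argument is correct, reading the hypothesis as $\bm{0}<1/\bm{q}<\bm{1}$ as the statement evidently intends. It follows a different route from the paper, which gives no proof and simply cites Lizorkin's Marcinkiewicz-type multiplier theorem on mixed-norm spaces. From such a theorem the equivalence follows by the standard randomization: random signs on the pieces $2^{k\cdot\bm{s}}P_kD^{-\bm{s}}$, Khintchine's inequality, then duality.

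You instead prove the square-function bound directly by vector-valued Calder\'on--Zygmund theory, one block at a time. The reduction to $\bm{s}=\bm{0}$ and the duality step for the lower bound are handled exactly right. The citation buys brevity and a stronger statement (a full multiplier theorem). Your route is self-contained modulo classical tools and works directly with the radial annuli in each $\mathbb{R}^{d_j}$.

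The step you call the main obstacle needs neither UMD lattices nor extrapolation. The Benedek--Calder\'on--Panzone theorem works for general Banach spaces, here the reflexive iterated $L^r$ spaces. It only requires one bound $L^{p_0}(A)\to L^{p_0}(B)$ together with H\"ormander's condition in $\mathcal{L}(A,B)$-norm. For $T_j$ with values in $X_{j-1}=L^{r}_{x_{j-1}}(X')$, the kernel acts as a scalar sequence times the identity. Hence its $\mathcal{L}(X_{j-1},X_{j-1}(l^2))$-norm equals the scalar $l^2$-norm, and the H\"ormander condition transfers directly. At $p_0=r$, Fubini interchanges $L^r_{x_j}$ and $L^r_{x_{j-1}}$, which reduces the claim to the same statement with one fewer level of nesting. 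Induction on the nesting depth, starting from the Hilbert-space case, then finishes the upper bound.

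Finally, your last sentence is slightly off. For the product decomposition, the ambiguity in $\sum_kP_ku$ is not ``modulo polynomials'': anything with Fourier support in $\bigcup_j\{\xi_j=0\}$ is invisible. This is moot for $u\in\mathcal{S}$, since $\sum_k\prod_j\psi(|\xi_j|/2^{k_j})=1$ a.e.\ and the identity holds in $L^2$.
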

	
	\begin{rem}
		Early contributions to the theory of mixed-norm Sobolev spaces and related multiplier results are due to Lizorkin \cite{lizorkin1970multipliers}, Solonnikov \cite{solonnikov1975inequalities}, and Besov–Il'in–Nikol'skii \cite{besov1978integral,besov1979integral}. Additional works on the subject include those of Triebel \cite{triebel1983theory}, Adams \cite{adams1988anisotropic}, and Guliev \cite{guliev}. For more recent progress in mixed Lebesgue spaces, we refer to Huang–Yang \cite{huang2021function} and the references therein.
	\end{rem}
	
	Let $S$ be the convex hull of the points $\{\bm{s}_m-\bm{d}/\bm{p}_m\}_{m=1}^M$, defined explicitly as
	$$S:= \left\{\sum_{m=1}^M\alpha_m\left(\bm{s}_m-\frac{\bm{d}}{\bm{p}_m}\right):\alpha_m\geq 0,\sum_{m=1}^M\alpha_m = 1\right\}.$$
	
	We have the following results:
	\begin{thm}\label{mainresmix}
		Assume \eqref{basicassmix}--\eqref{basicconmix}. Then inequality \eqref{anignmix} is valid whenever at least one of the following conditions is satisfied:
		\begin{itemize}
			\item[\hypertarget{(1)}{(1)}] $\mathrm{dim}(S) = n$ and $1/\bm{q}\leq 1/\bm{p}_m$ for any $1\leq m\leq M$;
			\item[\hypertarget{(2)}{(2)}] $\mathrm{dim}(S) = n$ and $\sum_{m=1}^M\theta_m\bm{s}_m>\bm{s}$;
			\item[\hypertarget{(3)}{(3)}] $\bm{0}<1/\bm{q}<\bm{1}$ and $\bm{0}<1/\bm{p}_m<\bm{1}$ for any $1\leq m\leq M$;
			\item[\hypertarget{(4)}{(4)}] $\bm{0}<1/\bm{q}<\bm{1}$, $\bm{0}\leq 1/\bm{p}_m<\bm{1}$, and $\sum_{m=1}^M\theta_m\bm{s}_m>\bm{s}$.
		\end{itemize}
	\end{thm}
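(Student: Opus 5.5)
The plan is to follow the two strategies already used in the isotropic and anisotropic settings: a Bernstein/dyadic summation argument for cases (1)--(2), modelled on Propositions \ref{subcritical} and \ref{subbern}, and a Littlewood--Paley square-function argument for cases (3)--(4), modelled on Propositions \ref{littlewood} and \ref{crilit}, using Lemma \ref{littlewoodpaley} in place of the scalar Littlewood--Paley theory.

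\textbf{Cases (1)--(2).} Decompose $u=\sum_{k\in\mathbb{Z}^n}P_ku$ and estimate each piece separately.

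\emph{Bernstein bound.} Mixed-norm Bernstein inequalities apply one variable block at a time. For each $m$ they give
$$\|D^{\bm{s}}P_ku\|_{L^{\bm{q}}}\lesssim 2^{k\cdot(\bm{s}-\bm{s}_m+\bm{d}/\bm{p}_m-\bm{d}/\bm{q})}\|D^{\bm{s}_m}u\|_{L^{\bm{p}_m}},$$
valid when $1/\bm{q}\le 1/\bm{p}_m$. In case (2) we only know $\sum\theta_m\bm{s}_m>\bm{s}$, so we proceed as in the proof of Proposition \ref{subcritical}: use H\"older between $L^{\bm{p}_m}$ norms of $P_ku$ to reach $L^{\bm{q}}$ from exponents on either side, and absorb the resulting loss into the strict gap $\sum\theta_m\bm{s}_m-\bm{s}>\bm{0}$.

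\emph{Normalisation.} The condition $\dim S=n$ means the affine dilations $x_j\mapsto\lambda_jx_j$ act on the tuple $(\|D^{\bm{s}_m}u\|_{L^{\bm{p}_m}})_m$ by the characters $\prod_j\lambda_j^{(\bm{s}_m-\bm{d}/\bm{p}_m)(j)}$. Since these span, we can normalise all $\|D^{\bm{s}_m}u\|_{L^{\bm{p}_m}}=1$ after also scaling $u$ by a constant. The inequality then reduces to
$$\sum_{k\in\mathbb{Z}^n}\min_m 2^{k\cdot\bm{v}_m}<\infty,\qquad \bm{v}_m:=\bm{s}-\bm{d}/\bm{q}-(\bm{s}_m-\bm{d}/\bm{p}_m).$$
By \eqref{basicconmix}, $\sum\theta_m\bm{v}_m=\bm{0}$, so $\bm{0}$ lies in the convex hull of the $\bm{v}_m$. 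Because $\dim S=n$, the $\bm{v}_m$ affinely span $\mathbb{R}^n$.

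\emph{Main obstacle.} Summability needs $\bm{0}$ to be an \emph{interior} point of $\mathrm{conv}\{\bm{v}_m\}$, i.e.\ $\max_m k\cdot\bm{v}_m\ge c|k|$ for all $k$. If $\bm{0}$ lies on the boundary, there is a direction $k$ with $\min_m 2^{-k\cdot\bm{v}_m}=1$ and the sum diverges. The fact that $\bm{0}$ is an interior point is the step I expect to be delicate. My first try: since all $\theta_m>0$, $\bm{0}$ is a strictly positive combination of the $\bm{v}_m$. A strictly positive combination of affinely spanning points lies in the relative interior of their hull, which is the full interior because the dimension is $n$. With this, each $k$ satisfies $\max_m(-k\cdot\bm{v}_m)\ge c|k|$, and the geometric sum over $\mathbb{Z}^n$ converges. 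In case (2) the $\epsilon$-interpolation trick from Proposition \ref{subbern} supplies the needed interior margin.

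\textbf{Cases (3)--(4).} By Lemma \ref{littlewoodpaley},
$$\|D^{\bm{s}}u\|_{L^{\bm{q}}}\sim\big\||2^{k\cdot\bm{s}}P_ku|_{l^2_k}\big\|_{L^{\bm{q}}}.$$
The plan has three steps.

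1. Write $2^{k\cdot\bm{s}}=\prod_m\big(2^{k\cdot(\bm{s}_m-\bm{d}/\bm{p}_m+\bm{d}/\bm{q})}\big)^{\theta_m}$ using \eqref{basicconmix}.
2. Apply H\"older pointwise in the $l^2_k$ norm and then in the mixed norm $L^{\bm{q}}$, as in Proposition \ref{crilit}. This gives the bound $\prod_m\|D^{\bm{s}_m+\bm{d}/\bm{q}-\bm{d}/\bm{p}_m}u\|_{L^{\bm{q}}}^{\theta_m}$.
3. Conclude with mixed-norm Sobolev embeddings applied one coordinate block at a time via Minkowski.

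Step 3 requires $1/\bm{p}_m\ge1/\bm{q}$ componentwise. Where this fails, I will first use Proposition \ref{littlewood} in the offending variable, together with Minkowski to reorder the mixed norms, to trade part of the $\bm{s}_m$-derivative against $\|u\|_{L^{\bm{q}}}$-type norms. This is the same absorption trick used in Proposition \ref{controlinfi}. In case (4), $p_m=\infty$ is allowed in some components; there the square-function bound fails, and the strict inequality $\sum\theta_m\bm{s}_m>\bm{s}$ lets me replace $L^\infty$ by slightly smaller exponents, through a subcritical Bernstein step, before entering the Littlewood--Paley argument.
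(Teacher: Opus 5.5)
\textbf{Cases (3) and (4).} This is where the real gap is.

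In case (3) you apply H\"older in $L^{\bm q}$ with equal exponents and then Sobolev factor by factor. As you note, that needs $1/\bm p_m\ge 1/\bm q$ for every $m$, and hypothesis (3) does not give this. For instance, $\|u\|_{L^8(\mathbb{R}^3)}\lesssim\|u\|_{L^{12}}^{1/2}\|Du\|_{L^2}^{1/2}$ falls under (3) with $p_1>q$. Your repair is not an argument. Proposition \ref{littlewood} is a single-block, unmixed inequality. Minkowski moves a norm inward only when its exponent is the larger one, so reordering mixed norms depends on exponent orderings you do not control.

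The missing idea is to reverse the two steps. Put $1/\bm r=\sum_m\theta_m/\bm p_m$. Then \eqref{basicconmix} gives $1/\bm q\le 1/\bm r$ and $\bm d(1/\bm r-1/\bm q)=\sum_m\theta_m\bm s_m-\bm s$. The mixed Sobolev inequality \eqref{sobo} therefore yields $\|D^{\bm s}u\|_{L^{\bm q}}\lesssim\|D^{\sum_m\theta_m\bm s_m}u\|_{L^{\bm r}}$. Now apply Lemma \ref{littlewoodpaley}, the pointwise bound $|2^{k\cdot\sum_m\theta_m\bm s_m}P_ku|_{l^2_k}\le\prod_m|2^{k\cdot\bm s_m}P_ku|_{l^2_k}^{\theta_m}$, and H\"older in $L^{\bm r}$ with exponents $\bm p_m/\theta_m$. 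These land exactly in $L^{\bm p_m}$, with no comparison of $\bm p_m$ and $\bm q$ needed. This is the paper's argument.

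Case (4) is missing its key ingredient. Bernstein's inequality only controls a larger Lebesgue exponent by a smaller one, so no ``subcritical Bernstein step'' can trade an infinite component of $\bm p_m$ for a finite one. Moreover, $\dim S<n$ is allowed in (4), so dyadic pieces cannot be summed by the triangle inequality as in (2).

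The paper first proves the case $M=2$ directly. It runs a Hedberg-type distribution-function argument one block at a time, using the strict gap componentwise and $l^1_{k_j}$ sums. It closes with iterated Hardy--Littlewood maximal functions, which, unlike square functions, are bounded on $L^\infty$. For $M\ge 3$ it applies this to get $\|D^{\tilde{\bm s}_m}u\|_{L^{\tilde{\bm p}_m}}\lesssim\|D^{\bm s}u\|_{L^{\bm q}}^{1/2}\|D^{\bm s_m}u\|_{L^{\bm p_m}}^{1/2}$, with $1/\tilde{\bm p}_m=\frac12(1/\bm q+1/\bm p_m)-\epsilon\bm 1$ and $\tilde{\bm s}_m=\frac12(\bm s+\bm s_m)-\epsilon\bm d$. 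This places the problem in case (3), and the factor $\|D^{\bm s}u\|_{L^{\bm q}}^{1/2}$ is then absorbed.

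\textbf{Cases (1) and (2).} Your interior-point observation is correct and is what the paper uses, but the normalisation step is false in general. Block dilations plus a constant multiple give only $n+1$ parameters. Since $\dim S=n$ forces $M\ge n+1$, you can set all $A_m:=\|D^{\bm s_m}u\|_{L^{\bm p_m}}$ equal to $1$ only when $M=n+1$.

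The paper never normalises. Because $\sum_m\theta_m(\bm s_m-\bm d/\bm p_m)$ is interior to $S$, one can choose $\epsilon_{m,l}$ with $|\epsilon_{m,l}|<\theta_m$, $\sum_m\epsilon_{m,l}=0$ and $\sum_m\epsilon_{m,l}(\bm s_m-\bm d/\bm p_m)=\epsilon e_l$, where $e_l$ is the $l$-th unit vector. The minimum over $m$ is then bounded by the weighted geometric means with weights $\theta_m\pm\epsilon_{m,l}$. This gives $\prod_mA_m^{\theta_m}\min_{l,\pm}2^{\mp\epsilon k_l}B_l^{\pm1}$ with $B_l=\prod_mA_m^{\epsilon_{m,l}}$, whose sum over $k\in\mathbb{Z}^n$ is bounded independently of the $B_l$.

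In case (2) the same weights define $1/\tilde{\bm p}^{\pm}_l=\sum_m(\theta_m\pm\epsilon_{m,l})/\bm p_m$. The strict gap $\sum_m\theta_m\bm s_m>\bm s$ guarantees $1/\tilde{\bm p}^{\pm}_l\ge 1/\bm q$, so mixed H\"older followed by Bernstein applies. The decay in $k$ still comes from $\dim S=n$, not from the gap.
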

%
%
	
	\begin{rem}
		Assume $d_j = 1$ for $j=1,\cdots,d$, $\bm{0}<1/\bm{p}_1,1/\bm{p}_2,1/\bm{q}<\bm{1}$, $0\leq \theta\leq 1$, $0\leq \sigma<s$, $\theta_1/\bm{p}_1+(1-\theta)/\bm{p}_2\geq 1/\bm{q}$, and $\bm{1}\cdot (\theta_1/\bm{p}_1+(1-\theta)/\bm{p}_2-1/\bm{q}) = (1-\theta)s-\sigma\geq 0$. Using the Marcinkiewicz–Lizorkin multiplier theorem from \cite{lizorkin1970multipliers}, thm 1.1 of Wang--Mei--Wei \cite{wang2025gagliardo} is equivalent to
		\begin{equation*}
			\sum_{j=1}^d \|D^{\sigma}_{x_j}u\|_{L^{\bm{q}}}\lesssim \|u\|_{L^{\bm{p}_1}}^{\theta}\left(\sum_{j=1}^d\|D_{x_j}^su\|_{L^{\bm{p}_2}}\right)^{1-\theta}.
		\end{equation*}
		Defining $\theta_{j,m}$ by
		\begin{align*}
			\frac{\theta}{\bm{p}_1}+\frac{1-\theta}{\bm{p}_2}-\frac{1}{\bm{q}} = \sum_{m=1}^d\theta_{j,m}se_m-\sigma e_j,~~ \mbox{ with } e_m\mbox{ the standard basis},
		\end{align*}
		and noting that $\sum_{m=1}^d \theta_{j,m} = 1-\theta$, we obtain from Theorem \ref{mainresmix} the product inequality
		\begin{align*}
			\|D^{\sigma}_{x_j}u\|_{L^{\bm{q}}}\lesssim \|u\|_{L^{\bm{p}_1}}^{\theta}\prod_{m=1}^d \|D_{x_{m}}^su\|_{L^{\bm{p}_2}}^{\theta_{j,m}},
		\end{align*}
		which contains thm 1.1 in \cite{wang2025gagliardo} as a special case.
	\end{rem}
	\begin{proof}[\textbf{Proof of Theorem \ref{mainresmix}}]
		Let $A_m = \|D^{\bm{s}_m}u\|_{L^{\bm{p}_m}(\mathbb{R}^{\bm{d}})}$.
%
		
	Assume that \hyperlink{(1)}{(1)} holds. By Bernstein's inequality, we obtain
		\begin{align*}
			\|P_k u\|_{L^{\bm{q}}(\mathbb{R}^{\bm{d}})}\lesssim 2^{k\cdot \bm{d}(1/\bm{p}_m-1/\bm{q})}\|P_k u\|_{L^{\bm{p}_m}(\mathbb{R}^{\bm{d}})}\lesssim 2^{k\cdot (\bm{d}(1/\bm{p}_m-1/\bm{q})-\bm{s}_m)} A_m.
		\end{align*}
		Thus
		\begin{align*}
			\|D^{\bm{s}}u\|_{L^{\bm{q}}(\mathbb{R}^{\bm{d}})}&\leq \sum_{k\in \mathbb{Z}^n}\|D^{\bm{s}}P_{k}u\|_{L^{\bm{q}}(\mathbb{R}^{\bm{d}})}\\ &\lesssim \sum_{k\in \mathbb{Z}^n} 2^{k\cdot \bm{s}}\|P_ku\|_{L^{\bm{q}}(\mathbb{R}^{\bm{d}})}\\
			&\lesssim \sum_{k\in \mathbb{Z}^n}2^{k\cdot \bm{s}}\min_{1\leq m\leq M}\{2^{k\cdot (\bm{d}(1/\bm{p}_m-1/\bm{q})-\bm{s}_m)} A_m\}.
		\end{align*}
		Since $\mathrm{diam}(S) = n$, for any $0<\epsilon\ll 1$, there exist $\epsilon_{m,l}\in \mathbb{R}$ with $|\epsilon_{m,l}|<\theta_m$ for any $1\leq m\leq M$, $1\leq l\leq n$ such that
		$$\sum_{m=1}^M \epsilon_{m,l} = 0,\quad\sum_{m=1}^M \epsilon_{m,l}(\bm{s}_m-\bm{d}/\bm{p}_m) = \epsilon e_{l},\quad 1\leq l\leq n,$$
		where $e_l$ denotes the $l$-th standard basis vector in $\mathbb{R}^n$. Thus
		\begin{align*}
			\|D^{\bm{s}}u\|_{L^{\bm{q}}(\mathbb{R}^{\bm{d}})}&\lesssim \sum_{k\in \mathbb{Z}^n}2^{k\cdot \bm{s}}\min_{1\leq l\leq n,\pm}\left\{\prod_{m=1}^M\{2^{k\cdot (\bm{d}(1/\bm{p}_m-1/\bm{q})-\bm{s}_m)} A_m\}^{\theta_m\pm\epsilon_{m,l}}\right\}\\
			&\lesssim \sum_{k\in \mathbb{Z}^n}\min_{1\leq l\leq n,\pm}\left\{2^{k\cdot (\mp\epsilon e_l)}\prod_{m=1}^M A_m^{\theta_m\pm\epsilon_{m,l}}\right\}\\
			&\lesssim \prod_{m=1}^M A_m^{\theta_m}.
		\end{align*}
		
		Assume that \hyperlink{(2)}{(2)} holds. For each $l$, define $\tilde{\bm{p}}^{+}_l$ and $\tilde{\bm{p}}^{-}_l$ by
		$$\frac{1}{\tilde{\bm{p}}^{\pm}_l} = \sum_{m=1}^M 
		\frac{\theta_m\pm\epsilon_{m,l}}{\bm{p}_m}.$$
		Note that for $\sum_{m=1}^M \theta_m \bm{s}_m > \bm{s}$, $|\epsilon_{m,l}|\ll 1$, we have $1/\bm{q}\leq 1/\tilde{\bm{p}}^{\pm}_{ l}$. Then by Bernstein's inequality and H\"{o}lder's inequality, we have
		\begin{align*}
			\|P_ku\|_{L^{\bm{q}}(\mathbb{R}^{\bm{d}})}&\lesssim 2^{k\cdot \bm{d}(1/\tilde{\bm{p}}_{\pm l}-1/\bm{q})}\|P_k u\|_{L^{\tilde{\bm{p}}_l}(\mathbb{R}^{\bm{d}})}\\
			&\lesssim 2^{k\cdot \bm{d}(1/\tilde{\bm{p}}_{\pm l}-1/\bm{q})}\prod_{m=1}^M\|P_ku\|_{L^{\bm{p}_m}(\mathbb{R}^{\bm{d}})}^{\theta_m\pm\epsilon_{m,l}}\\
			&\lesssim \prod_{m=1}^M (2^{k\cdot (\bm{d}(1/\bm{p}_m-1/\bm{q})-\bm{s}_m)}A_m)^{\theta_m\pm\epsilon_{m,l}}.
		\end{align*}
		By the same argument as for case \hyperlink{(1)}{(1)}, the proof of case \hyperlink{(2)}{(2)} is complete.
%

		Assume that \hyperlink{(3)}{(3)} holds. We first establish the following Sobolev inequality: if $\bm{0}<1/\bm{p},1/\bm{q}<\bm{1}$, $\bm{d}(1/\bm{p}-1/\bm{q}) = \bm{s}\geq \bm{0}$, then
		\begin{equation}\label{sobo}
			\|u\|_{L^{\bm{q}}(\mathbb{R}^{\bm{d}})}\lesssim \|D^{\bm{s}}u\|_{L^{\bm{p}}(\mathbb{R}^{\bm{d}})}.
		\end{equation}
		We only need to consider $\bm{s} = (0,\cdots,0,s)$, $s>0$, $\bm{p}(j) = \bm{q}(j)$, $\forall~1\leq j\leq n-1$. Let $\varDelta_{j,l} = \mathscr{F}^{-1}\psi (|\xi_j|/2^l\mathscr{F})$. Then $P_k = \varDelta_{1,k_1}\cdots \varDelta_{n,k_n}$. 
		By Lemma \ref{littlewoodpaley} and Minkowski's inequality, we obtain
		\begin{align*}
			\|u\|_{L^{\bm{q}}(\mathbb{R}^{\bm{d}})} &\sim \||P_k u|_{l^2_{k\in \mathbb{Z}^n}}\|_{L^{\bm{q}}(\mathbb{R}^{\bm{d}})}\\
			&\lesssim \||P_k u|_{l^1_{k_n}l^{2}_{k_1,\cdots,k_{n-1}}}\|_{L^{\bm{q}}(\mathbb{R}^{\bm{d}})}\\
			&\lesssim \||\||P_k u|_{l^{2}_{k_1,\cdots,k_{n-1}}}\|_{_{L^{\bm{q}(n-1)}_{x_{n-1}}\cdots L^{\bm{q}(1)}_{x_1}}}|_{l^1_{k_n}}\|_{L^{\bm{q}(n)}_{x_n}}\\
			&\lesssim \||\|\varDelta_{n,k_n}u\|_{_{L^{\bm{p}(n-1)}_{x_{n-1}}\cdots L^{\bm{p}(1)}_{x_1}}}|_{l^1_{k_n}}\|_{L^{\bm{q}(n)}_{x_n}}.
		\end{align*}
		Below, we use an argument similar to that in the proof of Proposition \ref{littlewood}.
		\begin{align*}
			\|u\|_{L^{\bm{q}}(\mathbb{R}^{\bm{d}})}^{\bm{q}(n)}\lesssim \int_0^\infty \lambda^{\bm{q}(n)}|\{x_n:|\|\varDelta_{n,k_n}u\|_{{L^{\bm{p}(n-1)}_{x_{n-1}}\cdots L^{\bm{p}(1)}_{x_1}}}|_{l^1_{k_n}}>\lambda\}|\frac{d\lambda}{\lambda}.
		\end{align*}		
		Let $f_{k_n}(x_n) = \|\varDelta_{n,k_n}u\|_{_{L^{\bm{p}(n-1)}_{x_{n-1}}\cdots L^{\bm{p}(1)}_{x_1}}}$ and $A = \||2^{sk_n}f_{k_n}(x_n)|_{l^\infty_{k_n}}\|_{L^{\bm{p}(n)}_{x_n}}$. By Bernstein's inequality and Minkowski's inequality, we have
		\begin{align*}
			f_{k_n}(x_n)\lesssim 2^{k_n\frac{d_n}{\bm{p}(n)}}\|\varDelta_{n,k_n}u\|_{L^{\bm{p}}(\mathbb{R}^{\bm{d}})} \lesssim 2^{k_n(\frac{d_n}{\bm{p}(n)}-\bm{s}(n))}A\sim 2^{k_n\frac{d_n}{\bm{q}(n)}}A.
		\end{align*}
		Choose $K(\lambda)$ such that $2^{K(\lambda)\frac{d_n}{\bm{q}(n)}}A \sim c\lambda$ for $0<c\ll 1$.
		Then we obtain
		\begin{align*}
			\|u\|_{L^{\bm{q}}(\mathbb{R}^{\bm{d}})}^{\bm{q}(n)}
			&\lesssim \int_0^\infty \lambda^{\bm{q}(n)}|\{x_n:|f_{k_n}(x_n)|_{l^1_{k_n\geq K(\lambda)}}>\lambda/2\}|\frac{d\lambda}{\lambda}\\
			&\lesssim \int_0^\infty \lambda^{\bm{q}(n)}|\{x_n:|2^{sk_n}f_{k_n}(x_n)|_{l^\infty_{k_n}}>2^{sK(\lambda)}\lambda\}|\frac{d\lambda}{\lambda}\\
			&\lesssim A^{\frac{\bm{s}(n)\bm{q}(n)\bm{p}(n)}{d_n}}\int_0^\infty \mu^{\bm{p}(n)}|\{x_n:|2^{sk_n}f_{k_n}(x_n)|_{l^\infty_{k_n}}>\mu\}|\frac{d\mu}{\mu}\\
			&\sim A^{\frac{\bm{s}(n)\bm{q}(n)\bm{p}(n)}{d_n}}\||2^{sk_n}f_{k_n}(x_n)|_{l^\infty_{k_n}}\|_{L^{\bm{p}(n)}_{x_n}}\sim A^{\bm{q}(n)} 
		\end{align*}
		Thus by Lemma \ref{littlewoodpaley} and Minkowski's inequality, we obtain
		\begin{align*}
			\|u\|_{L^{\bm{q}}(\mathbb{R}^{\bm{d}})}&\lesssim \||2^{sk_n}\|\varDelta_{n,k_n}u\|_{_{L^{\bm{p}(n-1)}_{x_{n-1}}\cdots L^{\bm{p}(1)}_{x_1}}}|_{l^\infty_{k_n}}\|_{L^{\bm{p}(n)}_{x_n}}\\
			&\lesssim \||2^{sk_n}\||P_k u|_{l^{2}_{k_1,\cdots,k_{n-1}}}\|_{_{L^{\bm{p}(n-1)}_{x_{n-1}}\cdots L^{\bm{p}(1)}_{x_1}}}|_{l^\infty_{k_n}}\|_{L^{\bm{p}(n)}_{x_n}}\\
			&\lesssim \||2^{k\cdot \bm{s}}P_k u|_{l^\infty_{k_n}l^{2}_{k_1,\cdots,k_{n-1}}}\|_{L^{\bm{p}}(\mathbb{R}^{\bm{d}})}\\
			&\lesssim \||2^{k\cdot \bm{s}}P_k u|_{l^{2}_{k}}\|_{L^{\bm{p}}(\mathbb{R}^{\bm{d}})}\sim \|D^{\bm{s}}u\|_{L^{\bm{p}}(\mathbb{R}^{\bm{d}})}.
		\end{align*}
		Thus we finish the proof of \eqref{sobo}.
		
		Now set $1/\bm{r} = \sum_{m=1}^M \theta_m/\bm{p}_m$, $\bm{\alpha} = \sum_{m=1}^M \theta_m\bm{s}_m-\bm{s}$. Then we have $\bm{0}<1/\bm{q}\leq 1/\bm{r}<\bm{1}$. Applying \eqref{sobo}, Lemma \ref{littlewoodpaley}, and Hölder's inequality yields
		\begin{align*}
			\|D^{\bm{s}}u\|_{L^{\bm{q}}(\mathbb{R}^{\bm{d}})}&\lesssim \|D^{\bm{s}+\bm{\alpha}}u\|_{L^{\bm{r}}(\mathbb{R}^{\bm{d}})}\\
			&\sim \||2^{k\cdot (\bm{s}+\bm{\alpha})}P_ku|_{l^2_{k\in \mathbb{Z}^n}}\|_{L^{\bm{r}}(\mathbb{R}^{\bm{d}})}\\
			&\lesssim \left\|\prod_{m=1}^M|2^{k\cdot \bm{s}_m}P_ku|_{l^2_{k\in \mathbb{Z}^n}}^{\theta_m}\right\|_{L^{\bm{r}}(\mathbb{R}^{\bm{d}})}\\
			&\lesssim \prod_{m=1}^M\||2^{k\cdot \bm{s}_m}P_ku|_{l^2_{k\in \mathbb{Z}^n}}\|_{L^{\bm{p}_m}(\mathbb{R}^{\bm{d}})}^{\theta_m}\\
			&\sim \prod_{m=1}^M \|D^{\bm{s}_m}u\|_{L^{\bm{p}_m}(\mathbb{R}^{\bm{d}})}^{\theta_m}.
		\end{align*}
		
		Assume that \hyperlink{(4)}{(4)} holds. We first prove the result for $M = 2$. By Lemma \ref{littlewoodpaley} and Minkowski's inequality, we have
		\begin{align*}
			\|D^{\bm{s}}u\|_{L^{\bm{q}}(\mathbb{R}^{\bm{d}})}&\sim \||2^{k\cdot \bm{s}}P_ku|_{l^2_{k\in \mathbb{Z}^n}}\|_{L^{\bm{q}}(\mathbb{R}^{\bm{d}})}\\
			&\lesssim \||2^{k\cdot \bm{s}}P_ku|_{l^1_{k\in \mathbb{Z}^n}}\|_{L^{\bm{q}}(\mathbb{R}^{\bm{d}})}\\
			&\lesssim \||2^{k_n\bm{s}(n)}\cdots\||2^{k_1\bm{s}(1)}P_ku|_{l^1_{k_1}}\|_{L^{\bm{q}(1)}_{x_1}}\cdots |_{l^1_{k_n}}\|_{L_{x_n}^{\bm{q}(n)}}.
		\end{align*}
		We define the following quantities inductively:
		\begin{align*}
			&f_1 = |2^{k_1\bm{s}(1)}P_ku|_{l^1_{k_1}},~F_l = \|f_l\|_{L^{\bm{q}(l)}_{x_l}},~f_{l+1} = |2^{k_{l+1}\bm{s}(l+1)}F_l|_{l^1_{k_{l+1}}}\\
			&g_1 = |2^{k_1\bm{s}_1(1)}P_ku|_{l^\infty_{k_1}},~G_l = \|g_l\|_{L^{\bm{p}_1(l)}_{x_l}},~g_{l+1} =|2^{k_{l+1}\bm{s}_1(l+1)}G_l|_{l^\infty_{k_{l+1}}}\\
			&h_1 = |2^{k_1\bm{s}_2(1)}P_ku|_{l^\infty_{k_1}},~H_l = \|h_l\|_{L^{\bm{p}_2(l)}_{x_l}},~h_{l+1} = |2^{k_{l+1}\bm{s}_2(l+1)}H_l|_{l^\infty_{k_{l+1}}}.
		\end{align*}		
		We prove $F_l\lesssim G_l^{\theta_1}H_l^{\theta_2}$ by induction. Set $1/\bm{r} = \theta_1/\bm{p}_1+\theta_2/\bm{p}_2$. By the Bernstein inequality, we have
		\begin{align*}
			\|P_ku\|_{L_{x_1}^{\infty}} &\lesssim \min\{2^{k_1d_1/\bm{p}_1(1)}	\|P_ku\|_{L_{x_1}^{\bm{p}_1(1)}},2^{k_1d_1/\bm{p}_2(1)}	\|P_ku\|_{L_{x_1}^{\bm{p}_2(1)}}\}\\
			&\lesssim \min\{2^{k_1(d_1/\bm{p}_1(1)-\bm{s}_1(1))}G_1,2^{k_1(d_1/\bm{p}_2(1)-\bm{s}_2(1))}	H_1\}\\
			&\lesssim 2^{k_1(d_1/\bm{q}(1)-\bm{s}(1))} G_1^{\theta_1}H_1^{\theta_2}.
		\end{align*}
		Let $K_\lambda$ such that $2^{K_\lambda d_1/\bm{q}(1)} G_1^{\theta_1}H_1^{\theta_2}\sim c\lambda$ for some $0<c\ll 1$. Then
		\begin{align*}
			F_1^{\bm{q}(1)}&\sim \int_0^\infty \lambda^{\bm{q}(1)}\left|\left\{x_1:\sum_{k_1\in \mathbb{Z}} 2^{k_1\bm{s}(1)}|P_ku(x)|>\lambda\right\}\right|\frac{d\lambda}{\lambda}\\
			&\sim \int_0^\infty \lambda^{\bm{q}(1)}\left|\left\{x_1:\sum_{k_1\geq K_\lambda} 2^{k_1\bm{s}(1)}|P_ku(x)|>\frac{\lambda}{2}\right\}\right|\frac{d\lambda}{\lambda}\\
			&\lesssim \int_0^\infty \lambda^{\bm{q}(1)}\left|\left\{x_1: 2^{K_\lambda(\bm{s}(1)-\theta_1 \bm{s}_1(1)-\theta_2\bm{s}_2(1))}g_1^{\theta_1}h_1^{\theta_2}>\frac{\lambda}{2}\right\}\right|\frac{d\lambda}{\lambda}\\
			& \lesssim \int_0^\infty \lambda^{\bm{q}(1)}\left|\left\{x_1: (\lambda/G_1^{\theta_1} H_1^{\theta_2})^{\frac{1/\bm{q}(1)-1/\bm{r}(1)}{1/\bm{q}(1)}}g_1^{\theta_1}h_1^{\theta_2}\gtrsim \lambda\right\}\right|\frac{d\lambda}{\lambda}\\
			&\lesssim (G_1^{\theta_1}H_1^{\theta_2})^{\bm{q}(1)-\bm{r}(1)}\int_0^\infty \mu^{\bm{r}(1)}\{x_1:g_1^{\theta_1} h_1^{\theta_2}> \mu\}\frac{d\mu}{\mu}\\
			&\sim (G_1^{\theta_1}H_1^{\theta_2})^{\bm{q}(1)-\bm{r}(1)}\|g_1^{\theta_1}h_1^{\theta_2}\|_{L^{\bm{r}(1)}_{x_1}}^{\bm{r}(1)}\\
			&\lesssim (G_1^{\theta_1}H_1^{\theta_2})^{\bm{q}(1)}.
		\end{align*}
		Thus $F_1\lesssim G_1^{\theta_1}H_1^{\theta_2}$. Likewise, applying Bernstein's inequality gives
		\begin{align*}
			&\quad\|F_l\|_{L_{x_{l+1}}^{\infty}}\\
			&\lesssim \|G_l^{\theta_1}H_l^{\theta_2}\|_{L^\infty_{x_{l+1}}}\\
			&\lesssim (2^{k_{l+1}(d_{l+1}/\bm{p}_1(l+1)-\bm{s}_1(l+1))}G_{l+1})^{\theta_1}(2^{k_{l+1}(d_{l+1}/\bm{p}_2(l+1)-\bm{s}_2(l+1))}H_{l+1})^{\theta_2}\\
			&\lesssim 2^{k_{l+1}(d_{l+1}/\bm{q}(l+1)-\bm{s}(l+1))}G_{l+1}^{\theta_1}H_{l+1}^{\theta_2}.
		\end{align*}
		Let $K_\lambda$ such that $2^{k_{l+1}d_{l+1}/\bm{q}(l+1)}G_{l+1}^{\theta_1}H_{l+1}^{\theta_2}\sim c\lambda$ for some $0<c\ll 1$. Then
		\begin{align*}
			F_{l+1}^{\bm{q}(l+1)}&\sim \int_0^\infty \lambda^{\bm{q}(1)}\left|\left\{x_{l+1}:\sum_{k_{l+1}\in \mathbb{Z}} 2^{k_{l+1}\bm{s}(l+1)}F_l>\lambda\right\}\right|\frac{d\lambda}{\lambda}\\
			&\sim \int_0^\infty \lambda^{\bm{q}(l+1)}\left|\left\{x_1:\sum_{k_{l+1}\geq K_\lambda} 2^{k_{l+1}\bm{s}(l+1)}F_l>\frac{\lambda}{2}\right\}\right|\frac{d\lambda}{\lambda}\\
			&\lesssim \int_0^\infty \lambda^{\bm{q}(l+1)}\left|\left\{x_1: 2^{K_\lambda(\bm{s}(l+1)-\theta_1 \bm{s}_1(l+1)-\theta_2\bm{s}_2(l+1))}G_l^{\theta_1}H_l^{\theta_2}>\frac{\lambda}{2}\right\}\right|\frac{d\lambda}{\lambda}\\
			& \lesssim \int_0^\infty \lambda^{\bm{q}(l+1)}\left|\left\{x_1: (\lambda/G_{l+1}^{\theta_1} H_{l+1}^{\theta_2})^{\frac{1/\bm{q}(l+1)-1/\bm{r}(l+1)}{1/\bm{q}(l+1)}}G_l^{\theta_1}H_l^{\theta_2}\gtrsim \lambda\right\}\right|\frac{d\lambda}{\lambda}\\
			&\lesssim (G_{l+1}^{\theta_1}H_{l+1}^{\theta_2})^{\bm{q}(l+1)-\bm{r}(l+1)}\|G_l^{\theta_1}H_l^{\theta_2}\|_{L^{\bm{r}(1)}_{x_1}}^{\bm{r}(l+1)}\lesssim (G_{l+1}^{\theta_1}H_{l+1}^{\theta_2})^{\bm{q}(l+1)}.
		\end{align*}
		We obtain $F_{l+1}\lesssim G_{l+1}^{\theta_1}H_{l+1}^{\theta_2}$. An induction argument yields $F_n\lesssim G_n^{\theta_1}H_n^{\theta_2}$.
		
		We now define the Hardy–Littlewood maximal operator by
		\begin{align*}
			M_jf(x_j) = \sup_{R>0}\frac{1}{R^{d_j}}\int_{\mathbb{R}^{d_j}}|f(y_j)|\chi_{|x_j-y_j|\leq R}~dy_j.
		\end{align*}
		Then
		\begin{align*}
			&\quad\|D^{\bm{s}}u\|_{L^{\bm{q}}(\mathbb{R}^{\bm{d}})}\\
			&\lesssim G_n^{\theta_1}H_n^{\theta_2}\\
			&\sim \|2^{k\cdot \bm{s}_1}P_ku\|_{L^{\bm{p}_1(n)}_{x_n}l^\infty_{k_n}\cdots L^{\bm{p}_1(1)}_{x_1}l^\infty_{k_1}}^{\theta_1}\|2^{k\cdot \bm{s}_2}P_ku\|_{L^{\bm{p}_2(n)}_{x_n}l^\infty_{k_n}\cdots L^{\bm{p}_2(1)}_{x_1}l^\infty_{k_1}}^{\theta_2}\\
			&\lesssim \|M_n\cdots\|M_1D^{\bm{s}_1}u\|_{L^{\bm{p}_1(1)}_{x_1}}\cdots\|_{L_{x_n}^{\bm{p}_1(n)}}^{\theta_1}\|M_n\cdots\|M_1D^{\bm{s}_2}u\|_{L^{\bm{p}_2(1)}_{x_1}}\cdots\|_{L_{x_n}^{\bm{p}_2(n)}}^{\theta_2}\\
			&\lesssim \|D^{\bm{s}_1}u\|_{L^{\bm{p}_1}(\mathbb{R}^{\bm{d}})}^{\theta_1}\|D^{\bm{s}_2}u\|_{L^{\bm{p}_2}(\mathbb{R}^{\bm{d}})}^{\theta_2}.
		\end{align*}		
		For $K\geq 3$, let $0<\epsilon\ll 1$, set
		$$\frac{1}{\tilde{\bm{p}}_m} = \frac{1}{2}\left(\frac{1}{\bm{q}}+\frac{1}{\bm{p}_m}\right)-\epsilon \bm{1},\quad \tilde{\bm{s}}_m = \frac{1}{2}(\bm{s}+\bm{s}_m)-\epsilon \bm{d}.$$
		Then $\bm{0}<1/\tilde{\bm{p}}_m<\bm{1}$ and		
		\begin{equation*}
			\|D^{\tilde{\bm{s}}_m}u\|_{L^{\tilde{\bm{p}}_m}}\lesssim \|D^{\bm{s}}u\|_{L^{\bm{q}}(\mathbb{R}^{\bm{d}})}^{\frac{1}{2}}\|D^{\bm{s}_m}u\|_{L^{\bm{p}_m}(\mathbb{R}^{\bm{d}})}^{\frac{1}{2}}.
		\end{equation*}
		Applying the result for case \hyperlink{(3)}{(3)}, we obtain
		\begin{align*}
			\|D^{\bm{s}}u\|_{L^{\bm{q}}(\mathbb{R}^{\bm{d}})}&\lesssim \prod_{m=1}^M \|D^{\tilde{\bm{s}}_m}u\|_{L^{\tilde{\bm{p}}_m}}^{\theta_m}\\
			&\lesssim \prod_{m=1}^M  \left( \|D^{\bm{s}}u\|_{L^{\bm{q}}(\mathbb{R}^{\bm{d}})}^{\frac{1}{2}}\|D^{\bm{s}_m}u\|_{L^{\bm{p}_m}(\mathbb{R}^{\bm{d}})}^{\frac{1}{2}}\right)^{\theta_m}\\
			&\lesssim \|D^{\bm{s}}u\|_{L^{\bm{q}}(\mathbb{R}^{\bm{d}})}^{\frac{1}{2}}\left(\prod_{m=1}^M   \|D^{\bm{s}_m}u\|_{L^{\bm{p}_m}(\mathbb{R}^{\bm{d}})}^{\theta_m}\right)^{\frac{1}{2}}.
		\end{align*}
		The proof is complete.
	\end{proof}
	
	\section{The profile decomposition}\label{profiledeco}
	The concentration-compactness argument is useful for establishing Theorem \ref{existextri}; see Lions \cite{lions1984concentration,lions1984concentration2,lions1985concentration,lions1985concentration2}. In this section, we present the profile decomposition to describe the concentration-compactness behavior relative to the embeddings $H^{\bm{s}}_{\bm{d}} \hookrightarrow L^q$ and $\dot{H}^{\bm{s}}_{\bm{d}} \hookrightarrow L^q$. This decomposition is particularly useful for proving the existence of maximizers of certain functionals, and is also a powerful tool in recent studies of the long-time behavior of dispersive equations. The method was introduced by G\'{e}rard \cite{gerard1998}, Bahouri--G\'{e}rard \cite{bahourigerard}, and our presentation mainly follows that of \cite{koch2014}.
	
	From now on, we assume the following: $$n\in \mathbb{N},~~ d_j\in \mathbb{N},~~ d = \sum_{j=1}^nd_j,~~\mathbb{R}^d= \oplus_{j=1}^n \mathbb{R}^{d_j},$$
	with $x = (x_1,\cdots,x_n)$, $\xi = (\xi_1,\cdots,\xi_n)$, $x_j,\xi_j\in \mathbb{R}^{d_j}$. Let $2<q<\infty$, $\bm{d} = (d_1,\cdots,d_n)$, $\bm{s} = (s_1,\cdots,s_n)$ with $s_j>0$ for each $1\leq j\leq n$.
	
	Recall the definitions of $H^{\bm{s}}_{\bm{d}}$ and $\dot{H}^{\bm{s}}_{\bm{d}}$ from \eqref{defiHds} and \eqref{defidotHds}, respectively, and that of $s$ from \eqref{avreandin}.
	\begin{prop}\label{linearprofilesubcritical}
		Assume that $1/q>1/2-s/d$. Let $\{u_l\}$ be a bounded sequence in $H^{\bm{s}}_{\bm{d}}$. Then after replacing it by a subsequence, there exist a bounded sequence $\{U^j\}$ in $H^{\bm{s}}_{\bm{d}}$, and sequences $\{x_{l}^j\}\subset \mathbb{R}^d$ such that for $r_l^k$ defined by
		$$u_l(x) = \sum_{j=1}^k U^j(x+x_l^j)+r_l^k(x),$$
		we have $r_l^k(\cdot-x_l^j)\rightharpoonup 0$ weakly in $H^{\bm{s}}_{\bm{d}}$ as $l\rightarrow \infty$ for any $1\leq j\leq k$ and
		\begin{equation}\label{almostorth}
			\lim_{k\rightarrow \infty} \lim_{l\rightarrow \infty}\|r_l^k\|_{L^q} = 0,\quad \lim_{l\rightarrow \infty}|x_l^j-x_l^k| = \infty,~~ \forall~j\neq k.
		\end{equation}
		Also,
		\begin{equation}\label{lqdec}
			\lim_{l\rightarrow \infty}\|u_l\|_{L^q}^q =\lim_{l\rightarrow \infty}\|r_l^k\|_{L^q}^q+ \sum_{j=1}^k\|U^j\|_{L^q}^q,
		\end{equation}
		and
		\begin{equation}\label{l2dec}
			\limsup_{l\rightarrow \infty} \|T_m u_l\|_{L^2}^2 = \limsup_{l\rightarrow \infty} \|T_m r_l^k\|_{L^2}^2+\sum_{j=1}^k \|T_m U^j\|_{L^2}^2
		\end{equation}
		for any $T_m = \mathscr{F}^{-1}m(\xi)\mathscr{F}$, $|m(\xi)|\lesssim 1+|\xi_1|^{s_1}+\cdots +|\xi_n|^{s_n}$.
	\end{prop}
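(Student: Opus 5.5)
We follow the standard translation-only profile extraction of Gérard and Bahouri--Gérard, in the form of \cite{koch2014}, adapted to the anisotropic space $H^{\bm{s}}_{\bm{d}}$. The scaling symmetry is excluded, because the inhomogeneous norm fixes the scale. The condition $1/q>1/2-s/d$ says that $q$ lies strictly below the endpoint exponent $r$ of \eqref{avreandin} with $p_j=2$. Hence the Gagliardo--Nirenberg inequality of Proposition \ref{subbern} holds with $\theta_0>0$ (valid since $2<q$).

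\textbf{Step 1: refined inequality.} The key tool is an improved estimate of the form
\begin{equation*}
\|u\|_{L^q}\lesssim \|u\|_{H^{\bm{s}}_{\bm{d}}}^{1-\eta}\Bigl(\sup_{k\in\mathbb{Z}^n_{\geq 0}}\|P_k^{inh}u\|_{L^\infty}\Bigr)^{\eta},\qquad 0<\eta<1,
\end{equation*}
where $P_k^{inh}$ are inhomogeneous anisotropic Littlewood--Paley pieces, with low frequencies grouped at $k_j=0$. To prove it, we first bound $\|P_ku\|_{L^q}\le\|P_ku\|_{L^2}^{2/q}\|P_ku\|_{L^\infty}^{1-2/q}$. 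We then sum in $k$ exactly as in the proof of Proposition \ref{subbern}, but with a small $\epsilon$-loss split between $L^2$ and $L^\infty$. The strict inequality $1/q>1/2-s/d$ leaves room for a geometric gain of the form $2^{-\epsilon\sum \alpha_jk_j}$ in every direction. In fact, since only $k\ge0$ occurs, the sum is simply $\sum_k 2^{-\epsilon\langle\alpha,k\rangle}$ against $\sup_k\|P_ku\|_{L^\infty}^{\eta}$.

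\textbf{Step 2: extraction.} Let $A=\sup_l\|u_l\|_{H^{\bm{s}}_{\bm{d}}}$ and $\nu(\{u_l\})=\limsup_l\sup_k\|P^{inh}_ku_l\|_{L^\infty}$. If $\nu=0$, Step 1 gives $\|u_l\|_{L^q}\to0$ and we stop. Otherwise we choose $k$ (finitely many $k$ matter, since high $k$ have small sup by Bernstein and the $H^{\bm{s}}$ bound) and points $x_l^1$ with $|P_ku_l(x_l^1)|\ge\nu/2$. We pass to a weak limit $u_l(\cdot+x_l^1)\rightharpoonup U^1$ in $H^{\bm{s}}_{\bm{d}}$. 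Because the kernel of $P_k$ lies in $L^2$, pairing gives $|P_kU^1(0)|\ge\nu/2$, and hence $\|U^1\|_{H^{\bm{s}}_{\bm{d}}}\gtrsim \nu\, c(A)$ for some power-type dependence $c(A)$. Setting $r^1_l=u_l-U^1(\cdot+x_l^1)$, we iterate and choose $x_l^j$ nearly maximizing at each stage.

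\textbf{Step 3: orthogonality.} For any Fourier multiplier $T_m$ bounded by the $H^{\bm{s}}$ symbol, weak convergence $r_l^k(\cdot-x_l^j)\rightharpoonup0$ gives the Pythagorean expansion \eqref{l2dec}, since cross terms vanish in the limit. Divergence $|x_l^j-x_l^k|\to\infty$ follows by the usual argument: if two centers stayed a bounded distance apart along a subsequence, the later profile would be a weak limit of a sequence already converging weakly to $0$, contradicting $U^k\neq0$. Applying \eqref{l2dec} with $m=1+\sum|\xi_j|^{s_j}$ gives $\sum_j\|U^j\|^2_{H^{\bm{s}}_{\bm{d}}}\le A^2$, so $\|U^j\|\to0$. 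Then $\nu(r^k)\to0$, and Step 1 yields the first part of \eqref{almostorth}.

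\textbf{Step 4: the $L^q$ decoupling} \eqref{lqdec}. Here we use the subcritical condition again, through local compactness. For fixed $j$, Rellich-type compactness of $H^{\bm{s}}_{\bm{d}}\hookrightarrow L^q_{loc}$ holds because $q<r$, via Step 1 applied to cut-off functions or via Fourier truncation plus the $\theta_0>0$ inequality. It gives $r_l^k(\cdot-x_l^j)\to0$ in $L^q_{loc}$. Combined with the separation of centers and the Brezis--Lieb lemma, this splits $\|u_l\|_{L^q}^q$ into the sum of the profile terms and the remainder term.

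The main obstacle I expect is Step 1: getting a genuine positive power of the sup in the anisotropic multi-parameter sum, uniformly over the cone $k\ge0$, including the mixed low/high blocks. I would handle it first by reproducing the $\gamma_{l,\cdot}$ convex-combination trick from Proposition \ref{subbern}.
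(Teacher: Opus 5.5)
\textbf{Gap in Steps 2--3.} Your extraction functional $\nu(\{u_l\})=\limsup_l\sup_{k\ge0}\|P^{inh}_ku_l\|_{L^\infty}$ is the wrong one when $s\le d/2$, which the hypothesis $1/q>1/2-s/d$ allows (e.g.\ $H^1(\mathbb{R}^3)$ with $2<q<6$). Your claim that ``high $k$ have small sup by Bernstein and the $H^{\bm{s}}_{\bm{d}}$ bound'' is false in that range. Bernstein together with $\|P_ku\|_{L^2}\lesssim\prod_j(2^{-k_js_j})^{\frac{sd_j}{ds_j}}\|u\|_{H^{\bm{s}}_{\bm{d}}}$ only gives $\|P_ku\|_{L^\infty}\lesssim 2^{(\frac12-\frac sd)\sum_jk_jd_j}\|u\|_{H^{\bm{s}}_{\bm{d}}}$, and this growth really occurs. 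Take $\hat\chi$ supported in a single unit block and set $v_\mu(x)=\mu^{\frac d{2s}-1}\chi(\mu^{1/s_1}x_1,\dots,\mu^{1/s_n}x_n)$ with $\mu\to\infty$. Then $\|D^{s_j}_{x_j}v_\mu\|_{L^2}\sim1$, $\|v_\mu\|_{L^2}\sim\mu^{-1}$ and $\|v_\mu\|_{L^q}\sim\mu^{\frac ds(\frac12-\frac1q)-1}\to0$. However, $\sup_k\|P_kv_\mu\|_{L^\infty}\sim\mu^{\frac d{2s}-1}$, carried by blocks with $2^{k_js_j}\sim\mu\to\infty$. So $\nu=\infty$ if $s<d/2$, and $\nu\sim1$ if $s=d/2$. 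No fixed $k$ can be chosen, and every weak limit of translates is $0$, so no profile is ever extracted. Hence the conclusion $\nu(r^k)\to0$ in Step 3 fails. Your Step 1 inequality is true, but the unweighted $L^\infty$ sup detects concentration that is invisible in $L^q$ in the subcritical regime, so it never becomes small. As written, your argument works only for $s>d/2$.

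\textbf{Repair and comparison.} Weight the functional: use $\nu_\gamma(u)=\sup_{k\ge0}2^{-\gamma\sum_jk_jd_j}\|P_ku\|_{L^\infty}$ with $\gamma>\frac12-\frac sd$, e.g.\ $\gamma=1/q$ (the normalization of Lemma \ref{localconcentration}). Then $2^{-\gamma\sum_j k_jd_j}\|P_ku\|_{L^\infty}\lesssim 2^{(\frac12-\frac sd-\gamma)\sum_jk_jd_j}\|u\|_{H^{\bm{s}}_{\bm{d}}}$, so only finitely many $k$ matter. Step 1 survives: the extra factor $2^{\eta\gamma\sum_j k_jd_j}$ is absorbed for small $\eta$, because the total exponent is $(\frac12-\frac1q-\frac sd)+\eta(\gamma-\frac12+\frac sd)<0$. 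The pairing now gives $\|U\|_{L^2}\gtrsim 2^{(\gamma-\frac12)\sum_jk_jd_j}\nu_\gamma$ with $k$ in a bounded set, and the rest of your argument goes through. Your Step 4 is essentially the paper's device (a.e.\ convergence from local compactness plus the refined Fatou lemma). The paper avoids the issue differently. From $\|u_l\|_{L^q}\to\epsilon>0$ and the summable dyadic bounds in the proof of Proposition \ref{subbern}, it selects a bounded $k\in\mathbb{Z}^n$ with $\|P_ku_l\|_{L^q}\gtrsim1$, and H\"older with $L^2$ then gives $\|P_ku_l\|_{L^\infty}\gtrsim1$. It extracts profiles whose $L^q$ norms are bounded below in terms of the current remainder's $L^q$ norm, and concludes from $\sum_j\|U^j\|_{L^q}^q\le\epsilon^q$, with no refined inequality needed.
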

	\begin{proof}[\textbf{Proof}]
		Let $A = \lim_{l}\|u_l\|_{H^{\bm{s}}_{\bm{d}}}$, $\epsilon = \lim_l \|u_l\|_{L^q}$. If $\epsilon = 0$, we may set $U^j = 0$ and choose the $x_l^j$ arbitrarily so that \eqref{almostorth} holds. If $\epsilon>0$, by choosing a subsequence (still denoted by $u_l$), we may find $k\in \mathbb{Z}^n$ with $|k|\lesssim_{A,\epsilon} 1$ such that $\|P_ku_l\|_{L^q}\gtrsim _{A,\epsilon} 1$ for each $l$. By the H\"{o}lder inequality and $\|u_n\|_{L^2}\lesssim 1$, we obtain $\|P_ku_l\|_{L^\infty}\gtrsim_{A,\epsilon} 1$. We choose $x_l^1$ such that $|P_k u_l(-x_l^1)| = \|P_ku_l\|_{L^\infty}$. By choosing a subsequence, there exists $U^1$ such that $u_l(\cdot-x_l^1)$ converges to $U^1$ weakly in $H^{\bm{s}}_{\bm{d}}$ as $l\rightarrow \infty$. Since $\|\partial^\alpha_x P_k u_l\|_{L^\infty}\lesssim_{A,\epsilon,\alpha} 1$, we would also assume $P_k u_l(-x_l^1)$ converges to $P_k U^1$ in $L^\infty$. Thus $\|P_k U^1\|_{L^\infty}\gtrsim_{A,\epsilon} 1$. By the Bernstein inequality and the H\"{o}lder inequality, we have $\|U^1\|_{L^r} \sim_{r,A,\epsilon} 1$ for any $2\leq r\leq \infty$. Let $s_* = \min_{1\leq j\leq n}\{s_j\}$. We have $H^{\bm{s}}_{\bm{d}}\hookrightarrow H^{s_*}(\mathbb{R}^d)$. Thus, for any bounded sequence in $H^{\bm{s}}_{\bm{d}}$, we can extract a subsequence that converges almost everywhere. Consequently, we may assume that $u_l(\cdot-x_l^1)\rightarrow U^1(\cdot)$, $a.e.$ as $l\rightarrow \infty$. By the refined Fatou lemma (see, e.g., \cite{lieb2001analysis}), we have
		\begin{align*}
			\lim_{l\rightarrow \infty}\|r_l^1\|^q_{L^q} = \lim_{l\rightarrow \infty}\|u_l(\cdot-x_l^1)-U^1(\cdot)\|_{L^q}^q & = \lim_{l\rightarrow \infty} \|u_l\|_{L^q}^q-\|U^1\|_{L^q}^q\\
			& = \epsilon^q - \|U^1\|_{L^q}^q.
		\end{align*}
		For the sequence $\{r_l^1\}$, we can repeat the former procedure for $\{u_l\}$. Then, we obtain $\{U^j\}$, $\{x_l^j\}$, and
		$$\lim_{l\rightarrow \infty}\|r_l^k\|_{L^q}^q = \epsilon^q-\sum_{j=1}^k \|U^j\|_{L^q}^q.$$
		Note that if $\lim_{l\rightarrow \infty}\|r_l^k\|_{L^q}^q = \delta$, we have $\|U^j\|_{L^q}\gtrsim_{A,\delta} 1$ for any $1\leq j\leq k$. Thus
		$$\lim_{k\rightarrow \infty}\lim_{l\rightarrow \infty}\|r_l^k\|_{L^q}^q = 0.$$
		We now prove by induction that $r_l^k(\cdot-x_l^j)\rightharpoonup 0$ weakly in $H^{\bm{s}}_{\bm{d}}$ as $l\rightarrow \infty$ for every $1\leq j\leq k$. Assume this holds for all $k\leq k_0$ (with $1\leq j\leq k$). The base case $k_0$ = 1 follows from the construction. For the induction step, note that
		$$r_l^{k_0+1}(x-x_{l}^{k_0+1}) = r_l^{k_0}(x-x_{l}^{k_0+1})-U^{k_0+1}(x).$$
		By the definition of $U^{k_0+1}$, the left-hand side tends weakly to $0$ as $l\rightarrow \infty$, which proves the claim for $j = k_0+1$. For $1\leq j\leq k_0$, we have
		\begin{equation}\label{beforetrs}
			r_l^{k_0+1}(x-x_l^j) =  r_l^{k_0}(x-x_{l}^j)-U^{k_0+1}(x+x_l^{k_0+1}-x_l^j).
		\end{equation}
		If $x_l^{k_0+1}-x_l^j$ converges to some $x_*\in \mathbb{R}^d$ as $l\rightarrow \infty$, we have
		$$r_l^{k_0+1}(x-x_{l}^{k_0+1}) =r_l^{k_0+1}(x-x_l^j+x_l^j-x_{l}^{k_0+1}) \rightharpoonup -U^{k_0+1}(x).$$
		Thus $U^{k_0+1} = 0$, which implies $U^j = 0$, $\lim_{l\rightarrow\infty}\|r_l^j\|_{L^q} = 0$ for any $j\geq k_0+1$. For such indices, the sequences $\{x_l^j\}$ can be selected arbitrarily. If $|x_l^{k_0+1}-x_l^j|\rightarrow \infty$ as $l\rightarrow \infty$, by \eqref{beforetrs} and the induction hypothesis we obtain $r_l^{k_0+1}(x-x_l^j)\rightharpoonup 0$ weakly in $H^{\bm{s}}_{\bm{d}}$ as $l\rightarrow \infty$.
		
		The proof also shows that we may choose $\{x_l^j\}$ such that $\lim_{l\rightarrow \infty}|x_l^j-x_l^k| = \infty$ for all $j\neq k$. Applying this orthogonality yields \eqref{lqdec}–\eqref{l2dec}, completing the proof.
	\end{proof}
	
	The critical case is more subtle than the subcritical one. We follow the argument presented in Tao \cite{tao2006nonlinear}, Appendix A. To proceed, we first introduce some notation.
	
	For $y\in \mathbb{R}^d$, define the translation operator by
	$$T_y u(x) = u(x+y).$$
	
	For $\mu>0$, define the scaling operator by
	$$S_\mu u(x) =\mu^{\frac{d}{sq}} u(\mu^{\frac{1}{s_1}} x_1,\cdots,\mu^{\frac{1}{s_n}}x_n).$$
	These operators satisfy
	$$\|S_\mu u\|_{L^q} = \|u\|_{L^q},~~\|S_\mu D^{s_j}_{x_j}u\|_{L^2} = \| D^{s_j}_{x_j}u\|_{L^2},~~ S_{\mu_1}S_{\mu_2} = S_{\mu_1\mu_2},~~ S_1 = Id,$$
	and
	\begin{equation}\label{defimmu}
		S_\mu T_y = T_{m_\mu y}S_\mu,~~\mbox{ where }m_\mu y := (\mu^{-\frac{1}{s_1}}y_1,\cdots,\mu^{-\frac{1}{s_n}}y_n).
	\end{equation}
	Moreover, we have
	\begin{equation}\label{comscfreqc}
		\begin{aligned}
			&\quad P_k S_\mu u(x)\\
			& = \mathscr{F}^{-1}\left(\prod_{j=1}^n \psi (2^{-k_j}|\xi_j|)\mu^{\frac{d}{sq}-\frac{d}{s}}\hat{u}(\mu^{-\frac{1}{s_1}} \xi_1,\cdots,\mu^{-\frac{1}{s_n}}\xi_n)\right)(x)\\
			& = S_\mu\mathscr{F}^{-1}\left(\prod_{j=1}^n \psi (2^{-k_j}\mu^{\frac{1}{s_j}}|\xi_j|)\hat{u}(\xi)\right)(x).
		\end{aligned}
	\end{equation}
	%
	
	
	We prove the following refined anisotropic Gagliardo–Nirenberg inequality. For the fractional Gagliardo–Nirenberg inequality in Lorentz spaces, see, for example, \cite{wei2023gagliardo} and the references therein.
	\begin{prop}\label{lorentzrefined}
		For $s<d/2$ and $1/q = 1/2-s/d$, the estimate $\|u\|_{L^{q,2}}\lesssim \|u\|_{\dot{H}^{\bm{s}}_{\bm{d}}}$ holds.
	\end{prop}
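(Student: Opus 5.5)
Our plan is to realize $u$ as an anisotropic fractional integral of an $L^2$ function and then invoke the O'Neil (Lorentz-space) form of Hardy--Littlewood--Sobolev. The key tool is an anisotropic quasi-norm adapted to the scaling $S_\mu$: set
$$\rho(\xi)=\sum_{j=1}^n|\xi_j|^{s_j},\qquad |x|_a=\sum_{j=1}^n|x_j|^{s_j}.$$
Both are homogeneous of degree $1$ under the dilations $\delta_\lambda\xi=(\lambda^{1/s_1}\xi_1,\dots,\lambda^{1/s_n}\xi_n)$, and the homogeneous dimension of $\mathbb{R}^d$ with respect to these dilations is $\sum_j d_j/s_j=d/s$. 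We write $\hat u=\rho^{-1}\hat g$. Since $\rho(\xi)^2\sim\sum_j|\xi_j|^{2s_j}$, Plancherel gives
$$\|g\|_{L^2}\sim\|u\|_{\dot H^{\bm s}_{\bm d}}.$$
This reduces the claim to the bound $\|\mathscr{F}^{-1}(\rho^{-1}\hat g)\|_{L^{q,2}}\lesssim\|g\|_{L^2}$.

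\emph{Step 1: the kernel.} Let $K=\mathscr{F}^{-1}(\rho^{-1})$. Because $s<d/2$, we have $d/s>2>1$, so $\rho^{-1}$ is locally integrable and is a tempered distribution. Its homogeneity of degree $-1$ translates into
$$K(\delta_\lambda x)=\lambda^{1-d/s}K(x).$$
The main obstacle is to show $|K(x)|\lesssim|x|_a^{1-d/s}$. By homogeneity it suffices to prove that $K$ is bounded on the anisotropic unit sphere $\{|x|_a=1\}$.

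For this we split $\rho^{-1}$ with the anisotropic Littlewood--Paley partition $\rho^{-1}=\sum_{m\in\mathbb{Z}}\rho^{-1}\chi(2^{-m}\rho)$. The $m$-th piece is a dilate of a fixed Schwartz-type bump, since $\rho$ is smooth away from the coordinate planes $\xi_j=0$. Its inverse Fourier transform is therefore bounded by
$$2^{m(d/s-1)}\bigl(1+2^{m}|x|_a\bigr)^{-N}.$$
Summing over $m$ then gives $|K(x)|\lesssim|x|_a^{1-d/s}$. The nonsmoothness of $|\xi_j|^{s_j}$ at $\xi_j=0$ limits $N$, so the decay has to be checked carefully. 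If this fails, we fall back to the weaker weak-type estimate below, which is all that is actually needed.

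\emph{Step 2: the kernel lies in a weak Lebesgue space.} The level sets satisfy
$$|\{|x|_a<R\}|=c R^{d/s}.$$
Hence $|x|_a^{1-d/s}$ belongs to $L^{r,\infty}(\mathbb{R}^d)$ with $1/r=1-s/d$. Note that $1/2+1/r=1+1/q$, because $1/q=1/2-s/d$.

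\emph{Step 3: conclude.} O'Neil's convolution inequality $L^{2,2}*L^{r,\infty}\subset L^{q,2}$ holds whenever $1/q=1/2+1/r-1$ and $1<2,\,r,\,q<\infty$, which is true here. It yields
$$\|u\|_{L^{q,2}}=\|K*g\|_{L^{q,2}}\lesssim\|g\|_{L^2}\sim\|u\|_{\dot H^{\bm s}_{\bm d}}.$$

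\emph{Alternative route.} We could avoid Step 1 altogether. Apply real interpolation, $(L^{q_0},L^{q_1})_{1/2,2}=L^{q,2}$, to the operator $g\mapsto\mathscr{F}^{-1}(\rho^{-1}\hat g)$. For this we would take two nearby exponents $q_0,q_1$ around $q$ together with inhomogeneous multipliers, using the subcritical bounds of Theorem \ref{mainresmix} (3). The difficulty there is that the $L^2$ side has to stay fixed, so interpolation must act on the derivative weights rather than on the target space. For that reason we prefer the kernel approach.
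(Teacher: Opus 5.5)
Your Step~3 is fine and follows the paper's skeleton. The paper's key claim $\int_E|\psi_{\bm{s}}|\,dx\lesssim|E|^{s/d}$ is equivalent to $K\in L^{r,\infty}$ with $1/r=1-s/d$. The rest of the paper's proof (the Lorentz characterization of $L^{q,2}$ plus the decomposition $f=\sum_k c_k\chi_k$) is a hands-on proof of the O'Neil bound $L^2*L^{r,\infty}\subset L^{q,2}$ that you cite.

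\textbf{The gap is in Step~1, and Step~2 does not stand without it.} The pointwise bound $|K(x)|\lesssim|x|_a^{1-d/s}$ is false in general, because $K$ is unbounded on the anisotropic sphere $\{|x|_a=1\}$. To see this, write $\rho^{-1}=\int_0^\infty e^{-t\rho}\,dt$. Then $K=c\int_0^\infty\prod_j t^{-d_j/s_j}P_j(t^{-1/s_j}x_j)\,dt$ with $P_j=\mathscr{F}^{-1}_{d_j}(e^{-|\xi_j|^{s_j}})$. For $0<s_j<2$ these are positive stable densities with $P_j(y)\sim\min\{1,|y|^{-d_j-s_j}\}$. Take $d_1=d_2=1$, $s_1=s_2=1/2$, so $d/s=4$ and $s<d/2$. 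For $|x_1|\sim1$ and $|x_2|^{1/2}<t\ll1$ the integrand is $\sim t^{-4}\cdot t^{3}\cdot 1=t^{-1}$. Hence $K(x)\gtrsim\log(1/|x_2|)$, which blows up along the sphere as $x_2\to0$.

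More generally, for $s_j<2$ the same computation gives blow-up near $\{x_j=0\text{ for }j\neq j_0\}$ whenever $\sum_{j\neq j_0}d_j/s_j\ge 2$. For instance, $d_1=1$, $s_1=1$, $d_2=3$, $s_2=1/2$ gives $K\gtrsim|x_2|^{-2}$.

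Your dyadic sum sees exactly this. The support of $\rho^{-1}\chi(\rho)$ meets the planes $\{\xi_j=0\}$, where the bump is not smooth. Its inverse Fourier transform therefore decays in general only like $|x_{j_0}|^{-d_{j_0}-s_{j_0}}$ in the $x_{j_0}$ directions, and the sum over $m$ genuinely diverges there. So this is not a matter of tracking $N$ carefully. Your ``fallback'' is not an independent argument either: Step~2 computes the weak norm of the majorant $|x|_a^{1-d/s}$, which is relevant only if Step~1 holds.

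\textbf{What is needed is the weak-type bound for $K$ itself.} It is true even though $K$ is not dominated by $|x|_a^{1-d/s}$.

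The paper proves it in the form $\int_E|\psi_{\bm{s}}|\lesssim 1$ for $|E|\le 1$. It inserts cutoffs $\varphi(|x_j||\xi_j|)$ and integrates by parts in the variables $\xi_j$ with $|x_j||\xi_j|\gtrsim1$. It then integrates over $E$ first, reducing to the finiteness of $\int\min\{1,\prod_j|\xi_j|^{-d_j}\}\rho(\xi)^{-1}\,d\xi$, and rescales to get $|E|^{s/d}$.

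Within your framework, the subordination formula above gives the same bound in two lines:
\begin{itemize}
\item $K_t=\mathscr{F}^{-1}(e^{-t\rho})$ is a tensor product of rescaled copies of the $P_j$. So $\|K_t\|_{L^1}=C$ independently of $t$, since each $P_j\in L^1$ (it decays like $|y|^{-d_j-s_j}$).
\item $\|K_t\|_{L^\infty}\lesssim\|e^{-t\rho}\|_{L^1}\sim t^{-d/s}$.
\item Hence $\int_E|K|\le\int_0^\infty\min\{C,|E|t^{-d/s}\}\,dt\lesssim|E|^{s/d}$, since $d/s>1$. This says $K\in L^{r,\infty}$, and your Step~3 finishes.
\end{itemize}

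Your alternative route is also not blocked as you feared. The $L^2$ side interpolates on the Fourier side: $(L^2(\rho^{2\alpha_0}d\xi),L^2(\rho^{2\alpha_1}d\xi))_{1/2,2}=L^2(\rho^{\alpha_0+\alpha_1}d\xi)$. With $\alpha_0+\alpha_1=2$ you can interpolate the ordinary anisotropic Sobolev bounds $\|u\|_{L^{q_i}}\lesssim\|\rho^{\alpha_i}\hat u\|_{L^2}$, $1/q_i=1/2-\alpha_i s/d$, to land in $L^{q,2}$.
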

	\begin{proof}[\textbf{Proof}]
		We use the argument in \cite{tao2006nonlinear}, Appendix A. Let $\psi_{\bm{s}}:=\mathscr{F}^{-1}(1/(|\xi_1|^{s_1}+\cdots+|\xi_n|^{s_n}))$. For any measurable $E\subset \mathbb{R}^d$ satisfying $|E|\leq 1$, we claim
		\begin{equation*}
			\int_{E}|\psi_{\bm{s}}(x)|~dx\lesssim 1.
		\end{equation*}
		Typically, we need to control the term
		\begin{align*}
			&\quad\int_{E}\left|\int_{\mathbb{R}^d}\frac{(\prod_{k=1}^l\varphi(|x_{j_k}||\xi_{j_k}|))(\prod_{k=l+1}^n(1-\varphi(|x_{j_k}||\xi_{j_k}|))) e^{ix\cdot \xi}}{|\xi_1|^{s_1}+\cdots + |\xi_n|^{s_n}}~d\xi\right|~dx
		\end{align*}
		where $\{j_1,\cdots,j_n\} = \{1,\cdots, n\}$. Integrating by parts with respect to the variables $\xi_{j_k}$ ($k = l+1, \cdots, n$), we can bound this term by
		\begin{align*}
			&\quad\int_{E}\left|\int_{\mathbb{R}^d}\frac{(\prod_{k=1}^l\chi_{|\xi_{j_k}||x_{j_k}|\lesssim 1})(\prod_{k=l+1}^n\chi_{|\xi_{j_k}||x_{j_k}|\gtrsim 1}) }{(|\xi_1|^{s_1}+\cdots + |\xi_n|^{s_n})\prod_{k=l+1}^n(|x_{j_k}||\xi_{j_k}|)^L}~d\xi\right|~dx.
		\end{align*}
		By choosing $L>d$, we have
		\begin{align*}
			&\quad\int_{E}\left|\int_{\mathbb{R}^d}\frac{(\prod_{k=1}^l\chi_{|\xi_{j_k}||x_{j_k}|\lesssim 1})(\prod_{k=l+1}^n\chi_{|\xi_{j_k}||x_{j_k}|\gtrsim 1}) }{(|\xi_1|^{s_1}+\cdots + |\xi_n|^{s_n})\prod_{k=l+1}^n(|x_{j_k}||\xi_{j_k}|)^L}~d\xi\right|~dx\\
			& = \int_{\mathbb{R}^d}\int_{E}\frac{(\prod_{k=1}^l\chi_{|\xi_{j_k}||x_{j_k}|\lesssim 1})(\prod_{k=l+1}^n\chi_{|\xi_{j_k}||x_{j_k}|\gtrsim 1}) }{(|\xi_1|^{s_1}+\cdots + |\xi_n|^{s_n})\prod_{k=l+1}^n(|x_{j_k}||\xi_{j_k}|)^L}~dxd\xi\\
			& \lesssim \int_{\mathbb{R}^d}\frac{\min\{1,\prod_{j=1}^n |\xi_j|^{-d_j}\}}{|\xi_1|^{s_1}+\cdots + |\xi_n|^{s_n}} ~d\xi\\
			& \lesssim \int_0^\infty \cdots \int_0^\infty \frac{\min\{1,\prod_{j=1}^{n}\rho_j^{-d_j}\}}{\rho_1^{s_1}+\cdots+\rho_n^{s_n}}~\rho_1^{d_1-1}\cdots \rho_n^{d_n-1}d\rho_1\cdots d\rho_n.
		\end{align*}
		Let $\rho_{j_0}^{s_{j_0}} = \max\{\rho_1^{s_1},\cdots,\rho_n^{s_n}\}$. If $\rho_{j_0}\leq 1$, we can control this part by
		\begin{equation*}
			\int_0^1 \rho_{j_0}^{d_{j_0}-1-s_{j_0}+\sum_{j\neq j_0} \frac{d_js_{j_0}}{s_{j}}}~d\rho_{j_0} = \int_0^1 \rho^{-1-s_{j_0}+ \frac{ds_{j_0}}{s}}~d\rho<\infty.
		\end{equation*}
		If $\rho_{j_0}> 1$, by choosing $0<\epsilon\ll 1$ we can control this part by
		\begin{equation*}
			\int_1^\infty \rho_{j_0}^{-1-s_{j_0}+ \frac{ds_{j_0}\epsilon}{s}}~d\rho_{j_0} <\infty.
		\end{equation*}
		
		If $|E| = \lambda$, we set $\mu = \lambda^{-s/d}$ and $\lambda_j = \mu^{1/s_j}$, and define $E_\lambda = \{(\lambda_jx_j)_{j=1}^n: x\in E\}$. Then $|E_\lambda| = 1$, and hence
		\begin{align*}
			\int_{E}|\psi_{\bm{s}}(x)|~dx =  \frac{1}{\mu}\int_{E_\lambda}|\psi_{\bm{s}}(x)|~dx\lesssim \mu^{-1}\sim \lambda^{\frac{s}{d}}.
		\end{align*}
		Let $E_k\subset \mathbb{R}^d$ with measure $2^k$, $\chi_k$ be a function bounded in magnitude by $1$ and supported on the set $S_k$ with measure at most $2^k$. Then
		\begin{align*}
			\int_{E_{k'}}|\chi_k *\psi_{\bm{s}}(x)|~dx
			&\leq \int_{E_{k'}}\int_{S_k}|\psi_{\bm{s}}(x-y)|~dydx\\
			&\lesssim \min\{|E_{k'}||S_k|^{\frac{s}{d}}, |S_k||E_{k'}|^{\frac{s}{d}}\}\\
			&\lesssim \min\{2^{k'+\frac{s}{d}k},2^{k+\frac{s}{d}k'}\}.
		\end{align*}
		Using the equivalent norm on Lorentz spaces (see \cite{tao2006nonlinear}, Appendix A), we have
		\begin{align*}
			\|f*\psi_{\bm{s}}\|_{L^{q,2}} \sim \sup\left\|2^{-\frac{k}{q'}}\int_{E_k}f*\psi_{\bm{s}}(x)~dx\right\|_{l^2_k}.
		\end{align*}
		where the supremum is taken over all sets $E_k$ with $|E_k| = 2^k$ for each $k\in \mathbb{Z}$. Let $f\in L^2$. Then $f$ admits a decomposition $f = \sum_{k\in \mathbb{Z}}c_k\chi_k$ with $c_k\geq 0$, $|\chi_k|\leq 1$, $|\mathrm{supp}(\chi_k)|\leq 2^k$, and $\|f\|_{L^2}\sim \|2^{k/2}c_k\|_{l^2_k}$ (see \cite{keel1998endpoint}). Then
		\begin{align*}
			\left|\int_{E_k}f*\psi_{\bm{s}}(x)~dx\right| &\leq \sum_l c_l\left|\int_{E_k}\chi_{l}*\psi_{\bm{s}}(x)~dx\right|\\
			&\lesssim \sum_l c_l\min\{2^{l+\frac{s}{d}k},2^{k+\frac{s}{d}l}\}.
		\end{align*}
		Thus
		\begin{align*}
			\|f*\psi_{\bm{s}}\|_{L^{q,2}}& \lesssim \left\|2^{-\frac{k}{q'}}\sum_l c_l\min\{2^{l+\frac{s}{d}k},2^{k+\frac{s}{d}l}\}\right\|_{l^2_k}\\
			& \sim \left\|\sum_l 2^{\frac{l}{2}}c_l\min\{2^{\frac{l-k}{2}},2^{\frac{k-l}{q}}\}\right\|_{l^2_k} \lesssim \|2^{\frac{k}{2}}c_k\|_{l^2_k}\sim \|f\|_{L^2}.
		\end{align*}
		The proof is finished by virtue of the identity $u = ((\sum_{j=1}^n D^{s_j}_{x_j})u)*\psi_{\bm{s}}$.
	\end{proof}
	\begin{lemma}\label{localconcentration}
		Assume $s<d/2$ and $1/q = 1/2-s/d$. Let $0<\eta\leq 1$. If $\eta\lesssim \|u\|_{L^q}\lesssim \|u\|_{\dot{H}^{\bm{s}}_{\bm{d}}}\lesssim 1$, then there exists $k\in \mathbb{Z}^n$ and $x_0\in \mathbb{R}^d$ such that $|P_k u(x_0)|\sim_{\eta} 2^{(k_1 d_1+\cdots+k_nd_n)/q}$.
	\end{lemma}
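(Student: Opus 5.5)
The plan is the usual refined-Sobolev argument for critical Sobolev embeddings: interpolate the $L^q$ norm between the Besov-type quantity $\sup_k 2^{-k\cdot \bm{d}/q}\|P_k u\|_{L^\infty}$ and the Lorentz estimate of Proposition \ref{lorentzrefined}. Write $|k|_{\bm{d}}:=k_1d_1+\cdots+k_nd_n$. The upper bound is automatic for every $k$ and $x_0$: by Bernstein in each block $\xi_j$, then orthogonality, then the scaling relation $s_j\cdot\frac{d}{s}\cdot\frac1{d_j}$-type identity (equivalently \eqref{thetarel} with $\theta_0=0$, $p_j=2$), one gets
$$\|P_ku\|_{L^\infty}\lesssim 2^{|k|_{\bm{d}}/2}\|P_ku\|_{L^2}\lesssim 2^{|k|_{\bm{d}}/2}\min_j 2^{-k_js_j}\|D^{s_j}_{x_j}u\|_{L^2}\lesssim 2^{|k|_{\bm{d}}/q}.$$
For the last step, I will pick a convex combination of the exponents $-k_js_j$ with weights $\theta_j$ chosen so that $\sum_j\theta_jk_js_j=\frac{s}{d}|k|_{\bm{d}}$, i.e.\ $\theta_j=sd_j/(ds_j)$. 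This gives $\tfrac12-\tfrac sd=\tfrac1q$. So the real content is the lower bound: I need to find some $k$ with $\|P_ku\|_{L^\infty}\gtrsim_\eta 2^{|k|_{\bm{d}}/q}$, and then pick $x_0$ to (nearly) achieve the sup.

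For the lower bound I argue by contradiction. Set $\delta:=\sup_k 2^{-|k|_{\bm{d}}/q}\|P_ku\|_{L^\infty}$; I want to show $\|u\|_{L^q}\lesssim \delta^{c}$ for some $c>0$, which forces $\delta\gtrsim_\eta 1$. Following Tao, Appendix A, I will use the Lorentz space $L^{q,2}$ and the $L^2$ structure. Write $\|u\|_{L^q}^q=\langle u,|u|^{q-2}u\rangle$. Decompose $u=\sum_k P_ku$ (with $\tilde P_k$ fattened projections) and estimate
$$\|u\|_{L^q}^q\le \sum_k |\langle P_ku, \tilde P_k(|u|^{q-2}u)\rangle|\le \sum_k\|P_ku\|_{L^\infty}^{\epsilon}\,\|P_ku\|_{L^{r}}^{1-\epsilon}\ldots$$
A cleaner route, which I will try first, is a pointwise interpolation. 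Split $u=u_{\le}+u_{>}$ at each point via a threshold $\lambda$, and use the distribution-function formula $\|u\|_{L^q}^q\sim\int_0^\infty\lambda^{q-1}|\{|u|>\lambda\}|\,d\lambda$. For each $\lambda$, the pieces $P_ku$ with $2^{|k|_{\bm{d}}/q}\delta\ll\lambda$, summed over the appropriate "low" region, are bounded by $\lambda/2$ only if the sum over $k$ converges. In the anisotropic setting this sum runs over a hyperplane slab of $k\in\mathbb{Z}^n$, which is infinite, so it does not converge directly.

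This is exactly the main obstacle. The multi-parameter frequency lattice has infinitely many $k$ with the same $|k|_{\bm{d}}$, so the single-parameter argument cannot be copied. To get around this, I will exploit the extra decay from the $\dot H^{\bm{s}}_{\bm{d}}$ norm in the directions transversal to $|k|_{\bm{d}}$. The same convex-combination Bernstein bound, perturbed as in the proof of Proposition \ref{subbern} with $\epsilon$-shifted weights, gives
$$\|P_ku\|_{L^\infty}\lesssim 2^{|k|_{\bm{d}}/q}\,2^{-c\,\mathrm{dist}(k,\ell)}\|u\|_{\dot H^{\bm{s}}_{\bm{d}}},$$
where $\ell$ is the line $\{k: k_js_j \text{ all equal}\}$, i.e.\ the anisotropic diagonal, and $c>0$. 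Combining this with $\|P_ku\|_{L^\infty}\le\delta 2^{|k|_{\bm{d}}/q}$ via a geometric mean yields $\|P_ku\|_{L^\infty}\lesssim \delta^{1/2}2^{|k|_{\bm{d}}/q}2^{-c\,\mathrm{dist}(k,\ell)/2}$. This bound is summable over the transversal directions, which reduces everything to a one-parameter family indexed by $|k|_{\bm{d}}$.

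With that reduction in hand, I run the Proposition \ref{littlewood}-style layer-cake argument. Low frequencies (those with $2^{|k|_{\bm{d}}/q}\delta^{1/2}\le c\lambda$) contribute at most $\lambda/2$. For the high frequencies I use Chebyshev on $L^2$ with weights $2^{k_js_j}$. This gives $\|u\|_{L^q}^q\lesssim \delta^{\kappa}\|u\|_{\dot H^{\bm{s}}_{\bm{d}}}^{2}\cdot(\ldots)$ for some $\kappa>0$, which is essentially the refined Sobolev inequality $\|u\|_{L^q}\lesssim\|u\|_{\dot H^{\bm{s}}_{\bm{d}}}^{2/q}\delta^{1-2/q}$ up to $\epsilon$-losses. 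Proposition \ref{lorentzrefined} can alternatively be used to absorb the logarithmic summation losses: I would pair $u$ against $|u|^{q-2}u\in L^{q',2}$, which is dual to $L^{q,2}$. Finally, $\eta\lesssim\|u\|_{L^q}$ forces $\delta\gtrsim_\eta1$, and I pick $x_0$ with $|P_ku(x_0)|\ge\frac12\|P_ku\|_{L^\infty}$.
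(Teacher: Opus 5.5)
Your argument is correct, but it takes a different route from the paper. The paper never runs a layer-cake argument for this lemma. Instead:
\begin{itemize}
\item It uses the Lorentz-space characterization of $L^q$ to pick sets $E_l$ with $|E_l|=2^l$ and $\sum_l 2^{l(1-q)}|\int_{E_l}u|^q\sim\eta^q$.
\item It then invokes Proposition \ref{lorentzrefined} ($\dot H^{\bm s}_{\bm d}\hookrightarrow L^{q,2}$) to control the same normalized averages in $\ell^2$.
\item H\"older in $l$ isolates one set $E_{l_0}$ with $2^{l_0(1/q-1)}|\int_{E_{l_0}}u|\gtrsim\eta^{q/(q-2)}$.
\item Finally it expands $\int_{E_{l_0}}u=\sum_k\int_{E_{l_0}}P_ku$ and bounds each term by $\min\{2^{-\max_jk_js_j}|E_{l_0}|^{1/2},\ \|P_ku\|_{L^\infty}|E_{l_0}|\}$, which gives $|\int_{E_{l_0}}u|\lesssim 2^{l_0(1-1/q)}\delta^{1-2/q}$.
\end{itemize}
In the paper, the infinite sum over each slab $\{|k|_{\bm d}=m\}$ is therefore tamed implicitly by the factor $2^{-\max_jk_js_j}$, which decays off the anisotropic diagonal. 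You make that same off-diagonal decay explicit through the geometric mean, and then run the standard layer-cake/Chebyshev-in-$L^2$ proof of the refined Sobolev inequality.

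Your route is more elementary: it needs neither Lorentz spaces nor Proposition \ref{lorentzrefined}. So you can drop the abandoned opening pairing attempt and the remark about absorbing ``logarithmic losses'', since no such losses occur. As a by-product it gives a quantitative refined Sobolev inequality. The paper's route reuses the Lorentz refinement it has just proved and follows Tao's Appendix A template.

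The step you should actually write out is the high-frequency estimate. Set $H_\lambda=\{k:2^{|k|_{\bm d}/q}\delta^{1/2}>c\lambda\}$. Almost orthogonality and Fubini give
\[
\|u\|_{L^q}^q\lesssim\int_0^\infty\lambda^{q-3}\sum_{k\in H_\lambda}\|P_ku\|_{L^2}^2\,d\lambda\sim\delta^{\frac{q-2}{2}}\sum_{k}2^{(1-\frac2q)|k|_{\bm d}}\|P_ku\|_{L^2}^2\lesssim\delta^{\frac{q-2}{2}}\|u\|_{\dot H^{\bm s}_{\bm d}}^2 .
\]
Two facts justify this: $q>2$, and $(1-\frac2q)|k|_{\bm d}=2\sum_j\theta_jk_js_j\le 2\max_jk_js_j$ with $\theta_j=sd_j/(ds_j)$. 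So your ``$(\ldots)$'' is just $1$ and $\kappa=(q-2)/2$.

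This is half the sharp exponent $q-2$, not an $\epsilon$-loss as you wrote. You would get an $\epsilon$-loss only by weighting the geometric mean as $(1-\epsilon,\epsilon)$. Either way it is harmless, since any positive power of $\delta$ forces $\delta\gtrsim_\eta 1$.
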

	\begin{proof}[\textbf{Proof}]
		First, by the Lorentz-space characterization of $L^q$ (see Exercise A.4 in \cite{tao2006nonlinear}), there exist sets $E_l$ with $|E_l| = 2^l$ such that
		$$\sum_l 2^{l(1-q)}\left|\int_{E_l}u(x)~dx \right|^q \sim \eta^q.$$
		By Proposition \ref{lorentzrefined}, we have
		$$\sum_l 2^{l(\frac{2}{q}-2)}\left|\int_{E_l}u(x)~dx \right|^2 \lesssim \|u\|_{L^{q,2}}^2\lesssim \|u\|_{\dot{H}^{\bm{s}}_{\bm{d}}}^2\lesssim 1.$$
		By the H\"{o}lder inequality, there exist $l_0$ such that
		\begin{equation}\label{lorentz}
			2^{l_0(\frac{1}{q}-1)}\left|\int_{E_{l_0}}u(x)~dx \right|\gtrsim \eta^{\frac{q}{q-2}}.
		\end{equation}
		On the other hand, by the triangle inequality and the H\"{o}lder inequality, we have
		\begin{align*}
			\left|\int_{E_{l_0}}u(x)~dx \right| &\leq \sum_k \left|\int_{E_{l_0}}P_k u(x)~dx \right|\\
			&\leq \sum_k \min\{\|P_k u\|_{L^2}|E_{l_0}|^{\frac{1}{2}},\|P_k u\|_{L^\infty}|E_{l_0}|\}\\
			&\lesssim \sum_k \min\{2^{-k_1s_1+\frac{l_0}{2}},\cdots,2^{-k_ns_n+\frac{l_0}{2}},\|P_k u\|_{L^\infty}2^{l_0}\}\\
			&\lesssim 2^{l_0-\frac{l_0}{q}}\left(\sup_k \|P_k u\|_{L^\infty}2^{-\frac{k_1 d_1+\cdots+k_nd_n}{q}}\right)^{1-\frac{2}{q}}.
		\end{align*}
		Combining with \eqref{lorentz}, we obtain $\sup_k \|P_k u\|_{L^\infty}2^{-\frac{k_1 d_1+\cdots+k_nd_n}{q}}\gtrsim_\eta 1$. Applying Bernstein’s inequality gives $\sup_{k}\|P_k u\|_{L^\infty}2^{-\frac{k_1 d_1+\cdots+k_nd_n}{q}}\lesssim \|u\|_{L^q}\lesssim \eta$, finishing the proof.
	\end{proof}
	
	\begin{prop}\label{criticallinearprofiledec}
		Assume $s<d/2$ and $1/q = 1/2-s/d$. Let $\{u_l\}$ be a bounded sequence in $\dot{H}^{\bm{s}}_{\bm{d}}$. Then after replacing it by a subsequence, there exist a bounded sequence $\{U^j\}$ in $\dot{H}^{\bm{s}}_{\bm{d}}$, and sequences $\mu_l^j \in (0,\infty)$, 
		$x_{l}^j\in  \mathbb{R}^d$ such that for $r_l^k$ defined by
		$$u_l = \sum_{j=1}^k T_{x_l^j}S_{\mu_l^j}U^j+r_l^k,$$
		we have $S_{\mu_l^j}^{-1}T^{-1}_{x_l^j}r_l^k \rightharpoonup 0$ weakly in $\dot{H}^{\bm{s}}_{\bm{d}}$ as $l\rightarrow \infty$ for any $1\leq j\leq k$ and
		\begin{equation}\label{almostorthc}
			\lim_{k\rightarrow \infty} \lim_{l\rightarrow \infty}\|r_l^k\|_{L^q} = 0,\quad \lim_{l\rightarrow \infty} \left|\log\frac{\mu_{l}^j}{\mu_l^k}\right|+|m_{\frac{1}{\mu_l^j}}(x_l^j-x_l^k)| = \infty,~~ \forall~j\neq k.
		\end{equation}
		Also,
		\begin{equation}\label{lqdecc}
			\lim_{l\rightarrow \infty}\|u_l\|_{L^q}^q =\lim_{l\rightarrow \infty}\|r_l^k\|_{L^q}^q+ \sum_{j=1}^k\|U^j\|_{L^q}^q,
		\end{equation}
		and
		\begin{equation}\label{l2decc}
			\limsup_{l\rightarrow \infty} \|T_m u_l\|_{L^2}^2 = \limsup_{l\rightarrow \infty} \|T_m r_l^k\|_{L^2}^2+\sum_{j=1}^k \|T_m U^j\|_{L^2}^2
		\end{equation}
		for any $T_m = \mathscr{F}^{-1}m(\xi)\mathscr{F}$, $m(\mu^{\frac{1}{s_1}}\xi_1,\cdots,\mu^{\frac{1}{s_n}}\xi_n) = \mu m(\xi)$ for any $\mu>0$, $\xi\in \mathbb{R}^d$.
	\end{prop}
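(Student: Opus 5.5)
We follow the strategy of Proposition \ref{linearprofilesubcritical}, with Lemma \ref{localconcentration} replacing the fixed-frequency extraction and the anisotropic scaling $S_\mu$ now part of the symmetry group.

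\textbf{Extraction of one profile.} Set $A=\limsup_l\|u_l\|_{\dot H^{\bm s}_{\bm d}}$ and $\epsilon=\lim_l\|u_l\|_{L^q}$, passing to a subsequence so that the limit exists. If $\epsilon=0$ there is nothing to do. If $\epsilon>0$, Lemma \ref{localconcentration} applied to $u_l/A$ gives $k_l\in\mathbb{Z}^n$ and $y_l\in\mathbb{R}^d$ with
\[
|P_{k_l}u_l(y_l)|\sim_{\eta}2^{(k_{l,1}d_1+\cdots+k_{l,n}d_n)/q},
\]
where $\eta\sim\epsilon/A$. We use \eqref{comscfreqc} to renormalize the frequency. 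The box $\{|\xi_j|\sim 2^{k_{l,j}}\}$ is mapped under $S_\mu^{-1}$ to $\{|\xi_j|\sim 2^{k_{l,j}}\mu^{-1/s_j}\}$. We choose $\mu_l$ so that $\mu_l^{1/s_{1}}\sim 2^{k_{l,1}}$, say.

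A subtlety is that the $n$ components of $k_l$ cannot all be normalized by the single scaling parameter $\mu$. The fix is to observe that the bound $|P_k u|\lesssim \min_j 2^{-k_js_j}2^{\sum k_id_i/2}$ (from Bernstein and the $\dot H^{\bm s}_{\bm d}$ norm), compared with the lower bound $2^{\sum k_id_i/q}$, forces every $2^{k_js_j}$ to be comparable, up to constants depending on $\eta$, to a common $\mu$. Indeed, $1/2-1/q=s/d$ gives $\sum_i k_id_i\, s/d\le k_js_j+C_\eta$ for every $j$. Averaging with weights $d_js/(ds_j)$ yields equality on average, so each $k_js_j$ lies within $O_\eta(1)$ of $\sum_i k_id_i\,s/d$.

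So take $\mu_l=2^{\sum_i k_{l,i}d_i s/d}$ and $x_l$ determined by $y_l$, so that $T_{x_l^1}S_{\mu_l^1}$ carries the concentration point to the origin at unit frequency, using \eqref{defimmu}. Then $v_l:=S_{\mu_l}^{-1}T_{x_l}^{-1}u_l$ is bounded in $\dot H^{\bm s}_{\bm d}$, and $|\tilde P v_l(0)|\gtrsim_\eta1$ for a Littlewood--Paley-type multiplier $\tilde P$ localized to a fixed compact set away from the axes. Extract a weak limit $U^1$. Since $\tilde P$ has a Schwartz kernel, pairing against it passes to the limit, giving $\|U^1\|_{\dot H^{\bm s}_{\bm d}}\gtrsim_\eta 1$ and, via Bernstein, $\|U^1\|_{L^q}\gtrsim_\eta1$.

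\textbf{Decoupling and iteration.} The identity \eqref{l2decc} for $k=1$ follows from weak convergence: $T_m$ commutes with $S_\mu$ up to the factor $\mu$, by the homogeneity assumption, and $T_m$ commutes with translations. Expanding the square gives the orthogonality. For \eqref{lqdecc}, local compactness gives a.e. convergence of $v_l$ on a subsequence. Here we use that $\dot H^{\bm s}_{\bm d}$ embeds in $H^{s_*}_{loc}$ modulo low frequencies, or equivalently that $\dot H^{\bm s}_{\bm d}\hookrightarrow L^q$ with Rellich on bounded sets. The refined Fatou (Brezis--Lieb) lemma then applies, since $S_\mu,T_y$ are $L^q$ isometries.

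Iterating on $r_l^1$ gives profiles with $\sum_j\|U^j\|_{\dot H^{\bm s}_{\bm d}}^2\le A^2$, by \eqref{l2decc} with $m=\sum|\xi_j|^{s_j}$-type symbols. Because each extracted profile has norm $\gtrsim_{\eta}1$ whenever the remainder's $L^q$ norm stays above $\eta$, we conclude $\lim_k\lim_l\|r_l^k\|_{L^q}=0$.

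\textbf{Orthogonality of parameters.} We argue by induction as in Proposition \ref{linearprofilesubcritical}. Suppose $\log(\mu_l^j/\mu_l^k)$ and $m_{1/\mu_l^j}(x_l^j-x_l^k)$ stay bounded. Pass to convergent subsequences; then the operator $S_{\mu_l^j}^{-1}T_{x_l^j}^{-1}T_{x_l^k}S_{\mu_l^k}$ converges strongly to a fixed isometry $g$ of $\dot H^{\bm s}_{\bm d}$. Consequently the weak limit defining $U^k$ equals $g^{-1}$ applied to the weak limit of $S_{\mu_l^j}^{-1}T^{-1}_{x_l^j}r_l^{k-1}$, which is $0$, contradicting $U^k\neq0$. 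If instead the parameters diverge, then $S^{-1}T^{-1}$ of the other profiles converges weakly to $0$, which is proved by testing against Schwartz functions with compactly supported Fourier transform. This propagates the weak-null property of remainders and yields the cross-term vanishing needed in \eqref{lqdecc}, where products of asymptotically orthogonal profiles tend to $0$ in $L^{q/2}$.

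\textbf{Main obstacle.} I expect the hardest step to be the frequency normalization with a one-parameter anisotropic scaling. The two-sided bound from Lemma \ref{localconcentration} is what pins all $k_j$ to a single $\mu$, and this must be checked carefully.
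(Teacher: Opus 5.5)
Your proposal is correct and follows the paper's proof essentially step for step. Like the paper, you use Lemma \ref{localconcentration} to find a concentration point and frequency, take the scale $\mu_l=2^{\frac{s}{d}\sum_i k_{l,i}d_i}$ (justified by averaging with weights $d_js/(ds_j)$), extract a weak limit, apply Brezis--Lieb for \eqref{lqdecc}, and run the same induction on the group parameters for the orthogonality.

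One small slip: the Bernstein comparison actually gives $k_js_j\le \frac{s}{d}\sum_i k_id_i+C_\eta$, the reverse of what you wrote. This does no harm, because the weighted average of the $k_js_j$ equals $\frac{s}{d}\sum_i k_id_i$, so either one-sided bound yields the two-sided comparability you need.
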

	\begin{proof}[\textbf{Proof}]
		Let $A = \lim_{l}\|u_l\|_{\dot{H}^{\bm{s}}_{\bm{d}}}$, $\epsilon = \lim_l \|u_l\|_{L^q}$. If $\epsilon = 0$, we define $U^j = 0$ and $x_l^j$, $\mu_l^j$ arbitrarily. If $\epsilon>0$, by Lemma \ref{localconcentration} there exist $k_l^1=(k^1_{l,1},\cdots,k^1_{l,n})\in \mathbb{Z}^n$, $x_l^1\in \mathbb{R}^d$ such that $|P_{k_l^1}u_l(-x_l^1)|\gtrsim _{A,\epsilon} 2^{(k^1_{l,1}d_1+\cdots+k^1_{l,n}d_n)/q}$ for any $l$. Let $\mu_l^1 = 2^{(k_{l,1}^1d_1+\cdots+k_{l,n}^1d_n)s/d}$. By choosing a subsequence, there exists $U^1$ such that $S_{\mu_l^1}^{-1}T^{-1}_{x_l^1}u_l$ converges to $U^1$ weakly in $\dot{H}^{\bm{s}}_{\bm{d}}$ as $l\rightarrow \infty$. For any $R>0$, $\varphi(|x|/R)S_{\mu_l^1}^{-1}T^{-1}_{x_l^1}u_l(x)$ is uniformly bounded in $H^{\bm{s}}_{\bm{d}}$. We can choose a subsequence such that $S_{\mu_l^1}^{-1}T^{-1}_{x_l^1}u_l$ converges to $U^1$ almost everywhere.
		
		Since
		$$\|P_{k_l^1} u_l\|_{L^\infty}\lesssim 2^{-k_{l,j}^1s_j}\|D^{s_j}_{x_j}P_{k_l^1} u_l\|_{L^\infty}\lesssim 2^{-k_{l,j}^1s_j+\frac{1}{2}\sum_{m=1}^n k_{l,m}^1d_m},$$
		it follows that for every $1\leq j\leq n$,
		$$2^{\frac{s}{d}\sum_{m=1}^n k_{l,m}^1d_m}=2^{(\frac{1}{2}-\frac{1}{q})\sum_{m=1}^n k_{l,m}^1d_m}\gtrsim 2^{k_{l,j}^1 s_j}.$$
		Recall that $\sum_{m=1}^n d_m/s_m = d/s$. We obtain
		\begin{equation}\label{scalingrela}
			\mu_l^1=2^{\frac{s}{d}\sum_{m=1}^n k_{1,m}^1d_m} \sim 2^{k_{1,j}^1 s_j},\quad \forall~1\leq j\leq n.
		\end{equation}
		By \eqref{comscfreqc} and \eqref{scalingrela}, we obtain
		\begin{align*}
			\left\|\sum_{ \langle k\rangle\sim 1}P_kS_{\mu_l^1}^{-1}T^{-1}_{x_l^1}u_l\right\|_{L^\infty}&\gtrsim \left\|S_{\mu_l^1}^{-1}T^{-1}_{x_l^1}\sum_{2^{k_js_j}\sim \mu_l^1}P_ku_l\right\|_{L^\infty}\\
			& \gtrsim (\mu_l^1)^{-\frac{d}{sq}}\|P_{k_l^1}u_l\|_{L^\infty}\\
			&\gtrsim 1.
		\end{align*}
		Since $\|\partial_x^\alpha P_k S_{\mu_l^1}^{-1}T^{-1}_{x_l^1}u_l\|_{L^\infty}\lesssim 1$ for any $\langle k\rangle\lesssim 1$, we would also assume $P_k S_{\mu_l^1}^{-1}T^{-1}_{x_l^1}u_l$ converges to $P_k U^1$ in $L^\infty$. Then, by Bernstein's inequality, we obtain
		$$\|U^1\|_{L^q}\gtrsim \sum_{\langle k\rangle \lesssim 1}\|P_k U^1\|_{L^q}\gtrsim \sum_{\langle k\rangle \lesssim 1}\|P_k U^1\|_{L^\infty}\gtrsim 1.$$
		Then by the refined Fatou lemma we have
		\begin{align*}
			\lim_{l\rightarrow \infty}\|r_l^1\|^q_{L^q} & = \lim_{l\rightarrow \infty}\|S_{\mu_l^1}^{-1}T^{-1}_{x_l^1}u_l-U^1\|_{L^q}^q\\
			& = \lim_{l\rightarrow \infty} \|u_l\|_{L^q}^q-\|U^1\|_{L^q}^q = \epsilon - \|U^1\|_{L^q}^q.
		\end{align*}
		For the sequence $r_l^1$, we can repeat the former procedure. Then 
		we obtain $\{U^j\}$, $\{\mu_l^j\}$, $\{x_l^j\}$, and
		$$\lim_{l\rightarrow \infty}\|r_l^k\|_{L^q}^q = \epsilon-\sum_{j=1}^k \|U^j\|_{L^q}^q.$$
		Note that if $\lim_{l\rightarrow \infty}\|r_l^k\|_{L^q}^q\neq 0$, we have $\|U^j\|_{L^q}\gtrsim_{A,\epsilon} 1$ for any $1\leq j\leq k$. Thus
		$$\lim_{k\rightarrow \infty}\lim_{l\rightarrow \infty}\|r_l^k\|_{L^q}^q = 0.$$
		Assume $S_{\mu_l^j}^{-1}T^{-1}_{x_l^j}r_l^{k}\rightharpoonup 0$ weakly in $\dot{H}^{\bm{s}}_{\bm{d}}$ as $l\rightarrow \infty$ for any $1\leq j\leq k$, $k\leq k_0$. The case $k_0 = 1$ follows from the construction.
		By the definition of $U^{k_0+1}$, we know $S_{\mu_l^{k_0+1}}^{-1}T^{-1}_{x_l^{k_0+1}}r_l^{k_0+1}\rightharpoonup 0$ weakly in $\dot{H}^{\bm{s}}_{\bm{d}}$ as $l\rightarrow \infty$. By \eqref{defimmu}, for $1\leq j\leq k_0$, we have
		\begin{equation}\label{orthrelation}
			\begin{aligned}
				S_{\mu_l^{j}}^{-1}T^{-1}_{x_l^{j}}r_l^{k_0+1} & =  S_{\mu_l^{j}}^{-1}T^{-1}_{x_l^{j}}r_l^{k_0}-S_{\mu_l^{j}}^{-1}T^{-1}_{x_l^{j}}T_{x_l^{k_0+1}}S_{\mu_l^{k_0+1}}U^{k_0+1}\\
				&=  S_{\mu_l^{j}}^{-1}T^{-1}_{x_l^{j}}r_l^{k_0}-T_{m_{\frac{1}{\mu_l^j}}(x_l^{k_0+1}-x_l^{j})}S_{\mu_l^{k_0+1}/\mu_l^{j}}U^{k_0+1}.
			\end{aligned}
		\end{equation}
		
		If $\mu_l^{k_0+1}/\mu_l^{j}\rightarrow \mu\in (0,\infty)$ and $m_{{1}/{\mu_l^j}}(x_l^{k_0+1}-x_l^{j})\rightarrow x\in \mathbb{R}^d$ as $l\rightarrow\infty$, we have, weakly in $\dot{H}^{\bm{s}}_{\bm{d}}$,
		\begin{align*}
			\lim_{l\rightarrow\infty}S_{\mu_l^{k_0+1}}^{-1}T^{-1}_{x_l^{k_0+1}}r_l^{k_0+1} = \lim_{l\rightarrow\infty}S_\mu^{-1}T_{x_*}^{-1}S_{\mu_l^{j}}^{-1}T^{-1}_{x_l^{j}}r_l^{k_0+1} = -U^{k_0+1}.
		\end{align*}
		Thus $U^{k_0+1} = 0$, which implies $U^j = 0$ for all $j\geq k_0+1$, we are free to choose $\{x_l^j\}$ and $\{\mu_l^j\}$ arbitrarily for such $j$.
		
		If
		$$\lim_{l\rightarrow \infty} \left|\log\frac{\mu_{l}^j}{\mu_l^k}\right|+|m_{\frac{1}{\mu_l^j}}(x_l^j-x_l^k)| = \infty,$$ by \eqref{orthrelation}, we obtain that $S_{\mu_l^{j}}^{-1}T^{-1}_{x_l^{j}}r_l^{k_0+1}\rightharpoonup 0$ converges weakly in $\dot{H}^{\bm{s}}_{\bm{d}}$ as $l\rightarrow \infty$.
		
		In the proof, we may choose sequences $\{x_l^j\}$ and $\{\mu_l^j\}$ such that \eqref{almostorthc} holds. Then, by \eqref{almostorthc}, we obtain \eqref{lqdecc}--\eqref{l2decc}.
	\end{proof}
	
	To avoid redundancy, we omit the routine derivation of Theorem \ref{existextri} from Lemmas \ref{linearprofilesubcritical} and \ref{criticallinearprofiledec}; the same argument will be presented in a slightly more general setting in Propositions \ref{generalso} and \ref{crigeneralso} in the next section.
	
	\section{Construction of soliton solutions for anisotropic fractional Schr\"{o}dinger equations}\label{application}
	In this section, we consider the anisotropic fractional Schr\"{o}dinger equation
	\begin{equation}\label{generalnon}
		i\partial_t u+\sum_{j=1}^nD_{x_j}^{2s_j}u = |u|^{q-2}u,
	\end{equation}
	where $u(t,x) = u(t,x_1,\cdots,x_n)$ is a complex-valued unknown function of $t\in \mathbb{R}$ and $x = (x_1,\cdots,x_n)x_j\in \mathbb{R}^{d_1}\oplus\cdots \oplus \mathbb{R}^{d_j}$. We impose the following standing assumptions: $n\in \mathbb{N}$, $d_j\in \mathbb{N}$, $s_j>0$ for any $1\leq j\leq n$,
	$$d = \sum_{j=1}^nd_j,\quad 2<q<\infty,\quad \omega\geq 0,\quad v\in \mathbb{R}^d,$$
	and
	$$\bm{d} = (d_1,\cdots,d_n),\quad\bm{s} = (s_1,\cdots,s_n),\quad \frac{d}{s} = \sum_{j=1}^n\frac{d_j}{s_j}.$$
	
	If $s_j = 1$ for all $j$, then \eqref{generalnon} reduces to the classical nonlinear Schr\"{o}dinger equation. For the construction of soliton solutions in this setting, we refer to Weinstein \cite{weinstein1982nonlinear}; for the fractional case, see Guo–Huang \cite{guo2012existence}; and for the anisotropic case where $0<s_j\leq 1$ for each $j$, we refer to Esfahani \cite{esfahani2015anisotropic}.
	
	The equation \eqref{generalnon} has the following conservation laws:
	\begin{align*}
		&\mathrm{Mass}: &M(u)&=\int_{\mathbb{R}^d}|u|^2~dx,\\
		&\mathrm{Momentum}:&P(u)& = \int_{\mathbb{R}^d}\mathrm{Im}(\bar{u}\nabla u)~dx,\\
		&\mathrm{Energy}:&E(u)& = \int_{\mathbb{R}^d}\frac{1}{2}\sum_{j=1}^n |D_{x_j}^{s_j}u|^2-\frac{1}{q}|u|^q~dx.
	\end{align*}	
	
	We refer to $H_{\bm{d}}^{\bm{s}}$ as the energy space of the equation \eqref{generalnon}. Recall the definition of $s$ by \eqref{avreandin}. If $1/q>1/2-s/d$, we call the equation \eqref{generalnon} \textit{energy subcritical}; and if $1/q = 1/2-s/d$, we call it \textit{energy critical}.	
	
	\subsection{Soliton solutions via the variational method}\label{solitonsolu}	
	Suppose there exist constants $v\in \mathbb{R}^d$, $\omega\in \mathbb{R}$, and a nonzero function $Q$ such that $e^{-i\omega t}Q(x-vt)$ solves \eqref{generalnon} in some sense. Then we call $Q$ a soliton solution of \eqref{generalnon} with velocity $c$. When $v = 0$, we refer to $Q$ as a solitary solution. Substituting this ansatz into \eqref{generalnon} formally yields the elliptic equation
	\begin{equation}\label{ellip}
		\sum_{j=1}^n D_{x_j}^{2s_j}Q+\omega Q-iv\cdot\nabla Q = |Q|^{q-2}Q.
	\end{equation}
	
	As we shall see, the values $s_j = 1/2$ and $s_j = 1$ play a distinguished role in the subsequent analysis. To this end, we introduce the index sets
	\begin{equation}\label{someindexsets}
		\begin{aligned}
			J_-:=\{1\leq j\leq n:s_j<1/2\},\quad J_+:=\{1\leq j\leq n:s_j>1/2\},\\
			J:=\{1\leq j\leq n:s_j=1/2\},\quad
			\tilde{J}:=\{j\in J_+:1/2<s_j<1\}.
		\end{aligned}
	\end{equation}
	We adopt the standard convention that sums and products over the empty set are $0$ and $1$, respectively.
	
	Let $v = (v_1,\cdots,v_n)\in \mathbb{R}^d$ with $v_j\in \mathbb{R}^{d_j}$. For the subcritical case, we assume
	\begin{equation}\label{subcriass}
		v_j = 0,~j\in J_-;\quad |v_j|<1,~j\in J;\quad \omega>\sum_{j\in J_+} (2s_j-1)\left(\frac{|v_j|}{2s_j}\right)^{\frac{2s_j}{2s_j-1}}.
	\end{equation}
	For the critical case, we assume
	\begin{equation}\label{criass}
		v_j = 0,~j\in J_-\cup J_+;\quad |v_j|<1,~j\in J;\quad \omega = 0.
	\end{equation}

	Let $w(\xi)$ denote the symbol of \eqref{ellip}, defined by
	\begin{equation}\label{symbol}
		w(\xi) = \omega+v\cdot \xi+\sum_{j=1}^n|\xi_j|^{2s_j}.
	\end{equation}
	Observe that, under assumption \eqref{subcriass}, there is a constant $c(\omega,\bm{s},v)>0$ such that $w(\xi)\geq c(\omega,\bm{s},v)$ for all $\xi\in \mathbb{R}^d$.
	
	\begin{thm}\label{generalso}
		Assume $1/q>1/2-s/d$ together with \eqref{subcriass}. Then there exists a nonzero solution $Q\in H^{\bm{s}}_{\bm{d}}$ of \eqref{ellip}, which is a maximizer of the inequality 
		$$\|u\|_{L^q(\mathbb{R}^d)}\leq C_{\omega,v}\|(w(\xi))^{1/2}\hat{u}(\xi)\|_{L^2_\xi},$$
		where $w(\xi)$ is defined by \eqref{symbol}. Moreover, $Q$ satisfies the equality $\|u\|_{L^q(\mathbb{R}^d)}^q = \|(w(\xi))^{1/2}\hat{u}(\xi)\|_{L^2_\xi}^2$.
	\end{thm}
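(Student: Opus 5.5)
We argue variationally, maximizing the Weinstein-type functional
$$W(u)=\frac{\|u\|_{L^q}^2}{\|w^{1/2}\hat u\|_{L^2}^2},\qquad u\in H^{\bm{s}}_{\bm{d}}\setminus\{0\},$$
and then turning a maximizer into a solution of \eqref{ellip} by rescaling.

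\textbf{Step 1: comparable norms.} First we check that $\|w^{1/2}\hat u\|_{L^2}^2\sim\|u\|_{H^{\bm{s}}_{\bm{d}}}^2$. The upper bound needs $|v\cdot\xi|\lesssim 1+\sum_j|\xi_j|^{2s_j}$. This holds because $v_j=0$ for $j\in J_-$, and $|v_j||\xi_j|\le|\xi_j|$ for $j\in J$. For $j\in J_+$ Young's inequality gives
$$|v_j||\xi_j|\le |\xi_j|^{2s_j}+(2s_j-1)\left(\frac{|v_j|}{2s_j}\right)^{\frac{2s_j}{2s_j-1}}.$$
For the lower bound, split each $|\xi_j|^{2s_j}$ as $\epsilon|\xi_j|^{2s_j}+(1-\epsilon)|\xi_j|^{2s_j}$ and apply the same bounds. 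Condition \eqref{subcriass} is strict (both $|v_j|<1$ on $J$ and the inequality on $\omega$), so for small $\epsilon>0$ we get $w(\xi)\gtrsim 1+\sum_j|\xi_j|^{2s_j}$. Combining this with Proposition~\ref{subbern} ($p_j=2$, $\theta_0>0$, which is exactly the subcritical case $1/q>1/2-s/d$) and Young's inequality, $W$ is bounded. Let $C_{\omega,v}^2=\sup W\in(0,\infty)$.

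\textbf{Step 2: compactness.} Take a maximizing sequence normalized by $\|w^{1/2}\hat u_l\|_{L^2}=1$, so $\{u_l\}$ is bounded in $H^{\bm{s}}_{\bm{d}}$ and $\|u_l\|_{L^q}\to C_{\omega,v}$. Apply Proposition~\ref{linearprofilesubcritical}. The multiplier $m=w^{1/2}$ satisfies $|m|\lesssim 1+\sum|\xi_j|^{s_j}$, so \eqref{l2dec} gives
$$1\ge\sum_j\|w^{1/2}\hat U^j\|_{L^2}^2 .$$
From \eqref{lqdec} and $\lim_k\lim_l\|r_l^k\|_{L^q}=0$ we get $C^q=\sum_j\|U^j\|_{L^q}^q$. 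By the definition of $C_{\omega,v}$,
$$C^q=\sum_j\|U^j\|_{L^q}^q\le C^q\sum_j\|w^{1/2}\hat U^j\|_{L^2}^q\le C^q\Bigl(\sum_j\|w^{1/2}\hat U^j\|_{L^2}^2\Bigr)^{q/2}\le C^q,$$
using $q>2$ and strict convexity of $t\mapsto t^{q/2}$. Equality throughout forces exactly one nonzero profile $U^1$, with $\|w^{1/2}\hat U^1\|_{L^2}=1$ and $\|U^1\|_{L^q}=C$. Hence $U^1$ is a maximizer.

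We expect Step~2 to be the main obstacle. The $w$-norm is not translation-invariant in frequency when $v\ne0$, but the translations $x_l^j$ act in physical space and commute with $T_m$, so \eqref{l2dec} does apply. The remaining care is checking that the $\limsup$ splitting is legitimate for the complex-valued, non-radial symbol $w^{1/2}$; since $w>0$, its square root is real and bounded by $1+\sum|\xi_j|^{s_j}$, which meets the hypothesis.

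\textbf{Step 3: Euler--Lagrange equation and rescaling.} Differentiate $W(U^1+t\phi)$ at $t=0$ for $\phi\in\mathcal{S}$. Since $w$ is real, the quadratic form is Hermitian, and the vanishing of the derivative gives
$$\sum_j D_{x_j}^{2s_j}U+\omega U-iv\cdot\nabla U=\lambda|U|^{q-2}U,\qquad \lambda=\frac{\|w^{1/2}\hat U\|_{L^2}^2}{\|U\|_{L^q}^q}>0,$$
in the distributional sense. Note that $-iv\cdot\nabla$ has symbol $v\cdot\xi$ under the paper's Fourier convention. Setting $Q=\lambda^{1/(q-2)}U^1$ solves \eqref{ellip}. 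Since $W$ is invariant under multiplication by constants, $Q$ is still a maximizer. Pairing the equation with $\bar Q$ gives $\|Q\|_{L^q}^q=\|w^{1/2}\hat Q\|_{L^2}^2$.
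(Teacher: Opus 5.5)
Your proof is correct and follows the paper's route. You maximize the Weinstein functional, apply Proposition~\ref{linearprofilesubcritical} with the multiplier $w^{1/2}$, use the chain $\sum_j\|U^j\|_{L^q}^q\le C^q\bigl(\sum_j\|w^{1/2}\hat U^j\|_{L^2}^2\bigr)^{q/2}$ with $q>2$ to rule out all but one profile, and then read off the Euler--Lagrange equation. The differences are cosmetic: you normalize $\|w^{1/2}\hat u_l\|_{L^2}=1$ and rescale by $\lambda^{1/(q-2)}$ at the end, where the paper normalizes $\|u_l\|_{L^q}^q=\|w^{1/2}\hat u_l\|_{L^2}^2$ so the multiplier is already $1$; you get maximality of $U^1$ directly from equality in the chain rather than from $r_l^1\to 0$ in $H^{\bm{s}}_{\bm{d}}$; and you check explicitly the bound $w(\xi)\sim 1+\sum_j|\xi_j|^{2s_j}$, which the paper only asserts.
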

	\begin{proof}[\textbf{Proof}]
		Define the Weinstein functional by
		\begin{equation*}
			W[u]:=  \frac{\|u\|_{L^q}}{\|(w(\xi))^{1/2}\hat{u}(\xi)\|_{L^2}},\quad u\in H^{\bm{s}}_{\bm{d}}\setminus\{0\}.
		\end{equation*}
		and set
		\begin{equation}\label{wein}
			C_{\omega,v}:= \sup_{0\neq u\in H^{\bm{s}}_{\bm{d}}} W[u].
		\end{equation}
		Let $\{u_l\}$ be a maximizing sequence for the Weinstein functional. We normalize it so that $\|u_l\|_{L^q}^q =  \|(w(\xi))^{\frac{1}{2}}\widehat{u_l}(\xi)\|_{L^2}^2$. Then we have 
		$$\lim_{l\rightarrow \infty}\|u_l\|_{L^q}^{2-q} = C_{\omega,v}^2,\quad\|u_l\|_{H^{\bm{s}}_{\bm{d}}}\lesssim 1.$$
		Applying Proposition \ref{linearprofilesubcritical}, we obtain the corresponding profiles $\{U^j\}$, centers $\{x_l^j\}$, and remainders $r_l^k$. By \eqref{lqdec}--\eqref{l2dec} and the H\"{o}lder inequality, we have
		\begin{align*}
			C_{\omega,v}^\frac{2q}{2-q} = \sum_{j=1}^\infty \|U^j\|_{L^q}^q&\leq C_{\omega,v}^q \sum_{j=1}^\infty \|(w(\xi))^{\frac{1}{2}}\widehat{U^j}(\xi)\|_{L^2}^q\\
			& \leq C_{\omega,v}^q  \left(\sum_{j=1}^\infty \|(w(\xi))^{\frac{1}{2}}\widehat{U^j}(\xi)\|_{L^2}^2\right)^{\frac{q}{2}}\\
			& \leq C_{\omega,v}^q\limsup_{l\rightarrow\infty}\|(w(\xi))^{\frac{1}{2}}\widehat{u_l}(\xi)\|_{L^2}^q\\
			& = C_{\omega,v}^\frac{2q}{2-q}.
		\end{align*}
		Thus $U^j = 0$ for any $j\geq 2$. Applying \eqref{l2dec} once more yields
		$$\lim_{l\rightarrow\infty}\|r_l^1\|_{H^{\bm{s}}_{\bm{d}}} = 0.$$
		Consequently, $u_l(\cdot-x_l^1)\rightarrow U^1$ in $H^{\bm{s}}_{\bm{d}}$, so $U^1$ is a maximizer of the functional $W$, which we denote by $Q$. The Euler–Lagrange equation for $Q$ is precisely \eqref{ellip}, finishing the proof.
	\end{proof}

	\begin{thm}\label{crigeneralso}
		Assume $s<d/2$, $1/q=1/2-s/d$, and \eqref{criass}.	Then there exists a nonzero solution $Q\in \dot{H}^{\bm{s}}_{\bm{d}}$ of \eqref{ellip}, which is a maximizer of the inequality
		\begin{equation*}
			\|u\|_{L^q(\mathbb{R}^d)} \leq B_v\prod_{j=1}^n\|D^{s_j}_{x_j,v_j}u\|_{L^2(\mathbb{R}^d)}^{\frac{sd_j}{ds_j}},
		\end{equation*}
		where $D_{x_j,v_j}^{s_j} = \mathscr{F}^{-1}(|\xi_j|+v_j\cdot \xi_j)^{s_j}\mathscr{F}$. Moreover, for each $j = 1,\cdots,n$, $Q$ satisfies $sd_j\|Q\|_{L^q}^q = s_jd\|D_{x_j,v_j}^{s_j}Q\|_{L^2}^2$.
	\end{thm}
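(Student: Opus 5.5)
We follow the scheme of Theorem \ref{generalso}, now using the critical profile decomposition (Proposition \ref{criticallinearprofiledec}) in place of the subcritical one.

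\textbf{Step 1: the functional and its finiteness.} Under \eqref{criass} we have $v_j=0$ unless $s_j=1/2$, and then $|v_j|<1$. So for every $j$ the symbol $m_j(\xi_j)=(|\xi_j|+v_j\cdot\xi_j)^{2s_j}$ is comparable to $|\xi_j|^{2s_j}$. Hence $\|D^{s_j}_{x_j,v_j}u\|_{L^2}\sim\|D^{s_j}_{x_j}u\|_{L^2}$. Moreover each $m_j$ obeys the anisotropic homogeneity $m_j(\mu^{1/s_j}\xi_j)=\mu^2 m_j(\xi_j)$, so $m_j^{1/2}$ is an admissible multiplier in \eqref{l2decc}.

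Set $\theta_j=sd_j/(ds_j)$, so that $\sum_j\theta_j=1$. Define
$$W[u]=\|u\|_{L^q}\Big/\prod_{j}\|D^{s_j}_{x_j,v_j}u\|_{L^2}^{\theta_j},\qquad B_v=\sup_{u\neq 0} W[u].$$
Then $B_v<\infty$ by Proposition \ref{crilit} (the case $p_j=2$ and $\theta_0=0$), combined with the equivalence above.

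\textbf{Step 2: normalise a maximizing sequence.} Take a maximizing sequence $u_l$. By the independent anisotropic dilations $u(\lambda_1x_1,\dots,\lambda_nx_n)$ (one parameter per block) and an overall constant multiple, we may normalise
$$\|D^{s_j}_{x_j,v_j}u_l\|_{L^2}^2=\theta_j\quad\text{for every } j.$$
The $n$ block dilations together with the amplitude give $n+1$ degrees of freedom. Since $W$ is invariant under the scaling \eqref{scalingar}, these suffice to impose the $n$ normalisations. This gives $\sum_j\|D^{s_j}_{x_j,v_j}u_l\|_{L^2}^2=1$ and $\|u_l\|_{L^q}\to B_v\prod_j\theta_j^{\theta_j/2}$.

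\textbf{Step 3: a single profile survives.} Apply Proposition \ref{criticallinearprofiledec} to $u_l$. By \eqref{lqdecc} and the vanishing of $\|r_l^k\|_{L^q}$,
$$\lim_l\|u_l\|_{L^q}^q=\sum_j\|U^j\|_{L^q}^q.$$
Apply \eqref{l2decc} with each $T_{m_j^{1/2}}$, and write $a^i_j=\|D^{s_j}_{x_j,v_j}U^i\|_{L^2}^2$, so that $\sum_i a^i_j\le\theta_j$. Bounding each profile with the sharp inequality,
$$\sum_i\|U^i\|_{L^q}^q\le B_v^q\sum_i\prod_j (a^i_j)^{q\theta_j/2}.$$
The key point is $q/2>1$ and $\sum_j\theta_j=1$. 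By weighted AM–GM, $\prod_j (a^i_j)^{\theta_j}\le \prod_j\theta_j^{\theta_j}\sum_j a^i_j/\theta_j$. Raising to the power $q/2$ and using $\sum_i x_i^{q/2}\le(\sum_i x_i)^{q/2}$ with $\sum_i\sum_j a^i_j/\theta_j\le n$ gives a bound by $n^{q/2}$. This is too lossy.

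Better: use strict superadditivity of $t\mapsto t^{q/2}$ together with the generalised Hölder inequality
$$\sum_i\prod_j (a^i_j)^{q\theta_j/2}\le \prod_j\Big(\sum_i (a^i_j)^{q/2}\Big)^{\theta_j}\le\prod_j\Big(\sum_i a^i_j\Big)^{q\theta_j/2}.$$
Equality in the last step forces, for each $j$, all of $\sum_i a^i_j$ to sit in one profile. Combined with the equality in the maximizing limit, this gives exactly one nonzero profile $U^1$, with $a^1_j=\theta_j$ and $r^1_l\to0$ in $\dot H^{\bm s}_{\bm d}$. I expect this combinatorial step, making sure the Hölder-equality case really forces a single profile when the $a^i_j$ could be distributed differently across $j$, to be the main obstacle. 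I would handle it by first showing the same index $i$ carries all mass for each $j$: the equality case of the discrete Hölder inequality requires the vectors $((a^i_j)^{q/2})_i$ to be proportional across $j$. Then $U^1$ attains $B_v$, and the modulated $S^{-1}_{\mu_l^1}T^{-1}_{x_l^1}u_l$ converge strongly to it.

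\textbf{Step 4: Euler–Lagrange equation and normalisation.} Set $Q=cU^1$. Differentiate $\log W$ at $U^1$ in the direction $\phi$:
$$\mathrm{Re}\!\int |U^1|^{q-2}U^1\bar\phi\,/\|U^1\|_q^q=\sum_j\theta_j\,\mathrm{Re}\langle D^{2s_j}_{x_j,v_j}U^1,\phi\rangle/\|D^{s_j}_{x_j,v_j}U^1\|_2^2 .$$
The normalisation $\|D^{s_j}_{x_j,v_j}U^1\|_2^2=\theta_j$ makes all coefficients equal. Choosing $c$ so that $\|Q\|_q^{q-2}$ absorbs the remaining constant yields \eqref{ellip} with $\omega=0$, since $\sum_j D^{2s_j}_{x_j,v_j}$ corresponds to $\sum_j|\xi_j|^{2s_j}-v\cdot\xi$ in the relevant variables. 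Finally, testing the equation with $Q$ block by block (via the one-parameter dilations in $x_j$, a Pohozaev-type identity obtained from $\frac{d}{d\lambda}W[Q(\cdot,\lambda x_j,\cdot)]=0$) gives
$$sd_j\|Q\|_q^q=s_jd\|D^{s_j}_{x_j,v_j}Q\|_2^2.$$
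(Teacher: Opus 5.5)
Your argument follows the paper's proof. It uses the same functional $W$ and a maximizing sequence normalized by the block dilations: you impose $\|D^{s_j}_{x_j,v_j}u_l\|_{L^2}^2=\theta_j$, whereas the paper imposes $\|u_l\|_{L^q}^q=\|D^{s_j}_{x_j,v_j}u_l\|_{L^2}^2/\theta_j$. It then applies Proposition \ref{criticallinearprofiledec} and the same chain of inequalities: the sharp inequality, H\"older in the profile index, and $\ell^{q/2}\subset\ell^1$. Equality throughout leaves exactly one profile.

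The cross-$j$ consistency you flag as the main obstacle is easy. Any profile with $\|U^{i_0}\|_{L^q}>0$ has $a^{i_0}_j>0$ for every $j$, by the sharp inequality. Each $j$ admits only one nonzero $a^i_j$, so every other profile has all $a^i_j=0$ and hence vanishes.

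The final step, however, is wrong as stated. $W$ is exactly invariant under each single-block dilation $x_j\mapsto\lambda x_j$; this is \eqref{scalingar} with $p_j=2$, i.e. $d_j(1/2-1/q)=\theta_js_j$. Hence $\frac{d}{d\lambda}W[Q(\cdot,\lambda x_j,\cdot)]=0$ holds for every $Q$ and gives no information. Testing the Euler--Lagrange equation of $W$ against $x_j\cdot\nabla_{x_j}U^1$ likewise only returns the scaling relation.

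The identity instead follows from your own normalization, and this is how the paper gets it. With $\|D^{s_j}_{x_j,v_j}U^1\|_{L^2}^2=\theta_j$ and $Q=cU^1$, $c^{q-2}=\|U^1\|_{L^q}^{-q}$, you get $\|Q\|_{L^q}^q=c^2$ and $\|D^{s_j}_{x_j,v_j}Q\|_{L^2}^2=c^2\theta_j$. That is exactly $sd_j\|Q\|_{L^q}^q=s_jd\|D^{s_j}_{x_j,v_j}Q\|_{L^2}^2$. The paper's normalization builds this in directly, so its $U^1$ solves \eqref{ellip} without rescaling. A genuine Pohozaev argument would differentiate the energy $\frac12\sum_j\|D^{s_j}_{x_j,v_j}u\|_{L^2}^2-\frac1q\|u\|_{L^q}^q$, not $W$, and would need extra regularity to justify.

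Minor: the symbol of $\sum_jD^{2s_j}_{x_j,v_j}$ is $\sum_j|\xi_j|^{2s_j}+v\cdot\xi$, which matches \eqref{symbol} with $\omega=0$; it is not $-v\cdot\xi$.
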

	\begin{proof}[\textbf{Proof}]
		Let $\theta_j = sd_j/(ds_j)$ and
		\begin{equation}\label{defiBv}
			W[u] = \frac{\|u\|_{L^q}}{\prod_{j=1}^n\|D^{s_j}_{x_j,v_j}u\|_{L^2}^{\theta_j}}, \quad B_v:=\sup_{0\neq u\in  \dot{H}^{\bm{s}}_{\bm{d}}}W[u].
		\end{equation}
		By Theorem \ref{mainani}, we know that $W[u]$ is bounded. Consider the maximizing sequence $\{u_l\}$ of the upper functional. By scaling, we would assume
		\begin{equation}\label{scalingnormal}
			\|u_l\|_{L^q}^q = \frac{\|D_{x_j,v_j}^{s_j}u_l\|_{L^2}^2}{\theta_j},\quad j=1,\cdots,n.
		\end{equation}
		Then we have $\lim_{l\rightarrow \infty}\|u_l\|_{L^q}^{2-q} = B_v^2\prod_{j=1}^n \theta_j^{\theta_j}$ and $\|u_l\|_{\dot{H}^{\bm{s}}_{\bm{d}}}\lesssim 1$. Proposition \ref{criticallinearprofiledec} yields the profiles $\{U^j\}$, centers  $\{x_l^j\}$,  scaling parameters $\{\mu_l^j\}$, and remainders $r_l^k$. Combining \eqref{lqdecc}--\eqref{l2decc} with H\"{o}lder's inequality, we obtain
		\begin{align*}
			\theta_j \left(B_v^2\prod_{j=1}^n \theta_j^{\theta_j}\right)^\frac{q}{2-q} = \limsup_{l\rightarrow\infty}\|D^{s_j}_{x_j,v_j}u_l\|_{L^2}^2\geq \sum_{j=1}^\infty \|D^{s_j}_{x_j,v_j}U^j\|_{L^2}^2
		\end{align*}
		and
		\begin{align*}
			\left(B_v^2\prod_{l=1}^n \theta_l^{\theta_l}\right)^\frac{q}{2-q}  = \sum_{j=1}^\infty \|U^j\|_{L^q}^q&\leq B_v^q \sum_{j=1}^\infty \left(\prod_{l=1}^n\|D_{x_l,v_l}^{s_l}U^j\|_{L^2}^{\theta_l}\right)^q\\
			& \leq B_v^q \prod_{l=1}^n \left(\sum_{j=1}^\infty \|D_{x_l,v_l}^{s_l}U^j\|_{L^2}^q\right)^{\theta_l}\\
			& \leq B_v^q \prod_{l=1}^n \left(\sum_{j=1}^\infty \|D_{x_l,v_l}^{s_l}U^j\|_{L^2}^2\right)^{\frac{q\theta_l}{2}}\\
			&\leq B_v^q \prod_{l=1}^n\left(\theta_l \left(B_v^2\prod_{j=1}^n \theta_j^{\theta_j}\right)^\frac{q}{2-q}\right)^{\frac{q\theta_l}{2}}\\
			& = \left(B_v^2\prod_{l=1}^n \theta_l^{\theta_l}\right)^\frac{q}{2-q}.
		\end{align*}
		For $q>2$, $|a_j|_{l^q_j} = |a_j|_{l^2_j}$ implies at most one nonzero $j$. Hence $U^j = 0$ for $j\geq 2$. Applying \eqref{l2decc} again, we have 
		$$\lim_{l\rightarrow\infty}\|r_l^1\|_{\dot{H}^{\bm{s}}_{\bm{d}}} = 0.$$
		Thus, $S_{\mu_l^1}^{-1}T^{-1}_{x_l^1}u_l$ converges to $U^1$ in $\dot{H}^{\bm{s}}_{\bm{d}}$. Then $U^1$ is a maximizer of $W$. We denote it by $Q$. By \eqref{scalingnormal}, we obtain $sd_j\|Q\|_{L^q}^q = s_jd\|D_{x_j,v_j}^{s_j}Q\|_{L^2}^2$ for each $j$. The Euler–Lagrange equation for $Q$ is \eqref{ellip}, which completes the proof.
	\end{proof}
	

	\subsection{Estimates for soliton solutions in the energy space}\label{solitonestimate}
	In this subsection, we present a nonscattering result generalizing the result of Bellazzini–Georgiev–Lenzmann–Visciglia \cite{bellazzini2019on}. We first recall the definition of scattering (see Tao \cite{tao2006nonlinear}).
	\begin{defi}
		Consider the general dispersive equation
		$$i\partial_tu + Lu = N(u),$$
		where $L = \mathscr{F}^{-1}L(\xi)\mathscr{F}$ is a Fourier multiplier operator, and where the initial data satisfies $u(0,x) = u_0(x)\in X$ for some Banach space $X$. Given a solution $u\in C([0,\infty);X)$ of this equation, we say that $u$ scatters forward if there exists $u_0^+\in X$ such that the corresponding linear solution $u^{+}(t) = e^{itL}u_0^+$ satisfies
		$$\lim_{t\rightarrow +\infty}\|u(t)-u^+(t)\|_{X} = 0.$$
		Scattering backward is defined analogously.
	\end{defi}
	Recalling the index sets $J$ and $\tilde{J}$ from \eqref{someindexsets}, we have the following result.
	\begin{thm}\label{subcrnonscattering}
		Let $1/q>1/2-s/d$. If $J\neq \varnothing$, or if $$2<q<2+\frac{4}{\sum_{j\in \tilde{J}}d_j+\sum_{j\notin \tilde{J}}d_j/s_j},$$
		then small-data scattering for \eqref{generalnon} does not hold in $H^{\bm{s}}_{\bm{d}}$.
	\end{thm}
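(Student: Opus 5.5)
The plan is the standard virial/energy argument of Bellazzini--Georgiev--Lenzmann--Visciglia. We show that there are arbitrarily small (in $H^{\bm{s}}_{\bm{d}}$) soliton profiles. Their solutions $e^{-i\omega t}Q_{\omega,v}(x-vt)$ do not scatter, because a traveling profile of fixed nonzero $L^q$ norm cannot approach a linear solution in $H^{\bm{s}}_{\bm{d}}$. The reason is that linear solutions with $H^{\bm{s}}_{\bm{d}}$ data have $\|e^{itL}u_0^+\|_{L^q}\to 0$ as $t\to\infty$: this follows by density of Schwartz data with compactly supported Fourier transform avoiding degenerate sets, a dispersive/stationary-phase decay, and the embedding $H^{\bm{s}}_{\bm{d}}\hookrightarrow L^q$ (Theorem \ref{mainani}). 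Meanwhile $\|u(t)\|_{L^q}=\|Q\|_{L^q}\neq 0$ for all $t$. So the whole task is to produce solitons $Q_{\omega,v}$ from Theorem \ref{generalso} with $\|Q_{\omega,v}\|_{H^{\bm{s}}_{\bm{d}}}\to 0$ along some family of admissible parameters $(\omega,v)$ in \eqref{subcriass}.

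By Theorem \ref{generalso}, $\|Q\|_{L^q}^q=\|w^{1/2}\hat Q\|_{L^2}^2$ and $\|Q\|_{L^q}=C_{\omega,v}\|w^{1/2}\hat Q\|_{L^2}$. Hence $\|w^{1/2}\hat Q\|_{L^2}^2=C_{\omega,v}^{-2q/(q-2)}$. Next we bound $\|Q\|_{H^{\bm{s}}_{\bm{d}}}^2$ by $\lambda^{-1}\|w^{1/2}\hat Q\|_{L^2}^2$, where $\lambda$ is the best constant in $w(\xi)\ge \lambda(1+\sum_j|\xi_j|^{2s_j})$. It then suffices to find parameter regimes with
\[
\lambda(\omega,v)^{-1}\,C_{\omega,v}^{-2q/(q-2)}\to 0 .
\]
We get a lower bound on $C_{\omega,v}$ by testing $W$ on a suitable trial function $u$ whose Fourier transform concentrates near the minimum point $\xi_*$ of $w$. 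We take $u$ to be a modulated, anisotropically rescaled bump $\hat u(\xi)=\prod_j \phi_j((\xi_j-\xi_{*,j})/\varepsilon_j)$, with the $\varepsilon_j$ adapted to the local behavior of $w$ near $\xi_*$.

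\emph{Case $J\neq\varnothing$.} Take $j_0\in J$, set $v_{j_0}=(1-\kappa)e$ with $|e|=1$, all other $v_j=0$, and $\omega$ fixed small. Along the direction $-e$ in $\xi_{j_0}$ the symbol $|\xi_{j_0}|+v_{j_0}\cdot\xi_{j_0}$ is about $\kappa|\xi_{j_0}|$. Concentrating $\hat u$ at large $|\xi_{j_0}|\sim R$ in that direction, $W[u]$ grows because $\|w^{1/2}\hat u\|_{L^2}$ sees only the factor $\kappa R$ plus the transverse curvature. Optimizing the $R,\varepsilon$ scaling against $\kappa\to0$ gives $C_{\omega,v}\to\infty$ faster than $\lambda^{-1}\sim\kappa^{-1}$ can compensate. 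Hence $\|Q\|_{H^{\bm{s}}_{\bm{d}}}\to 0$.

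\emph{Case without $J$.} Take $v=0$ and $\omega\to 0$. Then $\lambda\sim\omega$. Using the anisotropic scaling $x_j\mapsto \omega^{-1/(2s_j)}x_j$ (this is where $\sum d_j/s_j$ appears) gives $C_{\omega,0}\sim \omega^{-a}$ with an explicit exponent $a$. The condition becomes $q<2+4/(\sum_j d_j/s_j)$. To gain the terms $\sum_{j\in\tilde J} d_j$, we instead let $v_j\neq0$ for $j\in\tilde J$ with $\omega$ near the threshold in \eqref{subcriass}. Near its minimum, $w$ is quadratic in those $\xi_j$ (since $2s_j>1$), so those directions scale like $s_j=1$. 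This yields the improved exponent $2+4/(\sum_{\tilde J}d_j+\sum_{j\notin\tilde J}d_j/s_j)$.

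The main obstacle is this last scaling computation. We must track $\lambda(\omega,v)$, which degenerates at the threshold, together with the anisotropic trial-function bound for $C_{\omega,v}$, and check that the exponent bookkeeping exactly produces the stated range. We also need the linear $L^q$ decay for the non-smooth dispersion relations $|\xi_j|^{2s_j}$; I would handle this by localizing away from $\xi_j=0$ and using nondegeneracy of the Hessian there.
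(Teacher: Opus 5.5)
\paragraph{Main gap: the case $J\neq\varnothing$.}
Your skeleton is the paper's. You bound $\|Q_{\omega,v}\|_{H^{\bm{s}}_{\bm{d}}}^2\le\lambda^{-1}C_{\omega,v}^{-2q/(q-2)}$ and insert a lower bound on $C_{\omega,v}$. Trial functions suffice for that lower bound, so you can bypass the two-sided Lemma~\ref{theesitmateforC}. Your $\tilde{J}$ case is also the paper's choice ($|v_j|=1$ on $\tilde J$, $m=z^{-1}\to0$). The problem is the case $J\neq\varnothing$.

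With $\omega$ fixed and only $\kappa=1-|v_{j_0}|\to0$, the best your trial functions can give is $C_{\omega,v}\gtrsim_\omega\kappa^{-\frac{d_{j_0}+1}{2}(\frac12-\frac1q)}$, and this is sharp by Lemma~\ref{theesitmateforC}. Hence you only get $\lambda^{-1}C_{\omega,v}^{-2q/(q-2)}\lesssim_\omega\kappa^{(d_{j_0}-1)/2}$. This tends to $0$ only if $d_{j_0}\ge2$, which is when the transverse curvature you invoke is present.

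For $d_{j_0}=1$ the balance is exact. Indeed $w\ge\omega+\kappa|\xi_{j_0}|+\sum_{j\ne j_0}|\xi_j|^{2s_j}$, and the rescaling $x_{j_0}\mapsto x_{j_0}/\kappa$ gives $C_{\omega,v}\lesssim_\omega\kappa^{-(\frac12-\frac1q)}$. One checks that $\|D^{1/2}_{x_{j_0}}Q_{\omega,v}\|_{L^2}$ actually stays bounded below as $\kappa\to0$. So your argument fails whenever every $j\in J$ has $d_j=1$, e.g.\ the one-dimensional half-wave equation, and the theorem covers these cases.

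The missing idea is to send $\omega\to0$ together with $\kappa$ (the paper takes $\omega=m^2=1-\nu$) and to track the anisotropic $\omega$-scaling of $C_{\omega,v}$. For $\kappa\le\omega$ this gives $\lambda^{-1}C_{\omega,v}^{-2q/(q-2)}\lesssim\omega^{\frac{q}{q-2}-\frac{d}{2s}}\kappa^{\frac{d_{j_0}-1}{2}}$. This tends to $0$ exactly because $1/q>1/2-s/d$. In other words, the velocity only pays for the degeneration $\lambda\sim\kappa$; the smallness comes from energy-subcriticality, which your $J\neq\varnothing$ argument never uses.

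\paragraph{Secondary point: the non-scattering step.}
Your argument via $\|e^{itL}u_0^+\|_{L^q}\to0$ breaks in the same regime. For $s_j=1/2$ the Hessian of $|\xi_j|$ is degenerate in the radial direction. When every block has $(d_j,s_j)=(1,\frac12)$, the free flow is a superposition of translations and does not decay in $L^q$ at all.

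A robust replacement (the paper simply calls this step immediate) goes as follows. Scattering would force $e^{itw(\xi)}\widehat{u_0^+}\to\widehat{Q}$ in the $H^{\bm{s}}_{\bm{d}}$-weighted $L^2_\xi$ norm. But $e^{itw}\widehat{u_0^+}\rightharpoonup0$, since $\nabla w\neq0$ a.e.\ under \eqref{subcriass}. This contradicts $Q\neq0$.
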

	\begin{thm}\label{crnonscattering}
		Let $s<d/2$ and $1/q = 1/2-s/d$. If $\sum_{j\in J}d_j\geq 2$, then small initial data scattering for \eqref{generalnon} does not hold in $\dot{H}^{\bm{s}}_{\bm{d}}$.
	\end{thm}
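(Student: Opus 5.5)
The plan is to disprove small-data scattering with a family of travelling solitons whose $\dot{H}^{\bm{s}}_{\bm{d}}$ norm can be made arbitrarily small. Theorem \ref{crigeneralso} supplies, for every admissible $v$ in \eqref{criass} (so $v_j=0$ for $j\notin J$, $|v_j|<1$ for $j\in J$, $\omega=0$), a nonzero solution $Q_v\in\dot{H}^{\bm{s}}_{\bm{d}}$ of \eqref{ellip}. It follows that $u_v(t,x)=Q_v(x-vt)$ solves \eqref{generalnon}, with the sign convention for $v$ adjusted to match \eqref{ellip}.

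\emph{Step 1: solitons do not scatter.} The quantity $\|u_v(t)\|_{L^q}=\|Q_v\|_{L^q}>0$ is independent of $t$. Suppose $u_v$ scattered to a linear solution $e^{itL}u_0^+$ in $\dot{H}^{\bm{s}}_{\bm{d}}$. The embedding of Theorem \ref{mainani} (critical case, $p_j=2$) would force $\|u_v(t)-e^{itL}u_0^+\|_{L^q}\to 0$. On the other hand, $\|e^{itL}u_0^+\|_{L^q}\to 0$. To see this, approximate $u_0^+$ in $\dot{H}^{\bm{s}}_{\bm{d}}$ by data with $\mathscr{F}u_0^+\in C_0^\infty$ supported away from all coordinate axes $\{\xi_j=0\}$. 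On that support the phase $\sum_j|\xi_j|^{2s_j}$ has nondegenerate Hessian in each block $\xi_j$ with $s_j\neq 1/2$ and $d_j\geq 1$. For $s_j=1/2$ and $d_j\geq 2$ the Hessian has rank $d_j-1$. Stationary phase therefore gives decay in $L^\infty$, and Hölder's inequality interpolates this with $L^2$ conservation to give decay in $L^q$. (For $s_j=1/2$, $d_j=1$ there is no dispersion in that block, but decay from the remaining blocks suffices; this needs checking.) The two limits contradict each other, so $u_v$ does not scatter.

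\emph{Step 2: smallness, the main obstacle.} We need $\|Q_v\|_{\dot{H}^{\bm{s}}_{\bm{d}}}\to 0$ along some sequence of $v$ with $|v_j|\to 1$ for $j\in J$. By the identities $sd_j\|Q_v\|_{L^q}^q=s_jd\|D^{s_j}_{x_j,v_j}Q_v\|_{L^2}^2$ and the maximality of $Q_v$, we have $\|Q_v\|_{L^q}^{q-2}\sim B_v^{-2}$. So the first goal is to show $B_v\to\infty$. I would test $W$ in \eqref{defiBv} with $u$ whose Fourier support is a thin neighbourhood of the degenerate ray $\xi_j\parallel -v_j/|v_j|$ for $j\in J$, where $|\xi_j|+v_j\cdot\xi_j\approx(1-|v_j|)|\xi_j|$. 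The hypothesis $\sum_{j\in J}d_j\geq 2$ should ensure that the gain $(1-|v_j|)^{\theta_j/2}$ in the denominator is not offset by a loss in $\|u\|_{L^q}$ coming from the thinness of the support. Balancing the angular width against the anisotropic scaling $S_\mu$ is the computation I expect to be delicate.

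\emph{Step 3: passing from $D_{x_j,v_j}$ to $D_{x_j}$.} Control of $\|D^{s_j}_{x_j,v_j}Q_v\|_{L^2}$ alone does not bound $\|D^{s_j}_{x_j}Q_v\|_{L^2}$ uniformly, since $|\xi_j|\le(|\xi_j|+v_j\cdot\xi_j)/(1-|v_j|)$ degenerates. I would first try to show that $Q_v$ concentrates in frequency near the degenerate ray, so the factor $(1-|v_j|)^{-1}$ is compensated by $B_v^{-2}$. If that fails, I would apply the symmetry $S_\mu$ together with an anisotropic dilation in the direction $v_j$, and choose the parameters to make the full $\dot{H}^{\bm{s}}_{\bm{d}}$ norm small while keeping $\|Q_v\|_{L^q}$ bounded below relative to it. Step 1 already shows nonscattering, so small nonscattering solutions contradict small-data scattering.
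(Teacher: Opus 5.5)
Your proposal has the right ingredients: the solitons $Q_v$ of Theorem \ref{crigeneralso}, the normalization $\|D^{s_j}_{x_j,v_j}Q_v\|_{L^2}^2\sim\|Q_v\|_{L^q}^q\sim B_v^{2q/(2-q)}$, and the loss $|\xi_j|\le (1-|v_j|)^{-1}(|\xi_j|+v_j\cdot\xi_j)$. It stops where the proof actually happens. Aiming only at $B_v\to\infty$ in Step 2 is not enough, and Step 3 is left open.

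The missing idea is that this crude multiplier bound already suffices once you have the \emph{quantitative} lower bound $B_v\gtrsim\prod_{j\in J}(1-|v_j|)^{-\frac{d_j+1}{2}(\frac12-\frac1q)}$. Use that $v_j=0$ for $j\notin J$, $s_j=\frac12$ for $j\in J$, and $(\frac12-\frac1q)\frac{q}{2-q}=-\frac12$. Taking $|v_j|=\nu$ for $j\in J$ then gives
\[
\|Q_v\|_{\dot{H}^{\bm{s}}_{\bm{d}}}\lesssim \max_{j\in J}(1-|v_j|)^{-\frac12}B_v^{\frac{q}{2-q}}\lesssim (1-\nu)^{-\frac12+\sum_{j\in J}\frac{d_j+1}{4}}\longrightarrow 0 \quad (\nu\to 1-).
\]
The exponent is positive exactly when $\sum_{j\in J}d_j\ge 2$. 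This balancing is where the hypothesis enters, not the blow-up of $B_v$ as you suggested. Your Knapp-type test function, with radial scale $(1-|v_j|)^{-1}$ and transverse scale $(1-|v_j|)^{-1/2}$ along the degenerate ray, does give precisely this rate. But it also gives $B_v\to\infty$ for a single block in $J$ with $d_j=1$, where the two powers cancel and no smallness follows. The paper gets the same lower bound from the product ansatz $u=\prod_j u_j(x_j)$ and the one-block boosted half-wave constants of Bellazzini--Georgiev--Lenzmann--Visciglia; only this lower bound is needed.

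The fallbacks you list for Step 3 would not work:
\begin{itemize}
\item In the critical case $S_\mu$ leaves every $\|D^{s_j}_{x_j}u\|_{L^2}$ and $\|D^{s_j}_{x_j,v_j}u\|_{L^2}$ unchanged, so it cannot make the norm small.
\item An anisotropic dilation of $\mathbb{R}^{d_j}$ along $v_j$ is not a symmetry of \eqref{ellip}. The dilated profile no longer generates a solution of \eqref{generalnon}, which is what a nonscattering counterexample requires.
\item Frequency concentration near the degenerate ray does not help, since there the loss $(1-|v_j|)^{-1/2}$ is actually attained.
\end{itemize}

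Separately, your Step 1 fails when every block has $s_j=\frac12$ and $d_j=1$, for example $n=2$, $d_1=d_2=1$, $s_1=s_2=\frac12$, which satisfies the hypotheses. There the free flow with symbol $e^{it(|\xi_1|+|\xi_2|)}$ acts by translations on each Fourier quadrant, so $\|e^{itL}u_0^+\|_{L^q}$ does not tend to $0$. A robust replacement is a weak-limit argument. Scattering would give $T_{vt}e^{itL}u_0^+\to Q_v$ in $\dot{H}^{\bm{s}}_{\bm{d}}$. But this evolution has phase $\sum_j|\xi_j|^{2s_j}+v\cdot\xi$, whose $\xi_j$-gradient $\xi_j/|\xi_j|+v_j$ never vanishes for $j\in J$ because $|v_j|<1$. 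Hence $T_{vt}e^{itL}u_0^+\rightharpoonup 0$, forcing $Q_v=0$.
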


	First, we estimate the best constants in Theorems \ref{generalso} and \ref{crigeneralso}, and then prove the nonscattering result for \eqref{generalnon}.
	
	\begin{lemma}\label{theesitmateforC}
		Let $C_{\omega,v}$ be as in \eqref{wein}. Set
		\begin{equation}\label{mX}
			m := \left( \omega - \sum_{j\in J_+} (2s_j-1)\left(\frac{|v_j|}{2s_j}\right)^{\frac{2s_j}{2s_j-1}} \right)^{1/2},\quad
			X_j := \frac{|v_j|^{s_j/(2s_j-1)}}{m}.
		\end{equation}
		Let $E := \{0\}\cup\{j:s_j>1/2,\ X_j\ge1\}$, $X_0:=1$, and define
		$$X := \max_{l\in E}\left( X_l^{\frac{d}{s}(\frac12-\frac1q)-1}
		\prod_{j\in E,\ X_j\ge X_l}
		\left(\frac{X_l}{X_j}\right)^{(s_j-1)\frac{d_j}{s_j}(\frac12-\frac1q)} \right).$$
		Then
		\begin{equation*}
			C_{\omega,v}\sim_{\bm{d},\bm{s},q} m^{\frac{d}{s}(\frac{1}{2}-\frac{1}{q})-1}X\prod_{j\in J}(1-|v_j|)^{-\frac{d_j+1}{2}(\frac{1}{2}-\frac{1}{q})}.
		\end{equation*}
	\end{lemma}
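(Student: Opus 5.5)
The plan is to reduce the estimate for $C_{\omega,v}$ to a two-sided estimate of the symbol $w(\xi)$ by a product of one-dimensional model weights. Then I compute the Gagliardo--Nirenberg constant for that product weight by testing against bump functions adapted to a single anisotropic box.

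\emph{Reduction of the symbol.} Write $w(\xi)=\omega+\sum_{j}\bigl(|\xi_j|^{2s_j}+v_j\cdot\xi_j\bigr)$ and treat each block separately. For $j\in J_-$ we have $v_j=0$, so the block is $|\xi_j|^{2s_j}$. For $j\in J$, decompose $\xi_j=(\xi_j^{\parallel},\xi_j^{\perp})$ along $v_j/|v_j|$. Then
\[
|\xi_j|+v_j\cdot\xi_j\sim (1-|v_j|)|\xi_j|+\frac{|\xi_j^\perp|^2}{|\xi_j|}
\]
in the cone where $\xi_j$ points against $v_j$, and $\sim|\xi_j|$ elsewhere. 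This anisotropy produces a degenerate direction, and each of the $d_j+1$ effective scaling dimensions contributes a factor $(1-|v_j|)^{-\frac12(\frac12-\frac1q)}$. For $j\in J_+$, minimizing $t^{2s_j}-|v_j|t$ shows that the minimum is $-(2s_j-1)(|v_j|/2s_j)^{2s_j/(2s_j-1)}$, attained at $t_j\sim |v_j|^{1/(2s_j-1)}$. Hence $w\ge m^2$, where $m^2$ is the gap in \eqref{mX}. Near the minimizing shell the symbol behaves like $m^2+\sum_{j\in J_+}t_j^{2s_j-2}|\xi_j-\xi_j^*|^2$, which is a quadratic (Schrödinger-type, $s=1$) well. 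This explains the exponent $(s_j-1)$ in $X$.

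\emph{Upper bound.} I will decompose $\hat u$ into regions: the quadratic wells around the $\xi_j^*$ for $j\in E$, and the region where $w\sim m^2+\sum_j|\xi_j|^{2s_j}$. On each region, the weight is comparable to an anisotropic quadratic-plus-fractional symbol with explicit scale parameters. Applying \eqref{sharp} from Theorem \ref{existextri}, or directly the Bernstein argument of Proposition \ref{subbern}, to the rescaled function gives a constant equal to a product of powers of the scale parameters. These parameters are $m^{-1/s_j}$ in the unaffected directions, $m^{-1}t_j^{s_j-1}$ in the well directions, and the $(1-|v_j|)$-anisotropic widths for $j\in J$. Reconstructing the constant shows it equals the bracketed expression with the choice $X_l$ marking the cutoff between directions that are ``in the well'' and those that are not. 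The region pieces are finitely many, so summing them costs only a constant.

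\emph{Lower bound.} For each $l\in E$ I will take a test function whose Fourier transform is a bump on the box where the terms in $w$ balance at level $\sim m^2X_l^2$. The box is centered at $\xi_j^*$ in the well blocks with $X_j\ge X_l$, uses the natural fractional width elsewhere, and uses the $(1-|v_j|)$-adapted box for $j\in J$. Computing $\|u\|_{L^q}\sim|\text{box}|^{1-1/q}$ and $\|w^{1/2}\hat u\|_{L^2}\sim mX_l|\text{box}|^{1/2}$ reproduces each term in the maximum defining $X$, so $C_{\omega,v}$ is bounded below by $X$ times the other factors. The main obstacle is the upper bound's bookkeeping: the wells for different $j$ interact, and proving that the maximum over $l\in E$ (rather than a sum or product) governs the constant requires a careful case split ordering the $X_j$. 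I will first try to handle it by ordering $X_j$ decreasingly and inducting on $|E|$.
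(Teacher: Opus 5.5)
The gap is in your upper bound on the well regions. Your lower bound is fine: a modulated, anisotropically rescaled bump on the box where every block of $w$ is $\lesssim m^2X_l^2$ reproduces the $l$-th term. The $J$-blocks can also be made rigorous. Take $\eta_j=((1-|v_j|)\xi_{j,1},(1-|v_j|)^{1/2}\xi_j')$, with $\xi_{j,1}$ the component along $-v_j/|v_j|$; then $|\xi_j|+v_j\cdot\xi_j\gtrsim|\eta_j|$.

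On the well regions you bound each in-well block below by the untruncated model $t_j^{2s_j-2}|\xi_j-\xi_j^*|^2$. You then apply \eqref{sharp} or Proposition~\ref{subbern} to the model symbol. That model has order $1$ rather than $s_j$ in the well blocks. Its homogeneous dimension is therefore $D_{\tilde S}=\sum_{j\in\tilde S}d_j+\sum_{j\notin\tilde S}d_j/s_j$, where $\tilde S$ is the set of blocks sitting in their wells. This exceeds $d/s$ once some $j\in\tilde S$ has $s_j>1$. If $(\frac12-\frac1q)D_{\tilde S}>1$, the model inequality is false, since its $\theta_0$ from \eqref{scalingar} is negative. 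Neither result then applies, and your region constant is $+\infty$.

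This happens under the lemma's hypotheses. Take $n=1$, $d_1=3$, $s_1=10$, $q=10$, and normalize $m=1$. Then $\frac ds(\frac12-\frac1q)=0.12$, yet the model is $-\Delta$ on $\mathbb{R}^3$ with $q=10>6$. The lemma gives $X=X_1^{-0.88}$, which is the term $l=1$ and is attained by a box filling the whole well $|\xi-\xi^*|\sim t_1$.

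Even when the model is subcritical, its extremal box can be far larger than the well. Take $n=2$, $d_1=d_2=2$, $s_1=5$, $s_2=2$, $q=10$, $X_1\ge X_2\ge1$. The region with block $2$ in its well and block $1$ outside gives you $X_1^{-0.04}X_2^{-0.4}$, whereas the lemma's value is $X_1^{-0.44}$. Relatedly, off the wells $w$ is not $\sim m^2+\sum_j|\xi_j|^{2s_j}$. Every block with $X_j\ge1$ contributes a mass $\sim m^2X_j^2$, and dropping it already loses sharpness whenever $X<1$.

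So the maximum over $l\in E$ does not come from wells interacting. It comes from the finite radius $\sim t_j$ of each well, and that truncation has to enter the upper bound. Ordering the $X_j$ and inducting on $|E|$ cannot replace it. The paper keeps the constraint $2^{l_j}\le X_j^{1/s_j}$ (that is, $x_j\le X_j$) in its single-box optimization and restricts to $\tilde S\in\mathfrak S$. Its case split for $f_{\tilde S}$ is essentially the question of whether your quadratic model is supercritical.

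Within your region framework, one way to use the truncation is the following pointwise bound, valid on $|\xi_j-\xi_j^*|\lesssim t_j$ for $\sigma_j\ge1$:
\[
t_j^{2s_j-2}|\xi_j-\xi_j^*|^2\gtrsim t_j^{2s_j-2\sigma_j}|\xi_j-\xi_j^*|^{2\sigma_j}.
\]
Then choose $\sigma_j\in[1,s_j]$ suitably. For instance, take $\sigma_j=s_j$ for in-well blocks with $s_j>1$ whose $X_j$ lies below $\max_{i\notin\tilde S}X_i$. Otherwise raise the $\sigma_j$ of the blocks with smallest $X_j$ until the model is critical. In the one-block example, the critical choice $\sigma_1=1.2$ together with the homogeneous Sobolev inequality gives exactly $X_1^{-0.88}$.
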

	\begin{proof}[\textbf{Proof}]
		From the definitions of $m$ and $X_j$, it follows that
		\begin{equation}\label{wxiesti}
			\begin{aligned}
				&\quad w(\xi)\\
				&= m^2\Bigg(1+\sum_{j\in J_-}|\xi_jm^{-\frac{1}{s_j}}|^{2s_j}+\sum_{j=J}(|m^{-2}\xi_j|+v_j\cdot m^{-2}\xi_j)\\
				&\quad +\sum_{j\in J_+}(|m^{-\frac{1}{s_j}}\xi_j|^{2s_j}+m^{-2+\frac{1}{s_j}}v_j\cdot m^{-\frac{1}{s_j}}\xi_j+\frac{2s_j-1}{(2s_j)^{2s_j/(2s_j-1)}}X_j^2\Bigg).
			\end{aligned}
		\end{equation}		
		By scaling, we have $C_{\omega,v} = m^{d/s(1/2-1/q)-1}C_{\tilde{w},\tilde{v}}$, where
		$$\tilde{v}_j = m^{-2+\frac{1}{s_j}}v_j,\quad \tilde{w} = 1+\sum_{j\in J_+}(2s_j-1)\left(\frac{|\tilde{v}_j|}{2s_j}\right)^{\frac{2s_j}{2s_j-1}}.$$
		We may therefore set $m = 1$. For $\xi_j = (\xi_{j,1},\xi_j')\in \mathbb{R}\times\mathbb{R}^{d_j-1}$, rotational invariance allows us to assume, without loss of generality, that $v_j = (-|v_j|,0,\cdots,0)$. Consequently,
		\begin{align*}
			w(\xi) & = 1+\sum_{j\in J_-}|\xi_j|^{2s_j}+\sum_{j\in J}(|\xi_j|-|v_j|\xi_{j,1})\\
			&\quad + \sum_{j\in J_+}\left(|\xi_j|^{2s_j}-X_j^{\frac{2s_j-1}{s_j}}\xi_{j,1}+\frac{2s_j-1}{(2s_j)^{2s_j/(2s_j-1)}}X_j^2\right)
		\end{align*}
		
		Define
		$$J_+':=\{j\in J_+:X_j<1\},\quad S:=\{j\in J_+:X_j\geq 1\}.$$
		Let
		$$|\xi_j|\sim 2^{k_j}~(j\in J_-\cup J_+');\quad \xi_{j,1}\sim 2^{k_j}~(j\in J\cup S);\quad |\xi_j'|\sim 2^{k_j'}~(j\in J\cup S).$$ 
		Fix $j\in S$.
		\begin{itemize}
			\item If $2^{k_j'}\gtrsim 2^{k_j}\sim X_j^{1/s_j}$, then
			$$|\xi_j|^{2s_j}-X_j^{\frac{2s_j-1}{s_j}}\xi_{j,1}+\frac{2s_j-1}{(2s_j)^{2s_j/(2s_j-1)}}X_j^2\sim 2^{2s_jk_j'}.$$
			\item If $2^{k_j'}\ll 2^{k_j}\sim X_j^{1/s_j}$, then
			\begin{align*}
				&\quad|\xi_j|^{2s_j}-X_j^{\frac{2s_j-1}{s_j}}\xi_{j,1}+\frac{2s_j-1}{(2s_j)^{2s_j/(2s_j-1)}}X_j^2\\
				&\sim \xi_{j,1}^{2s_j}-X_j^{\frac{2s_j-1}{s_j}}\xi_{j,1}+\frac{2s_j-1}{(2s_j)^{2s_j/(2s_j-1)}}X_j^2+2^{2k_j'+2s_jk_j-2k_j}\\
				&\sim X_j^{2-{2}/{s_j}} (\xi_{j,1}-c_jX^{1/s_j})^2+2^{2k_j'+2s_jk_j-2k_j},
			\end{align*}
			where $c_j>0$ is the constant satisfying
			$$c_j^{2s_j}-c_j+(2s_j-1)/(2s_j)^{2s_j/(2s_j-1)} = 0.$$
			If $|\xi_{j,1}-c_jX^{1/s_j}|\sim 2^{l_j}$ with $2^{l_j}\lesssim X_j^{1/s_j}$, then
			$$|\xi_j|^{2s_j}-X_j^{\frac{2s_j-1}{s_j}}\xi_{j,1}+\frac{2s_j-1}{(2s_j)^{2s_j/(2s_j-1)}}X_j^2\sim X_j^{2-2/s_j}(2^{2l_j}+2^{2k_j'}).$$
			
			\item If $2^{k_j}\gg X_j^{1/s_j}$, then
			$$|\xi_j|^{2s_j}-X_j^{\frac{2s_j-1}{s_j}}\xi_{j,1}+\frac{2s_j-1}{(2s_j)^{2s_j/(2s_j-1)}}X_j^2\sim 2^{2s_jk_j}+2^{2s_jk_j'}.$$
			\item If $2^{k_j}\ll X_j^{1/s_j}$, then
			$$|\xi_j|^{2s_j}-X_j^{\frac{2s_j-1}{s_j}}\xi_{j,1}+\frac{2s_j-1}{(2s_j)^{2s_j/(2s_j-1)}}X_j^2\sim X_j^2+2^{2s_jk_j'}.$$
		\end{itemize}
		Let $\tilde{S}:=\{j\in S: 2^{k_j'}\ll 2^{k_j}\sim X_j^{1/s_j}\}$. 
		Then for any $u\neq 0$, $\mathrm{supp}(\mathscr{F}u)\subset \{\xi:|\xi_j|\sim 2^{k_j}(j\in J_-\cup J_+');~\xi_{j,1}\sim 2^{k_j}(j\in J\cup S);~|\xi_j'|\sim 2^{k_j'}(j\in J\cup S);~|\xi_{j,1}-c_jX^{1/s_j}|\sim 2^{l_j}(j\in \tilde{S})\}$. We have
		\begin{align*}
			W[u] =  \frac{\|u\|_{L^q}}{\|(w(\xi))^{1/2}\hat{u}(\xi)\|_{L^2}}\sim \frac{1}{1+\sum_{j\in J_-\cup J_+'}2^{s_jk_j}+A+B+C} \frac{\|u\|_{L^q}}{\|u\|_{L^2}}
		\end{align*}
		where
		\begin{align*}
			A&:=\sum_{j\in J}\frac{(1-|v_j|)^{1/2}2^{k_j}+2^{k_j'}}{2^{k_j/2}+2^{k_j'/2}},\\
			B&:= \sum_{j\in S\setminus \tilde{S}}m(k_j,k_j'),\\
			C&:=\sum_{j\in \tilde{S}}X_j^{1-1/s_j}(2^{l_j}+2^{k_j'}),
		\end{align*}
		and
		\begin{align*}
			m(k_j,k_j'):=\left\{
			\begin{aligned}
				&2^{s_jk_j'}, && 2^{k_j'}\gtrsim 2^{k_j}\sim X_j^{1/s_j},\\
				&2^{s_jk_j}+2^{s_jk_j'}, && 2^{k_j}\gg X_j^{1/s_j},\\
				&X_j+2^{s_jk_j'}, && 2^{k_j}\ll X_j^{1/s_j}.
			\end{aligned}
			\right.
		\end{align*}
		By the Littlewood--Paley theory (Lemma \ref{littlewoodpaley}) and Bernstein's inequality, we obtain
		\begin{align*}
			C_{\omega,v}\sim \sup_{k_j\in \mathbb{Z},1\leq j\leq n}\sup_{k_j'\in \mathbb{Z},j\in J\cup S}\sup_{2^{l_j}\leq X_j^{1/s_j}, j\in \tilde{S}}\frac{H}{1+\sum_{j\in J_-\cup J_+'}2^{s_jk_j}+A+B+C},
		\end{align*}
		where
		$$H = 2^{(\sum_{j\in J_-\cup J_+'}k_jd_j+\sum_{j\in J\cup S\setminus \tilde{S}}(k_j+k_j'(d_j-1))+\sum_{j\in \tilde{S}}(l_j+k_j'(d_j-1)))(\frac{1}{2}-\frac{1}{q})}.$$
		To achieve the maximum, we have $2^{s_jk_j}\sim 1+A+B+C$ for $j\in J_-\cup J_+'$, $2^{l_j}\gtrsim 2^{k_j'}$ for $j\in \tilde{S}$, and $2^{k_j}\gtrsim 2^{k_j'}$ for $j\in J$. Thus
		\begin{align*}
			C_{\omega,v}\sim \sup_{k_j,k_j'\in \mathbb{Z},j\in J\cup S}\sup_{2^{l_j}\leq X_j^{1/s_j},j\in \tilde{S}}\frac{\tilde{H}}{(1+\tilde{A}+B+\tilde{C})^{1-\sum_{j\in J_-\cup J_+'}\frac{d_j}{s_j}(\frac{1}{2}-\frac{1}{q})}}
		\end{align*}
		where $\tilde{H}=2^{(\sum_{j\in J\cup S\setminus \tilde{S}}(k_j+k_j'(d_j-1))+\sum_{j\in \tilde{S}}(l_j+k_j'(d_j-1)))(\frac{1}{2}-\frac{1}{q})}$ and
		\begin{align*}
			\tilde{A}:=\tilde{A}(k_j,k_j')&:=\sum_{j\in J}((1-|v_j|)^{1/2}2^{k_j/2}+2^{k_j'-k_j/2})\chi_{2^{k_j}\gtrsim 2^{k_j'}},\\
			\tilde{C}&:=\sum_{j\in \tilde{S}}X_j^{1-1/s_j}2^{l_j}.
		\end{align*}
		For any $j\in J$, let $2^{k_j} = (1+B+\tilde{C})^22^{\tilde{k}_j}$, $2^{k_j'} = (1+B+\tilde{C})^22^{\tilde{k}_j'}$. Then we have
		\begin{align*}
			C_{\omega,v}&\sim \sup_{k_j,k_{j}'\in  \mathbb{Z},j\in S}\sup_{2^{l_j}\leq X_j^{1/s_j},j\in \tilde{S}}\frac{2^{(\sum_{j\in  S\setminus \tilde{S}}(k_j+k_j'(d_j-1))+\sum_{j\in \tilde{S}}(l_j+k_j'(d_j-1)))(\frac{1}{2}-\frac{1}{q})}}{(1+B+\tilde{C})^{1-\sum_{j\in J_-\cup J\cup J_+'}\frac{d_j}{s_j}(\frac{1}{2}-\frac{1}{q})}}\\
			&\quad \cdot \sup_{\tilde{k}_j,\tilde{k}_j'\in \mathbb{Z},2^{\tilde{k}_j}\gtrsim 2^{\tilde{k}_j'},j\in J}\frac{2^{(\sum_{j\in J}(\tilde{k}_j+\tilde{k}_j'(d_j-1)))(\frac{1}{2}-\frac{1}{q})}}{(1+\tilde{A}(k_j,k_j'))^{1-\sum_{j\in J_-\cup J_+'}\frac{d_j}{s_j}(\frac{1}{2}-\frac{1}{q})}}\\
			&:=UV.
		\end{align*}
		$V$ is maximized when $2^{k_j}\sim (1-|v_j|)^{-1}$ and $2^{k_j'}\sim (1-|v_j|)^{-1/2}$ for $j\in J$. Thus
		\begin{align*}
			V \sim \prod_{j\in J}(1-|v_j|)^{-\frac{d_j+1}{2}(\frac{1}{2}-\frac{1}{q})}
		\end{align*}
		Assume that $U$ attains its maximum at $k_{j,0}$ and $k_{j,0}'$ for $j\in S$, and at $l_{j,0}$ for $j\in \tilde{S}$. If $2^{k_{j,0}}\gg X_j^{1/s_j}$, we have $k_{j,0} = k_{j,0}'$. Neither the case $2^{k_{j,0}}\ll X_j^{1/s_j}$ nor the case $2^{k_{j,0}}\sim X_j^{1/s_j}\ll 2^{k_{j,0}'}$ can occur. Thus $2^{k_{j,0}}\sim 2^{k_{j,0}'}\gtrsim X_j^{1/s_{j}}$ for $j\in S\setminus \tilde{S}$. Let $x_j = 2^{k_{j,0}' s_j}$ for $j\in S\setminus \tilde{S}$ and $x_j = X_j^{1-1/s_j}2^{l_{j,0}}$ for $j\in \tilde{S}$. Thus we obtain
		\begin{align*}
			U&\sim \max_{\tilde{S}\subset S}\sup_{x_j\geq X_j, j\in S\setminus \tilde{S}}\sup_{0\leq x_j\leq X_j, j\in \tilde{S}}\frac{(\prod_{j\in S\setminus \tilde{S}}x_j^{1/s_j}\prod_{j\in \tilde{S}}(x_jX_j^{1/s_j-1}))^{d_j(\frac{1}{2}-\frac{1}{q})}}{(1+\sum_{j\in S}x_j)^{1-\sum_{j\in J_-\cup J\cup J_+'}\frac{d_j}{s_j}(\frac{1}{2}-\frac{1}{q})}}\\
			&\sim \sup_{\tilde{S}\subset S}\sup_{x\geq X_ {\tilde{S}}}\sup_{0\leq x_j\leq X_j, j\in \tilde{S}} \frac{x^{\sum_{j\in S\setminus \tilde{S}}\frac{d_j}{s_j}(\frac{1}{2}-\frac{1}{q})}\prod_{j\in \tilde{S}}(x_jX_j^{1/s_j-1})^{d_j(\frac{1}{2}-\frac{1}{q})}}{(x+\sum_{j\in \tilde{S}}x_j)^{1-\sum_{j\in J_-\cup J\cup J_+'}\frac{d_j}{s_j}(\frac{1}{2}-\frac{1}{q})}}\\
			&:= \sup_{\tilde{S}\subset S}f_{\tilde{S}}\prod_{j\in J}(1-|v_j|)^{-\frac{d_j+1}{2}(\frac{1}{2}-\frac{1}{q})}
		\end{align*}
		where $X_{\tilde{S}}:=\max_{j\in S\setminus\tilde{S}}X_j$ (If $S\setminus \tilde{S} = \varnothing$, $X_{\tilde{S}} = 1$). Let $\mathfrak{S}:=\{\tilde{S}\subset S:X_j\geq X_{\tilde{S}}\mbox{ for any }j\in \tilde{S}\}$. It suffices to consider the maximum over $S\in \mathfrak{S}$.
		
		If $\tilde{S} = \varnothing$, we have
		$$f_{\tilde{S}}\sim \frac{X_{\varnothing}^{\sum_{j\in S}\frac{d_j}{s_j}(\frac{1}{2}-\frac{1}{q})}}{X_\varnothing^{1-\sum_{j\in J_-\cup J\cup J_+'}\frac{d_j}{s_j}(\frac{1}{2}-\frac{1}{q})}}\sim X_{\varnothing}^{\frac{d}{s}(\frac{1}{2}-\frac{1}{q})-1}.$$
		
		If $\tilde{S}\neq \varnothing$, let $K = \#\tilde{S}$, and let $j_1,\cdots,j_{K}\in \tilde{S}$ be ordered so that $X_{j_1}\leq \cdots\leq X_{j_K}$.
		
		If
		$$\sum_{j\in \tilde{S}}d_j(\frac{1}{2}-\frac{1}{q})+\sum_{j\in J_-\cup J\cup J_+'}\frac{d_j}{s_j}(\frac{1}{2}-\frac{1}{q})\leq 1,$$
		then
		\begin{align*}
			f_{\tilde{S}}&\sim\frac{X_{\tilde{S}}^{\sum_{j\in S\setminus \tilde{S}}\frac{d_j}{s_j}(\frac{1}{2}-\frac{1}{q})}\prod_{j\in \tilde{S}}(X_{\tilde{S}}X_j^{\frac{1}{s_j}-1})^{d_j(\frac{1}{2}-\frac{1}{q})}}{X_{\tilde{S}}^{1-\sum_{j\in J_-\cup J\cup J_+'}\frac{d_j}{s_j}(\frac{1}{2}-\frac{1}{q})}}\\
			&\sim X_{\tilde{S}}^{\frac{d}{s}(\frac{1}{2}-\frac{1}{q})-1}\prod_{j\in \tilde{S}}\left(\frac{X_{\tilde{S}}}{X_j}\right)^{\frac{d_j}{s_j}(\frac{1}{2}-\frac{1}{q})(s_j-1)}.
		\end{align*}
		If instead
		$$\sum_{j\in \tilde{S}}d_j(\frac{1}{2}-\frac{1}{q})+\sum_{j\in J_-\cup J\cup J_+'}\frac{d_j}{s_j}(\frac{1}{2}-\frac{1}{q})> 1,$$
		then there exists $1\leq k\leq K$ such that
		$$\sum_{l = k+1}^Kd_{j_l}(\frac{1}{2}-\frac{1}{q})+\sum_{j\notin \tilde{S}}\frac{d_j}{s_j}(\frac{1}{2}-\frac{1}{q}) \leq 1$$
		and
		$$\sum_{l= k}^Kd_{j_l}(\frac{1}{2}-\frac{1}{q})+\sum_{j\notin \tilde{S}}\frac{d_j}{s_j}(\frac{1}{2}-\frac{1}{q}) > 1.$$
		In this case,
		\begin{align*}
			f_{\tilde{S}}& \sim \frac{X_{j_k}^{\sum_{j\in S\setminus \tilde{S}}\frac{d_j}{s_j}(\frac{1}{2}-\frac{1}{q})}\prod_{l=1}^kX_{j_l}^{\frac{d_{j_l}}{s_{j_l}}(\frac{1}{2}-\frac{1}{q})}}{X_{j_k}^{1-\sum_{j\in J_-\cup J\cup J_+'}\frac{d_j}{s_j}(\frac{1}{2}-\frac{1}{q})}}\prod_{l=k+1}^K(X_{j_k}X_{j_l}^{\frac{1}{s_{j_l}}-1})^{d_{j_l}(\frac{1}{2}-\frac{1}{q})}\\
			&\lesssim X_{j_k}^{\frac{d}{s}(\frac{1}{2}-\frac{1}{q})-1}\prod_{l=k+1}^{K}\left(\frac{X_{j_k}}{X_j}\right)^{\frac{d_j}{s_j}(\frac{1}{2}-\frac{1}{q})(s_j-1)}.
		\end{align*}
		Finally, we conclude that this term is equivalent to
		$$U\sim \max_{\tilde{S}\in \mathfrak{S}}\left(X_{\tilde{S}}^{\frac{d}{s}(\frac{1}{2}-\frac{1}{q})-1}\prod_{j\in \tilde{S}}\left(\frac{X_{\tilde{S}}}{X_j}\right)^{\frac{d_j}{s_j}(\frac{1}{2}-\frac{1}{q})(s_j-1)}\right)\sim X.$$
		The proof is complete.
	\end{proof}
	\begin{rem}\label{sleq1}
		If $s_j\geq 1$ for all $j\in S$, then $X = 1$; if instead $s_j\leq 1$ for all $j\in S$, then
		$$X = \prod_{j\in S}X_j^{(1-s_j)\frac{d_j}{s_j}(\frac{1}{2}-\frac{1}{q})}.$$
	\end{rem}
	
	\begin{lemma}\label{thehomoesti}
		Let $B_v$ be defined by \eqref{defiBv}. Then there exists $0<c<C$, which depend on $\bm{d}$ and $\bm{s}$, such that
		$$c\prod_{j\in J}(1-|v_j|)^{-\frac{d_j+1}{2}(\frac{1}{2}-\frac{1}{q})}\leq B_v \leq C\prod_{j\in J}(1-|v_j|)^{-\frac{d_j+1}{2}(\frac{1}{2}-\frac{1}{q})}.$$
	\end{lemma}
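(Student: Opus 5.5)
We estimate $B_v$ by the same frequency-localized reduction used in Lemma \ref{theesitmateforC}, now in the homogeneous setting $\omega=0$ under \eqref{criass}. Since $v_j=0$ for $j\notin J$, we have $D^{s_j}_{x_j,v_j}=D^{s_j}_{x_j}$ for $j\notin J$, and for $j\in J$ (where $s_j=1/2$) the symbol is $(|\xi_j|+v_j\cdot\xi_j)^{1/2}$. By rotation in each $\mathbb{R}^{d_j}$, $j\in J$, we take $v_j=(-|v_j|,0,\dots,0)$ and write $\xi_j=(\xi_{j,1},\xi_j')$.

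\textbf{Upper bound.} Introduce an anisotropic dilation in the $\xi_{j,1}$ and $\xi_j'$ directions for each $j\in J$, namely $\xi_{j,1}\mapsto (1-|v_j|)^{-1}\xi_{j,1}$ and $\xi_j'\mapsto (1-|v_j|)^{-1/2}\xi_j'$, chosen so that the symbol $|\xi_j|-|v_j|\xi_{j,1}$ becomes comparable to the $v=0$ symbol $|\xi_j|$, uniformly in $|v_j|<1$. Concretely, on the region $\xi_{j,1}>0$, $|\xi_j'|\ll\xi_{j,1}$ one has
\[
|\xi_j|-|v_j|\xi_{j,1}\sim (1-|v_j|)\xi_{j,1}+|\xi_j'|^2/\xi_{j,1},
\]
and elsewhere the symbol is $\sim|\xi_j|$. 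The first step is to check that after the dilation the multiplier $(|\xi_j|-|v_j|\xi_{j,1})^{1/2}$ is bounded above and below by a $v$-independent elliptic weight $a_j(\eta_j)$ homogeneous of degree $1/2$ along this parabolic-type scaling. The Jacobian then produces $\|u\|_{L^q}$ scaling by $\prod_{j\in J}(1-|v_j|)^{-\frac{d_j+1}{2}(\frac12-\frac1q)}$, since the $L^2$ norms transform compatibly by the criticality relation $\theta_j=sd_j/(ds_j)$. This reduces the upper bound to a $v$-uniform inequality $\|u\|_{L^q}\lesssim\prod_j\|a_j(D)u\|_{L^2}^{\theta_j}$. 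I would prove that inequality by Littlewood--Paley decomposition in the variables $(\xi_{j,1},\xi_j')$ (Lemma \ref{littlewoodpaley}, mixed dyadic pieces), together with the Sobolev/critical argument of Proposition \ref{crilit} or the Lorentz refinement of Proposition \ref{lorentzrefined} adapted to this weight.

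\textbf{Main obstacle.} The hard step is making the weight genuinely $v$-uniform near the cone $\xi_{j,1}<0$ and near $\xi_j'=0$, and handling the $L^2\to L^q$ critical summation when the dyadic pieces have different anisotropic shapes. First I would bound $\|P u\|_{L^q}$ on each dyadic box by Bernstein. Then I would sum using a square-function estimate in $L^q$ (valid since $2<q<\infty$) together with H\"older, exactly as in the $p_n<q$ branch of Proposition \ref{crilit}. Because all exponents are $2$, Plancherel makes the right-hand side an orthogonal sum, so the summation closes.

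\textbf{Lower bound.} Take $u$ with $\hat u$ a bump adapted to the box $\xi_{j,1}\sim(1-|v_j|)^{-1}$, $|\xi_j'|\lesssim(1-|v_j|)^{-1/2}$ for $j\in J$, and to the unit box in the other variables. On this box $|\xi_j|-|v_j|\xi_{j,1}\sim1$, so $\|D^{s_j}_{x_j,v_j}u\|_{L^2}\sim\|u\|_{L^2}$ for every $j$, while Bernstein-type sharpness for a bump gives
\[
\|u\|_{L^q}\sim \mathrm{vol}^{\frac12-\frac1q}\|u\|_{L^2},
\]
with volume $\prod_{j\in J}(1-|v_j|)^{-\frac{d_j+1}{2}}$. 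This gives $W[u]\gtrsim\prod_{j\in J}(1-|v_j|)^{-\frac{d_j+1}{2}(\frac12-\frac1q)}$, matching the upper bound, as in the computation of $V$ in Lemma \ref{theesitmateforC}.
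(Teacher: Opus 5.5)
Your lower bound is correct, and it is more self-contained than the paper's. The paper tests $W$ on product functions $u=\prod_j u_j(x_j)$ and imports the asymptotics of the boosted half-wave Gagliardo--Nirenberg constant from Bellazzini--Georgiev--Lenzmann--Visciglia. Your rescaled bump gives the bound directly, since on your box $(1-|v_j|)\xi_{j,1}\leq |\xi_j|-|v_j|\xi_{j,1}\leq (1-|v_j|)\xi_{j,1}+|\xi_j'|^2/\xi_{j,1}\lesssim 1$, and $\sum_j\theta_j=1$.

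The upper bound is not established. Your ``first step'' is false. Set $\epsilon=1-|v_j|$, $\eta_{j,1}=\epsilon\xi_{j,1}$, $\eta_j'=\epsilon^{1/2}\xi_j'$. On the half-space $\xi_{j,1}<0$ the symbol is comparable to $|\xi_j|\sim\epsilon^{-1}|\eta_{j,1}|+\epsilon^{-1/2}|\eta_j'|$, so no $v$-independent weight bounds it from above. The $v$-uniform inequality for the unspecified $a_j$ is then left as a ``hard step'' that is never carried out.

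The idea you are missing is that only a one-sided comparison is needed, because a smaller denominator only increases $W$. That comparison holds against the $v=0$ symbol:
\[
|\xi_j|-|v_j|\xi_{j,1}=\epsilon|\xi_j|+(1-\epsilon)\bigl(|\xi_j|-\xi_{j,1}\bigr)\geq \epsilon|\xi_j|+(1-\epsilon)\frac{|\xi_j'|^2}{2|\xi_j|}\geq c\bigl(\epsilon|\xi_{j,1}|+\epsilon^{1/2}|\xi_j'|\bigr)\geq c|\eta_j|.
\]
The middle inequality follows from AM--GM if $\epsilon\leq 1/2$ and trivially if $\epsilon\geq 1/2$. Now set $\hat u(\xi)=\hat{\tilde u}(M\xi)$, with $M$ this diagonal dilation (the identity on blocks $j\notin J$). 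Then $\|u\|_{L^q}=(\det M)^{-1/q'}\|\tilde u\|_{L^q}$, and $\|D^{s_j}_{x_j,v_j}u\|_{L^2}\geq c(\det M)^{-1/2}\|D^{s_j}_{x_j}\tilde u\|_{L^2}$ for all $j$. Hence $B_v\leq C(\det M)^{-(\frac12-\frac1q)}B_0=C\prod_{j\in J}(1-|v_j|)^{-\frac{d_j+1}{2}(\frac12-\frac1q)}B_0$, with $B_0<\infty$ by Theorem~\ref{mainani}. No new Littlewood--Paley summation is needed.

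The paper instead keeps the unrescaled symbol. It localizes to boxes $|\xi_{j,1}|\sim 2^{k_j}$, $|\xi_j'|\sim 2^{k_j'}$, applies Bernstein on each box, and passes to the supremum over boxes by Littlewood--Paley theory. Your orthogonality remark is the right mechanism for that step: Minkowski for $q\geq 2$, then H\"older in $k$. The exponent is read off from $\sup_{0<y\leq1}y^{\frac{d_j-1}{2d_j}}/\bigl((1-|v_j|)^{1/2}+y\bigr)$. Your ``main obstacle'' paragraph, applied to the unrescaled symbol with your own description of its size, would reproduce that proof, but you never carry out the optimization that produces the exponent. The repaired dilation argument replaces that optimization by a Jacobian computation.
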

	\begin{proof}[\textbf{Proof}]
		Let $u(x) = \prod_{j=1}^n u_j(x_j)$. By the results in \cite{bellazzini2019on,bellazzini2019oncorr}, we obtain
		\begin{align*}
			B_v &\geq \prod_{j=1}^n\sup_{0\neq u_j\in H^{s_j}(\mathbb{R}^{d_j})}\frac{\|u_j\|_{L^q(\mathbb{R}^{d_j})}}{\|u_j\|^{1-\theta_j}_{L^2(\mathbb{R}^{d_j})}\|D_{x_j,v_j}^{s_j}u_j\|_{L^2(\mathbb{R}^{d_j})}^{\theta_j}}\\
			&\gtrsim \prod_{j\in J} (1-|v_j|)^{-\frac{(d_j+1)(q-2)}{4q}}.
		\end{align*}
		For the opposite direction, we may assume, without loss of generality, that $v_j = (|v_j|,0,\cdots,0)$. Consider $u\neq 0$ whose Fourier support satisfies 
		$$\mathrm{supp}(\mathscr{F}u)\subset \{\xi:|\xi_j|\sim 2^{k_j}(j\in J_-\cup J_+);~|\xi_{j,1}|\sim 2^{k_j},|\xi_j'|\sim 2^{k_j'}(j\in J)\}. $$By the Bernstein inequality, we have
		\begin{align*}
			&\quad\frac{\|u\|_{L^q}}{\prod_{j=1}^n \|D_{x_j,v_j}^{s_j}u\|_{L^2}^{\theta_j}}\\
			&\sim \frac{1}{2^{\sum_{j\in J_-\cup J_+}k_js_j\theta_j}\prod_{j\in J}(\frac{(1-|v_j|)^{1/2}2^{k_j}+2^{k_j'}}{2^{k_j/2}+2^{k_j'/2}})^{\theta_j}}\frac{\|u\|_{L^q}}{ \|u\|_{L^2}}\\
			&\lesssim \prod_{j\in J} \frac{2^{\frac{s}{d}(k_j+k_j'(d_j-1))}(2^{k_j/2}+2^{k_j'/2})^{\theta_j}}{((1-|v_j|)^{1/2}2^{k_j}+2^{k_j'})^{\theta_j}}.
		\end{align*}
		Then by the Littlewood--Paley theory (Lemma \ref{littlewoodpaley}), we obtain
		\begin{align*}
			B_v&\lesssim \sup_{k_j,k_j'\in \mathbb{Z},j\in J}\prod_{j\in J} \frac{2^{\frac{s}{d}(k_j+k_j'(d_j-1))}(2^{k_j/2}+2^{k_j'/2})^{\theta_j}}{((1-|v_j|)^{1/2}2^{k_j}+2^{k_j'})^{\theta_j}}\\
			&\sim \sup_{k_j,k_j'\in \mathbb{Z},2^{k_j}\geq 2^{k_j'},j\in J}\prod_{j\in J} \left(\frac{2^{(k_j+k_j'(d_j-1))/(2d_j)}2^{k_j/2}}{(1-|v_j|)^{1/2}2^{k_j}+2^{k_j'}}\right)^{\theta_j}\\
			&\sim \sup_{0<y_j\leq 1, j\in J}\prod_{j\in J} \left(\frac{y_j^{(d_j-1)/(2d_j)}}{(1-|v_j|)^{1/2}+y_j}\right)^{\theta_j}\\
			&\lesssim \prod_{j\in J}(1-|v_j|)^{-\frac{d_j+1}{4d_j}\theta_j}.
		\end{align*}
		Since $\theta_j = sd_j/(ds_j) = 2sd_j/d = 2(1/2-1/q)d_j$ for $j\in J$, the proof is complete.
	\end{proof}
	
	\begin{proof}[\textbf{Proof of Theorem \ref{subcrnonscattering}}]
		Let $Q_{\omega,v}$ be the solution of \eqref{ellip} constructed in Theorem \ref{generalso}. Then $u(t,x) = e^{-i\omega t}Q_{\omega,v}(x-vt)$ is a solution of \eqref{generalnon}. 
		 Moreover, it is immediate that $u$ does not scatter and that 
		 $$\|u\|_{C(\mathbb{R};H^{\bm{s}}_{\bm{d}})} = \|Q_{\omega,v}\|_{H^{\bm{s}}_{\bm{d}}}.$$
		
		By \eqref{wxiesti}, we have
		\begin{align*}
			w(\xi)^{\frac{1}{2}}&\gtrsim m+\sum_{j\in J_-\cup J_+'}|\xi_j|^{s_j}+\sum_{j\in J}(1-|v_j|)^{\frac{1}{2}}|\xi_j|^{\frac{1}{2}}\\
			&\quad+\sum_{j\in S}|\xi_j|^{s_j}\chi_{|\xi_j|\nsim |v_j|^{1/(2s_j-1)}}.
		\end{align*}
		Recall the definition of $X_j$ from \eqref{mX}. Then,
		\begin{align*}
			\sup_\xi \frac{1+\sum_{j=1}^n|\xi_j|^{s_j}}{w(\xi)^{1/2}}&\lesssim \frac{1}{m}+1+\max_{j\in J}(1-|v_j|)^{-\frac{1}{2}}+\sum_{j\in S}\frac{|v_j|^{s_j/(2s_j-1)}}{m}\\
			&\lesssim \frac{1}{m}+(1-\max_{j\in J}|v_j|)^{-\frac{1}{2}}+\max_{j\in S}X_j.
		\end{align*}
		Combining with Lemma \ref{theesitmateforC}, we obtain
		\begin{align*}
			&\quad\|Q_{\omega,v}\|_{H^{\bm{s}}_{\bm{d}}}\\
			&\lesssim \|w(\xi)^{\frac{1}{2}}\widehat{Q_{\omega,v}}(\xi)\|_{L^2}\sup_\xi \frac{1+\sum_{j=1}^n|\xi_j|^{s_j}}{w(\xi)^{1/2}}\\
			&\sim \left(\frac{1}{m}+\left(1-\max_{j\in J}|v_j|\right)^{-\frac{1}{2}}+\max_{j\in S} X_j\right) m^{\frac{q}{q-2}-\frac{d}{2s}}X^{-\frac{q}{q-2}}\prod_{j\in J}(1-|v_j|)^{\frac{d_j+1}{4}}.
		\end{align*}
		If $J\neq \varnothing$, we choose $|v_j| = \nu\in (0,1)$ for $j\in J$, $v_j = 0$ for $j\in J_+$, and $m = (1-\nu)^{1/2}$. Since $1/q>1/2-s/d$, we have
		\begin{align*}
			\|Q_{\omega,v}\|_{H^{\bm{s}}_{\bm{d}}}&\lesssim (m^{-1}+(1-\nu)^{-\frac{1}{2}}) m^{\frac{q}{q-2}-\frac{d}{2s}}(1-\nu)^{\sum_{j\in J}\frac{d_j+1}{4}}\\
			& \sim (1-\nu)^{-\frac{1}{2}+\frac{q}{2(q-2)}-\frac{d}{4s}+\sum_{j\in J}\frac{d_j+1}{4}}\\
			&\lesssim (1-\nu)^{\frac{q}{2(q-2)}-\frac{d}{4s}}\rightarrow 0,\quad \mbox{as } \nu\rightarrow 1-.
		\end{align*}
		If $J = \varnothing$, we set $v_j = 0$ for $j\in J_+\setminus \tilde{J}$, $X_j = z>1$ for $j\in \tilde{J}$, and $m = z^{-1}$. By Remark \ref{sleq1}, we have
		\begin{align*}
			\|Q_{\omega,v}\|_{H^{\bm{s}}_{\bm{d}}} &\lesssim (m^{-1}+z) m^{\frac{q}{q-2}-\frac{d}{2s}}z^{-\sum_{j\in \tilde{J}}(1-s_j)\frac{d_j}{s_j}(\frac{1}{2}-\frac{1}{q})\frac{q}{q-2}}\\
			& \sim z^{1-\frac{q}{q-2}+\frac{d}{2s}-\sum_{j\in \tilde{J}}(1-s_j)\frac{d_j}{s_j}(\frac{1}{2}-\frac{1}{q})\frac{q}{q-2}}\\
			&\sim z^{-\frac{2}{q-2}+\sum_{j\notin \tilde{J}}\frac{d}{2s_j}+\sum_{j\in \tilde{J}}\frac{d_j}{2}}.
		\end{align*}
		Thus, if $-\frac{2}{q-2}+\sum_{j\notin \tilde{J}}\frac{d}{2s_j}+\sum_{j\in \tilde{J}}\frac{d_j}{2}<0$, which is equivalent to $q<2+4/(\sum_{j\notin \tilde{J}}d_j/s_j+\sum_{j\in \tilde{J}}d_j)$, then $\|Q_{\omega,v}\|_{H^{\bm{s}}_{\bm{d}}}\rightarrow 0$ as $z\rightarrow \infty$. We finish the proof.
	\end{proof}
	\begin{proof}[\textbf{Proof of Theorem \ref{crnonscattering}}]
		Let $Q_v$ be the solution of \eqref{ellip} constructed in Theorem \ref{crigeneralso}. 
		By the definition of $D_{x_j,v_j}^{s_j}$, we have
		\begin{align*}
			\|Q_v\|_{\dot{H}^{\bm{s}}_{\bm{d}}}\sim \sum_{j=1}^n\|D_{x_j}^{s_j}Q_v\|_{L^2}
			&\lesssim \sum_{j=1}^n (1-|v_j|)^{-s_j}\|D_{x_j,v_j}^{s_j}Q_v\|_{L^2}\\
			&\lesssim \max_{j\in J}(1-|v_j|)^{-\frac{1}{2}}\|Q_v\|_{L^q}^{\frac{q}{2}}\\
			&\sim \max_{j\in J}(1-|v_j|)^{-\frac{1}{2}}B_v^{\frac{q}{2-q}}.
		\end{align*}
		Combining with Lemma \ref{thehomoesti}, we obtain
		\begin{align*}
			\|Q_{v}\|_{\dot{H}^{\bm{s}}_{\bm{d}}} \lesssim \max_{j\in J}(1-|v_j|)^{-\frac{1}{2}}\left(\prod_{j\in J}(1-|v_j|)^{-\frac{d_j+1}{2}(\frac{1}{2}-\frac{1}{q})}\right)^{\frac{q}{2-q}}.
		\end{align*}
		Let $|v_j| = \nu\in (0,1)$ for $j\in J$. Then
		\begin{align*}
			\|Q_{v}\|_{\dot{H}^{\bm{s}}_{\bm{d}}}\lesssim (1-\nu)^{-\frac{1}{2}+\sum_{j\in J}\frac{d_j+1}{4}}.
		\end{align*}
		Note that for $\sum_{j\in J}d_j\geq 2$, we have $-1/2+\sum_{j\in J}(d_j+1)/4>0$. Thus, $\|Q_v\|_{\dot{H}^{\bm{s}}_{\bm{d}}}\rightarrow 0$ as $\nu\rightarrow 1-$. The same reasoning as in Theorem \ref{subcrnonscattering} finishes the proof.
	\end{proof}

	\section*{Acknowledgment}
	Jie Chen acknowledges the support of NSFC grants 12301116. The author thanks Professor Baoxiang Wang, from whose PDE course he learned a great deal. The author also thanks Ying Zhang, Xiaojie Wang, and Danyi Li for their assistance in finding and translating literature.
	
	\phantomsection 
	\bibliographystyle{amsplain}
	\addcontentsline{toc}{section}{References}
	\bibliography{reference}	
	
	\begin{itemize}[leftmargin=5pt]
		\item[] \scriptsize\textsc{Jie Chen: School of Science, Jimei University, Xiamen 361021, P.R. China}
		
		\textit{E-mail address}: \textbf{jiechern@jmu.edu.cn}
	\end{itemize}
	
\end{document}